\documentclass[twoside,11pt]{article}
%

%
%
%

\usepackage[preprint]{jmlr2e}
\usepackage{xcolor}
\usepackage{pdfpages}

\DeclareMathOperator{\rank}{\mathrm{rank}}
\DeclareMathOperator{\diag}{\mathrm{diag}}

\def\lsb{\left[}
\def\rsb{\right]}
\def\lab{\left|}
\def\rab{\right|} 
\def\la {\left\langle}
\def\ra {\right\rangle} \DeclareMathOperator{\SVD}{SVD}
\DeclareMathOperator{\HOSVD}{HOSVD}
\DeclareMathOperator{\offdiag}{off-diag}

\def\lb{\left(}
\def\rb{\right)}
\def \ttimes { \underset{i=1}{\overset{3}{ \times }}}
\def \jneqi {\underset{j\neq i}{ \times }}


\newcommand{\vecnorm}[2]{\left\| #1\right\|_{#2}}

\newcommand{\matsnorm}[2]{\left\| #1\right\|_{{#2}}}
\newcommand{\whf}[1]{{#1}}

\newcommand{\nucnorm}[1]{\ensuremath{\matsnorm{#1}{\footnotesize{\mbox{$\ast$}}}}}
\newcommand{\fronorm}[1]{\ensuremath{\matsnorm{#1}{\footnotesize{\mathsf{F}}}}}
\newcommand{\opnorm}[1]{\ensuremath{\matsnorm{#1}{}}}
\newcommand{\twoinf}[1]{\ensuremath{\matsnorm{#1}{\footnotesize{\mbox{2,$\infty$}}}}}
\newcommand{\infnorm}[1]{\ensuremath{\matsnorm{#1}{\footnotesize{\mbox{$\infty$} }}}}
\newcommand{\twonorm}[1]{\ensuremath{\matsnorm{#1}{\footnotesize{2}}}}

\newcommand{\bfm}[1]{\bm{#1}}

\newcommand{\E}[2][]{\mathbb{E}_{#1} \left\{ #2 \rule{0mm}{3mm}\right\}}
 \newtheorem{claim}{Claim}[section]
\newcommand{\cjc}[1]{{{#1}}}
\def \ProjTX {\calP_{\tangentX}}
\def \ProjTXL {\calP_{\tangentXL}}
\def \tangentX {T_{\calX^t}}
\def \tangentXL {T_{\calX^{t,\ell}}}
\def\lcb{\left\{}
\def\rcb{\right\}}
\def\va{\bfm a}   \def\mA{\bfm A}  
\def\vb{\bfm b}   \def\mB{\bfm B}  
\def\vc{\bfm c}   \def\mC{\bfm C}  
   \def\mD{\bfm D}  
\def\ve{\bfm e}   \def\mE{\bfm E}  
    
   \def\mG{\bfm G}  
   \def\mH{\bfm H}  
   \def\mI{\bfm I}

   \def\mM{\bfm M}  \def\M{\mathbb{M}}
     
   \def\mO{\bfm O}  
     
   \def\mQ{\bfm Q}  
\def\vr{\bfm r}   \def\mR{\bfm R}  \def\R{\mathbb{R}}
   \def\mS{\bfm S}  
   \def\mT{\bfm T}  
   \def\mU{\bfm U}  
   \def\mV{\bfm V}  
   \def\mW{\bfm W}  
\def\vx{\bfm x}   \def\mX{\bfm X}  
   \def\mY{\bfm Y}  
   \def\mZ{\bfm Z}


\def\calA{{\cal  A}} 
\def\calB{{\cal  B}} 
\def\calC{{\cal  C}} 
 
\def\calE{{\cal  E}} 
 
\def\calG{{\cal  G}} 
\def\calH{{\cal  H}} 
\def\calI{{\cal  I}} 
\def\calJ{{\cal  J}}

\def\calM{{\cal  M}}

\def\calP{{\cal  P}}

\def\calS{{\cal  S}} 
\def\calT{{\cal  T}}

\def\calX{{\cal  X}} 
\def\calY{{\cal  Y}} 
\def\calZ{{\cal  Z}}

\newcommand{\bfsym}[1]{\bm{#1}}

           \def\bDelta {\bfsym {\Delta}}

             \def\bSigma{\bfsym \Sigma}
        \def\bLambda {\bfsym {\Lambda}}



\def \tran {\mathsf{T}}

\DeclareMathOperator{\vect}{vec}

\def \bzero{\bm 0}

\usepackage{bbm}

\def \Rn {\R^{n \times n  \times n}}

\def \Pxtl {\calP_{T_{\calX^{t,\ell}} } }
\def \Pxt {\calP_{T_{\calX^{t}} } }





\firstpageno{1}

\makeatletter
\newcommand\blfootnote[1]{%
	\begingroup
	\renewcommand\thefootnote{}\footnote{#1}%
	\addtocounter{footnote}{-1}%
	\endgroup
}

\makeatother

\title{Implicit Regularization and Entrywise Convergence of Riemannian Optimization  for Low Tucker-Rank Tensor Completion}



\newcommand{\upstairs}[1]{\textsuperscript{#1}}
\newcommand{\affilone}{1}
\newcommand{\affiltwo}{2}

\begin{document}
\maketitle
\begin{center}

\vspace*{-0.8in}

\begin{tabular}{cc}
	Haifeng Wang\upstairs{\affilone,\! $\star$}, 
	Jinchi Chen\upstairs{\affiltwo,\! $\star$}, 
	and
	Ke Wei\upstairs{\affilone} \\[1.5ex]
	\upstairs{\affilone} School of Data Science, Fudan University, Shanghai, China\\
	\upstairs{\affiltwo} School of Mathematics, East China University of Science and Technology, Shanghai, China\\
\end{tabular}
\end{center}
\vspace*{.2in}

\allowdisplaybreaks
\begin{abstract}%
This paper is concerned with the low Tucker-rank tensor completion problem, which is about reconstructing a tensor $\calT\in\R^{n\times n\times n}$ of low multilinear rank from partially observed entries.
\whf{Riemannian optimization algorithms are a class of efficient methods for this problem, but the theoretical convergence  analysis  is still lacking. In this manuscript, we establish the entrywise convergence of the vanilla Riemannian gradient method for low Tucker-rank tensor completion under the nearly optimal sampling complexity $O(n^{3/2})$. Meanwhile, the implicit regularization phenomenon of the algorithm  has also been revealed. 
As far as we know, this is the first work that has shown the  entrywise
convergence and  implicit regularization property of a non-convex method for low
Tucker-rank tensor completion.
The analysis relies on the leave-one-out technique, and some of the technical results developed in the paper might be of broader interest in investigating the properties of other non-convex methods for this problem.}
\end{abstract}

\begin{keywords}
Low  rank tensor completion, Tucker decomposition,  Riemannian gradient,  entrywise convergence, implicit regularization,   leave-one-out
\end{keywords}

\section{Introduction}
Tensors\blfootnote{\upstairs{$\star$}Equal contribution.} are  multidimensional arrays which are ubiquitous in data analysis, including but  not limited to topic modeling \citep{anandkumar2015spectral}, community detection \citep{anandkumar2013tensor}, computer version \citep{ liu2012tensor}, collaborative filtering \citep{ karatzoglou2010multiverse}, and signal processing \citep{cichocki2015tensor}. 
In this paper, we consider the tensor completion problem which is about reconstructing  a tensor from a few observed entries. \whf{Without any additional assumptions, tensor completion is an ill-posed problem which does not even have a unique solution. Therefore computationally efficient solution of this problem is typically based on certain intrinsic low dimensional structures of tensors, a notable example of which is low rank. Compared with matrix, tensor has more complex rank notions up to different tensor decompositions such as CANDECOMP/PARAFAC (CP) decomposition \citep{hitchcock1927expression}, Tucker decomposition \citep{tucker1966some}, tensor train (TT) decomposition \citep{oseledets2011tensor}, and {\color{black}t-SVD decomposition \citep{zhang2016exact}}. In this manuscript, we focus on the Tucker decomposition. Then the low-rank tensor completion problem can be formulated as follows:
\begin{align}\label{eq:optProb}
\min_{\calX\in\R^{n\times n\times n}}~\frac{1}{2p}\fronorm{\calP_{\Omega}\lb\calX\rb-\calP_{\Omega}\lb\calT\rb}^2,\quad\mbox{s.t.}\quad \rank\lb\calX\rb = \vr
\end{align} 
where  $\calT\in\R^{n\times n\times n}$ is the target tensor to be recovered,  $\rank\lb\calX\rb$ is the Tucker rank of $\calX$ which will be specified later, $\Omega$ is a subset of indices for the observed entries, $p = |\Omega|/n^3$ is the sampling rate,  $\calP_{\Omega}$ is the element-wise sampling operator, {\color{black}and $\fronorm{\cdot}$ denotes the Frobenius norm (e.g., $\fronorm{\calX}^2=\sum_{i_1,i_2,i_3}\calX_{i_1,i_2,i_3}^2$)}.}

\subsection{Main contributions}

For the low rank tensor completion problem under Tucker decomposition, many  methods have been developed, see \citep{gandy2011tensor, huang2015provable,han2020optimal, kressner2014low,liu2012tensor, luo2021low, mu2014square,xia2019polynomial,rauhut2017low,tong2021scaling}  and references therein. Among them, Riemannian optimization algorithms are a class of efficient methods. Despite the computational efficiency of Riemannian optimization,  there still lacks theoretical analysis for them. In this manuscript, we fill this gap by providing an entrywise convergence of the   vanilla Riemannian gradient method (RGM) for low rank tensor completion. 

RGM for tensor completion is an extension of the method for matrix completion \citep{vandereycken2013low,wei2016guarantees,wei2020guarantees}. The iteration of RGM is given by
\begin{align*}
\calX^{t+1} = \HOSVD\lb\calX^t-p^{-1}\calP_{T_{\calX^{t}}}\calP_{\Omega}\lb\calX^{t}-\calT\rb\rb,
\end{align*}
where the retraction $\HOSVD$ and the projection $\calP_{T_{\calX^{t}}}$ are specified in Section~\ref{sec: preliminaries}.
Assume the target tensor $\calT$ is incoherent, i.e., there exists a  constant $\mu>0$ such that 
\begin{align*}
\|\mU_i\|_{2,\infty}\leq\sqrt{\frac{\mu r}{n}},\quad i=1,2,3,
\end{align*} 
where $\mU_i\in\R^{n\times r}$ $(i = 1,2,3)$ are the factor matrices of the tensor $\calT$, {\color{black} and $\twoinf{\mU_i} := \max_{j\in [n]}\twonorm{\lsb\mU_i\rsb_{j,:}}$ is the $\ell_{2,\infty}$ norm of $\mU_i$}. Let $\calX^t\in\R^{n\times n\times n}$ with factor matrices $\mX^{t}_i\in\R^{n\times r}$ ($i = 1,2,3$) denote the iterate generated by the RGM in the $t$-th iteration. We have the following main result.
\begin{theorem}\label{thm:informal}
Assume each entry of $\calT$ is observed independently with probability $p$. Let the condition number of $\calT$ be $\kappa$. If $p\geq \frac{O(r^6\kappa^8\log^3 n)}{n^{3/2}}$, the iterates of the Riemannian gradient method {\color{black}(Algorithm~\ref{alg:rgd})} satisfy the following properties with high probability:
\begin{enumerate}
    \item Incoherence of the iterates:
    \begin{align}\label{eq:informal twotoinfinity}
        \twoinf{\mX_i^t} \leq 2\kappa\sqrt{\frac{\mu r}{n}},\quad t\ge 1.
    \end{align}
    \item The iterates $\calX^t$ converges linearly to $\calT$ in terms of the infinity norm: 
    \begin{align}\label{eq: informal infinity convergence}
        \infnorm{\calX^t-\calT} &\leq \lb\frac{1}{2}\rb^t\sigma_{\max}\lb\calT\rb,\quad t\geq 1,
    \end{align}
    {\color{black} where $\sigma_{\max}\lb\calT\rb$ denotes the upper bound on the largest singular values of the matricization of $\calT$ along every mode (see \eqref{eq:sigma_max_ke}), and $
    \infnorm{\calX^t-\calT} := \max_{i_1,i_2,i_3}\lab(\calX^t-\calT)_{i_1,i_2,i_3}\rab
$ denotes the infinity norm of $\calX^t-\calT$.}
        In particular, for the first logarithm number of iterations,  a stronger entrywise convergence  of RGM can be established (see Theorem~\ref{thm: finite iteration}):
    \begin{align*}
        \infnorm{\calX^t-\calT} &\leq \lb\frac{1}{2}\rb^t\frac{1}{n^{3/2}}\sigma_{\max}\lb\calT\rb, \quad \mbox{for }1\leq t\leq t_0,\mbox{ where }t_0=O(\log_2n).
    \end{align*}
\end{enumerate}
\end{theorem}

\begin{remark}
As a byproduct, the convergence of the vanilla RGM for low rank matrix completion follows immediately from Theorem~\ref{thm:informal}. This is also a new result for the matrix case since previous results either require a  stronger initialization procedure\citep{wei2020guarantees} or require an additional  projection in the algorithm \citep{cai2021provable}.
{\color{black} The analyses in \citep{wei2020guarantees,cai2021provable} are all based on the Frobenius norm error metric, which cannot fully exploit the incoherence property of each iterate and thus require sample splitting for initialization (so that the initialization can be unreasonably close to the ground truth)   or explicit projection onto the incoherence region in the algorithm. However, these additional steps are empirically not necessary. In contrast, the analysis in this paper is based on the $\ell_{2,\infty}$ norm and infinity norm which enables us to analyse the incoherence property of each iterate more carefully and establish the convergence of the algorithm that does not have those empirically redundant steps. }
\end{remark}

Inequality~\eqref{eq:informal twotoinfinity} shows that iterates of the RGM remain incoherent even in the absence of explicit regularization which is known as the {\it implicit regularization} phenomenon. To the best of our knowledge, this  is the first work that has shown the stronger entrywise convergence and the implicit regularization phenomenon of a non-convex method for low Tucker-rank tensor completion under the nearly optimal sampling complexity $O(n^{3/2})$.    Table~\ref{theoritical comparison} summarizes the theoretical recovery guarantees of different nonconvex algorithms for both the  Gaussian measurement model and the entrywise measurement model. 
\begin{table}[ht!]
\centering
\caption{Theories of different nonconvex methods for low Tucker-rank tensor completion. 
}
\vspace{0.2cm}
\label{theoritical comparison}
\begin{tabular}{|c|c|c|c|}
\hline Algorithms                    & Sampling complexity                                                                            & Error metric & Sampling scheme  \\\hline
\begin{tabular}[c]{@{}c@{}}Projected GD\\ {\citep{chen2019non}}\end{tabular}                     & $n^{2}r$ & Frobenius &  Gaussian             \\\hline 
\begin{tabular}[c]{@{}c@{}}Regularized GD\\ {\citep{han2020optimal}}\end{tabular}                     & $n^{3/2}r\kappa^4$ & Frobenius &  Gaussian             \\\hline 
\begin{tabular}[c]{@{}c@{}}RGM\\ {\citep{cai2020provable}}\end{tabular}                     & $n^{3/2}r^2\kappa^2$ & Frobenius &  Gaussian             \\\hline 
\begin{tabular}[c]{@{}c@{}}Riemannian Gauss-Newton\\ {\citep{luo2021low}}\end{tabular}                     & $n^{3/2}r^{3/2}\kappa^4$ & Frobenius &  Gaussian             \\\hline  
\begin{tabular}[c]{@{}c@{}}ScaledGD\\ {\citep{tong2021scaling}}\end{tabular}                     & $n^{3/2}r\kappa^2$ & Frobenius &  Gaussian               \\\hline  
\begin{tabular}[c]{@{}c@{}}Grassmannian GD\\ {\citep{xia2019polynomial}}\end{tabular}              & $n^{3/2}r^{7/2}\kappa^4\log^{7/2}n$                       &  Frobenius        & Entrywise  \\\hline
\begin{tabular}[c]{@{}c@{}}ScaledGD\\ {\citep{tong2021scaling}}\end{tabular}                     & $n^{3/2}r^2\kappa\lb\sqrt{r}\vee\kappa^2\rb\log^3n$ & Frobenius &  Entrywise           \\\hline
\begin{tabular}[c]{@{}c@{}}RGM\\ {[}\textbf{this paper}{]}\end{tabular}                     & $n^{3/2}r^6\kappa^8\log^3n$ & \textbf{Infinity} &  Entrywise           \\\hline   
\end{tabular}
\end{table}

The proof of Theorem~\ref{thm:informal} relies on the leave-one-out technique which has been widely used in analyzing various high dimensional data processing methods. Our work is mostly inspired by ~\citep{cai2021nonconvex} for  low rank tensor completion problem under  the CP decomposition and by~\citep{ding2020leave} for low rank matrix completion, but the technical details are substantially different. On the one hand,  CP decomposition and Tucker decomposition are essentially two different decompositions for {\color{black}tensors} and a gradient descent algorithm is analysed in \citep{cai2021nonconvex}. Even though a few  results for the initialization step therein can be used in our proof, the proof details for the iteration procedure are significantly different. Even in the initialization step, we find that the structure of Tucker decomposition can be used to simplify the proof of a related result (Lemma~\ref{spectral norm of delta 1,ell}). On the other hand, tensors are more complex than matrices which means that, compared with the analysis of iterative hard thresholding (IHT) for low rank matrix completion,  the analysis  of RGM for tensor completion are much more complicated. Moreover, differing from IHT, there is one  key different component in RGM, namely the orthogonal projection $\calP_{T_{\calX^t}}$. Indeed,  we need to carefully leverage this projection to obtain the nearly optimal sampling complexity $O(n^{3/2})$. {\color{black}Without this projection step, the near-optimal sampling
complexity can not be achieved for the low rank tensor completion problem.  This is basically because in the tensor case we need to transform the perturbation tensor into an $n\times n^2$ matrix via matricization in order to bound it and exploiting the low dimensional structure is essential for us to reduce the dependence on $n$ from $n^2$ to $n^{3/2}$.  In contrast, for the low rank matrix completion problem,
the convergence result can be established with or without this projection step since it does
not involve the transformation from an tensor to an $n\times n^2$ matrix.}

\subsection{Organization and Notation}
This paper is organized as follows. Section~\ref{algorithm and main result} begins with some tensor preliminaries and then introduces the Riemannian gradient method for low Tucker-rank tensor completion together with the main theoretical results.  The proof strategies of the main results are outlined in Sections~\ref{proof architecture} and~\ref{sec: local}, while the details are provided {\color{black}in the appendix. In Section~\ref{sec:conclusion}, we conclude this paper with a few future directions}.

Throughout this paper, tensors are denoted by capital calligraphic letters, matrices  are denoted by bold capital letters, and vectors are denoted by bold lower case letters. In particular, we will reserve $\calT$ for the ground truth tensor to be recovered, which is an ${n\times n\times n}$ tensor  of multilinear  rank $\vr=(r_1,r_2,r_3)$. For ease of exposition, we assume $r_1=r_2=r_3=r$ for the ground truth tensor. For each integer $d$, define the set $[d] = \left\lbrace 1,2,\cdots, d\right\rbrace$. We denote by $\ve_i$ the $i$-th canonical basis vector,  by $\mI$ the identity matrix with suitable size, and by $\calJ$ the all-one tensor (i.e., all the entries of $\calJ$  are $1$).  For any matrix $\mM$, we use $\lsb\mM\rsb_{i,:}$, $\lsb\mM\rsb_{:,j}$, $\lsb\mM\rsb_{i,j}$ to represent the $i$-th row, the $j$-th column, the $(i,j)$-th element of $\mM$,  respectively. The Frobenius norm, spectral norm and nuclear norm of a matrix $\mM$ are denoted by $\fronorm{\mM}$ , $\opnorm{\mM}$, $\nucnorm{\mM}$, respectively. In addition, the $\ell_{2,\infty}$ norm of a matrix $\mM$ is defined as
 $\twoinf{\mM} := \max_{i\in [n]}\twonorm{\lsb\mM\rsb_{i,:}}$. 
{\color{black}For a tensor $\calX$, its  infinity norm is defined as
$
    \infnorm{\calX} := \max_{i_1,i_2,i_3}\lab\calX_{i_1,i_2,i_3}\rab.
$}
We will use $C, C_0,C_1, \cdots$  to  denote absolute positive constants, whose values may vary from line to line. Lastly, we use the terminology "with high probability" to denote the event happens with probability at least $1-C_1n^{-C_2}$ for some constants $C_1,C_2>0$ and $C_2$ {\color{black}sufficiently} large.
\section{Algorithm and Main Result}
\label{algorithm and main result}
\subsection{Preliminaries}
\label{sec: preliminaries}
We begin this section with some preliminaries of tensors; for a more detailed exposition, see \citet{kolda2009tensor}. For conciseness, we restrict our discussion to $n\times n\times n$ three-way  tensors.

\paragraph{Tensor matricization.} Matricization, also known as unfolding,  transforms a tensor into a matrix along different modes. Given a tensor $\calX\in\R^{n\times n\times n}$, the matricization operators are defined as
\begin{align*}
&\calM_1\lb\calX\rb\in\R^{n\times n^2} : \left[\calM_1\lb\calX\rb\right]_{i_1, i_2+n(i_3-1)} = \calX_{i_1, i_2, i_3},\\
&\calM_2\lb\calX\rb\in\R^{n\times n^2} : \left[\calM_2\lb\calX\rb\right]_{i_2, i_1+n(i_3-1)} = \calX_{i_1, i_2, i_3},\\
&\calM_3\lb\calX\rb\in\R^{n\times n^2} : \left[\calM_3\lb\calX\rb\right]_{i_3, i_1+n(i_2-1)} = \calX_{i_1, i_2, i_3}.
\end{align*}
\paragraph{Mode-$d$ tensor multiplication.} The mode-$1$ product of a tensor $\calX\in\R^{n\times n\times n}$ with a matrix $\mA\in\R^{m\times n}$, denoted  $\calX\times_1\mA$, gives a tensor of size $m\times n\times n$. Elementwise, we have
\begin{align*}
\lb \calX\times_1\mA\rb_{j_1,i_2,i_3} = \sum_{i_1 = 1}^{n}\calX_{i_1,i_2,i_3}\lsb\mA\rsb_{j_1,i_1},
\end{align*}
and $\times_2$ and $\times_3$ are similarly defined.
 A few facts regarding mode-$d$ tensor multiplication are in order:
 \begin{align*}
\calX\times_i\mA\times_j\mB = \calX\times_j\mB \times_i\mA~(i\neq j)\quad\mbox{and}\quad \calX\times_i\mA\times_i\mB = \calX\times_i\lb\mB\mA\rb.
\end{align*}
\paragraph{Tensor norms.} The inner product between two tensors is defined as
 \begin{align*}
     \left\langle\calX,\calZ\right\rangle := \sum_{i_1,i_2,i_3}\calX_{i_1,i_2,i_3}\cdot\calZ_{i_1,i_2,i_3}.
 \end{align*}
The Frobenius norm and the spectral norm of a tensor are defined as
\begin{align*}
    \fronorm{\calX} := \sqrt{\left\langle\calX,\calX\right\rangle}\quad\mbox{and}\quad
\opnorm{\calX} := \sup_{\vx_i\in\R^{n}:\twonorm{\vx_i}=1} \left\langle \calX, \vx_1\circ\vx_2\circ\vx_3\right\rangle,
\end{align*}
where the element of  $\vx_1\circ\vx_2\circ\vx_3\in\R^{n\times n\times n}$ is defined by
\begin{align}\label{rank one tensor}
    \lsb\vx_1\circ\vx_2\circ\vx_3\rsb_{i_1,i_2,i_3} := [\vx_1]_{i_1}\cdot [\vx_2]_{i_2}\cdot [\vx_3]_{i_3}.
\end{align}
The following basic relations regarding the spectral norm, which follow  immediately from the definition, will be very useful: for any $i\in[3]$,
\begin{align*}
\opnorm{\calX}\leq \opnorm{\calM_i\lb\calX\rb}\quad\mbox{and}\quad\opnorm{\calX \times_i \mX} \leq \opnorm{\mX}\cdot \opnorm{\calX}.
\end{align*}
Similar to  the matrix case, the nuclear norm is the dual of spectral norm:
\begin{align*}
    \nucnorm{\calX} := \sup_{\calZ\in\R^{n\times n\times n}, \opnorm{\calZ}\leq 1} \left\langle \calX,\calZ\right\rangle.
\end{align*}
Recall that the condition number for a matrix $\mA$ is given by $\kappa\lb\mA\rb = \sigma_{\max}\lb \mA\rb/\sigma_{\min}\lb \mA\rb$ where $\sigma_{\max}$ and $\sigma_{\min}$ are the largest and smallest nonzero singular values of $\mA$ respectively. This concept can be naturally generalized to  tensors, 
\begin{align*}
    \kappa\lb\calX\rb :=\frac{\sigma_{\max}\lb\calX\rb}{\sigma_{\min}\lb\calX\rb},
\end{align*}
where $\sigma_{\max}\lb\calX\rb$ and $\sigma_{\min}\lb\calX\rb$ are defined as
\begin{align*}
\sigma_{\max}\lb\calX\rb & := \max\left\lbrace \sigma_{\max}\lb\calM_1\lb\calX\rb\rb, \sigma_{\max}\lb\calM_2\lb\calX\rb\rb, \sigma_{\max}\lb\calM_3\lb\calX\rb\rb\right\rbrace\numberthis\label{eq:sigma_max_ke}\\
\sigma_{\min}\lb\calX\rb & := \min\left\lbrace \sigma_{\min}\lb\calM_1\lb\calX\rb\rb, \sigma_{\min}\lb\calM_2\lb\calX\rb\rb, \sigma_{\min}\lb\calM_3\lb\calX\rb\rb\right\rbrace\numberthis\label{eq:sigma_min_ke}.
\end{align*}

\paragraph{Tucker decomposition and HOSVD.} The Tucker decomposition is a higher-order generalization of singular value decomposition (SVD), which has the form
\begin{align}\label{tucker decomposition}
\calX = \calG\times_1\mX_1\times_2\mX_2\times_3\mX_3 = \calG\ttimes\mX_i,
\end{align}
where $\calG\in\R^{r_1\times r_2\times r_3}$ is referred to as the core tensor, and  $\mX_i\in\R^{n\times r_i}$ for $i = 1,2,3$ are the factor matrices. Because $\calG$ is usually unstructured, we can always write the  Tucker  decomposition of $\calX$ into  the form where $\mX_i$ are orthonormal matrices. Given the Tucker decomposition of $\calX$, its matricizations  are given by
\begin{align*}
&\calM_1\lb\calX\rb = \mX_1\calM_1\lb\calG\rb\lb\mX_3\otimes\mX_2\rb^{\tran},\\& \calM_2\lb\calX\rb = \mX_2\calM_2\lb\calG\rb\lb\mX_3\otimes\mX_1\rb^{\tran},\\
&\calM_3\lb\calX\rb = \mX_3\calM_3\lb\calG\rb\lb\mX_2\otimes\mX_1\rb^{\tran},
\end{align*}
where  $\otimes$ denotes the Kronecker product of matrices.

A tensor $\calX\in\R^{n\times n\times n}$ is said to be of multilinear rank $\vr = (r_1, r_2, r_3)$ if $\rank\lb \calM_i(\calX)\rb = r_i$ for $i = 1,2,3$. It  is evident that $1\leq r_i\leq n$ for $i=1,2,3$.
It is well known that SVD can be used to  find the  best low rank approximation of  a matrix. In contrast, computing the best low rank approximation of a  tensor is an NP hard \citep{hillar2013most} problem. That being said, there exists a higher order analogue of SVD, known as Higher Order Singular Value Decomposition (HOSVD) \citep{de2000multilinear}, which is able to return a quasi-optimal approximation; see Algorithm~\ref{alg: hosvd}. HOSVD first estimates the principle factor matrices of each mode by an SVD trunctation of the corresponding matricization, and then formulates the core tensor by multiplying $\calX$  by the transpose of the factor matrix along each mode. Denoting by $\calH_{\vr}(\calX)$  the output of  HOSVD, there holds {\color{black}\citep{de2000multilinear}}
\begin{align*}
\fronorm{\calX-\calH_{\vr}\lb\calX\rb}\leq \sqrt{3}\inf_{\rank\lb\calM_i\lb{\calZ}\rb\rb\leq r_i}\fronorm{\calX-\calZ}.
\end{align*}
Note that when $\calX$  is already  of multilinear rank $\vr$, HOSVD returns the exact Tucker decomposition of $\calX$.
\begin{algorithm}[ht!]
	\caption{HOSVD}
	\label{alg: hosvd}
    \begin{algorithmic}[1]
	\State Input: Tensor $\calX\in\R^{n\times n\times n}$, multilinear rank $\vr = (r_1,r_2,r_3)$.
	\For{$i = 1,2,3$}
	\State $\mX_i = \SVD_{r_i}(\calM_i(\calX))$.
	\EndFor
	\State $\calG = \calX \ttimes\mX_i^\tran $.
	\State Output: $\calG \ttimes\mX_i$.
	\end{algorithmic}
\end{algorithm}

\paragraph{Tensor manifold.} A collection of tensors with multilinear rank $\vr = (r_1,r_2 ,r_3)$  forms a smooth embedded submanifold of $\R^{n\times n\times n}$ \citep{koch2010dynamical}, denoted $\mathbb{M}_{\vr}$, i.e.,
\begin{align*}
\mathbb{M}_{\vr}  = \left\lbrace \calX\in\R^{n\times n\times n}~\mid~\rank(\calX) = \vr\right\rbrace. 
\end{align*} 
Let the Tucker decomposition of $\calX$ be
    $\calX = \calG\ttimes{\mX_i}$,
where $\mX_i^\tran\mX_i = \mI \in\R^{r_i\times r_i}$ and $\calG\in\R^{r_1\times r_2\times r_3}$ has full multilinear rank. 
 The tangent space of $\mathbb{M}_{\vr}$ at $\calX$ is given by \citep{koch2010dynamical}
 \begin{small}
\begin{align*}
	T_{\calX}= \left\{ \calC \ttimes \mX_i + \sum_{i=1}^{3} \calG \times_i \mW_i \jneqi \mX_j ~\mid~ \calC\in\R^{r_1\times r_2\times r_3},\mW_i\in\R^{n\times r_i},   \mW_i^\tran \mX_i = \bzero, i = 1,2,3 \right\}.
\end{align*}
\end{small}{\color{black}Given a tensor $\calZ$, the orthogonal projection of  $\calZ$ onto $T_{\calX}$, denoted $\calP_{T_{\calX}}\lb\calZ\rb$, has the form $\calP_{T_{\calX}}\lb\calZ\rb=\calC \ttimes \mX_i + \sum_{i=1}^{3} \calG \times_i \mW_i \jneqi \mX_j$, where $\calC$ and $\mW_i$ are to be determined. Since the summands are orthogonal to each other, $\calC$ and $\mW_i$ can be obtained independently
by solving the least squares problems, yielding the expression \citep{koch2010dynamical, kressner2014low,cai2020provable}}
\begin{align}
\label{eq: def of projection onto tangent space}
	\calP_{T_{\calX}}\lb\calZ\rb = \calZ\ttimes \mX_i\mX_i^\tran + \sum_{i=1}^3 \calG\times_i \mW_i \jneqi \mX_j,
\end{align}
where $\mW_i$ is defined as
\begin{align*}
	\mW_i = \lb\mI-\mX_i\mX_i^\tran\rb \calM_i\lb \calZ \jneqi \mX_j^{\tran} \rb \calM_i^\dagger\lb\calG\rb.
\end{align*}
Here $\calM_i^\dagger\lb\calG\rb$ denotes the Moore-Penrose  pseudoinverse of  $\calM_i\lb\calG\rb$ \citep{golub1996matrix}, which obeys that $\calM_i\lb\calG\rb\calM_i^\dagger\lb\calG\rb=\mI$ and $\calM_i^\dagger\lb\calG\rb\calM_i\lb\calG\rb$ is an orthogonal projector. 

\subsection{Riemannian Gradient Method}
\begin{algorithm}[ht!]
	\caption{Riemannian Gradient Method (RGM)}
	\label{alg:rgd}
	\begin{algorithmic}[1]
	\State Input: Initialization $\calX^1$ {\color{black} generated via Algorithm~\ref{alg: spectral initial}}, multilinear rank $\vr = (r,r,r)$, parameter $p$.
	\For{$t = 1,\cdots$}
	\For{$i = 1,2,3$}
	\State $\mX_i^{t+1} = \text{SVD}_{r}\lb\calM_i\lb\calX^{t}- p^{-1}\calP_{T_{\calX^{t}}}\calP_{\Omega}\left(\calX^{t}- \calT\right)\rb\rb$,
	\EndFor
	\State $\calG^{t+1} = \lb\calX^{t}- p^{-1}\calP_{T_{\calX^{t}}}\calP_{\Omega}\left(\calX^{t}- \calT\right)\rb\ttimes{\mX_i^{t+1}}^\tran$.
	\State $\calX^{t+1} = \calG^{t+1}\ttimes{\mX_i^{t+1}}$.
	\EndFor
	\end{algorithmic}
\end{algorithm}

The Riemannian gradient method (RGM) for solving \eqref{eq:optProb} is presented in Algorithm~\ref{alg:rgd}. 
Let $\calX^t$ be the current estimator, and $T_{\calX^t}$ be the tangent space of the rank $\vr$ tensor manifold at $\calX^t$. RGM first updates $\calX^t$ along $\calP_{T_{\calX^t}} \calP_{\Omega}\left(\calX^{t}- \calT\right)$, the gradient descent direction projected onto the tangent space $T_{\calX^t}$, using the fixed step size $1$. Then the new estimator $\calX^{t+1}$ is obtained by projecting the  update to  the set of rank $\vr$ tensors via HOSVD. Note that in description of Algorithm~\ref{alg:rgd}, we have included the details of HOSVD.

\subsubsection{Initialization by spectral method with diagonal deletion}

\begin{algorithm}[ht!]
	\caption{Initialization via spectral method with diagonal deletion}
  \label{alg: spectral initial}
    \begin{algorithmic}[1]
    \State Input: $\calP_{\Omega}\lb\calT\rb\in\R^{n\times n\times n}$, multilinear rank $\vr = (r, r, r)$, parameter $p$.
    \For{$i = 1,2,3$}
    \State Let $\mX_i^1 \bSigma^1_i {\mX_i^1}^\tran$ be the top-$r$ eigenvalue decomposition of $\calP_{\offdiag}\lb\widehat{\mT_i}\widehat{\mT_i}^{\tran}\rb$.
    \EndFor
    \State	$\calG^1 =  p^{-1} \calP_{\Omega} (\calT)\ttimes {\mX_i^1}^\tran $.
    \State Output: $\calX^1 = \calG^1\ttimes {\mX_i^1}$.
    \end{algorithmic}
\end{algorithm}

Spectral method is a widely used initialization method in matrix and tensor computation problems \citep{chen2020spectral}, which typically involves the estimation of certain principal subspace from the data matrix or equivalently the Gram matrix corresponding to the data matrix. In some statistical settings, directly using the data matrix may lead to a biased estimator. Though this is usually not a problem when matrices are nearly square (for example in typical matrix recovery problems \citep{jain2013low, keshavan2010matrix, ma2020implicit}), it does  lead to sub-optimal performance for highly unbalanced matrices unless the number of observations is unnecessarily large.
{\color{black} This is the case for the tensor completion problem since  for an $n\times n\times n$ tensor, we need to consider its matricization in the analysis which is an $n\times n^2$ matrix and hence highly unbalanced.}

Specifically for the problem considered in this paper, the sample Gram matrix is given by $\widehat{\mT_i}\widehat{\mT_i}^\tran$ where $\widehat{\mT_i} := p^{-1}\calM_i\lb\calP_{\Omega}\lb\calT\rb\rb \in\R^{n\times n^2}$ is the scaled observation matrix. It is clear that how close the top-$r$ eigenvectors of the Gram matrix are to the target eigenvectors  is determined by $\widehat{\mT_i}\widehat{\mT_i}^\tran-\mT_i\mT^\tran_i$. Such error term can be decomposed into bias part and unbiased parts as follows:
\begin{small}
\begin{align*}
    \widehat{\mT_i}\widehat{\mT_i}^\tran-\mT_i\mT^\tran_i &= \widehat{\mT_i}\widehat{\mT_i}^\tran-\mathbb{E}\lsb \widehat{\mT_i}\widehat{\mT_i}^\tran\rsb +\mathbb{E}\lsb\widehat{\mT_i}\widehat{\mT_i}^\tran\rsb-\mT_i\mT_i^\tran\\
    & = \underbrace{\calP_{\offdiag}\lb\widehat{\mT_i}\widehat{\mT_i}^\tran-\mT_i\mT_i^\tran \rb}_{\text{unbiased}} +\underbrace{\calP_{\diag}\lb \widehat{\mT_i}\widehat{\mT_i}^\tran-p^{-1}\mT_i\mT_i^\tran \rb}_{\text{unbiased}}+\underbrace{\lb p^{-1}-1\rb\calP_{\diag}\lb\mT_i\mT_i^\tran\rb}_{\text{bias}},
\end{align*}
\end{small}where $\calP_{\diag}\lb\mM\rb$ sets the non-diagonal elements of $\mM$ to zeros and $\calP_{\offdiag}\lb\mM\rb = \mM-\calP_{\diag}\lb\mM\rb$. It can be shown that the bias term  and the unbiased diagonal part lead to an unnecessarily large number of samples to ensure a reliable estimator \citep{florescu2016spectral, zhang2018heteroskedastic}. To deal with this difficulty, we adopt the diagonal deletion strategy  proposed in \citep{florescu2016spectral}; that is, performing the spectral method on the matrix $\calP_{\offdiag}\lb\widehat{\mT_i}\widehat{\mT_i}^{\tran}\rb$. In this case, we have
\begin{align*}
    \calP_{\offdiag}\lb\widehat{\mT_i}\widehat{\mT_i}^{\tran}\rb - \mT_i\mT_i^\tran =  \underbrace{\calP_{\offdiag}\lb\widehat{\mT_i}\widehat{\mT_i}^\tran-\mT_i\mT_i^\tran \rb}_{\text{unbiased}} -\underbrace{\calP_{\diag}\lb \mT_i\mT_i^\tran\rb}_{\text{diagonal deletion}}.
\end{align*}
Therefore, if the diagonal elements of $\mT_i\mT_i^{\tran}$ are not too large, the matrix $\calP_{\offdiag}\lb\widehat{\mT}_i\widehat{\mT}_i^{\tran}\rb$ serves as a nearly unbiased estimator of $\mT_i\mT_i^{\tran}$. It implies that the top-$r$ eigenvectors of $\calP_{\offdiag}\lb\widehat{\mT}_i\widehat{\mT}_i^{\tran}\rb$ could form a reliable estimator of the principal subspace of $\mT_i\mT_i^{\tran}$.
The complete description of the initialization procedure is summarized in Algorithm \ref{alg: spectral initial}.

\begin{remark}
The diagonal deletion idea has already been used in various scenarios, including low rank tensor completion \citep{cai2021nonconvex, tong2021scaling}.  We give a  brief introduction here for the paper to be self-contained. In addition to diagonal deletion, one might  also consider
properly reweighting the diagonal entries, see for example \citep{cai2016minimax, cho2017asymptotic, elsener2019sparse, loh2012high, lounici2013sparse, lounici2014high,montanari2018spectral,  zhang2018heteroskedastic, zhu2019high}.
\end{remark}

\subsection{ Implicit Regularization and Entrywise Convergence of RGM}
Since the target tensor $\calT$ is low rank, the application of HOSVD with the true  parameter $\vr=(r,r,r)$ yields the exact Tucker decomposition of $\calT$, denoted
\begin{align*}
\calT = \calS\ttimes\mU_i,
\end{align*}
where $\mU_i\in\R^{n\times r}$ are the top-$r$ left singular vectors of $\calM_i(\calT)$ for $i = 1,2,3$ and $\calS = \calT\ttimes\mU_i^{\tran}\in\R^{r\times r\times r}$ is the core tensor.
As in  matrix completion, the notion of  incoherence is required for us to be able to successfully fill in the  missing entries of a low rank tensor. Specifically,
the incoherence parameter of $\calT$ is defined as
\begin{align*}
\mu := \frac{n}{r}\max\left\lbrace \twoinf{\mU_1}^2, \twoinf{\mU_2}^2, \twoinf{\mU_3}^2 \right\rbrace.
\end{align*}
Clearly, the smallest value for $\mu$ can be $1$, while the largest possible value is $n/r$. A tensor is $\mu$-incoherent with a small $\mu$ implies that the singular vectors of its matricization form are weakly correlated with the canonical basis. Therefore, the energy of the tensor sufficiently spreads out across the measurement basis, and a small random subset  of its entries still contains enough information for successful reconstruction. The following lemma follows directly from the definition   of incoherence, see Section~\ref{subsec:incoherenceT} for  the proof.
\begin{lemma}
	\label{lemma: incoherence for T}
 Letting $\mT_i = \calM_i(\calT)$, we have
	\begin{align*}
		\twoinf{\mT_i} &\leq \sqrt{\frac{\mu r }{n }}\sigma_{\max}(\calT),\quad\twoinf{\mT_i^\tran}\leq \frac{\mu r  }{ n}\sigma_{\max}(\calT),\quad\infnorm{\calT} \leq  \left( \frac{\mu r}{n} \right)^{3/2}\sigma_{\max}(\calT).
	\end{align*}
\end{lemma}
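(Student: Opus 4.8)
The plan is to reduce all three bounds to the incoherence estimate $\twoinf{\mU_i}\le\sqrt{\mu r/n}$, $i\in[3]$, which is merely a restatement of the definition of $\mu$. I will work from the exact Tucker decomposition $\calT=\calS\ttimes\mU_i$ with orthonormal factors $\mU_i$ (the HOSVD of $\calT$), together with the matricization identities recorded above, e.g.\ $\mT_1=\mU_1\calM_1(\calS)\lb\mU_3\otimes\mU_2\rb^\tran$. The single fact that drives everything is that, for each $i$, $\calM_i(\calS)$ is obtained from $\calM_i(\calT)$ by left multiplication by $\mU_i^\tran$ and right multiplication by the orthonormal-column matrix $\mU_j\otimes\mU_k$ (with $\{j,k\}=[3]\setminus\{i\}$, ordered as in the matricization identity); hence $\calM_i(\calS)$ and $\calM_i(\calT)$ share the same nonzero singular values, so $\opnorm{\calM_i(\calS)}=\sigma_{\max}(\calM_i(\calT))\le\sigma_{\max}(\calT)$.

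For the first bound, fix $i$ and let $\{j,k\}=[3]\setminus\{i\}$ ordered as in the matricization identity, so that $\mT_i=\mU_i\calM_i(\calS)\lb\mU_j\otimes\mU_k\rb^\tran$. The $\ell$-th row of $\mT_i$ is $\lsb\mU_i\rsb_{\ell,:}\calM_i(\calS)\lb\mU_j\otimes\mU_k\rb^\tran$, and since $\lb\mU_j\otimes\mU_k\rb^\tran$ has orthonormal rows (so right multiplication by it does not change the spectral norm),
\[
\twonorm{\lsb\mT_i\rsb_{\ell,:}}\;\le\;\twonorm{\lsb\mU_i\rsb_{\ell,:}}\cdot\opnorm{\calM_i(\calS)}.
\]
Maximizing over $\ell$ and using $\opnorm{\calM_i(\calS)}\le\sigma_{\max}(\calT)$ yields $\twoinf{\mT_i}\le\twoinf{\mU_i}\,\sigma_{\max}(\calT)\le\sqrt{\mu r/n}\,\sigma_{\max}(\calT)$.

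The second bound follows the same way from $\mT_i^\tran=\lb\mU_j\otimes\mU_k\rb\calM_i(\calS)^\tran\mU_i^\tran$: the $m$-th row of $\mT_i^\tran$ is $\lsb\mU_j\otimes\mU_k\rsb_{m,:}\calM_i(\calS)^\tran\mU_i^\tran$, whose $\ell_2$ norm is at most $\twonorm{\lsb\mU_j\otimes\mU_k\rsb_{m,:}}\cdot\opnorm{\calM_i(\calS)}$, and then the Kronecker identity $\twoinf{\mU_j\otimes\mU_k}=\twoinf{\mU_j}\twoinf{\mU_k}\le\mu r/n$ together with $\opnorm{\calM_i(\calS)}\le\sigma_{\max}(\calT)$ gives the claim. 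For the third bound, the identity $\lsb\mT_1\rsb_{i_1,\,i_2+n(i_3-1)}=\calT_{i_1,i_2,i_3}$ exhibits $\calT_{i_1,i_2,i_3}$ as the scalar $\lsb\mU_1\rsb_{i_1,:}\calM_1(\calS)\lb\lsb\mU_3\rsb_{i_3,:}\otimes\lsb\mU_2\rsb_{i_2,:}\rb^\tran$; bounding it by $\twonorm{\lsb\mU_1\rsb_{i_1,:}}\cdot\opnorm{\calM_1(\calS)}\cdot\twonorm{\lsb\mU_3\rsb_{i_3,:}\otimes\lsb\mU_2\rsb_{i_2,:}}$ and using $\twonorm{\lsb\mU_3\rsb_{i_3,:}\otimes\lsb\mU_2\rsb_{i_2,:}}=\twonorm{\lsb\mU_3\rsb_{i_3,:}}\,\twonorm{\lsb\mU_2\rsb_{i_2,:}}$ gives $|\calT_{i_1,i_2,i_3}|\le(\mu r/n)^{3/2}\sigma_{\max}(\calT)$, and a maximum over $(i_1,i_2,i_3)$ finishes the proof.

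I do not expect a genuine obstacle here: the whole argument is bookkeeping once the index conventions for the three matricizations — and the matching order of the Kronecker factors — are pinned down. The only steps that warrant an explicit line of justification are that right multiplication by the (tall) matrix $\lb\mU_j\otimes\mU_k\rb^\tran$ preserves the spectral norm, and that $\sigma_{\max}(\calT)$, defined as the maximum of $\sigma_{\max}(\calM_i(\calT))$ over the three modes, dominates $\opnorm{\calM_i(\calS)}$ for each individual $i$.
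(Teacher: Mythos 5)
Your proof is correct and follows essentially the same route as the paper: write $\mT_i=\mU_i\calM_i(\calS)\lb\mU_j\otimes\mU_k\rb^\tran$, bound each quantity by $\twoinf{\mU_i}$ (or products thereof) times $\opnorm{\calM_i(\calS)}\le\sigma_{\max}(\calT)$, using that multiplication by the orthonormal-column factors does not increase the relevant norms. The paper states these bounds at the level of $\ell_{2,\infty}$ norms of the factored matrices rather than row by row, but that is only a cosmetic difference.
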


The convergence of the  Riemannian gradient method can be approximately decomposed into two phases. In Phase I, the iterates are not sufficiently close to the target tensor, we need to explicitly show that they remain in the incoherence region based on  induction. In Phase II, the iterates are in a local neighbourhood of the target tensor where the restricted isometry property uniformly holds, and thus implicit regularization and entrywise convergence follows directly from the convergence in terms of the Frobenius norm. 
\begin{theorem}[Phase I convergence]\label{thm: finite iteration}
Suppose that $\calT$ is $\mu$-incoherent and the index set $\Omega$ satisfies the Bernoulli model with parameter $p$. If $n\geq  C_0\kappa^6\mu^3 r^{5}$ and 
\begin{align*}
    p\geq \max\left\lbrace\frac{C_1 \kappa^8\mu^{3.5}r^{6}\log^3 n}{n^{3/2}}, \frac{C_2\kappa^{16}\mu^7r^{12}\log^5 n}{n^2}\right\rbrace
\end{align*}
for some universal constants $C_0$, $C_1$ and $C_2$. Suppose $t_0 = 2\log_2 n+c$. Then with high probability, the iterates of RGM {\color{black}(Algorithm~\ref{alg:rgd})} satisfy 
\begin{align}
    \twoinf{\mX_i^t\mR_i^t-\mU_i}&\leq \lb\frac{1}{2}\rb^{t}\sqrt{\frac{\mu r}{n}},\quad i = 1,2,3,\notag\\
    \label{eq:phaseI infinity}
    \infnorm{\calX^t-\calT}&\leq \lb\frac{1}{2}\rb^{t}\frac{1}{n^{3/2}}\sigma_{\max}\lb\calT\rb,
\end{align}
for $t = 1,\cdots,t_0$, where $\mR_i^t = \arg\min_{\mR^\tran\mR=\mI}\fronorm{\mX_i^t\mR-\mU_i}$. 
\end{theorem}
Theorem~\ref{thm: finite iteration} establishes the performance guarantee of RGM within the first logarithm number of iterations, where $c>0$ in $t_0$ is a constant  specified in Remark~\ref{remark: specify c}. Analyzing the distance between factor matrices $\mX_i^t$ and $\mU_i$ is the key to showing the convergence of the RGM in phase I. As already mentioned, this explicitly shows that RGM automatically forces the iterates to stay incoherent in each iteration. 
\begin{theorem}[Phase II convergence]\label{thm: local convergence}
Suppose that $\calT$ is $\mu$-incoherent and the index set $\Omega$ satisfies the Bernoulli model with parameter $p$.  Suppose  $p\geq\frac{C_2\mu^2r^2\log n}{\varepsilon\cdot n^2}$ and
\begin{align*}
    &\frac{\fronorm{\calX^{t_0}-\calT}}{\sigma_{\min}\lb\calT\rb}\leq \frac{p^{1/2}\varepsilon}{9\lb1+\epsilon\rb},
\end{align*}
where $0<\varepsilon\leq \frac{1}{200}$. Then with high probability, the iterates of RGM {\color{black}(Algorithm~\ref{alg:rgd})} satisfy 
\begin{align}\label{eq: local}
	\fronorm{\calX^t-\calT}\leq \lb\frac{1}{2}\rb^{t-t_0}\fronorm{\calX^{t_0}-\calT},\quad\mbox{for all}~t\geq t_0.
\end{align} 
\end{theorem}

\begin{remark}\label{remark: specify c}
Note that if we set $c=\lceil\log_2(9(1+\varepsilon)/\varepsilon) \rceil$ in Theorem~\ref{thm: finite iteration}, after Phase I, the iterates of RGM will enter the local neighborhood specified in Theorem~\ref{thm: local convergence}. This follows from a simple calculation,
\begin{align*}
	\frac{\fronorm{\calX^{t_0} - \calT} }{\sigma_{\min}(\calT )}&\leq \frac{1}{\sigma_{\min}(\calT)} n^{3/2}\infnorm{\calX^{t_0} - \calT} \stackrel{(a)}{\leq } \frac{1}{\sigma_{\min}(\calT)}  n^{3/2}\cdot {\frac{1}{2^{t_0}}}\frac{1}{n^{3/2}}\sigma_{\max}(\calT)\\
	&\leq  \frac{\kappa }{2^{t_0}}\stackrel{(b)}{\leq } \frac{\varepsilon}{9(1+\varepsilon)}\frac{\kappa}{n}\stackrel{(c)}{\leq } \frac{\sqrt{p}\varepsilon}{9(1+\varepsilon)},
\end{align*}
where $(a)$ is due to \eqref{eq:phaseI infinity},  $(b)$ follows from $t_0 = 2\log_2 n+\lceil\log_2(9(1+\varepsilon)/\varepsilon) \rceil$, and $(c)$ uses the fact $p\geq \kappa^2/n^2$. 
\end{remark}
As an immediate consequence of Theorem~\ref{thm: finite iteration} and Theorem~\ref{thm: local convergence}, one can obtain the proof of Theorem~\ref{thm:informal}.
\begin{proof}[Proof of Theorem~\ref{thm:informal}]
Noting that the results in Theorem~\ref{thm: finite iteration} naturally  show that the inequalities~\eqref{eq:informal twotoinfinity} and~\eqref{eq: informal infinity convergence} hold for $1\leq t\leq t_0$. It only remains to show the above two inequalities hold for $t\geq t_0$. 

A simple calculation yields that
\begin{align*}
    \infnorm{\calX^t-\calT}&\leq \fronorm{\calX^t-\calT}\leq \lb\frac{1}{2}\rb^{t-t_0}\fronorm{\calX^{t_0}-\calT}\leq \lb\frac{1}{2}\rb^{t-t_0} n^{3/2}\infnorm{\calX^{t_0}-\calT}\\
    &\leq \lb\frac{1}{2}\rb^{t-t_0} n^{3/2}\lb\frac{1}{2}\rb^{t_0}\frac{1}{n^{3/2}}\sigma_{\max}\lb\calT\rb = \lb\frac{1}{2}\rb^{t}\sigma_{\max}\lb\calT\rb.
\end{align*}

Only detailed proofs for the  $i = 1$ case are provided for \eqref{eq:informal twotoinfinity}, and the proofs for the other two cases are overall similar.  Applying the Weyl's inequality yields that
\begin{align*}
\sigma_{\min}\lb\calM_1\lb\calX^t\rb\rb&\geq \sigma_{\min}\lb\calM_1\lb\calT\rb\rb-\opnorm{\calM_1\lb\calX^t\rb-\calM_1\lb\calT\rb}\\
&\geq \sigma_{\min}\lb\calT\rb-\fronorm{\calM_1\lb\calX^t\rb-\calM_1\lb\calT\rb}\\
&\geq \sigma_{\min}\lb\calT\rb-\lb\frac{1}{2}\rb^{t-t_0}\fronorm{\calX^{t_0}-\calT}\\
&\geq \sigma_{\min}\lb\calT\rb-\lb\frac{1}{2}\rb^{t-t_0}\frac{\sqrt{p}\varepsilon}{9(1+\varepsilon)}\sigma_{\min}\lb\calT\rb\geq \frac{15}{16}\sigma_{\min}\lb\calT\rb,
\end{align*}
where the last inequality is due to $p\leq 1$ and $\varepsilon\leq\frac{1}{200}$. For any $\ell$ satisfies $1\leq \ell\leq n$, using the fact $\calM_1\lb\calX^t\rb = \mX_1^t\calM_1\lb\calG^t\rb\lb\mX_3^t\otimes\mX_2^t\rb^\tran$,  one can obtain
\begin{align*}
\opnorm{\ve_{\ell}^\tran\mX_1^t} &= \opnorm{\ve_{\ell}^\tran\calM_1\lb\calX^t\rb\lb\mX_3^t\otimes\mX_2^t\rb\calM_1^\dagger\lb\calG^t\rb}\\
& \stackrel{(a)}{\leq} \opnorm{\ve_{\ell}^\tran\calM_1\lb\calX^t\rb}\opnorm{\calM_1^\tran\lb\calG^t\rb\lb\calM_1\lb\calG^t\rb\calM_1^\tran\lb\calG^t\rb\rb^{-1}}\\
&\leq \lb\opnorm{\ve_{\ell}^\tran\calM_1\lb\calX^t-\calT\rb}+\opnorm{\ve_{\ell}^\tran\calM_1\lb\calT\rb}\rb\frac{1}{\sigma_{\min}\lb\calM_1\lb\calG^t\rb\rb}\\
&\leq \lb n\infnorm{\calX^t-\calT}+\twoinf{\calM_1\lb\calT\rb}\rb\frac{1}{\sigma_{\min}\lb\calM_1\lb\calX^t\rb\rb}\\
&\leq \lb n\lb\frac{1}{2}\rb^{t-t_0}\lb\frac{1}{2}\rb^{t_0}\sigma_{\max}\lb\calT\rb+\sqrt{\frac{\mu r}{n}}\sigma_{\max}\lb\calT\rb\rb\frac{16}{15\sigma_{\min}\lb\calT\rb}\\
&\stackrel{(b)}{\leq}\lb n\lb\frac{1}{2}\rb^{t-t_0} \lb\frac{1}{2}\rb^c\frac{1}{n^2}\sigma_{\max}\lb\calT\rb+\sqrt{\frac{\mu r}{n}}\sigma_{\max}\lb\calT\rb\rb\frac{16}{15\sigma_{\min}\lb\calT\rb}\leq 2\kappa\sqrt{\frac{\mu r}{n}},
\end{align*}
where $(a)$ is due to $\sigma_{\min}\lb\calM_1\lb\calG^t\rb\rb>0$ and $(b)$ follows from $t_0 = 2\log_2 n+c$.  
\end{proof}

{\color{black}
\subsection{Numerical Experiments}\label{subsec:empirical}

\begin{figure}[ht!]
    \centering
    \includegraphics[width=0.48\textwidth]{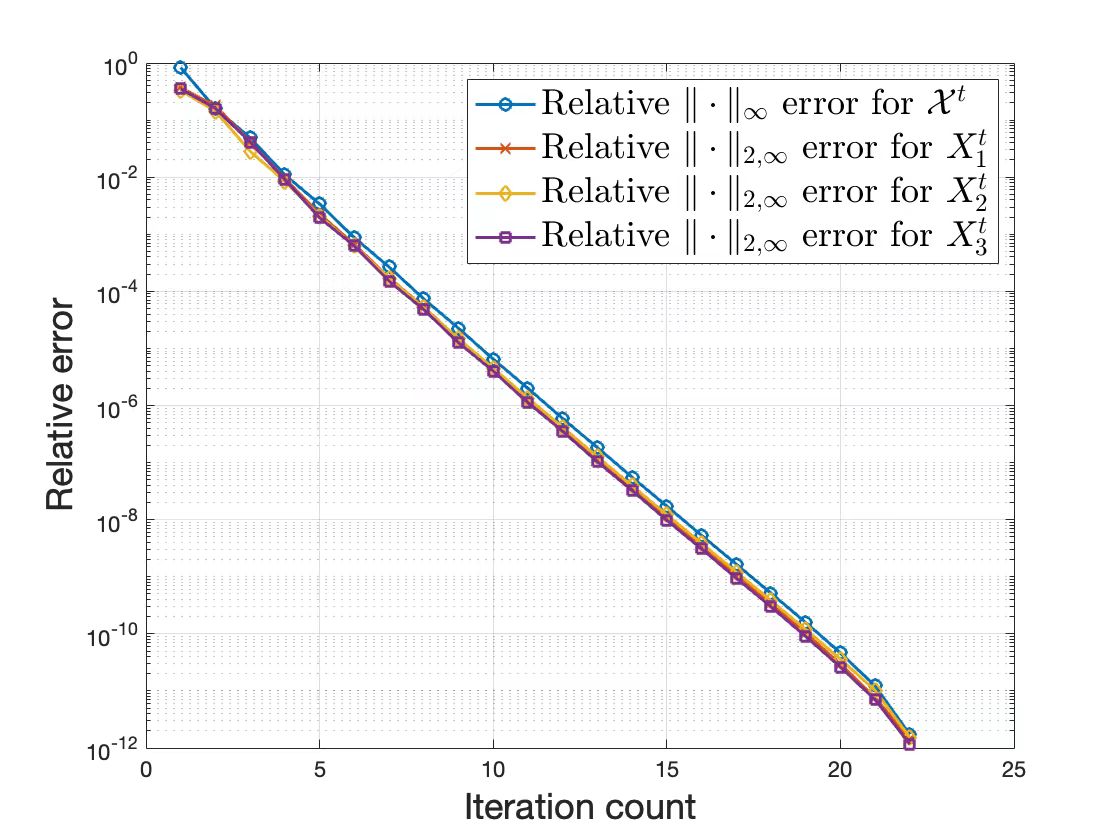}
    \includegraphics[width=0.48\textwidth]{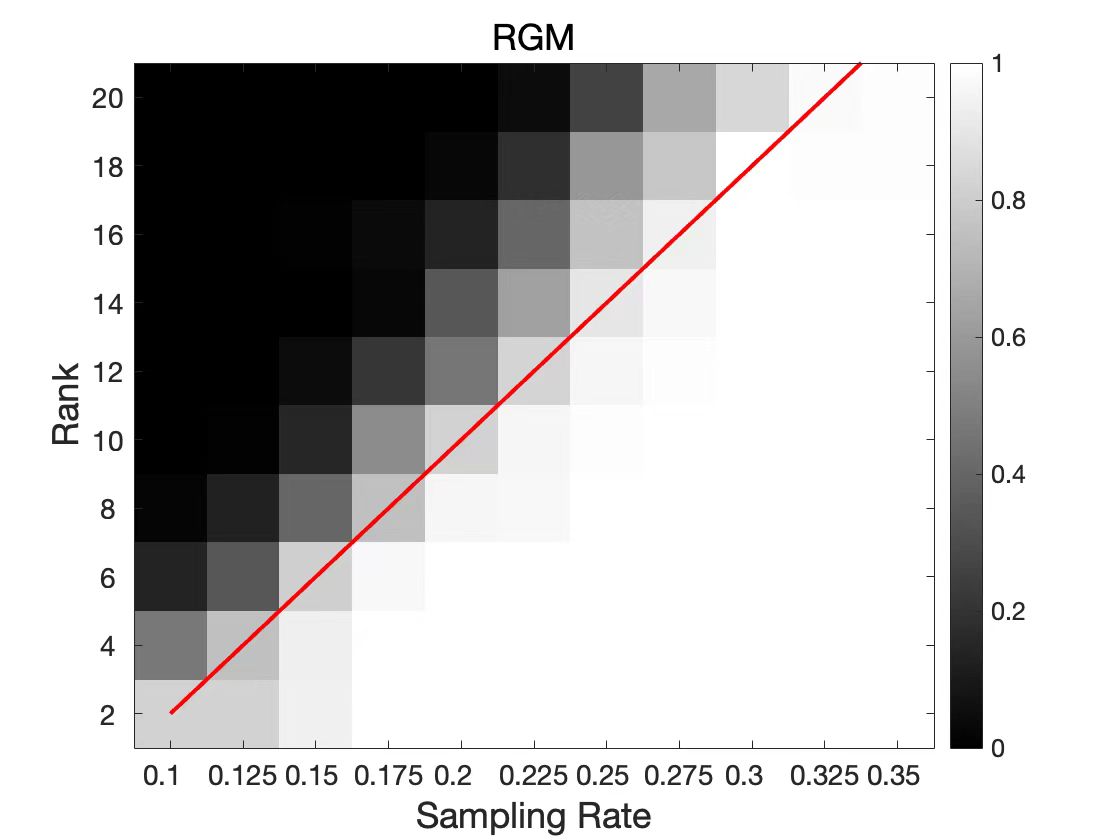}   \caption{(Left) Relative errors of $\calX^t$ and $\mX_i^t$ for $i = 1,2,3$ against iteration count when $n = 100$, $r = 3$, $p = 0.2$; (Right) Phase transition plot for varying $r$ and sampling rate when  $n = 100$ (different colors represent different successful rates out of 100 random tests). }
    \label{fig:my_labely}
\end{figure}
We first test the convergence of RGM for tensor completion problem under two metrics defined below:
\begin{align*}
\mbox{\textbf{Relative $\|\cdot\|_\infty$: }}~\frac{\infnorm{\calX^t-\calT}}{\infnorm{\calT}}\quad\mbox{and}\quad\mbox{\textbf{Relative $\|\cdot\|_{2,\infty}$}:~}~\frac{\twoinf{\mX^t_i\mR^t_i-\mU_i}}{\twoinf{\mU_i}},
\end{align*}
where  $\mR_i^t = \arg\min_{\mR^\tran\mR=\mI}\fronorm{\mX^t_i\mR-\mU_i}$. Tests are conducted with $n  = 100$, $r = 3$ and $p = 0.2$, and the plot of average relative errors over 100 random tests  against iteration count is presented in Figure~\ref{fig:my_labely} (Right). Clearly, a desirable linear convergence can be observed from the plots  for both the iterates and the factor matrices. 


Our results suggests that $O(n^{3/2}r^6)$ are overall sufficient for the successful reconstruction of a low Tucker-rank tensor. For the dependence on $n$, it matches the lower bound for any known polynomial time algorithms. Moreover, it is shown in \citep{barak2016noisy} that, conditioned on some conjecture on computational complexity theory, no polynomial time algorithm can be successful if the sampling complexity is less than $O(n^{3/2})$. For the dependence on $r$, we conduct the phase transition tests for fixed $n=100$, and varying $r$ and $p$, see Figure~\ref{fig:my_labely} (Right). For each pair of $(r,p)$, 100 random random trials are tested and a trial is considered to have successfully recover the target tensor if the output tensor $\calX^t$ satisfies $\frac{\infnorm{\calX^t-\calT}}{\infnorm{\calT}}\leq 10^{-3}$. The phase transition plot indicates that the sampling complexity for successful recovery is about linearly proportional to $r$, which suggests the possibility of reducing this dependency in the future.

}
\section{Proof of Theorem \ref{thm: finite iteration}}
\label{proof architecture}

 The proof of Theorem~\ref{thm: finite iteration} relies on the reconstruction of auxiliary sequences via a \emph{leave-one-out} perturbation argument, and  is much more involved. Thus, this section is devoted to the proof outline of Theorem \ref{thm: finite iteration}, while the proofs of the intermediate results are deferred to later sections.

To obtain the entrywise error bound of the iterates,  the $\ell_{2,\infty}$ norm of the factor matrices needs to be bounded which is quite difficult to control
directly due to the complicated statistical dependency.  To overcome this difficulty, the leave-one-out technique proposes to introduce a collection of leave-one-out versions of $\left\lbrace\calX^t \right\rbrace$, denoted by $\left\lbrace\calX^{t,\ell} \right\rbrace$ for each $1\leq \ell\leq n$. Specifically, for every  $\ell$, define the following auxiliary loss function
\begin{align}\label{eq:new cost}
\frac{1}{2p}\fronorm{\calP_{\Omega_{-\ell}}\lb\calX-\calT\rb}^2+\frac{1}{2}\fronorm{\calP_{\ell}\lb\calX-\calT\rb}^2,
\end{align}
where  $\calP_{\Omega_{-\ell}}\lb\calZ\rb$  and $\calP_{\ell}\lb\calZ\rb$ are defined as follows:
\begin{align*}
\lsb \calP_{\Omega_{-\ell}}(\calZ)\rsb_{i_1, i_2, i_3}  & := \begin{cases}
		\lsb\calZ\rsb_{i_1,i_2,i_3},~&\text{if } i_1\neq \ell,  i_2\neq\ell,  i_3\neq \ell ~\text{and} ~ (i_1,i_2,i_3)\in\Omega,\\
		0, ~&\text{otherwise},
\end{cases}\\
\lsb \calP_{\ell}(\calZ)\rsb_{i_1, i_2, i_3}  & := \begin{cases}
		\lsb\calZ\rsb_{i_1,i_2,i_3},~&\text{if } i_1 = \ell, \text{or}~ i_2 = \ell, \text{or}~ i_3 = \ell,\\
		0, ~&\text{otherwise}.
\end{cases}
\end{align*}
The leave-one-out sequence $\left\lbrace\calX^{t,\ell} \right\rbrace_{t\geq 1}$ is produced by  applying RGM to this new cost  function.
If $\Omega$ satisfies the Bernoulli model,   then we can rewrite \eqref{eq:new cost} as
\begin{align}\label{eq:new cost2}
    \frac{1}{2p}\sum_{i_1,i_2,i_3\neq \ell}\delta_{i_1,i_2,i_3}\lb\calX_{i_1,i_2,i_3}-\calT_{i_1,i_2,i_3}\rb^2+ \frac{1}{2}\sum_{\exists i_j= \ell,~j\in[3]}\lb\calX_{i_1,i_2,i_3}-\calT_{i_1,i_2,i_3}\rb^2,
\end{align}
where $\{\delta_{i_1,i_2,i_3}\}$ are $n^3$ independent Bernoulli random variables. 
Noting that \eqref{eq:new cost2} does not depend on $\{\delta_{i_1,i_2,i_3}:~\exists i_j=\ell,~j\in[3]\}$, the sequence  $\left\lbrace\calX^{t,\ell} \right\rbrace_{t\geq 1}$ is independent of those random variables provided the initial guess $\calX^{1,\ell}$ is independent  of them.  This decoupling of the  statistical dependency turns out to be crucial for us to bound the  $\ell_{2,\infty}$ norm of the factor matrices.
The initial guess $\calX^{1,\ell}$ can be similarly generated by the spectral method with diagonal deletion, but with those entries at locations indexed by  $\{(i_1,i_2,i_3):~\exists i_j=\ell,~j\in[3]\}$ being replaced by the ground truth values. 
The complete procedure to create the leave-one-out sequence $\left\lbrace\calX^{t,\ell} \right\rbrace_{t\geq 1}$ is described in Algorithm~\ref{alg:rgd leave one out}. 
We would like to caution that Algorithm~\ref{alg:rgd leave one out} is by no means a practical algorithm, but only introduced for the sake of analysis. 

\begin{algorithm}[ht!]
	\caption{The $\ell$-th leave-one-out sequence for tensor completion}
 \label{alg:rgd leave one out}
	\begin{algorithmic}[1]
	\State Input: tensors $\calP_{\Omega_{-\ell}}\lb\calT\rb$, $\calP_{\ell}\lb\calT\rb$, multilinear rank $\vr = (r,r,r)$, parameter $p$.
	\For{$i = 1,2,3$}
	\State Let $\mX_i^{1,\ell} \bSigma^{1,\ell}_i {\mX_i^{1,\ell}}^\tran$ be the top-$r$ eigenvalue decomposition of 
$		\calP_{\offdiag}\lb\widehat{\mT}_i^{\ell}{\widehat{\mT}_i^{{\ell}^{^\tran}}}\rb$,
		where $\widehat{\mT}^{\ell}_i =  \calM_i\lb p^{-1}\calP_{\Omega_{-\ell}}\lb\calT\rb+\calP_{\ell}\lb\calT\rb\rb$.
	\EndFor
	\State $\calG^{1,\ell} =  \lb p^{-1} \calP_{\Omega_{-\ell}} (\calT)+\calP_{\ell}\lb\calT\rb\rb\ttimes {\mX_i^{1,\ell}}^\tran$.
	\State $\calX^{1,\ell} = \calG^{1,\ell}\ttimes {\mX_i^{1,\ell}}$.
    \For{$t = 1,\cdots$}
    \For{$i = 1,2,3$}
    \State $\mX_i^{t+1,\ell} = \text{SVD}_{r}\lb\calM_i\lb\calX^{t,\ell}- \calP_{T_{\calX^{t,\ell}}}\lb p^{-1}\calP_{\Omega_{-\ell}}+\calP_{\ell}\rb\left(\calX^{t,\ell}- \calT\right)\rb\rb$.
    	\EndFor
    \State $\calG^{t+1,\ell} = \lb\calX^{t,\ell}- \calP_{T_{\calX^{t,\ell}}}\lb p^{-1}\calP_{\Omega_{-\ell}}+\calP_{\ell}\rb\left(\calX^{t,\ell}- \calT\right)\rb\ttimes{\mX_i^{t+1,\ell}}^\tran$.
    \State $\calX^{t+1,\ell} = \calG^{t+1,\ell}\ttimes{\mX_i^{t+1,\ell}}$.
    \EndFor
	\end{algorithmic}
\end{algorithm}


To facilitate the analysis, it is much more convenient to rewrite the iterates of Algorithm \ref{alg:rgd} into the following perturbation form, 
\begin{align*}
    \calX^{t+1} =\calH_{\vr}\lb \calT+\calE^t\rb,
\end{align*}
where the residual tensor $\calE^t$ is given by
\begin{align}\label{residual tensor et}
    \calE^{t} := \lb\calI-p^{-1}\calP_{T_{\calX^{t}}}\calP_{\Omega}\rb\lb \calX^{t}-\calT\rb,\quad t\geq 1.
\end{align}
For $t = 0$, since $\calX^1$ can be rewritten as 
\begin{align*}
    \calX^1 &= \calH_{\vr}\lb\calT+\calX^1-\calT\rb,
\end{align*}
 the residual tensor in the initialization step is defined as
\begin{align}\label{calE0}
    \calE^0 := \calX^1-\calT=\left( \left( \calI - p^{-1} \calP_{\Omega} \right)(-\calT)  \right) \ttimes \mX_i^1{\mX_i^1}^\tran +\calT\ttimes\mX_i^1{\mX_i^1}^\tran  -\calT.
\end{align}
Similarly, the residual tensors of the $\ell$-th leave one out sequence from Algorithm~\ref{alg:rgd leave one out} are defined by
\begin{align}\label{calE0ell}
        &\calE^{0,\ell} := \lb\lb\calI-p^{-1}\calP_{\Omega_{-\ell}}-\calP_{\ell}\rb\lb-\calT\rb\rb\ttimes {\mX_i^{1,\ell}}{\mX_i^{1,\ell}}^\tran+\calT\ttimes{\mX_i^{1,\ell}}{\mX_i^{1,\ell}}^\tran-\calT,\\
        \label{residual tensor etl}
         &\calE^{t,\ell} := \lb\calI-\calP_{T_{\calX^{t,\ell}}}\lb p^{-1}\calP_{\Omega_{-\ell}}+\calP_{\ell}\rb\rb\lb\calX^{t,\ell}-\calT\rb,\quad t\geq 1,
\end{align} 
which  satisfy
$$ \calX^{1,\ell} =\calH_{\vr}\lb \calT+\calE^{0,\ell}\rb\quad\mbox{and}\quad\calX^{t+1,\ell} =\calH_{\vr}\lb \calT+\calE^{t,\ell}\rb.$$

Let $\mT_i = \calM_i\lb\calT\rb$, $\mE_i^{t-1} = \calM_i\lb\calE^{t-1}\rb$ and $\mE_i^{t-1,\ell} = \calM_i\lb\calE^{t-1,\ell}\rb$ be the mode-$i$ matricizations of the corresponding tensors. It can be seen  from  Algorithms~\ref{alg:rgd} and \ref{alg:rgd leave one out} that
the matrices $\mX_i^t$ and $\mX_i^{t,\ell}$ are the top-$r$ eigenvectors of $\lb\mT_i+\mE_i^{t-1}\rb\lb\mT_i+\mE_i^{t-1}\rb^\tran$ and 
$\lb\mT_i+\mE_i^{t-1,\ell}\rb\lb\mT_i+\mE_i^{t-1,\ell}\rb^\tran$, respectively.
Recall that the  eigenvalue decomposition of $\mT_i\mT_i^\tran$ is $\mT_i\mT_i^\tran = \mU_i\bLambda_i\mU_i^\tran$. If we further define three auxiliary $r\times r$ orthonormal matrices as  follows:
\begin{align*}
	\mR_i^t &= {\arg\min}_{\mR} \fronorm{\mX_i^{t } \mR - \mU_i },\quad \mR_i^{t,\ell} = {\arg\min}_{\mR} \fronorm{\mX_i^{t,\ell} \mR - \mU_i },\\
  \mT_i^{t,\ell} &= {\arg\min}_{\mR} \fronorm{\mX_i^{t,\ell} \mR - \mX^t_i \mR_i^t},
\end{align*}
 the following theorem for the sequences produced by Algorithms~\ref{alg:rgd} and \ref{alg:rgd leave one out}.
 
\begin{theorem}\label{base and induction case theorem}
Under the assumption of Theorem~\ref{thm: finite iteration}, the following inequalities hold with high probability for all $1\leq \ell\leq n$ and $1\leq t\leq t_0$,
\begin{subequations}
	\label{eq induction hypotheis}
	\begin{align}
		\label{spectral norm of error tensor}
		\opnorm{\mE_i^{t-1}} &\leq \frac{1}{2^{20} \kappa^6 \mu^2 r^4} \frac{1}{2^t}  \sigma_{\max}(\calT), \\
		\label{spectral norm of loo error tensor}
		\opnorm{\mE_i^{t-1,\ell} } &\leq\frac{1}{2^{20} \kappa^6\mu^2 r^4} \frac{1}{2^t}  \sigma_{\max}(\calT) ,\\
		\label{2 inf norm of loo}
		\twoinf{\mX_i^{t, \ell} \mR_i^{t,\ell} - \mU_i} &\leq \frac{1}{2^{20} \kappa^2\mu^2 r^4} \frac{1}{2^t} \sqrt{\frac{\mu r}{n}},\\
		\label{F norm  of error tensor}
		\fronorm{ \calE^{t-1} - \calE^{t-1,\ell}} &\leq \frac{1}{2^{20} \kappa^4 \mu^2 r^4} \frac{1}{2^t} \sqrt{\frac{\mu r}{n}} \sigma_{\max}(\calT),  \\
		\label{F norm}
		\fronorm{\mX_i^{t}\mR_i^t - \mX_i^{t,\ell} \mT_i^{t,\ell}} &\leq  \frac{1}{2^{20} \kappa^2\mu^2 r^4} \frac{1}{2^t} \sqrt{\frac{\mu r}{n}} ,\\
		\label{L2 inf norm}
		\twoinf{\mX_i^t\mR_i^t - \mU_i} &\leq \frac{1}{2^{20}\kappa^2\mu^2 r^4} \frac{1}{2^t} \sqrt{\frac{\mu r}{n}}.
	\end{align}
\end{subequations}
\end{theorem}
We also need a  lemma which transfers the convergence result in  terms of  the $\ell_2$ and $\ell_{2,\infty}$ norms to that in terms of the $\ell_\infty$ norm.
\begin{lemma}
	\label{lemma: infinite norm of X-T}
	Let $\calX = \HOSVD_r(\calT + \calE)$ for some perturbation tensor $\calE\in\Rn$. Let the Tucker decomposition of $\calX$  be $\calG\ttimes \mX_i$ with $\mX_i^\tran\mX_i = \mI$.  Define 
 \begin{small}
	\begin{align*}
	\mR_i = \arg\min_{\mR^\tran\mR = \mI}\fronorm{\mX_i \mR - \mU_i}~\mbox{and}~B= \max_{i=1,2,3} \lb  \twoinf{\mX_i\mR_i - \mU_i} +\frac{15\sigma_{\max}(\calT)}{2\sigma_{\min}^2(\calT)} \twoinf{\mU_i} \opnorm{\mE_i} \rb,
	\end{align*}
 \end{small}
	where $\mE_i := \calM_i(\calE)$. Suppose that  $\max_{i=1,2,3}\opnorm{\mE_i} \leq \sigma_{\max}(\calT)/\lb10\kappa^2\rb$. Then one has
	\begin{align*}
		\infnorm{\calX - \calT}&\leq \sigma_{\max}(\calT) \left( B^3 + 3B^2 \lb  \max_{i = 1,2,3} \twoinf{\mU_i} \rb  + 3 B \lb  \max_{i = 1,2,3}  \twoinf{\mU_i} \rb ^2 \right) \\
  &\quad+ \max_{i = 1,2,3}\opnorm{\mE_i }  \prod_{i=1}^{3} \twoinf{\mX_i}.
	\end{align*}
\end{lemma}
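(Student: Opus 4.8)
The plan is to split $\calX-\calT$ into a part driven by the perturbation $\calE$ and a part coming from the displacement of the factor subspaces, to bound each of the two parts entrywise through the tensor spectral norm, and then to convert the resulting $\ell_{2,\infty}$-type quantities into $B$. Since $\calX=\HOSVD_r(\calT+\calE)$ has factor matrices $\mX_i$ and core $\calG=(\calT+\calE)\ttimes\mX_i^\tran$, we have $\calX=(\calT+\calE)\ttimes(\mX_i\mX_i^\tran)$. Writing $\calT=\calS\ttimes\mU_i$, $\mD_i:=\mX_i\mR_i-\mU_i$ and $\mF_i:=\mX_i\mX_i^\tran\mU_i-\mU_i$, I would use
\begin{align*}
\calX-\calT=\calE\ttimes(\mX_i\mX_i^\tran)+\Big(\calS\ttimes(\mX_i\mX_i^\tran\mU_i)-\calS\ttimes\mU_i\Big),
\end{align*}
and multilinearly expand the bracket, via $\mX_i\mX_i^\tran\mU_i=\mU_i+\mF_i$, into its $2^3-1=7$ cross terms, a generic one replacing $\mU_i$ by $\mF_i$ along the modes in a nonempty $J\subseteq[3]$ and keeping $\mU_i$ on the remaining modes.

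For any tensor of the form $\calC\times_1\mA_1\times_2\mA_2\times_3\mA_3$, its $(i_1,i_2,i_3)$ entry equals $\la\calC,[\mA_1]_{i_1,:}\circ[\mA_2]_{i_2,:}\circ[\mA_3]_{i_3,:}\ra$, hence is bounded in modulus by $\opnorm{\calC}\prod_{k=1}^3\twonorm{[\mA_k]_{i_k,:}}\leq\opnorm{\calC}\prod_{k=1}^3\twoinf{\mA_k}$. Applied to $\calE\ttimes(\mX_i\mX_i^\tran)$ with $\opnorm{\calE}\leq\opnorm{\calM_i(\calE)}=\opnorm{\mE_i}$ for each $i$, this gives $\infnorm{\calE\ttimes(\mX_i\mX_i^\tran)}\leq\max_i\opnorm{\mE_i}\cdot\prod_{i=1}^3\twoinf{\mX_i}$, the last term of the claim. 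Applied to each of the $7$ cross terms, with $\opnorm{\calS}=\opnorm{\calT\ttimes\mU_i^\tran}\leq\opnorm{\calT}\leq\sigma_{\max}(\calT)$, and summed, with $f:=\max_i\twoinf{\mF_i}$ and $u:=\max_i\twoinf{\mU_i}$, it gives
\begin{align*}
\infnorm{\calS\ttimes(\mX_i\mX_i^\tran\mU_i)-\calS\ttimes\mU_i}\leq\sigma_{\max}(\calT)\big((f+u)^3-u^3\big)=\sigma_{\max}(\calT)\big(f^3+3f^2u+3fu^2\big).
\end{align*}
Since $x\mapsto x^3+3x^2u+3xu^2$ is nondecreasing on $[0,\infty)$, it remains only to show $\twoinf{\mF_i}\leq B$ for every $i$; adding the two displays then finishes the proof.

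The crux is this $\ell_{2,\infty}$ estimate, which I would route through the Procrustes rotation $\mR_i$ so as to replace $\mU_i$ by $\mD_i=\mX_i\mR_i-\mU_i$, the quantity we are entitled to control in $\ell_{2,\infty}$. Using $\mU_i\mU_i^\tran\mU_i=\mU_i$, rewrite $\mF_i=(\mX_i\mX_i^\tran-\mU_i\mU_i^\tran)\mU_i$, so $\twoinf{\mF_i}\leq\twoinf{\mX_i\mX_i^\tran-\mU_i\mU_i^\tran}$. Writing $\mX_i\mX_i^\tran=(\mX_i\mR_i)(\mX_i\mR_i)^\tran$ and expanding $\mX_i\mR_i=\mU_i+\mD_i$, the $k$-th row of $\mX_i\mX_i^\tran-\mU_i\mU_i^\tran$ equals $[\mD_i]_{k,:}(\mX_i\mR_i)^\tran+[\mU_i]_{k,:}\mD_i^\tran$, whence $\twoinf{\mF_i}\leq\twoinf{\mD_i}+\twoinf{\mU_i}\opnorm{\mD_i}=\twoinf{\mX_i\mR_i-\mU_i}+\twoinf{\mU_i}\opnorm{\mX_i\mR_i-\mU_i}$. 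To bound $\opnorm{\mX_i\mR_i-\mU_i}$, the standard Procrustes inequality gives $\opnorm{\mX_i\mR_i-\mU_i}\leq\sqrt2\,\opnorm{(\mI-\mX_i\mX_i^\tran)\mU_i}$; and since $\mT_i=\calM_i(\calT)$ has exact rank $r$, Eckart--Young together with Weyl yields $\opnorm{(\mI-\mX_i\mX_i^\tran)(\mT_i+\mE_i)}=\sigma_{r+1}(\mT_i+\mE_i)\leq\opnorm{\mE_i}$, hence $\opnorm{(\mI-\mX_i\mX_i^\tran)\mT_i}\leq2\opnorm{\mE_i}$, and writing the thin SVD $\mT_i=\mU_i\bSigma_i\mV_i^\tran$ and right-multiplying $(\mI-\mX_i\mX_i^\tran)\mT_i$ by $\mV_i\bSigma_i^{-1}$ gives $\opnorm{(\mI-\mX_i\mX_i^\tran)\mU_i}\leq2\opnorm{\mE_i}/\sigma_{\min}(\calT)$. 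Combining,
\begin{align*}
\twoinf{\mF_i}\leq\twoinf{\mX_i\mR_i-\mU_i}+\frac{2\sqrt2}{\sigma_{\min}(\calT)}\,\twoinf{\mU_i}\,\opnorm{\mE_i}\leq\twoinf{\mX_i\mR_i-\mU_i}+\frac{15\,\sigma_{\max}(\calT)}{2\,\sigma_{\min}^2(\calT)}\,\twoinf{\mU_i}\,\opnorm{\mE_i}\leq B,
\end{align*}
the middle inequality using $2\sqrt2\,\sigma_{\min}(\calT)\leq\tfrac{15}{2}\sigma_{\max}(\calT)$.

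The main obstacle is exactly this last step: an $\ell_{2,\infty}$ bound on $\mF_i$ — equivalently, on the row norms of $(\mI-\mX_i\mX_i^\tran)$ applied to $\mU_i$ — does not follow from spectral control of $\calE$, because the residual projection $\mI-\mX_i\mX_i^\tran$ is far from entrywise small. The two ingredients that make it go through are (i) the reformulation $\mF_i=(\mX_i\mX_i^\tran-\mU_i\mU_i^\tran)\mU_i$ together with the Procrustes rotation, which exposes $\mD_i=\mX_i\mR_i-\mU_i$ with precisely the coefficient $\twoinf{\mU_i}$ demanded by the definition of $B$; and (ii) the \emph{exact} low-rankness of $\calT$, so that $\sigma_{r+1}(\mT_i)=0$ forces the $\sin\Theta$ factor to be of order $\opnorm{\mE_i}/\sigma_{\min}(\calT)$ and hence the whole correction of order $\twoinf{\mU_i}\,\opnorm{\mE_i}$. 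The hypothesis $\max_i\opnorm{\mE_i}\leq\sigma_{\max}(\calT)/(10\kappa^2)$ then guarantees that $B$, and so the resulting entrywise error, is small.
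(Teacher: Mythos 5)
Your proof is correct, and its skeleton matches the paper's: split $\calX-\calT$ into the projected-noise term $\calE\ttimes\mX_i\mX_i^\tran$ and the subspace-displacement term, expand the latter multilinearly, and bound each entry by the spectral norm of the core times the relevant $\ell_{2,\infty}$ row norms. Your reformulation via $\mF_i=(\mX_i\mX_i^\tran-\mU_i\mU_i^\tran)\mU_i$ acting on $\calS$ is just the paper's expansion in the projector differences $\mX_i\mX_i^\tran-\mU_i\mU_i^\tran$ acting on $\calT$ (the terms $\omega_1,\omega_2,\omega_3,\omega_4$ in Section 6.2), and your row estimate $\twoinf{\mF_i}\leq\twoinf{\mX_i\mR_i-\mU_i}+\twoinf{\mU_i}\opnorm{\mX_i\mR_i-\mU_i}$ is exactly the paper's bound on $\twoinf{\mX_i\mX_i^\tran-\mU_i\mU_i^\tran}$. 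Where you genuinely deviate is the spectral bound on $\mX_i\mR_i-\mU_i$. The paper symmetrizes: it views $\mX_i$ as the top-$r$ eigenvectors of $\mT_i\mT_i^\tran+\bDelta_i$ with $\bDelta_i=\mT_i\mE_i^\tran+\mE_i\mT_i^\tran+\mE_i\mE_i^\tran$, uses the hypothesis $\opnorm{\mE_i}\leq\sigma_{\max}(\calT)/(10\kappa^2)$ precisely to verify $\opnorm{\bDelta_i}\leq\frac14\sigma_{\min}^2(\calT)$, and then invokes Lemma~\ref{lemma: Ma 2017 lemma 45} to obtain $\opnorm{\mX_i\mR_i-\mU_i}\leq\frac{15\sigma_{\max}(\calT)}{2\sigma_{\min}^2(\calT)}\opnorm{\mE_i}$, which is literally the constant defining $B$. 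You instead work with the rectangular matricization: $\opnorm{(\mI-\mX_i\mX_i^\tran)(\mT_i+\mE_i)}=\sigma_{r+1}(\mT_i+\mE_i)\leq\opnorm{\mE_i}$ by Weyl and exact rank $r$ of $\mT_i$, hence $\opnorm{(\mI-\mX_i\mX_i^\tran)\mU_i}\leq 2\opnorm{\mE_i}/\sigma_{\min}(\calT)$ after dividing out $\bSigma_i$, and then a Procrustes bound gives $\opnorm{\mX_i\mR_i-\mU_i}\leq 2\sqrt2\,\opnorm{\mE_i}/\sigma_{\min}(\calT)$. This is sharper by a factor of order $\kappa$, fits inside $B$ since $2\sqrt2\leq\frac{15}{2}\kappa$, and notably does not consume the smallness hypothesis on $\opnorm{\mE_i}$ at all, so your version of the lemma is in fact assumption-free; that is a nice simplification relative to the paper.

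One step you should spell out: you need the bound $\opnorm{\mX_i\mR_i-\mU_i}\leq\sqrt2\,\opnorm{(\mI-\mX_i\mX_i^\tran)\mU_i}$ for the \emph{specific} Frobenius-optimal rotation $\mR_i$ appearing in the definition of $B$, whereas the paper's Lemma~\ref{rateoptimal lemma 1} only controls the infimum over rotations. The claim is true: writing $\mX_i^\tran\mU_i=\mA\bSigma\mB^\tran$ and $\mR_i=\mA\mB^\tran$, one computes $\opnorm{\mX_i\mR_i-\mU_i}^2=2\lb1-\sigma_{\min}(\bSigma)\rb\leq 2\lb1-\sigma_{\min}^2(\bSigma)\rb=2\opnorm{(\mI-\mX_i\mX_i^\tran)\mU_i}^2$, but a line of justification like this should be included rather than citing a "standard" Procrustes inequality.
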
 
Theorem~\ref{base and induction case theorem} will be proved by induction, with the proof details for the base case and induction step presented in Sections~\ref{sec: base case} and \ref{sec: induction case}, respectively.  The proof of Lemma~\ref{lemma: infinite norm of X-T} can be found in Section~\ref{subsec:proof34}. Equipped with Theorem~\ref{base and induction case theorem} and Lemma~\ref{lemma: infinite norm of X-T},
we are now able to present the proof of Theorem \ref{thm: finite iteration}.
\begin{proof}[Proof of Theorem~\ref{thm: finite iteration}]
The $\ell_{2,\infty}$ convergence for the factor matrices follows directly from \eqref{L2 inf norm}. It only remains to show the $\ell_\infty$ convergence of the iterates.
From \eqref{spectral norm of error tensor}, one can see that
\begin{align*}
    \opnorm{\mE_i^{t-1}}\leq \frac{1}{10\kappa^2}\sigma_{\max}(\calT), \quad i = 1,2,3.
\end{align*}
Applying Lemma~\ref{lemma: infinite norm of X-T} with $\calX := \calX^t$, $\calE := \calE^{t-1}$ yields that
\begin{align}\label{proof infinity finite stage}
    \infnorm{\calX^t-\calT}&\leq \sigma_{\max}(\calT) \left( B^3 + 3B^2 \lb  \max_{i = 1,2,3} \twoinf{\mU_i} \rb + 3 B \lb  \max_{i = 1,2,3} \twoinf{\mU_i} \rb ^2 \right)\notag\\
    &\quad+ \max_{i = 1,2,3}\opnorm{\mE_i^{t-1} }  \prod_{i=1}^{3} \twoinf{\mX^t_i}.
\end{align}
By  \eqref{spectral norm of error tensor} and \eqref{L2 inf norm}, one has
\begin{align}\label{upper bound b}
    B\leq \frac{1}{2^{16}\kappa^2\mu^2r^4}\sqrt{\frac{\mu r}{n}}.
\end{align}
Finally, substituting \eqref{spectral norm of error tensor} and  \eqref{upper bound b} into \eqref{proof infinity finite stage} yields that
\begin{align}\label{eq:kw001}
    \infnorm{\calX^t-\calT}\leq \lb\frac{1}{2}\rb^t\frac{1}{n^{3/2}}\sigma_{\max}\lb\calT\rb.
\end{align}
Noting that  \eqref{eq:kw001} holds with high probability for each $t$ and $t_0=2\log_2 n+c$.     
\end{proof}
 
\section{Proof of Theorem \ref{thm: local convergence}}
\label{sec: local}
In this section, we prove Theorem \ref{thm: local convergence} by induction. It is trivial that  when $t = t_0$ the result holds. Next, we assume \eqref{eq: local} holds for the iterations $t_0, t_0+1,\cdots, t$, and then prove it also holds for $t+1$. The proof relies on several lemmas which reveal the uniform restricted isometry property in a local neighborhood of the ground truth. We first present these lemmas  and postpone the proofs to Section~\ref{sec: proof of key lemmas}.
\begin{lemma}\label{lemma: entry upper bound}
	Suppose $T$ is the tangent space of $\M_r$ at $\calT$. Then 
	\label{lemma: F norm of E projection onto T}
	\begin{align}
		\fronorm{\calP_{T}(\ve_{i_1}\circ \ve_{i_2}\circ\ve_{i_3})}^2 \leq 4\lb \frac{\mu r}{n} \rb^2.
	\end{align}
\end{lemma}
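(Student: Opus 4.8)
The plan is to expand $\calP_T(\ve_{i_1}\circ\ve_{i_2}\circ\ve_{i_3})$ via the explicit tangent-space projection formula \eqref{eq: def of projection onto tangent space}, specialized to $\calX=\calT$ (so that $\mX_i=\mU_i$ and $\calG=\calS$), and then to square it using the pairwise orthogonality relations recorded right after \eqref{eq: def of projection onto tangent space}. Writing $\calZ:=\ve_{i_1}\circ\ve_{i_2}\circ\ve_{i_3}$, we have
$$\calP_T(\calZ)=\calZ\ttimes\mU_i\mU_i^\tran+\sum_{i=1}^3\calS\times_i\mW_i\jneqi\mU_j,\qquad \mW_i=(\mI-\mU_i\mU_i^\tran)\calM_i\big(\calZ\jneqi\mU_j^\tran\big)\calM_i^\dagger(\calS).$$
The first summand is of the form $\calC\ttimes\mU_i$ with $\calC=\calZ\ttimes\mU_i^\tran$, and each $\mW_i$ satisfies $\mW_i^\tran\mU_i=\bzero$; hence the orthogonality identities of the excerpt give that the four terms are pairwise orthogonal, so
$$\fronorm{\calP_T(\calZ)}^2=\fronorm{\calZ\ttimes\mU_i\mU_i^\tran}^2+\sum_{i=1}^3\fronorm{\calS\times_i\mW_i\jneqi\mU_j}^2.$$

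Next I would bound each piece using the rank-one structure of $\calZ$ together with the incoherence of $\calT$. For the first term, $\calZ\ttimes\mU_i\mU_i^\tran=(\mU_1\mU_1^\tran\ve_{i_1})\circ(\mU_2\mU_2^\tran\ve_{i_2})\circ(\mU_3\mU_3^\tran\ve_{i_3})$, so its squared Frobenius norm equals $\prod_{k=1}^3\twonorm{\mU_k^\tran\ve_{i_k}}^2\leq\prod_{k=1}^3\twoinf{\mU_k}^2\leq(\mu r/n)^3$, using the definition of $\mu$. For the $i$-th $\mW_i$ term, taking the mode-$i$ matricization and using that a Kronecker product of matrices with orthonormal columns has orthonormal columns (so right-multiplication by it preserves the Frobenius norm) yields $\fronorm{\calS\times_i\mW_i\jneqi\mU_j}=\fronorm{\mW_i\calM_i(\calS)}$. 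Since $\calM_i^\dagger(\calS)\calM_i(\calS)$ is an orthogonal projector (a contraction) and $\mI-\mU_i\mU_i^\tran$ is a contraction, $\fronorm{\mW_i\calM_i(\calS)}\leq\fronorm{\calM_i(\calZ\jneqi\mU_j^\tran)}$. Exploiting the rank-one structure once more, $\calZ\jneqi\mU_j^\tran$ is the outer product of $\ve_{i_i}$ with $\mU_j^\tran\ve_{i_j}$ over the two remaining modes, so $\fronorm{\calM_i(\calZ\jneqi\mU_j^\tran)}^2=\prod_{j\neq i}\twonorm{\mU_j^\tran\ve_{i_j}}^2\leq(\mu r/n)^2$. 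Summing the four bounds gives $\fronorm{\calP_T(\calZ)}^2\leq(\mu r/n)^3+3(\mu r/n)^2$, and since $\twoinf{\mU_i}^2\leq1$ forces $\mu r/n\leq1$, this is at most $4(\mu r/n)^2$, as claimed.

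There is no real obstacle in this argument; the only points needing care are keeping track of which factor matrix acts on which mode when matricizing, and invoking the two structural facts already established in the excerpt — the pairwise orthogonality of the components of $\calP_T$ and the fact that $\calM_i^\dagger(\calS)\calM_i(\calS)$ is an orthogonal projector. The final comparison $(\mu r/n)^3\leq(\mu r/n)^2$ is automatic from $\mu r\leq n$.
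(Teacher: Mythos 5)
Your proposal is correct and follows essentially the same route as the paper's proof: the same orthogonal decomposition of $\calP_T(\ve_{i_1}\circ\ve_{i_2}\circ\ve_{i_3})$ into the $\calZ\ttimes\mU_i\mU_i^\tran$ part and the three $\mW_i$ parts, the same incoherence bounds $(\mu r/n)^3$ and $(\mu r/n)^2$ for the respective pieces, and the same final comparison using $\mu r/n\leq 1$. The only cosmetic difference is that you peel off the Kronecker factor and the projector $\calM_i^\dagger(\calS)\calM_i(\calS)$ before invoking the rank-one structure, whereas the paper substitutes the definition of $\mW_i$ and bounds everything in one display.
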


\begin{lemma}
	\label{lemma: local isotropy}
	Suppose $\Omega$ is sampled according to the Bernoulli model and the tensor $\calT$ obeys the incoherence condition with parameter $\mu$. If $p\geq\frac{C_2\mu^2r^2\log n}{\varepsilon\cdot n^2}$, then 
	\begin{align*}
		\opnorm{\calP_{T}\lb p^{-1}\calP_{\Omega}-\calI\rb\calP_{T}}\leq\varepsilon
	\end{align*}
	holds with high probability, where $\varepsilon>0$ is an absolute constant.
\end{lemma}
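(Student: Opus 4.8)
The plan is to write $\calP_{T}\lb p^{-1}\calP_{\Omega}-\calI\rb\calP_{T}$ as a sum of $n^3$ independent, mean-zero, self-adjoint random operators on $\Rn$ and then invoke the matrix Bernstein inequality. Write $\calE_{i_1i_2i_3} := \ve_{i_1}\circ\ve_{i_2}\circ\ve_{i_3}$ for the canonical tensor basis, so that $\calP_{\Omega}(\calZ)=\sum_{i_1,i_2,i_3}\delta_{i_1,i_2,i_3}\la\calZ,\calE_{i_1i_2i_3}\ra\,\calE_{i_1i_2i_3}$ and $\sum_{i_1,i_2,i_3}\calE_{i_1i_2i_3}\la\calE_{i_1i_2i_3},\cdot\ra=\calI$. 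Since $\calP_{T}^2=\calP_{T}$, this gives
\begin{align*}
\calP_{T}\lb p^{-1}\calP_{\Omega}-\calI\rb\calP_{T}=\sum_{i_1,i_2,i_3}\lb p^{-1}\delta_{i_1,i_2,i_3}-1\rb\calP_{T}(\calE_{i_1i_2i_3})\la\calP_{T}(\calE_{i_1i_2i_3}),\cdot\ra=:\sum_{i_1,i_2,i_3}\calS_{i_1i_2i_3},
\end{align*}
where each $\calS_{i_1i_2i_3}$ is self-adjoint and, because $\E{\delta_{i_1,i_2,i_3}}=p$, satisfies $\E{\calS_{i_1i_2i_3}}=\bzero$.

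Next I would assemble the two inputs required by Bernstein. For the uniform bound, $\lab p^{-1}\delta_{i_1,i_2,i_3}-1\rab\leq p^{-1}$ and $\calS_{i_1i_2i_3}$ is a signed rank-one operator, so $\opnorm{\calS_{i_1i_2i_3}}\leq p^{-1}\fronorm{\calP_{T}(\calE_{i_1i_2i_3})}^2\leq 4p^{-1}(\mu r/n)^2=:L$ by Lemma~\ref{lemma: entry upper bound}. For the variance, $\calS_{i_1i_2i_3}^2=\lb p^{-1}\delta_{i_1,i_2,i_3}-1\rb^2\fronorm{\calP_{T}(\calE_{i_1i_2i_3})}^2\,\calP_{T}(\calE_{i_1i_2i_3})\la\calP_{T}(\calE_{i_1i_2i_3}),\cdot\ra$, and using $\E{(p^{-1}\delta_{i_1,i_2,i_3}-1)^2}=p^{-1}-1\leq p^{-1}$ together with Lemma~\ref{lemma: entry upper bound} once more,
\begin{align*}
\sum_{i_1,i_2,i_3}\E{\calS_{i_1i_2i_3}^2}\preceq 4p^{-1}(\mu r/n)^2\sum_{i_1,i_2,i_3}\calP_{T}(\calE_{i_1i_2i_3})\la\calP_{T}(\calE_{i_1i_2i_3}),\cdot\ra=4p^{-1}(\mu r/n)^2\,\calP_{T},
\end{align*}
so the variance proxy is $\sigma^2:=\opnorm{\sum_{i_1,i_2,i_3}\E{\calS_{i_1i_2i_3}^2}}\leq 4p^{-1}(\mu r/n)^2$. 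Applying the matrix Bernstein inequality (\cite[Theorem 6.1.1]{tropp2015introduction}, regarding the $\calS_{i_1i_2i_3}$ as self-adjoint matrices acting on the $n^3$-dimensional space $\Rn$) then gives, with $t=\varepsilon$,
\begin{align*}
\Pr{\opnorm{\calP_{T}\lb p^{-1}\calP_{\Omega}-\calI\rb\calP_{T}}\geq\varepsilon}\leq 2n^3\exp\lb\frac{-\varepsilon^2/2}{\sigma^2+L\varepsilon/3}\rb,
\end{align*}
and since $\sigma^2\vee L\leq 4p^{-1}(\mu r/n)^2$, the exponent is at most $-C\log n$ for any prescribed constant $C$ once $p\geq C_2\mu^2 r^2\log n/(\varepsilon n^2)$ with $C_2$ taken large enough (here we also use that $\varepsilon$ is an absolute constant, so $\varepsilon^{-1}$ and $\varepsilon^{-2}$ differ only by a constant factor). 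This establishes the claim with high probability.

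The computation above is a fairly standard ``restricted isometry on the tangent space'' estimate, so I do not anticipate a genuine obstacle in it. The one ingredient that carries real content is the deterministic bound $\fronorm{\calP_{T}(\ve_{i_1}\circ\ve_{i_2}\circ\ve_{i_3})}^2\leq 4(\mu r/n)^2$, i.e. Lemma~\ref{lemma: entry upper bound}, which is where the incoherence of $\calT$ is consumed and which is proved separately; granting it, the rest reduces to bookkeeping with matrix Bernstein.
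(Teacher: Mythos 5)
Your proof is correct and follows essentially the same route as the paper: decompose $\calP_{T}\lb p^{-1}\calP_{\Omega}-\calI\rb\calP_{T}$ into independent mean-zero rank-one self-adjoint terms, bound the uniform and variance parameters by $4p^{-1}(\mu r/n)^2$ via Lemma~\ref{lemma: entry upper bound}, and apply matrix Bernstein. The only (cosmetic) difference is that you observe $\sum_{i_1,i_2,i_3}\calP_{T}(\calE_{i_1i_2i_3})\la\calP_{T}(\calE_{i_1i_2i_3}),\cdot\ra=\calP_{T}$ directly, whereas the paper works with the vectorized operators and bounds the corresponding Gram sum's norm by $1$ through a short chain of inequalities — the same fact in slightly different clothing.
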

\begin{remark}
Lemma~\ref{lemma: entry upper bound} and Lemma~\ref{lemma: local isotropy}  highly resemble Lemma 2 in \citep{yuan2016tensor} and Lemma 12 in \citep{tong2021scaling} respectively. However, the definition of the operator $\calP_{T}$ in this paper differs from the one in \citep{tong2021scaling, yuan2016tensor}. Specifically, the operator $\calP_{T}$ in the previous works was defined as 
\begin{align*}
    \calP_{T}\lb\calZ\rb = \calZ\ttimes\mU_i\mU_i^\tran + \sum_{i = 1}^3\calZ\times_i\lb\mI-\mU_i\mU_i^\tran\rb \jneqi \mU_j\mU_j^\tran.
\end{align*}
\end{remark}

\begin{lemma}
	\label{lemma: technique lemmas}
	Let $\calX^t = \calG^t \ttimes \mX_i^t$ and $\calT = \calS\ttimes \mU_i$ be two tensors in $\M_r$ and $T_{\calX^t}, T$ be the tangent spaces of $\M_r$ at $\calX^t$ and $\calT$, respectively. Then
	\begin{align*}
		\opnorm{  {\mX_i^t} {\mX_i^t} ^\tran -  \mU_i\mU_i^\tran  } &\leq \frac{1}{\sigma_{\min}(\calT)}\fronorm{\calX^t - \calT},\quad i=1,2,3,\\
		\opnorm{\calP_{T_{\calX^t}} - \calP_T} &\leq \frac{9 }{\sigma_{\min}(\calT)}\fronorm{\calX^t - \calT}.
	\end{align*}
\end{lemma}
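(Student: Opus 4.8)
The statement to prove is Lemma~\ref{lemma: technique lemmas}, which bounds the perturbation of the mode-$i$ column-space projectors and of the tangent-space projector in terms of $\fronorm{\calX^t-\calT}$.

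\textbf{Proof plan.} The plan is to treat the two inequalities separately. For the first one, I would observe that $\mX_i^t{\mX_i^t}^\tran$ and $\mU_i\mU_i^\tran$ are the projectors onto the column spaces of $\calM_i(\calX^t)$ and $\calM_i(\calT)$, i.e., onto the top-$r$ left singular subspaces of these two matrices. A Davis--Kahan / Wedin-type $\sin\Theta$ bound then gives
\begin{align*}
\opnorm{\mX_i^t{\mX_i^t}^\tran-\mU_i\mU_i^\tran}\leq \frac{\opnorm{\calM_i(\calX^t)-\calM_i(\calT)}}{\sigma_{\min}(\calM_i(\calT))},
\end{align*}
since $\calM_i(\calT)$ has rank exactly $r$ so the relevant eigengap of $\calM_i(\calT)\calM_i(\calT)^\tran$ is $\sigma_{\min}^2(\calT)$ and the first-order Wedin bound has the stated form (one should be slightly careful and either invoke a clean $\sin\Theta$ statement like Lemma~\ref{Davis-kahan sintheta theorem} cited in the paper, or note the elementary fact that the gap in the perturbed matrix still dominates). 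Then bound $\opnorm{\calM_i(\calX^t)-\calM_i(\calT)}\leq\fronorm{\calM_i(\calX^t)-\calM_i(\calT)}=\fronorm{\calX^t-\calT}$, where the last equality is because matricization preserves the Frobenius norm. This yields the first claim with constant $1$.

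For the second inequality, I would use the decomposition of $\calP_{T_{\calX^t}}$ from \eqref{definition projector} together with the analogous decomposition of $\calP_T$ (the tangent space at $\calT$, whose factors are $\mU_i$ and core $\calS$):
\begin{align*}
\calP_{T_{\calX^t}}=\prod_{i=1}^3\calP_{\mX_i^t}^{(i)}+\sum_{i=1}^3\calP_{\mX_i^{t,\perp}}^{(i)}\calP_{\calG^t,\{\mX_j^t\}_{j\neq i}}^{(j\neq i)},\qquad
\calP_{T}=\prod_{i=1}^3\calP_{\mU_i}^{(i)}+\sum_{i=1}^3\calP_{\mU_i^{\perp}}^{(i)}\calP_{\calS,\{\mU_j\}_{j\neq i}}^{(j\neq i)}.
\end{align*}
Subtracting and telescoping, the difference $\prod_i\calP_{\mX_i^t}^{(i)}-\prod_i\calP_{\mU_i}^{(i)}$ splits into three terms each bounded by $\opnorm{\mX_i^t{\mX_i^t}^\tran-\mU_i\mU_i^\tran}$ (using $\opnorm{\calZ\times_i\mA}\le\opnorm{\mA}\opnorm{\calZ}$ and that the remaining projectors have operator norm $1$). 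Similarly, $\calP_{\mX_i^{t,\perp}}^{(i)}-\calP_{\mU_i^\perp}^{(i)}=\calP_{\mU_i}^{(i)}-\calP_{\mX_i^t}^{(i)}$ contributes the same bound, while the difference $\calP_{\calG^t,\{\mX_j^t\}_{j\neq i}}^{(j\neq i)}-\calP_{\calS,\{\mU_j\}_{j\neq i}}^{(j\neq i)}$ is, via Claim~\ref{property of these projection}, a difference of right-singular-subspace projectors $\mY_i^t{\mY_i^t}^\tran-\mV_i\mV_i^\tran$ (here $\mV_i$ the right singular vectors of $\calM_i(\calT)$), which is again controlled by a $\sin\Theta$ bound of the form $\opnorm{\calM_i(\calX^t)-\calM_i(\calT)}/\sigma_{\min}(\calT)\le\fronorm{\calX^t-\calT}/\sigma_{\min}(\calT)$. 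Collecting the $3+3+3=9$ terms (each $\le\fronorm{\calX^t-\calT}/\sigma_{\min}(\calT)$) gives the factor $9$.

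\textbf{Main obstacle.} The routine part is the triangle-inequality bookkeeping; the delicate point is getting the Wedin/$\sin\Theta$ bound for the \emph{right} singular subspaces in exactly the clean form $\opnorm{\mY_i^t{\mY_i^t}^\tran-\mV_i\mV_i^\tran}\le\fronorm{\calX^t-\calT}/\sigma_{\min}(\calT)$ with no extra constant, and making sure the operator-norm perturbation $\opnorm{\calM_i(\calX^t\!-\!\calT)}$ (not its square) appears linearly — this requires the first-order form of Wedin's theorem rather than the eigenvalue version, and one must verify the eigengap condition is met (which it is, since $\calM_i(\calT)$ has rank $r$ and no constraint is imposed on $\fronorm{\calX^t-\calT}$, so one should state the bound in the form that is vacuous but still true when $\fronorm{\calX^t-\calT}\ge\sigma_{\min}(\calT)$). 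I would, if needed, slightly inflate constants and instead prove $\opnorm{\calP_{T_{\calX^t}}-\calP_T}\le C\fronorm{\calX^t-\calT}/\sigma_{\min}(\calT)$ and then check that the bookkeeping actually closes at $C=9$; the precise constant is not essential for how the lemma is used in Section~\ref{sec: local}.
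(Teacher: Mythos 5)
Your proposal is correct and follows essentially the same route as the paper: the same decomposition \eqref{definition projector} of the two tangent-space projectors, the same identification (via Claim~\ref{property of these projection}) of the core projectors with right-singular-subspace projectors $\mY_i^t{\mY_i^t}^\tran$ versus $\mV_i\mV_i^\tran$, and the same $3+3+3$ bookkeeping that yields the constant $9$. The one point you flag as delicate — getting $\opnorm{\mX_i^t{\mX_i^t}^\tran-\mU_i\mU_i^\tran}\le \fronorm{\calX^t-\calT}/\sigma_{\min}(\calT)$ (and the analogous right-subspace bound) with constant $1$ and no eigengap condition — is handled in the paper simply by invoking Lemma~\ref{lemma: lemma 4.1 in Wei}, which states exactly this Frobenius-norm bound for any two exactly rank-$r$ matrices, so no Davis--Kahan/Wedin argument or constant inflation is needed.
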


\begin{lemma}
	\label{lemma 2}
	Assume the following inequalities hold for $0<\varepsilon \leq \frac{1}{200}$,	\begin{align}\label{assumption in lemma 2}		
			\opnorm{\calP_T - p^{-1}\calP_{T} \calP_{\Omega} \calP_{T}} \leq \varepsilon \quad\mbox{and}\quad			\frac{\fronorm{\calX^t - \calT}}{\sigma_{\min}(\calT)} \leq \frac{\sqrt{p} \varepsilon}{9(1+\varepsilon)}.
		\end{align}
 Then 
	\begin{align*}
		&\opnorm{\calP_{\Omega} \calP_{T_{\calX^t}}} \leq  2\sqrt{p}(1+\varepsilon) \quad\mbox{and}\quad\opnorm{\calP_{T_{\calX^t}} - p^{-1}\calP_{T_{\calX^t}} \calP_{\Omega} \calP_{T_{\calX^t}} }  \leq5 \varepsilon.
	\end{align*}
\end{lemma}

\begin{proof}[Proof of Theorem~\ref{thm: local convergence}]
First, Lemma~\ref{lemma: local isotropy} implies that 
\begin{align*}
	\opnorm{\calP_T - p^{-1}\calP_{T} \calP_{\Omega} \calP_{T}} \leq \varepsilon
\end{align*}
holds with high probability provided that $p\geq \frac{C_2\mu^2 r^2\log n}{\varepsilon\cdot n^2}$. 

We assume that in the $t$-th iteration $\calX^t$ satisfies 
\begin{align}
	\label{eq induction for local convergence}
	\frac{\fronorm{\calX^{t} - \calT} }{\sigma_{\min}(\calT)} \leq  \frac{\sqrt{p} \varepsilon}{9(1+\varepsilon)}.
\end{align}
Recall that $\calX^{t+1} = \calH_r\lb \calX^t - p^{-1}\Pxt \calP_{\Omega}\lb \calX^t - \calT \rb \rb = \calH_r\lb \calT +\calE^t\rb$. One can see that
\begin{align*}
	\fronorm{\calX^{t+1} - \calT} \leq	\fronorm{\calX^{t+1} - \lb \calT + \calE^t \rb}  + \fronorm{\calE^{t}} \leq (\sqrt{3}+1) \fronorm{\calE^t}\leq 2^2\fronorm{\calE^t},
\end{align*}
where the second inequality follows from the quasi-optimality of $\HOSVD$. It remains to  bound $\fronorm{\calE^t}$. To this end, invoking the triangle inequality gives  that
\begin{align*}
	\fronorm{\calE^t} &= \fronorm{ \calX^t - \calT - p^{-1}\Pxt\calP_{\Omega}\lb \calX^t - \calT \rb } \\
	&\leq \underbrace{ \fronorm{  \lb \Pxt- p^{-1}\Pxt\calP_{\Omega} \Pxt \rb  \lb \calX^t - \calT \rb } }_{ =: \upsilon_1}  + \underbrace{\fronorm{ \lb  \calI - \Pxt\rb \lb \calX^t - \calT \rb } }_{=:\upsilon_2} \\
	&\quad + \underbrace{\fronorm{ p^{-1}\Pxt\calP_{\Omega}\lb \calI -  \Pxt \rb \lb \calX^t - \calT \rb  }}_{=:\upsilon_3}.
\end{align*}
\paragraph{Bounding $\upsilon_1$.} Applying Lemma~\ref{lemma 2} gives that \begin{align*}
		\upsilon_1\leq \opnorm{   \Pxt- p^{-1}\Pxt\calP_{\Omega} \Pxt      }\cdot \fronorm{  \calX^t - \calT  } \leq 5\varepsilon\fronorm{\calX^t - \calT}.
	\end{align*}
\paragraph{Bounding $\upsilon_2$.} The application of~\citep[Lemma 5.2]{cai2020provable}
 gives that
	\begin{align*}
		\upsilon_2 = \fronorm{\lb \calI - \calP_{T_{\calX^t}} \rb(\calT)} \leq \frac{8}{\sigma_{\min}(\calT) }\fronorm{\calX^t - \calT}^2\leq \frac{8\sqrt{p} \varepsilon}{9(1+\varepsilon)}\fronorm{\calX^t - \calT}\leq 2\varepsilon\fronorm{\calX^t - \calT}.
	\end{align*}
\paragraph{Bounding $\upsilon_3$.} A direct computation yields that
	\begin{align*}
		\upsilon_3 &\leq p^{-1} \opnorm{\calP_{\Omega} \calP_{T_{\calX^t}}}\cdot \fronorm{\lb \calI - \calP_{T_{\calX^t}} \rb(\calT)}\\
		&\leq p^{-1}\cdot 2\sqrt{p}(1+\varepsilon)\cdot \frac{8}{\sigma_{\min}(\calT) }\fronorm{\calX^t - \calT}^2\leq 16\varepsilon\fronorm{\calX^t - \calT},
	\end{align*}
	where the second line is due to Lemma~\ref{lemma 2}.
\\

Putting the above bounds  together yields that
\begin{align}
	\label{eq contractive sequence}
	\fronorm{\calX^{t+1} - \calT} &\leq 4\fronorm{\calE^t} \stackrel{(a)}{\leq} 100\varepsilon\cdot \fronorm{\calX^t - \calT}\leq \frac{1}{2}\fronorm{\calX^t - \calT},
\end{align}
where $(a)$ is due to the induction condition and the last step follows from $\varepsilon\leq \frac{1}{200}$.

By the assumption of the theorem, the inequality \eqref{eq induction for local convergence} is valid for $t=t_0$. Since $\fronorm{\calX^t - \calT}$ is a contractive sequence following from \eqref{eq contractive sequence}, the inequality \eqref{eq induction for local convergence} is valid for all $t\geq t_0$ by induction. 
\end{proof}

{\color{black}\section{Conclusion and Discussion}\label{sec:conclusion}
In this paper,  entrywise convergence of the vanilla Riemannian gradient method for low Tucker-rank tensor completion has been established and the implicit regularization property of the method has been revealed. For conciseness of presentation, we focus on  three-way tensors and the noiseless case. Indeed, the results can be extended to general multi-way tensors and the noisy case, with the sampling complexity and error bound  matching these results in \citep{cai2021nonconvex,xia2019polynomial,xia2021statistically}, see the supplement of this paper for details. 
For future work, it is interesting to further optimize the dependency of the sampling complexity on the rank $r$. Additionally, it may also be possible to extend the analysis to the low rank tensor completion problem based on the tensor train decomposition since both the Tucker decomposition and the tensor train decomposition reduce to the same form of decomposition for the matrix case.

}

\vskip 0.2in
\bibliography{refTensor}

\begin{thebibliography}{55}
\providecommand{\natexlab}[1]{#1}
\providecommand{\url}[1]{\texttt{#1}}
\expandafter\ifx\csname urlstyle\endcsname\relax
  \providecommand{\doi}[1]{doi: #1}\else
  \providecommand{\doi}{doi: \begingroup \urlstyle{rm}\Url}\fi

\bibitem[Anandkumar et~al.(2015)Anandkumar, Foster, Hsu, Kakade, and
  Liu]{anandkumar2015spectral}
Anima Anandkumar, Dean~P Foster, Daniel Hsu, Sham~M Kakade, and Yi-Kai Liu.
\newblock A spectral algorithm for latent dirichlet allocation.
\newblock \emph{Algorithmica}, 72\penalty0 (1):\penalty0 193--214, 2015.

\bibitem[Anandkumar et~al.(2013)Anandkumar, Ge, Hsu, and
  Kakade]{anandkumar2013tensor}
Animashree Anandkumar, Rong Ge, Daniel Hsu, and Sham Kakade.
\newblock A tensor spectral approach to learning mixed membership community
  models.
\newblock In \emph{Conference on Learning Theory}, pages 867--881. PMLR, 2013.

\bibitem[Arratia and Gordon(1989)]{arratia1989tutorial}
Richard Arratia and Louis Gordon.
\newblock Tutorial on large deviations for the binomial distribution.
\newblock \emph{Bulletin of mathematical biology}, 51\penalty0 (1):\penalty0
  125--131, 1989.

\bibitem[Barak and Moitra(2016)]{barak2016noisy}
Boaz Barak and Ankur Moitra.
\newblock Noisy tensor completion via the sum-of-squares hierarchy.
\newblock In \emph{Conference on Learning Theory}, pages 417--445. PMLR, 2016.

\bibitem[Cai et~al.(2021{\natexlab{a}})Cai, Li, Chi, Poor, and
  Chen]{cai2021subspace}
Changxiao Cai, Gen Li, Yuejie Chi, H~Vincent Poor, and Yuxin Chen.
\newblock Subspace estimation from unbalanced and incomplete data matrices:
  $\ell_{2,\infty} $ statistical guarantees.
\newblock \emph{The Annals of Statistics}, 49\penalty0 (2):\penalty0 944--967,
  2021{\natexlab{a}}.

\bibitem[Cai et~al.(2021{\natexlab{b}})Cai, Li, Poor, and
  Chen]{cai2021nonconvex}
Changxiao Cai, Gen Li, H~Vincent Poor, and Yuxin Chen.
\newblock Nonconvex low-rank tensor completion from noisy data.
\newblock \emph{Operations Research}, 2021{\natexlab{b}}.

\bibitem[Cai et~al.(2020)Cai, Miao, Wang, and Xian]{cai2020provable}
Jian-Feng Cai, Lizhang Miao, Yang Wang, and Yin Xian.
\newblock Provable near-optimal low-multilinear-rank tensor recovery.
\newblock \emph{arXiv preprint arXiv:2007.08904}, 2020.

\bibitem[Cai et~al.(2021{\natexlab{c}})Cai, Li, and Xia]{cai2021provable}
Jian-Feng Cai, Jingyang Li, and Dong Xia.
\newblock Provable tensor-train format tensor completion by riemannian
  optimization.
\newblock \emph{arXiv preprint arXiv:2108.12163}, 2021{\natexlab{c}}.

\bibitem[Cai and Zhang(2016)]{cai2016minimax}
T~Tony Cai and Anru Zhang.
\newblock Minimax rate-optimal estimation of high-dimensional covariance
  matrices with incomplete data.
\newblock \emph{Journal of multivariate analysis}, 150:\penalty0 55--74, 2016.

\bibitem[Cai and Zhang(2018)]{cai2018rate}
T~Tony Cai and Anru Zhang.
\newblock Rate-optimal perturbation bounds for singular subspaces with
  applications to high-dimensional statistics.
\newblock \emph{The Annals of Statistics}, 46\penalty0 (1):\penalty0 60--89,
  2018.

\bibitem[Chen et~al.(2019)Chen, Raskutti, and Yuan]{chen2019non}
Han Chen, Garvesh Raskutti, and Ming Yuan.
\newblock Non-convex projected gradient descent for generalized low-rank tensor
  regression.
\newblock \emph{The Journal of Machine Learning Research}, 20\penalty0
  (1):\penalty0 172--208, 2019.

\bibitem[Chen et~al.(2020{\natexlab{a}})Chen, Liu, and Li]{chen2020nonconvex}
Ji~Chen, Dekai Liu, and Xiaodong Li.
\newblock Nonconvex rectangular matrix completion via gradient descent without
  $\ell_{2,\infty}$ regularization.
\newblock \emph{IEEE Transactions on Information Theory}, 66\penalty0
  (9):\penalty0 5806--5841, 2020{\natexlab{a}}.

\bibitem[Chen et~al.(2020{\natexlab{b}})Chen, Chi, Fan, and
  Ma]{chen2020spectral}
Yuxin Chen, Yuejie Chi, Jianqing Fan, and Cong Ma.
\newblock Spectral methods for data science: A statistical perspective.
\newblock \emph{arXiv preprint arXiv:2012.08496}, 2020{\natexlab{b}}.

\bibitem[Cho et~al.(2017)Cho, Kim, and Rohe]{cho2017asymptotic}
Juhee Cho, Donggyu Kim, and Karl Rohe.
\newblock Asymptotic theory for estimating the singular vectors and values of a
  partially-observed low rank matrix with noise.
\newblock \emph{Statistica Sinica}, pages 1921--1948, 2017.

\bibitem[Cichocki et~al.(2015)Cichocki, Mandic, De~Lathauwer, Zhou, Zhao,
  Caiafa, and Phan]{cichocki2015tensor}
Andrzej Cichocki, Danilo Mandic, Lieven De~Lathauwer, Guoxu Zhou, Qibin Zhao,
  Cesar Caiafa, and Huy~Anh Phan.
\newblock Tensor decompositions for signal processing applications: From
  two-way to multiway component analysis.
\newblock \emph{IEEE signal processing magazine}, 32\penalty0 (2):\penalty0
  145--163, 2015.

\bibitem[De~Lathauwer et~al.(2000)De~Lathauwer, De~Moor, and
  Vandewalle]{de2000multilinear}
Lieven De~Lathauwer, Bart De~Moor, and Joos Vandewalle.
\newblock A multilinear singular value decomposition.
\newblock \emph{SIAM Journal on Matrix Analysis and Applications}, 21\penalty0
  (4):\penalty0 1253--1278, 2000.

\bibitem[Ding and Chen(2020)]{ding2020leave}
Lijun Ding and Yudong Chen.
\newblock Leave-one-out approach for matrix completion: Primal and dual
  analysis.
\newblock \emph{IEEE Transactions on Information Theory}, 66\penalty0
  (11):\penalty0 7274--7301, 2020.

\bibitem[Elsener and van~de Geer(2019)]{elsener2019sparse}
Andreas Elsener and Sara van~de Geer.
\newblock Sparse spectral estimation with missing and corrupted measurements.
\newblock \emph{Stat}, 8\penalty0 (1):\penalty0 e229, 2019.

\bibitem[Florescu and Perkins(2016)]{florescu2016spectral}
Laura Florescu and Will Perkins.
\newblock Spectral thresholds in the bipartite stochastic block model.
\newblock In \emph{Conference on Learning Theory}, pages 943--959. PMLR, 2016.

\bibitem[Gandy et~al.(2011)Gandy, Recht, and Yamada]{gandy2011tensor}
Silvia Gandy, Benjamin Recht, and Isao Yamada.
\newblock Tensor completion and low-n-rank tensor recovery via convex
  optimization.
\newblock \emph{Inverse Problems}, 27\penalty0 (2):\penalty0 025010, 2011.

\bibitem[Golub and Van~Loan(1996)]{golub1996matrix}
Gene~H Golub and Charles~F Van~Loan.
\newblock Matrix computations. johns hopkins studies in the mathematical
  sciences, 1996.

\bibitem[Han et~al.(2020)Han, Willett, and Zhang]{han2020optimal}
Rungang Han, Rebecca Willett, and Anru Zhang.
\newblock An optimal statistical and computational framework for generalized
  tensor estimation.
\newblock \emph{arXiv preprint arXiv:2002.11255}, 2020.

\bibitem[Hillar and Lim(2013)]{hillar2013most}
Christopher~J Hillar and Lek-Heng Lim.
\newblock Most tensor problems are np-hard.
\newblock \emph{Journal of the ACM (JACM)}, 60\penalty0 (6):\penalty0 1--39,
  2013.

\bibitem[Hitchcock(1927)]{hitchcock1927expression}
Frank~L Hitchcock.
\newblock The expression of a tensor or a polyadic as a sum of products.
\newblock \emph{Journal of Mathematics and Physics}, 6\penalty0 (1-4):\penalty0
  164--189, 1927.

\bibitem[Huang et~al.(2015)Huang, Mu, Goldfarb, and Wright]{huang2015provable}
Bo~Huang, Cun Mu, Donald Goldfarb, and John Wright.
\newblock Provable models for robust low-rank tensor completion.
\newblock \emph{Pacific Journal of Optimization}, 11\penalty0 (2):\penalty0
  339--364, 2015.

\bibitem[Jain et~al.(2013)Jain, Netrapalli, and Sanghavi]{jain2013low}
Prateek Jain, Praneeth Netrapalli, and Sujay Sanghavi.
\newblock Low-rank matrix completion using alternating minimization.
\newblock In \emph{Proceedings of the forty-fifth annual ACM symposium on
  Theory of computing}, pages 665--674, 2013.

\bibitem[Karatzoglou et~al.(2010)Karatzoglou, Amatriain, Baltrunas, and
  Oliver]{karatzoglou2010multiverse}
Alexandros Karatzoglou, Xavier Amatriain, Linas Baltrunas, and Nuria Oliver.
\newblock Multiverse recommendation: n-dimensional tensor factorization for
  context-aware collaborative filtering.
\newblock In \emph{Proceedings of the fourth ACM conference on Recommender
  systems}, pages 79--86, 2010.

\bibitem[Keshavan et~al.(2010)Keshavan, Montanari, and Oh]{keshavan2010matrix}
Raghunandan~H Keshavan, Andrea Montanari, and Sewoong Oh.
\newblock Matrix completion from a few entries.
\newblock \emph{IEEE Transactions on Information Theory}, 56\penalty0
  (6):\penalty0 2980--2998, 2010.

\bibitem[Koch and Lubich(2010)]{koch2010dynamical}
Othmar Koch and Christian Lubich.
\newblock Dynamical tensor approximation.
\newblock \emph{SIAM Journal on Matrix Analysis and Applications}, 31\penalty0
  (5):\penalty0 2360--2375, 2010.

\bibitem[Kolda and Bader(2009)]{kolda2009tensor}
Tamara~G Kolda and Brett~W Bader.
\newblock Tensor decompositions and applications.
\newblock \emph{SIAM Review}, 51\penalty0 (3):\penalty0 455--500, 2009.

\bibitem[Kressner et~al.(2014)Kressner, Steinlechner, and
  Vandereycken]{kressner2014low}
Daniel Kressner, Michael Steinlechner, and Bart Vandereycken.
\newblock Low-rank tensor completion by riemannian optimization.
\newblock \emph{BIT Numerical Mathematics}, 54\penalty0 (2):\penalty0 447--468,
  2014.

\bibitem[Liu et~al.(2012)Liu, Musialski, Wonka, and Ye]{liu2012tensor}
Ji~Liu, Przemyslaw Musialski, Peter Wonka, and Jieping Ye.
\newblock Tensor completion for estimating missing values in visual data.
\newblock \emph{IEEE Transactions on Pattern Analysis and Machine
  Intelligence}, 35\penalty0 (1):\penalty0 208--220, 2012.

\bibitem[Loh and Wainwright(2012)]{loh2012high}
Po-Ling Loh and Martin~J Wainwright.
\newblock High-dimensional regression with noisy and missing data: Provable
  guarantees with nonconvexity.
\newblock \emph{The Annals of Statistics}, 40\penalty0 (3):\penalty0
  1637--1664, 2012.

\bibitem[Lounici(2013)]{lounici2013sparse}
Karim Lounici.
\newblock Sparse principal component analysis with missing observations.
\newblock In \emph{High dimensional probability VI}, pages 327--356. Springer,
  2013.

\bibitem[Lounici(2014)]{lounici2014high}
Karim Lounici.
\newblock High-dimensional covariance matrix estimation with missing
  observations.
\newblock \emph{Bernoulli}, 20\penalty0 (3):\penalty0 1029--1058, 2014.

\bibitem[Luo and Zhang(2021)]{luo2021low}
Yuetian Luo and Anru~R Zhang.
\newblock Low-rank tensor estimation via riemannian gauss-newton: Statistical
  optimality and second-order convergence.
\newblock \emph{arXiv preprint arXiv:2104.12031}, 2021.

\bibitem[Ma et~al.(2018)Ma, Wang, Chi, and Chen]{ma2018implicit}
Cong Ma, Kaizheng Wang, Yuejie Chi, and Yuxin Chen.
\newblock Implicit regularization in nonconvex statistical estimation: Gradient
  descent converges linearly for phase retrieval and matrix completion.
\newblock \emph{Foundations of Computational Mathematics}, 2018.

\bibitem[Ma et~al.(2020)Ma, Wang, Chi, and Chen]{ma2020implicit}
Cong Ma, Kaizheng Wang, Yuejie Chi, and Yuxin Chen.
\newblock Implicit regularization in nonconvex statistical estimation: Gradient
  descent converges linearly for phase retrieval, matrix completion, and blind
  deconvolution.
\newblock \emph{Foundations of Computational Mathematics}, 20\penalty0 (3),
  2020.

\bibitem[Montanari and Sun(2018)]{montanari2018spectral}
Andrea Montanari and Nike Sun.
\newblock Spectral algorithms for tensor completion.
\newblock \emph{Communications on Pure and Applied Mathematics}, 71\penalty0
  (11):\penalty0 2381--2425, 2018.

\bibitem[Mu et~al.(2014)Mu, Huang, Wright, and Goldfarb]{mu2014square}
Cun Mu, Bo~Huang, John Wright, and Donald Goldfarb.
\newblock Square deal: Lower bounds and improved relaxations for tensor
  recovery.
\newblock In \emph{International conference on machine learning}, pages 73--81.
  PMLR, 2014.

\bibitem[Oseledets(2011)]{oseledets2011tensor}
Ivan~V Oseledets.
\newblock Tensor-train decomposition.
\newblock \emph{SIAM Journal on Scientific Computing}, 33\penalty0
  (5):\penalty0 2295--2317, 2011.

\bibitem[Rauhut et~al.(2017)Rauhut, Schneider, and Stojanac]{rauhut2017low}
Holger Rauhut, Reinhold Schneider, and {\v{Z}}eljka Stojanac.
\newblock Low rank tensor recovery via iterative hard thresholding.
\newblock \emph{Linear Algebra and its Applications}, 523:\penalty0 220--262,
  2017.

\bibitem[Tong et~al.(2021)Tong, Ma, Prater-Bennette, Tripp, and
  Chi]{tong2021scaling}
Tian Tong, Cong Ma, Ashley Prater-Bennette, Erin Tripp, and Yuejie Chi.
\newblock Scaling and scalability: Provable nonconvex low-rank tensor
  estimation from incomplete measurements.
\newblock \emph{arXiv preprint arXiv:2104.14526}, 2021.

\bibitem[Tropp(2015)]{tropp2015introduction}
Joel~A Tropp.
\newblock An introduction to matrix concentration inequalities.
\newblock \emph{Foundations and Trends{\textregistered} in Machine Learning},
  8\penalty0 (1-2):\penalty0 1--230, 2015.

\bibitem[Tucker(1966)]{tucker1966some}
Ledyard~R Tucker.
\newblock Some mathematical notes on three-mode factor analysis.
\newblock \emph{Psychometrika}, 31\penalty0 (3):\penalty0 279--311, 1966.

\bibitem[Vandereycken(2013)]{vandereycken2013low}
Bart Vandereycken.
\newblock Low-rank matrix completion by riemannian optimization.
\newblock \emph{SIAM Journal on Optimization}, 23\penalty0 (2):\penalty0
  1214--1236, 2013.

\bibitem[Wei et~al.(2016)Wei, Cai, Chan, and Leung]{wei2016guarantees}
Ke~Wei, Jian-Feng Cai, Tony~F Chan, and Shingyu Leung.
\newblock Guarantees of {R}iemannian optimization for low rank matrix recovery.
\newblock \emph{SIAM Journal on Matrix Analysis and Applications}, 37\penalty0
  (3):\penalty0 1198--1222, 2016.

\bibitem[Wei et~al.(2020)Wei, Cai, Chan, and Leung]{wei2020guarantees}
Ke~Wei, Jian-Feng Cai, Tony~F Chan, and Shingyu Leung.
\newblock Guarantees of {R}iemannian optimization for low rank matrix
  completion.
\newblock \emph{Inverse Problems \& Imaging}, 14\penalty0 (2):\penalty0 233,
  2020.

\bibitem[Xia and Yuan(2019)]{xia2019polynomial}
Dong Xia and Ming Yuan.
\newblock On polynomial time methods for exact low-rank tensor completion.
\newblock \emph{Foundations of Computational Mathematics}, 19\penalty0
  (6):\penalty0 1265--1313, 2019.

\bibitem[Xia et~al.(2017)Xia, Yuan, and Zhang]{xia2017statistically}
Dong Xia, Ming Yuan, and Cun-Hui Zhang.
\newblock Statistically optimal and computationally efficient low rank tensor
  completion from noisy entries.
\newblock \emph{arXiv preprint arXiv:1711.04934}, 2017.

\bibitem[Xia et~al.(2021)Xia, Yuan, and Zhang]{xia2021statistically}
Dong Xia, Ming Yuan, and Cun-Hui Zhang.
\newblock Statistically optimal and computationally efficient low rank tensor
  completion from noisy entries.
\newblock \emph{The Annals of Statistics}, 49\penalty0 (1), 2021.

\bibitem[Yuan and Zhang(2016)]{yuan2016tensor}
Ming Yuan and Cun-Hui Zhang.
\newblock On tensor completion via nuclear norm minimization.
\newblock \emph{Foundations of Computational Mathematics}, 16\penalty0
  (4):\penalty0 1031--1068, 2016.

\bibitem[Zhang et~al.(2018)Zhang, Cai, and Wu]{zhang2018heteroskedastic}
Anru~R Zhang, T~Tony Cai, and Yihong Wu.
\newblock Heteroskedastic pca: Algorithm, optimality, and applications.
\newblock \emph{arXiv preprint arXiv:1810.08316}, 2018.

\bibitem[Zhang and Aeron(2016)]{zhang2016exact}
Zemin Zhang and Shuchin Aeron.
\newblock Exact tensor completion using t-svd.
\newblock \emph{IEEE Transactions on Signal Processing}, 65\penalty0
  (6):\penalty0 1511--1526, 2016.

\bibitem[Zhu et~al.(2019)Zhu, Wang, and Samworth]{zhu2019high}
Ziwei Zhu, Tengyao Wang, and Richard~J Samworth.
\newblock High-dimensional principal component analysis with heterogeneous
  missingness.
\newblock \emph{arXiv preprint arXiv:1906.12125}, 2019.

\end{thebibliography}

\appendix
\section{Proof of Theorem~\ref{base and induction case theorem}}
\subsection{Base Case for Theorem~\ref{base and induction case theorem}}
\label{sec: base case}
We first list two useful lemmas, whose proofs are deferred to Sections~\ref{Proof of Lemma spectral norm of delta 1,ell} and \ref{proof of lemma spectral norm distance of initialization}.
\begin{lemma}
\label{spectral norm of delta 1,ell}
Under the assumption of Theorem \ref{thm: finite iteration}, the following inequality holds with high probability,
\begin{align*}
&\opnorm{\calP_{\offdiag}\left(\widehat{\mT}_i^{\ell}\widehat{\mT}_i^{{\ell}^\tran}\right)-\mT_i\mT_i^\tran}\\
&\quad\leq  C\lb\lb\frac{\mu^{3/2} r^{3/2}}{n^{3/2}p}+\frac{\mu^2 r^2}{n^2p}\rb\log n+ \sqrt{r}  \lb \frac{\mu^{3/2} r^{3/2}\log^3 n}{n^{3/2}p}  + \sqrt{\frac{\mu ^2 r^2 \log^5 n }{n^2p} }  \rb+\frac{\mu r}{n}\rb\sigma^2_{\max}\lb\calT\rb.
\end{align*}
\end{lemma}

\begin{lemma}
	\label{lemma: spectral norm distance of initialization}
	Suppose  $n\geq C_0 \mu^3 r^5 \kappa^8$ and
	\begin{align}
		\label{cond: p}
		p\geq \max\left\{\frac{C_1 \kappa^8\mu^{3.5} r^6 \log^3 n }{n^{3/2}}, \frac{C_2 \kappa^{16}\mu^6 r^{11} \log^5 n }{n^2}\right\}.
	\end{align}
	With high probability, one has 
	\begin{align}
		\label{eq: spectral norm distance of initialization}
		\opnorm{\mX_i^1 \mR_i^1 - \mU_i} \leq  \frac{1}{2^5}\frac{1}{2^{20} \kappa^6 \mu^2 r^4 },\\
		\label{eq: spectral norm distance of initialization ell}
		\opnorm{\mX_i^{1,\ell} \mR_i^{1,\ell} - \mU_i} \leq  \frac{1}{2^5}\frac{1}{2^{20} \kappa^6 \mu^2 r^4 }.
	\end{align}
\end{lemma}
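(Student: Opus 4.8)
## Proof Proposal for Lemma~\ref{lemma: spectral norm distance of initialization}

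The plan is to reduce both \eqref{eq: spectral norm distance of initialization} and \eqref{eq: spectral norm distance of initialization ell} to a single Davis--Kahan type argument applied to the perturbation bound already isolated in Lemma~\ref{spectral norm of delta 1,ell}. Observe that $\mX_i^1$ consists of the top-$r$ eigenvectors of $\calP_{\offdiag}(\widehat{\mT}_i^\ell\widehat{\mT}_i^{\ell^\tran})$ while $\mU_i$ consists of the top-$r$ eigenvectors of $\mT_i\mT_i^\tran = \mU_i\bLambda_i\mU_i^\tran$, whose nonzero eigenvalues all lie in $[\sigma_{\min}^2(\calT),\sigma_{\max}^2(\calT)]$ with a spectral gap of at least $\sigma_{\min}^2(\calT)$ to the trailing (zero) eigenvalues. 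Writing $\mDelta_i := \calP_{\offdiag}(\widehat{\mT}_i^\ell\widehat{\mT}_i^{\ell^\tran}) - \mT_i\mT_i^\tran$, the Davis--Kahan $\sin\Theta$ theorem (in the $\|\cdot\|$ / two-to-one form, accounting for the orthogonal alignment $\mR_i^1$) gives
\begin{align*}
\opnorm{\mX_i^1\mR_i^1 - \mU_i} \leq \frac{C\,\opnorm{\mDelta_i}}{\sigma_{\min}^2(\calT)},
\end{align*}
and the same inequality for the leave-one-out version with $\mDelta_i$ replaced by the corresponding perturbation; Lemma~\ref{spectral norm of delta 1,ell} bounds exactly this latter quantity (it is stated for $\widehat{\mT}_i^\ell$, and the non-leave-one-out case $\widehat{\mT}_i$ is the special case where $\Omega_{-\ell}=\Omega$ and $\calP_\ell = 0$, or can be obtained by an identical but simpler computation).

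Next I would plug in the bound from Lemma~\ref{spectral norm of delta 1,ell} and divide by $\sigma_{\min}^2(\calT) = \sigma_{\max}^2(\calT)/\kappa^2$. This leaves a sum of five terms, each of which I must show is at most $\frac{1}{C}\cdot\frac{1}{2^5}\cdot\frac{1}{2^{20}\kappa^6\mu^2 r^4}$ under the stated conditions on $n$ and $p$. The term $\frac{\mu r}{n}\cdot\kappa^2$ is controlled by $n\geq C_0\mu^3 r^5\kappa^8$; the terms $\frac{\mu^{3/2}r^{3/2}\log n}{n^{3/2}p}\kappa^2$ and $\sqrt{r}\cdot\frac{\mu^{3/2}r^{3/2}\log^3 n}{n^{3/2}p}\cdot\kappa^2$ are controlled by the first branch of \eqref{cond: p}, namely $p\gtrsim \kappa^8\mu^{3.5}r^6\log^3 n/n^{3/2}$ (note $\sqrt{r}\cdot\mu^{3/2}r^{3/2} = \mu^{3/2}r^2$, and after dividing by $p$ and multiplying by $\kappa^2$ one needs $p$ to absorb $\kappa^2\cdot\mu^{3/2}r^2\cdot 2^{25}\kappa^6\mu^2 r^4$, which matches the $\kappa^8\mu^{3.5}r^6$ scaling); and the two terms involving $\frac{\mu^2 r^2}{n^2 p}$ and $\sqrt{r}\sqrt{\frac{\mu^2 r^2\log^5 n}{n^2 p}} = \sqrt{\frac{\mu^2 r^3\log^5 n}{n^2 p}}$ are controlled by the second branch $p\gtrsim \kappa^{16}\mu^6 r^{11}\log^5 n/n^2$ (the square-root term is the binding one: squaring the required inequality $\sqrt{\mu^2 r^3\log^5 n/(n^2 p)}\cdot\kappa^2\lesssim 1/(\kappa^6\mu^2 r^4)$ yields $p\gtrsim \kappa^{16}\mu^6 r^{11}\log^5 n/n^2$). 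Each of these is a routine bookkeeping check once the target constant $2^{-25}\kappa^{-6}\mu^{-2}r^{-4}$ is fixed.

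The main obstacle is really Lemma~\ref{spectral norm of delta 1,ell} itself, which is proved separately; granting it, the remaining work is the Davis--Kahan reduction plus the constant-chasing above. Two minor technical points deserve care. First, the $\sin\Theta$ bound naturally controls $\opnorm{\mX_i^1{\mX_i^1}^\tran - \mU_i\mU_i^\tran}$ (a subspace distance); converting to $\opnorm{\mX_i^1\mR_i^1 - \mU_i}$ with the optimal orthogonal $\mR_i^1$ costs only a constant factor (at most $\sqrt{2}$), which I absorb into $C$. Second, one should verify the perturbation is small enough that the top-$r$ eigenspace of the perturbed matrix is well-defined and separated, i.e.\ $\opnorm{\mDelta_i} \leq \sigma_{\min}^2(\calT)/4$ say; this follows a fortiori from the same estimates, so it is not a genuine extra hypothesis. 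I would present the argument once for $\mX_i^1$ and remark that the leave-one-out case $\mX_i^{1,\ell}$ is verbatim identical, since Lemma~\ref{spectral norm of delta 1,ell} already treats the leave-one-out perturbation $\widehat{\mT}_i^\ell$ directly.
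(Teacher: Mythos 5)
Your proposal is correct and follows essentially the same route as the paper: bound the spectral-norm perturbation of the (diagonal-deleted) Gram matrices, apply a Davis--Kahan-type eigenvector perturbation bound with optimal orthogonal alignment (the paper uses Lemma~\ref{lemma: Ma 2017 lemma 45}, which gives $\opnorm{\mX_i^1\mR_i^1-\mU_i}\leq 3\opnorm{\bDelta_i}/\sigma_{\min}^2(\calT)$ directly), and then do the same constant chasing against the assumptions on $n$ and $p$, after checking the perturbation is at most a quarter of the eigengap. The only cosmetic difference is that for the non-leave-one-out matrix $\calP_{\offdiag}(\widehat{\mT_i}\widehat{\mT_i}^{\tran})$ (note this, not the leave-one-out Gram matrix, is what defines $\mX_i^1$) the paper invokes a sharper cited bound with only $\log n$ factors, whereas you reuse the Lemma~\ref{spectral norm of delta 1,ell}-type bound for both cases; this is harmless since the hypotheses \eqref{cond: p} already carry the $\log^3 n$ and $\log^5 n$ factors.
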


\subsubsection{Bounding \texorpdfstring{$\opnorm{\mE_i^{0} }$}{TEXT} and \texorpdfstring{$\opnorm{\mE_i^{0,\ell} }$}{TEXT}}
In light of the symmetry of tensor matricization  among different modes, we only focus on bounding $\opnorm{\mE_1^{0} }$ and $\opnorm{\mE_1^{0,\ell}}$, and omit the proofs for the other two modes. The spectral norm of $\mE_1^0$ can  be decomposed as follows:
\begin{align*}
	\opnorm{\mE_1^{0}}  &= \opnorm{\calM_1\lb\lb\calI-p^{-1}\calP_{\Omega}\rb\lb-\calT\rb\ttimes \mX_i^1{\mX_i^1}^\tran + \calT \ttimes  \mX_i^1{\mX_i^1}^\tran  - \calT \rb}\\
	&\leq  \underbrace{\opnorm{\calM_1 \lb \lb\calI-p^{-1}\calP_{\Omega}\rb\lb-\calT\rb \ttimes \mX_i^1{\mX_i^1}^\tran \rb} }_{=:\varphi_1}+ \underbrace{ \opnorm{\calM_1 \lb  \calT \ttimes  \mX_i^1{\mX_i^1}^\tran  - \calT \rb}}_{=:\varphi_2}.
\end{align*}
\paragraph{Bounding $\varphi_1$.}
Notice that  $\lb\calI-p^{-1}\calP_{\Omega}\rb\lb-\calT\rb\ttimes \mX_i^1{\mX_i^1}^\tran $ is a tensor of multilinear rank at most $(r,r,r)$. Applying  Lemma~\ref{lemma: Lemma 6 in XYZ} yields that
\begin{align}
	\label{eq I1}
	\varphi_1 &=\opnorm{\calM_1 \lb \lb\calI-p^{-1}\calP_{\Omega}\rb\lb-\calT\rb \ttimes \mX_i^1{\mX_i^1}^\tran \rb}
	\leq\sqrt{r} \opnorm{ \lb\calI-p^{-1}\calP_{\Omega}\rb\lb-\calT\rb\ttimes \mX_i^1{\mX_i^1}^\tran }\notag \\
	&\leq \sqrt{r} \opnorm{\lb p^{-1}\calP_{\Omega} - \calI \rb (\calT)} \stackrel{(a)}{\leq} \sqrt{r} C \lb \frac{\log^3 n}{p} \infnorm{\calT} + \sqrt{\frac{3\log^5 n}{p} \max_{i=1,2,3}\twoinf{  \mT_i^\tran }^2 } \rb \notag  \\
    	&\stackrel{(b)}{\leq} \sqrt{r} C \lb \frac{\mu^{3/2} r^{3/2}\log^3 n}{n^{3/2}p}  + \sqrt{\frac{\mu ^2 r^2 \log^5 n }{n^2p} }  \rb \sigma_{\max}(\calT)  \stackrel{(c)}{\leq}\frac{1}{2} \frac{1}{2^{20} \kappa^6 \mu^2 r^4}\frac{1}{2} \sigma_{\max}(\calT), 
\end{align}
where $(a)$ is due to Lemma~\ref{lemma: spectral norm of E initial}, $(b)$ follows from Lemma~\ref{lemma: incoherence for T}, and $(c)$ is due to the assumption  
	$p\geq \max\left\{ \frac{  C_1  \kappa^6\mu^{3.5}r^{6}  \log^3n  }{n^{3/2}},  \frac{ C_2 \kappa^{12} \mu^6 r^{11}   \log^5 n}{n^2}\right\}$.

\paragraph{Bounding $\varphi_2$.}
Since $\calT = \calT \ttimes \mU_i\mU_i^\tran$, one has
\begin{align*}
	\calT \ttimes  \mX_i^1{\mX_i^1}^\tran  - \calT  
	&= \calT\times_1 \left(  \mX_1^1{\mX_1^1}^\tran - \mU_1\mU_1^\tran \right) \times_2 \mX_2^1{\mX_2^1}^\tran \times_3 \mX_3^1{\mX_3^1}^\tran \\
	&\quad+  \calT\times_1\mU_1\mU_1^\tran \times_2 \left(  \mX_2^1{\mX_2^1}^\tran - \mU_2\mU_2^\tran \right)\times_3 \mX_3^1{\mX_3^1}^\tran  \\
	&\quad + \calT\times_1\mU_1\mU_1^\tran \times_2\mU_2\mU_2^\tran  \times_3 \left(  \mX_3^1{\mX_3^1}^\tran - \mU_3\mU_3^\tran \right).  
\end{align*}
Consequently,
\begin{align}
	\label{eq I2}
	\varphi_2 &=\opnorm{ \calM_1\left( \calT \ttimes  \mX_i^1{\mX_i^1}^\tran  - \calT  \right) }\leq 3\max_{i = 1,2,3}\opnorm{\mX_i^1{\mX_i^1}^\tran-\mU_i\mU_i^\tran}\cdot\max_{i = 1,2,3}\opnorm{\mT_i}\notag\\
 &\leq 6 \max_{i=1,2,3} \opnorm{\mX_i^1 \mR_i^1 - \mU_i}\cdot  \sigma_{\max}(\calT) \stackrel{(a)}{\leq} 6\cdot   \frac{1}{2^{25}\kappa^6 \mu^2 r^4} \cdot \sigma_{\max}(\calT),
\end{align}
where $(a)$ follows from \eqref{eq: spectral norm distance of initialization}.
\\

Putting \eqref{eq I1} and \eqref{eq I2} together shows that
\begin{align*}
	\opnorm{\mE_1^{0} } &\leq  \frac{1}{2^{20}\kappa^6 \mu^2 r^4} \frac{1}{2} \sigma_{\max}(\calT).
\end{align*}
Using the same argument as above, one can obtain
\begin{align*}
	\opnorm{\mE_1^{0,\ell}} &\leq  \frac{1}{2^{20}\kappa^6 \mu^2 r^4} \frac{1}{2}. \sigma_{\max}(\calT).
\end{align*}
\subsubsection{Bounding \texorpdfstring{$\twoinf{\mX_i^{1} \mR_i^{1} - \mU_i}$}{TEXT}}
\label{sec: incoherence of loo}
Theorem 3.1 in \citep{cai2021subspace} shows that with high probability,
\begin{align*}
    \twoinf{\mX_i^{1  } \mR_i^{1} - \mU_i} &\leq C\kappa^2\lb\frac{\kappa^2\mu^{3/2}r^{3/2}\log n}{n^{3/2}p}+\sqrt{\frac{\kappa^4\mu^2r^2\log n}{n^2p}}+\frac{\kappa^2\mu r}{n}\rb\sqrt{\frac{\mu r}{n}}\\
    &\leq \frac{1}{2}\frac{1}{2^{20}\kappa^2\mu^2r^4}\frac{1}{2}\sqrt{\frac{\mu r}{n}},
\end{align*}
where the last inequality follows from the assumption $n\geq C_0 \kappa^6\mu^{3}r^{5}$ and 
\begin{align}\label{twoinf mx1-mu}
    p\geq \max\left\lbrace \frac{C_1\kappa^6\mu^{3.5}r^{5.5}\log n}{n^{3/2}} ,\frac{C_2\kappa^{12}\mu^6r^{10}\log n}{n^2}\right\rbrace.
\end{align}

\subsubsection{Bounding \texorpdfstring{$\fronorm{\mX_i^{1}\mR_i^1 - \mX_i^{1,\ell} \mT_i^{1,\ell}}$}{TEXT}}
 Lemma 4 in \citep{cai2021subspace}, Lemma 8 in \citep{cai2021nonconvex} and Lemma~\ref{rateoptimal lemma 1} demonstrate that with high probability,
\begin{align}
	\label{eq upper bound of Xr-Xt}
	\fronorm{\mX_i^{1}\mR_i^1 - \mX_i^{1,\ell} \mT_i^{1,\ell}} &\leq\fronorm{\mX_i^{1,\ell}{ \mX_i^{1,\ell} }^\tran  - \mX_i^{1} {\mX_i^{1}}^\tran} \notag \\
	&\leq  C \lb \frac{ \kappa^2\mu^{3/2} r^{3/2}\log n}{n^{3/2} p} + \sqrt{\frac{ \kappa^4\mu^2 r^2\log n}{n^2p} }\rb \sqrt{\frac{\mu r}{n}}\notag  \\
	&\leq \frac{1}{2^{25} \kappa^2 \mu^2 r^4}  \frac{1}{2}\sqrt{\frac{\mu r}{n}},
\end{align}
 where the last line is due to the assumption
	$p\geq \max\left\{ \frac {C_1  \kappa^4\mu^{3.5}r^{5.5}\log n}{n^{3/2}} , \frac{C_2 \kappa^8\mu^6r^{10}\log n }{n^2} \right\}$.
The inequality \eqref{eq upper bound of Xr-Xt} directly implies that \eqref{F norm} holds for $t = 1$.

\subsubsection{Bounding \texorpdfstring{$\twoinf{\mX_i^{1,\ell} \mR_i^{1,\ell} - \mU_i} $}{TEXT}}
Invoking the triangle inequality gives
\begin{align}
	\label{eq Xr-U}
	\twoinf{\mX_i^{1,\ell } \mR_i^{1,\ell} - \mU_i}  &\leq \twoinf{ \mX_i^{1, \ell} \mR_i^{1,\ell} - \mX_i^{1}\mR_i^1} + \twoinf{\mX_i^{1} \mR_i^{1} - \mU_i }\notag \\
	&\leq \fronorm{ \mX_i^{1, \ell} \mR_i^{1,\ell} - \mX_i^{1}\mR_i^1 } + \twoinf{\mX_i^{1} \mR_i^{1} - \mU_i }.
\end{align}
By \eqref{eq: spectral norm distance of initialization} and \eqref{eq upper bound of Xr-Xt},  the following inequalities hold with high probability,
\begin{align*}
	&\opnorm{\mX_i^1\mR_i^1-\mU_i}\opnorm{\mU_i} \leq \frac{1}{2} \quad\mbox{and}\quad\opnorm{\mX_i^1\mR_i^1-\mX_i^{1,\ell}\mT_i^{1,\ell}}\opnorm{\mU_i}\leq \frac{1}{4}.
\end{align*}
Applying Lemma~\ref{lemma Ma 2017 lemma 37} with $\mX_1 = \mX_i^1\mR_i^1,~\mX_2 = \mX_i^{1,\ell}\mT_i^{1,\ell}$, one can see that
\begin{align}
	\label{eq second term}
	\fronorm{\mX_i^1\mR_i^1-\mX_i^{1,\ell}\mR_i^{1,\ell}} &\leq 5\fronorm{\mX_i^1\mR_i^1-\mX_i^{1,\ell}\mT_i^{1,\ell}} \leq  \frac{5}{2^{25} \kappa^2 \mu^2 r^4}  \frac{1}{2}\sqrt{\frac{\mu r}{n}},
\end{align}
where the last line is due to \eqref{eq upper bound of Xr-Xt}. Plugging \eqref{twoinf mx1-mu} and \eqref{eq second term} into \eqref{eq Xr-U} gives
\begin{align}\label{eq upper bound of X1ell-u}
	\twoinf{\mX_i^{1,\ell}\mR_i^{1,\ell}-\mU_i} \leq \frac{1}{ 2^{20} \kappa^2 \mu^{2} r^{4}} \frac{1}{2} \sqrt{\frac{\mu r}{n}}.
\end{align}

\subsubsection{Bounding \texorpdfstring{$\fronorm{ \calE^{0} - \calE^{0,\ell}}$}{TEXT}}
By the definitions of $\calE^0$ and $\calE^{0,\ell}$ in \eqref{calE0} and \eqref{calE0ell}, we can decompose $\calE^{0} - \calE^{0,\ell}$  as 
\begin{align*}
	\calE^{0} - \calE^{0,\ell} 
	&=   \lb \lb \calI - p^{-1}\calP_{\Omega} \rb(-\calT) \rb \ttimes{\mX_i^1}{\mX_i^1}^\tran - \lb \lb \calI - p^{-1}\calP_{\Omega} \rb(-\calT) \rb \ttimes{\mX_i^{1,\ell}}{\mX_i^{1,\ell}}^\tran   \\
	&\quad +  \lb \lb p^{-1}\calP_{\Omega} - p^{-1}\calP_{\Omega_{-\ell}} - \calP_{\ell} \rb(\calT) \rb\ttimes{\mX_i^{1,\ell}}{\mX_i^{1,\ell}}^\tran  \\
	&\quad +   \calT \ttimes {\mX_i^1}{\mX_i^1}^\tran  - \calT \ttimes  {\mX_i^{1,\ell}}{\mX_i^{1,\ell}}^\tran.
\end{align*}
It then follows from the triangle inequality that
\begin{align*}
	\fronorm{ \calE^{0} - \calE^{0,\ell}} &\leq \underbrace{ \fronorm{ \lb \lb \calI - p^{-1}\calP_{\Omega} \rb(-\calT) \rb \ttimes{\mX_i^1}{\mX_i^1}^\tran - \lb \lb \calI - p^{-1}\calP_{\Omega} \rb(-\calT) \rb \ttimes{\mX_i^{1,\ell}}{\mX_i^{1,\ell}}^\tran } }_{=:\varphi_3}\\
	&\quad + \underbrace{\fronorm{ \lb \lb p^{-1}\calP_{\Omega} - p^{-1}\calP_{\Omega_{-\ell}} - \calP_{\ell} \rb(\calT) \rb\ttimes{\mX_i^{1,\ell}}{\mX_i^{1,\ell}}^\tran  } }_{=:\varphi_4}\\
	&\quad + \underbrace{ \fronorm{ \calT \ttimes {\mX_i^1}{\mX_i^1}^\tran  - \calT \ttimes  {\mX_i^{1,\ell}}{\mX_i^{1,\ell}}^\tran }}_{=:\varphi_5}.
\end{align*}
\paragraph{Bounding $\varphi_3$.}
Simple calculation reveals that
\begin{align*}
    &\lb \lb \calI - p^{-1}\calP_{\Omega} \rb(-\calT) \rb \ttimes{\mX_i^1}{\mX_i^1}^\tran - \lb \lb \calI - p^{-1}\calP_{\Omega} \rb(-\calT) \rb \ttimes{\mX_i^{1,\ell}}{\mX_i^{1,\ell}}^\tran \\
    &\quad= \lb \lb \calI - p^{-1}\calP_{\Omega} \rb( -\calT) \rb \times_1 \left( {\mX_1^1}{\mX_1^1}^\tran  - {\mX_1^{1,\ell}}{\mX_1^{1,\ell}} \right) \times_2 {\mX_2^{1,\ell}}{\mX_2^{1,\ell}}^\tran  \times_3 {\mX_3^{1,\ell}}{\mX_3^{1,\ell}}^\tran\\
    &\quad\quad +\lb \lb \calI - p^{-1}\calP_{\Omega} \rb( -\calT) \rb \times_1   {\mX_1^1}{\mX_1^1}^\tran   \times_2 \left( {\mX_2^1}{\mX_2^1}^\tran  - {\mX_2^{1,\ell}}{\mX_2^{1,\ell}}^\tran \right)\times_3 {\mX_3^{1,\ell}}{\mX_3^{1,\ell}}^\tran\\
    &\quad \quad  +\lb \lb \calI - p^{-1}\calP_{\Omega} \rb( -\calT) \rb \times_1   {\mX_1^1}{\mX_1^1}^\tran   \times_2 {\mX_2^1}{\mX_2^1}^\tran \times_3\left( {\mX_3^1}{\mX_3^1}^\tran  - {\mX_3^{1,\ell}}{\mX_3^{1,\ell}}^\tran \right).
\end{align*}
Thus we have
\begin{align*}
    \varphi_3 
	&\leq \sum_{i = 1}^3\fronorm{\mX_i^1{\mX_i^1}^\tran-\mX_i^{1,\ell}{\mX_i^{1,\ell}}^\tran}\cdot \sqrt{r} \opnorm{\lb  \calI - p^{-1}\calP_{\Omega} \rb( \calT)  },
\end{align*}
where the inequality is due to Lemma~\ref{lemma: Lemma 6 in XYZ}. Together with \eqref{eq I1} and  \eqref{eq upper bound of Xr-Xt}, one can see that
\begin{align}\label{bounding I6}
	\varphi_3 &\leq 3\max_{i = 1,2,3} \fronorm{ {\mX_i^1}{\mX_i^1}^\tran  - {\mX_i^{1,\ell}}{\mX_i^{1,\ell}}^\tran } \cdot \sqrt{r} \opnorm{\lb  \calI - p^{-1}\calP_{\Omega} \rb( \calT)  }\notag\\
	&\leq 3\cdot \frac{1}{2^{25} \kappa^2 \mu^2 r^4}  \frac{1}{2}\sqrt{\frac{\mu r}{n}}\cdot \sqrt{r}\lb \frac{\mu^{3/2} r^{3/2}\log^3 n}{n^{3/2}p}  + \sqrt{\frac{\mu ^2 r^2 \log^5 n }{n^2p} }  \rb \sigma_{\max}(\calT) \notag\\
	&\leq \frac{1}{4} \frac{1}{2^{20} \kappa^4 \mu^2 r^4} \frac{1}{2} \sqrt{\frac{\mu r}{n}}  \sigma_{\max}(\calT),
\end{align}
where it is assumed that
	$p\geq \max\left\{ \frac {C_1  \kappa^4\mu^{3.5}r^{5.5}\log n}{n^{3/2}} , \frac{C_2 \kappa^8\mu^6r^{10}\log n }{n^2} \right\}$.

\paragraph{Bounding $\varphi_4$.}
Since $\fronorm{\calZ} = \fronorm{\calM_i\lb\calZ\rb}$ for any tensor $\calZ$, one has
\begin{align*}
	\varphi_{4} &= \fronorm{ {\mX_{1}^{1,\ell}}{\mX_{1}^{1,\ell}}^\tran\calM_1\lb \lb p^{-1}\calP_{\Omega} - p^{-1}\calP_{\Omega_{-\ell}} - \calP_{\ell} \rb(\calT) \rb \lb \mX_{3}^{1,\ell} \otimes \mX_{2}^{1,\ell} \rb \lb \mX_{3}^{1,\ell} \otimes \mX_{2}^{1,\ell} \rb^\tran}\\
	&\leq\fronorm{ {\mX_{1}^{1,\ell}}^\tran\calM_1\lb \lb p^{-1}\calP_{\Omega} - p^{-1}\calP_{\Omega_{-\ell}} - \calP_{\ell} \rb(\calT) \rb \lb \mX_{3}^{1,\ell} \otimes \mX_{2}^{1,\ell} \rb }.
\end{align*}
By the definition of $\calP_{\Omega_{-\ell}}$ and $\calP_{\ell}$, it can be seen that the entries of $\calM_1\lb \lb p^{-1}\calP_{\Omega} - p^{-1}\calP_{\Omega_{-\ell}} - \calP_{\ell} \rb(\calT) \rb$ are all zeros except those on the $\ell$-th row or the $j$-th column for any $j\in\Gamma$, where $\Gamma$ is an index set  defined by
\begin{align*}
    \Gamma = \{\ell,n+\ell,\cdots, n(\ell-2)+\ell,n(\ell-1)+1,\cdots,n(\ell-1)+n, n\ell+\ell,\cdots, n(n-1)+\ell\}.
\end{align*}
Using this fact, we have
\begin{align*}
	\varphi_4
	&\leq \fronorm{ \calP_{\ell,:}\lb \calM_1\lb \lb p^{-1}\calP_{\Omega} - p^{-1}\calP_{\Omega_{-\ell}} - \calP_{\ell} \rb(\calT) \rb \rb \lb \mX_{3}^{1,\ell} \otimes \mX_{2}^{1,\ell} \rb}\\
	&\quad + \fronorm{{\mX_{1}^{1,\ell}}^\tran\calP_{-\ell,\Gamma}\lb \calM_1\lb \lb p^{-1}\calP_{\Omega} - p^{-1}\calP_{\Omega_{-\ell}} - \calP_{\ell} \rb(\calT) \rb \rb \lb \mX_{3}^{1,\ell} \otimes \mX_{2}^{1,\ell} \rb}\\
	:&=\varphi_{4,1} + \varphi_{4,2}.
\end{align*}
Here for any matrix $\mM\in\R^{n\times n^2}$,  $\calP_{\ell,:}(\mM)$ and $\calP_{-\ell, \Gamma}(\mM)$ are defined by
\begin{align}
\label{eq submatrix row}
&\lsb \calP_{\ell,:}(\mM) \rsb_{i,j} = \begin{cases}
	\lsb\mM\rsb_{i,j}, &\text{ if } i =\ell,\\
	0, &\text{ otherwise,}
\end{cases}\\
\label{eq submatrix columns}
&\lsb \calP_{-\ell, \Gamma}(\mM) \rsb_{i,j} = \begin{cases}
	\lsb\mM\rsb_{i,j}, &\text{ if }i\neq \ell \text{ and } j\in \Gamma,\\
	0,&\text{otherwise.}
\end{cases}
\end{align}

\begin{itemize}
	\item For $\varphi_{4,1} $, note that
	\begin{align*}
	    &\calP_{\ell,:}\lb \calM_1\lb \lb p^{-1}\calP_{\Omega} - p^{-1}\calP_{\Omega_{-\ell}} - \calP_{\ell} \rb(\calT) \rb \rb \lb \mX_{3}^{1,\ell} \otimes \mX_{2}^{1,\ell} \rb \\
	    &\quad= \sum_{j=1}^{n^2} \lb1 -p^{-1} \delta_{\ell, j} \rb [\calM_1(\calT)]_{\ell, j} \lsb \mX_{3}^{1,\ell} \otimes \mX_{2}^{1,\ell} \rsb_{j,:} := \sum_{j=1}^{n^2} \vx_j^\tran,
	\end{align*}
 where $\mX^{1,\ell}_{2}$ and $\mX^{1,\ell}_{3}$  are independent of $\{\delta_{\ell, j}\}_{j\in[n^2]}$ by construction.
 A simple computation implies that
	\begin{align*}
		\opnorm{\vx_j} &\leq p^{-1} \infnorm{\calT}\cdot \lb  \max_{i = 1,2,3}\twoinf{\mX_i^{1,\ell}}\rb^2,\\
		\opnorm{\sum_{j=1}^{n^2}\E{\vx_j^\tran\vx_j}} &\leq p^{-1}\twoinf{\mT_1}^2\cdot \lb  \max_{i = 1,2,3}\twoinf{\mX_i^{1,\ell}} \rb^4,\\
		\opnorm{\sum_{j=1}^{n^2}\E{\vx_j\vx_j^\tran}} &\leq p^{-1}\twoinf{\mT_1}^2\cdot \lb  \max_{i = 1,2,3}\twoinf{\mX_i^{1,\ell}} \rb^4.
	\end{align*}
	By the matrix Bernstein inequality \citep[Theorem 6.1.1]{tropp2015introduction}, the following inequality holds with high probability,
 \begin{small}
	\begin{align*}
	    	\varphi_{4,1}  &\leq C \lb \frac{\log n}{p}  \infnorm{\calT}\cdot \lb  \max_{i = 1,2,3}\twoinf{\mX_i^{1,\ell}}\rb^2 + \sqrt{ \frac{\log n}{p} \twoinf{\mT_1}^2\cdot \lb  \max_{i = 1,2,3}\twoinf{\mX_i^{1,\ell}} \rb^4  } \rb
	\end{align*}
 \end{small}
   Moreover, from \eqref{eq upper bound of X1ell-u}, we have
	\begin{align}\label{upper bound of x1ell-u}
		\twoinf{\mX_i^{1,\ell}} \leq\twoinf{\mX_i^{1,\ell}\mR_i^{1,\ell} - \mU_i} + \twoinf{\mU_i} \leq 2\sqrt{ \frac{\mu r}{n}}.
	\end{align}
	Therefore, with high probability,
	\begin{align*}
		\varphi_{4,1} 
		&\leq C\lb \frac{4\log n}{p}\lb\frac{\mu r}{n}\rb^{5/2}+\sqrt{\frac{16\log n}{p}\lb\frac{\mu r}{n}\rb^{3}}\rb\sigma_{\max}\lb\calT\rb\\
  &\leq 
  \frac{1}{8} \frac{1}{2^{20} \kappa^4 \mu^2 r^4} \frac{1}{2} \sqrt{\frac{\mu r}{n}}  \sigma_{\max}(\calT),
	\end{align*}
 where the last step is due to the assumption
	$p\geq   \frac{C_2 \kappa^8\mu^6 r^{10}\log n }{n^2}$.

	\item For $\varphi_{4,2} $, it can be rearranged as follows:
	\begin{align*}
		\varphi_{4,2} &\leq \sqrt{r}\opnorm{\calP_{-\ell,\Gamma}\lb \calM_1\lb \lb p^{-1}\calP_{\Omega} - p^{-1}\calP_{\Omega_{-\ell}} - \calP_{\ell} \rb(\calT) \rb \rb \lb \mX_{3}^{1,\ell} \otimes \mX_{2}^{1,\ell} \rb}\\
		&=\sqrt{r}\opnorm{ \sum_{j\in\Gamma} \underbrace{\lsb  \calP_{-\ell,:}\lb \calM_1\lb\lb p^{-1}\calP_{\Omega} - p^{-1}\calP_{\Omega_{-\ell}} - \calP_{\ell} \rb(\calT)  \rb \rb\rsb_{:, j} \lsb\mX_{3}^{1,\ell} \otimes \mX_{2}^{1,\ell} \rsb_{j,:}}_{:=\mZ_j}},
	\end{align*}
	where $\mZ_j$ are independent mean-zero random matrices conditioned on $\mX^{1,\ell}_{2}$ and $\mX^{1,\ell}_{3}$. First, a straightforward computation shows that
	\begin{align*}
		\opnorm{\mZ_j} &\leq \twonorm{ \lsb  \calP_{-\ell,:}\lb \calM_1\left(\lb p^{-1}\calP_{\Omega} - p^{-1}\calP_{\Omega_{-\ell}} - \calP_{\ell} \rb(\calT) \right) \rb \rsb_{:, j} }\cdot \twonorm{\lsb\mX_{3}^{1,\ell} \otimes \mX_{2}^{1,\ell} \rsb_{j,:} }\\
		&\leq \twonorm{ \lsb \calM_1\left(\lb p^{-1}\calP_{\Omega} - p^{-1}\calP_{\Omega_{-\ell}} - \calP_{\ell} \rb(\calT) \right) \rsb_{:, j} }\cdot \twonorm{\lsb\mX_{3}^{1,\ell} \otimes \mX_{2}^{1,\ell} \rsb_{j,:} }\\
		&\leq \frac{1}{p}\twoinf{\mT_1^\tran} \cdot \lb \max_{i = 1,2,3} \twoinf{\mX_i^{1,\ell}} \rb^2.
	\end{align*}
	Moreover, one has 
	\begin{align*}
	\sigma_{\mZ}^2:=&\opnorm{\sum_{j\in\Gamma }\E{\mZ_j\mZ_j^\tran}}\leq \frac{1}{p}\lb \max_{i = 1,2,3} \twoinf{\mX_i^{1,\ell}} \rb^4 \cdot \twoinf{\mT_1}^2\leq \frac{16}{p} \cdot \lb\frac{\mu r }{n}\rb^3 \sigma_{\max}^2 (\calT),\\
	\sigma_{\mZ'}^2:=&\opnorm{\sum_{j\in\Gamma }\E{\mZ_j^\tran\mZ_j}}\leq \frac{2n}{p}\twoinf{\mT_1^\tran}^2\cdot \lb \max_{i = 1,2,3} \twoinf{\mX_i^{1,\ell}} \rb^4\leq \frac{32n}{p}\cdot  \lb\frac{\mu r}{n}\rb^4\sigma_{\max}^2 (\calT).
\end{align*}
Therefore, by the matrix Bernstein inequality, we know that
\begin{align*}
    \varphi_{4,2}&\leq \sqrt{r}\cdot C\lb \frac{\log n}{p}\twoinf{\mT_1^\tran} \cdot \lb \max_{i = 1,2,3} \twoinf{\mX_i^{1,\ell}} \rb^2+\sqrt{\max\left\lbrace \sigma_{\mZ}^2,\sigma_{\mZ'}^2\right\rbrace\log n}\rb\\
    &\leq \sqrt{r}\cdot C\left(  \frac{\log n}{p} \left(\frac{\mu r  }{n}\right)^{3/2} + \sqrt{ \frac{ n\log n}{p}\cdot  \lb \frac{\mu r}{n} \rb^3  }\right) \sqrt{ \frac{\mu r  }{n}}\sigma_{\max}(\calT)  \\
		&\leq\frac{1}{8} \frac{1}{2^{20} \kappa^4 \mu^2 r^4} \frac{1}{2} \sqrt{\frac{\mu r}{n}}  \sigma_{\max}(\calT)
\end{align*}
holds with high probability  under the assumption 
		$p\geq \max\left\{  \frac{C_1 \kappa^4\mu^{3.5} r^{6} \log n }{n^{3/2}}, \frac{ C_2\kappa^8 \mu^7 r^{12}\log n }{n^2} \right\}$.
		
\end{itemize}
Combining the above bounds together, one can find that
\begin{align} \label{bounding I7}
	\varphi_4 &\leq \frac{1}{4} \frac{1}{2^{20} \kappa^4 \mu^2 r^4} \frac{1}{2} \sqrt{\frac{\mu r}{n}}  \sigma_{\max}(\calT).
\end{align}
\paragraph{Bounding $\varphi_5$.}
It follows from the triangle inequality that
\begin{align}\label{bounding I8}
	\varphi_5 &\leq \fronorm{  \calT \times_1 \left( {\mX_1^1}{\mX_1^1}^\tran  - {\mX_1^{1,\ell}}{\mX_1^{1,\ell}}^\tran \right) \times_2 {\mX_2^{1,\ell}}{\mX_2^{1,\ell}}^\tran  \times_3 {\mX_3^{1,\ell}}{\mX_3^{1,\ell}}^\tran } \notag\\
	&\quad + \fronorm{ \calT \times_1   {\mX_1^1}{\mX_1^1}^\tran   \times_2 \left( {\mX_2^1}{\mX_2^1}^\tran  - {\mX_2^{1,\ell}}{\mX_2^{1,\ell}}^\tran \right)\times_3 {\mX_3^{1,\ell}}{\mX_3^{1,\ell}}^\tran }\notag\\
	&\quad + \fronorm{ \calT \times_1   {\mX_1^1}{\mX_1^1}^\tran   \times_2 {\mX_2^1}{\mX_2^1}^\tran \times_3\left( {\mX_3^1}{\mX_3^1}^\tran  - {\mX_3^{1,\ell}}{\mX_3^{1,\ell}} ^\tran \right) }\notag\\
	&\leq 3 \max_{i = 1,2,3} \fronorm{ {\mX_i^1}{\mX_i^1}^\tran  - {\mX_i^{1,\ell}}{\mX_i^{1,\ell}} ^\tran } \cdot \sigma_{\max}(\calT)
	\stackrel{(a)}{\leq}\frac{1}{4} \frac{1}{2^{20} \kappa^4 \mu^2 r^4} \frac{1}{2} \sqrt{\frac{\mu r}{n}}  \sigma_{\max}(\calT),
\end{align}
where  $(a)$ follows from \eqref{eq upper bound of Xr-Xt}. 

\paragraph{Combining $\varphi_3$, $\varphi_4$ and $\varphi_5$.}
Putting \eqref{bounding I6}, \eqref{bounding I7} and \eqref{bounding I8} together yields that
\begin{align*}
	\fronorm{ \calE^{0} - \calE^{0,\ell}}  \leq  \frac{1}{2^{20} \kappa^4 \mu^2 r^4} \frac{1}{2} \sqrt{\frac{\mu r}{n}}  \sigma_{\max}(\calT).
\end{align*}

\subsection{Induction Step for Theorem~\ref{base and induction case theorem}}
\label{sec: induction case}
For the sake of clarity,  a proof roadmap is presented  in Figure \ref{fig:proofsketch}, which shows the dependencies of different quantities during the induction process.
\begin{figure}[ht!]
\begin{center}
\includegraphics[width=0.8\textwidth]{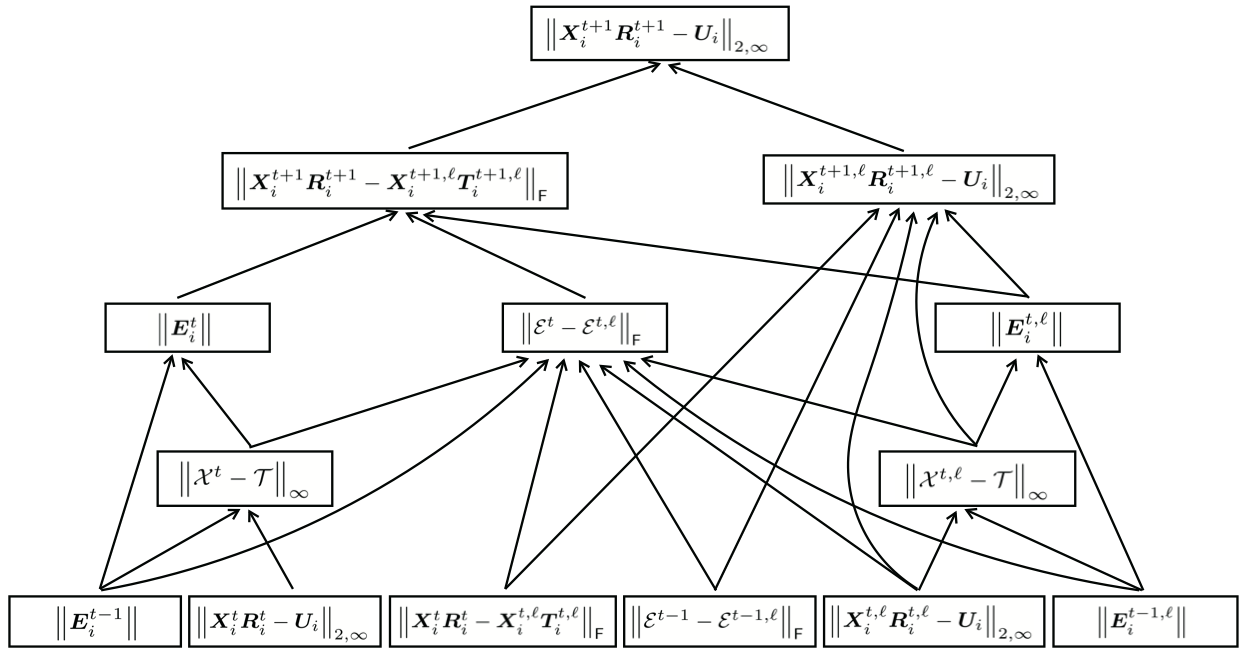}
\end{center}
\caption{Proof roadmap for  the  induction step.}\label{fig:proofsketch}
\end{figure}
We first summarize below several immediate consequences of \eqref{eq induction hypotheis}, which will be useful throughout this section. The proofs of these results are provided in Section \ref{sec: proof of key lemmas}.

\begin{lemma}
	\label{lemma: spectral norm of Xt}
	From the inequalities \eqref{spectral norm of error tensor}, \eqref{spectral norm of loo error tensor} and  \eqref{2 inf norm of loo}, one has
	\begin{align}
		\label{eq spectral norm XR-U}
		\opnorm{\mX_i^t \mR_i^t - \mU_i} &\leq \frac{1}{2^{17} \kappa^4  \mu^2 r^4 }\frac{1}{2^t}, \quad i = 1,2,3,\\
		\label{eq spectral norm XRl-U}
		\opnorm{\mX_i^{t,\ell} \mR_i^{t,\ell} - \mU_i} &\leq \frac{1}{2^{17} \kappa^4  \mu^2 r^4 }\frac{1}{2^t}, \quad i = 1,2,3, \\
		\label{eq spectral norm of Pxt-Pu}
		\opnorm{ {\mX_i^t} {\mX_i^t} ^\tran -  \mU_i\mU_i^\tran } &\leq \frac{1}{2^{17} \kappa^4 \mu^2 r^4} \frac{1}{2^t}, \quad i = 1,2,3,\\
		\label{eq spectral norm of Px-Pu}
		\opnorm{  {\mX_i^{t,\ell}}{\mX_i^{t,\ell}}^\tran - \mU_i\mU_i^\tran } &\leq \frac{1}{2^{17} \kappa^4  \mu^2 r^4 }\frac{1}{2^t}, \quad i = 1,2,3,\\
		\label{eq two inf norm of Px-Pu}
		\twoinf{ {\mX_i^{t,\ell}}{\mX_i^{t,\ell}}^\tran - \mU_i\mU_i^\tran } &\leq  \frac{1}{2^{16} \kappa^2  \mu^2 r^4 }\frac{1}{2^t}\sqrt{\frac{\mu r}{n}}, \quad i = 1,2,3.
	\end{align}
\end{lemma}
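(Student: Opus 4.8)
The plan is to treat $\mX_i^{t}$ (resp.\ $\mX_i^{t,\ell}$) as the matrix of top-$r$ eigenvectors of the symmetric matrix $\bigl(\mT_i+\mE_i^{t-1}\bigr)\bigl(\mT_i+\mE_i^{t-1}\bigr)^{\tran}$ (resp.\ with $\mE_i^{t-1,\ell}$ in place of $\mE_i^{t-1}$), while $\mU_i$ is the matrix of top-$r$ eigenvectors of $\mT_i\mT_i^{\tran}$, and to apply a Davis--Kahan type $\sin\Theta$ bound. Writing
\[
\bDelta_i := \bigl(\mT_i+\mE_i^{t-1}\bigr)\bigl(\mT_i+\mE_i^{t-1}\bigr)^{\tran}-\mT_i\mT_i^{\tran}=\mE_i^{t-1}\mT_i^{\tran}+\mT_i{\mE_i^{t-1}}^{\tran}+\mE_i^{t-1}{\mE_i^{t-1}}^{\tran},
\]
the triangle inequality together with $\opnorm{\mT_i}\le\sigma_{\max}(\calT)$ and $\opnorm{\mE_i^{t-1}}\le\sigma_{\max}(\calT)$ (immediate from \eqref{spectral norm of error tensor}) gives $\opnorm{\bDelta_i}\le 3\sigma_{\max}(\calT)\opnorm{\mE_i^{t-1}}$. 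The relevant eigengap of $\mT_i\mT_i^{\tran}$ is $\sigma_r^2(\mT_i)-\sigma_{r+1}^2(\mT_i)=\sigma_r^2(\mT_i)\ge\sigma_{\min}^2(\calT)$, and \eqref{spectral norm of error tensor} forces $\opnorm{\bDelta_i}$ to be far below $\tfrac12\sigma_{\min}^2(\calT)$. Hence Davis--Kahan yields $\opnorm{{\mX_i^{t}}{\mX_i^{t}}^{\tran}-\mU_i\mU_i^{\tran}}=\opnorm{\sin\Theta(\mX_i^{t},\mU_i)}\le \tfrac{2\opnorm{\bDelta_i}}{\sigma_{\min}^2(\calT)}\le \tfrac{6\kappa\,\opnorm{\mE_i^{t-1}}}{\sigma_{\max}(\calT)}$, and likewise for the leave-one-out iterate. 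Substituting \eqref{spectral norm of error tensor} and \eqref{spectral norm of loo error tensor}, whose bounds carry the factor $\kappa^{-6}r^{-4}$, and absorbing absolute constants (using $\kappa\ge1$) produces \eqref{eq spectral norm of Pxt-Pu} and \eqref{eq spectral norm of Px-Pu}.

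For \eqref{eq spectral norm XR-U} and \eqref{eq spectral norm XRl-U} I pass from the subspace distance to the Procrustes distance using the elementary identity $\opnorm{\mX\mR-\mU}^2=2\bigl(1-\cos\theta_{\max}\bigr)=\tfrac{2}{1+\cos\theta_{\max}}\sin^2\theta_{\max}$ for the optimal orthogonal $\mR$, where $\theta_{\max}$ is the largest principal angle; thus $\opnorm{\mX_i^{t}\mR_i^{t}-\mU_i}\le\sqrt2\,\opnorm{\sin\Theta(\mX_i^{t},\mU_i)}$ (indeed essentially $\opnorm{\sin\Theta}$, since the angles here are $o(1)$), and I reuse the Davis--Kahan bound from the previous paragraph.

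The $\ell_{2,\infty}$ bound \eqref{eq two inf norm of Px-Pu} rests on the algebraic decomposition, valid for any orthogonal $\mR$,
\[
\mX\mX^{\tran}-\mU\mU^{\tran}=(\mX\mR-\mU)(\mX\mR)^{\tran}+\mU(\mX\mR-\mU)^{\tran},
\]
applied with $\mX=\mX_i^{t,\ell}$, $\mR=\mR_i^{t,\ell}$. Combining this with $\twoinf{\mA\mB}\le\twoinf{\mA}\opnorm{\mB}$ and $\twoinf{\mU\mN}\le\twoinf{\mU}\opnorm{\mN}$ gives $\twoinf{{\mX_i^{t,\ell}}{\mX_i^{t,\ell}}^{\tran}-\mU_i\mU_i^{\tran}}\le\twoinf{\mX_i^{t,\ell}\mR_i^{t,\ell}-\mU_i}+\twoinf{\mU_i}\,\opnorm{\mX_i^{t,\ell}\mR_i^{t,\ell}-\mU_i}$; plugging in the induction hypothesis \eqref{2 inf norm of loo} for the first term, the just-established \eqref{eq spectral norm XRl-U} for the operator-norm factor, and $\twoinf{\mU_i}\le\sqrt{\mu r/n}$ from Lemma~\ref{lemma: incoherence for T} for the third, and noting $\tfrac{1}{2^{20}\kappa^2}+\tfrac{1}{2^{17}\kappa^4}\le\tfrac{1}{2^{16}\kappa^2}$, yields \eqref{eq two inf norm of Px-Pu}.

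The only genuinely delicate point is the bookkeeping around Davis--Kahan: one must verify that $\opnorm{\bDelta_i}$ is strictly below the eigengap $\sigma_{\min}^2(\calT)$ so that the leading $r$-dimensional eigenspace stays well separated under perturbation — this is precisely where the strong $\kappa^{-6}r^{-4}$ factors in \eqref{spectral norm of error tensor}--\eqref{spectral norm of loo error tensor} are consumed — and to keep track that one perturbs $\mT_i\mT_i^{\tran}$, not $\mT_i$ itself, so the effective gap is $\sigma_r^2(\mT_i)$ and the effective perturbation size is of order $\sigma_{\max}(\calT)\opnorm{\mE_i^{t-1}}$, which is what accounts for the extra power of $\kappa$ in \eqref{eq spectral norm XR-U}--\eqref{eq spectral norm XRl-U} relative to the error-tensor bounds. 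Everything else is routine constant arithmetic.
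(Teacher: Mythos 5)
Your argument is essentially the paper's own: you perturb the Gram matrix $\mT_i\mT_i^\tran$ by $\bDelta_i^{t-1}=\mT_i{\mE_i^{t-1}}^\tran+\mE_i^{t-1}\mT_i^\tran+\mE_i^{t-1}{\mE_i^{t-1}}^\tran$, invoke a Davis--Kahan/Weyl-type bound with eigengap $\sigma_{\min}^2(\calT)$ to get the projection and Procrustes estimates (the paper uses its Lemma~\ref{lemma: Ma 2017 lemma 45} for the Procrustes distance and Lemmas~\ref{rateoptimal lemma 1}--\ref{Davis-kahan sintheta theorem} for the projections, which is the same mechanism as your $\sqrt{2}\,\opnorm{\sin\Theta}$ route), and then use exactly the paper's decomposition $(\mX\mR-\mU)(\mX\mR)^\tran+\mU(\mX\mR-\mU)^\tran$ for the $\ell_{2,\infty}$ bound. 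Two small remarks. First, your intermediate constant is off: $2\opnorm{\bDelta_i}/\sigma_{\min}^2(\calT)\le 6\sigma_{\max}(\calT)\opnorm{\mE_i^{t-1}}/\sigma_{\min}^2(\calT)=6\kappa^2\opnorm{\mE_i^{t-1}}/\sigma_{\max}(\calT)$, not $6\kappa\opnorm{\mE_i^{t-1}}/\sigma_{\max}(\calT)$; this is harmless because the hypothesis \eqref{spectral norm of error tensor} carries $\kappa^{-6}$ while the target only needs $\kappa^{-4}$, so $6\cdot 2^{-20}\le 2^{-17}$ still closes the bound. Second, the paper treats $t=1$ separately (via the diagonal-deleted Gram matrix $\calP_{\offdiag}(\widehat{\mT_i}\widehat{\mT_i}^\tran)$ and Lemma~\ref{lemma: spectral norm distance of initialization}) because $\mX_i^1$ is produced by the spectral initialization rather than literally being the top-$r$ eigenvectors of $(\mT_i+\mE_i^0)(\mT_i+\mE_i^0)^\tran$; your uniform treatment still goes through, since $\mT_i+\mE_i^0=\calM_i(\calX^1)$ and, once Weyl plus \eqref{spectral norm of error tensor} guarantee $\sigma_r(\mT_i+\mE_i^0)>0$, the top-$r$ eigenspace of $(\mT_i+\mE_i^0)(\mT_i+\mE_i^0)^\tran$ coincides with the column span of $\mX_i^1$ and all quantities in the lemma are rotation-invariant — but this identification should be stated rather than assumed.
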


\begin{lemma}
	\label{lemma: infty norm at t step}
From the inequalities \eqref{spectral norm of error tensor}, \eqref{spectral norm of loo error tensor}, \eqref{2 inf norm of loo} and \eqref{L2 inf norm}, one has 
	\begin{align}
		\label{infty norm at t step}
		\infnorm{\calX^t - \calT} &\leq \frac{36}{2^{20} \kappa^2 \mu^2 r^{4}}\cdot \frac{1}{2^t} \cdot \left( \frac{\mu r}{n} \right)^{3/2}\cdot \sigma_{\max}(\calT),\\
		\label{infty norm at t step for Xtl}
		\infnorm{\calX^{t,\ell} - \calT} &\leq \frac{36}{2^{20} \kappa^2 \mu^2 r^{4}}\cdot \frac{1}{2^t} \cdot \left( \frac{\mu r}{n} \right)^{3/2}\cdot \sigma_{\max}(\calT).
	\end{align}
\end{lemma}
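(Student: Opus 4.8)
The plan is to obtain both displays as immediate consequences of Lemma~\ref{lemma: infinite norm of X-T}, applied to the perturbation representations $\calX^t=\calH_{\vr}(\calT+\calE^{t-1})$ and $\calX^{t,\ell}=\calH_{\vr}(\calT+\calE^{t-1,\ell})$ recorded earlier. I would carry out \eqref{infty norm at t step} in detail; \eqref{infty norm at t step for Xtl} then follows verbatim after swapping $\calX^t,\calE^{t-1},\mR_i^t$ for $\calX^{t,\ell},\calE^{t-1,\ell},\mR_i^{t,\ell}$ and using \eqref{spectral norm of loo error tensor} and \eqref{2 inf norm of loo} in place of \eqref{spectral norm of error tensor} and \eqref{L2 inf norm}. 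The first step is to check the hypothesis $\max_{i}\opnorm{\mE_i^{t-1}}\le\sigma_{\max}(\calT)/(10\kappa^2)$ of Lemma~\ref{lemma: infinite norm of X-T}, which is immediate from \eqref{spectral norm of error tensor} since the prefactor $2^{-20}\kappa^{-4}\mu^{-2}r^{-4}2^{-t}$ is far below $1/10$.

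The substance is to feed the induction hypotheses into the right-hand side of Lemma~\ref{lemma: infinite norm of X-T}. For the quantity $B$ there, I would use the identity $\sigma_{\max}(\calT)/\sigma_{\min}^2(\calT)=\kappa^2/\sigma_{\max}(\calT)$, the incoherence bound $\twoinf{\mU_i}\le\sqrt{\mu r/n}$ of Lemma~\ref{lemma: incoherence for T}, and then \eqref{L2 inf norm} for the first summand and \eqref{spectral norm of error tensor} for the $\opnorm{\mE_i}$ factor in the second; this gives $B\le\tfrac{9}{2^{20}\kappa^2\mu^2 r^4}\tfrac{1}{2^t}\sqrt{\mu r/n}$. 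The key observation is that $B$ is then smaller than $M:=\max_i\twoinf{\mU_i}\le\sqrt{\mu r/n}$ by the minuscule factor $\delta:=9\cdot 2^{-20}\kappa^{-2}\mu^{-2}r^{-4}2^{-t}$, so the cubic bracket in Lemma~\ref{lemma: infinite norm of X-T} essentially reduces to its linear part: $B^3+3B^2M+3BM^2=(\delta^3+3\delta^2+3\delta)M^3\le 3.1\,\delta M^3$. In parallel, \eqref{L2 inf norm} and the triangle inequality give $\twoinf{\mX_i^t}\le(1+2^{-20}\kappa^{-2}\mu^{-2}r^{-4})\sqrt{\mu r/n}$, hence $\prod_i\twoinf{\mX_i^t}\le 2(\mu r/n)^{3/2}$, while \eqref{spectral norm of error tensor} together with $\kappa\ge1$ yields $\max_i\opnorm{\mE_i^{t-1}}\le 2^{-20}\kappa^{-2}\mu^{-2}r^{-4}2^{-t}\sigma_{\max}(\calT)$. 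Summing the two contributions of Lemma~\ref{lemma: infinite norm of X-T}, the resulting prefactor is at most $(3.1\cdot 9+2)\cdot 2^{-20}\kappa^{-2}\mu^{-2}r^{-4}2^{-t}<36\cdot 2^{-20}\kappa^{-2}\mu^{-2}r^{-4}2^{-t}$, which is exactly \eqref{infty norm at t step}.

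The proof involves no genuine obstacle — Lemma~\ref{lemma: infinite norm of X-T} carries the analysis and Theorem~\ref{base and induction case theorem} supplies precisely the inputs it needs. The only point demanding attention is the tracking of absolute constants, in particular verifying that the bias-type term $\tfrac{15\sigma_{\max}(\calT)}{2\sigma_{\min}^2(\calT)}\twoinf{\mU_i}\opnorm{\mE_i^{t-1}}$ inside $B$, the sub-leading powers $B^3+3B^2M$, and the residual contribution $\opnorm{\mE_i^{t-1}}\prod_i\twoinf{\mX_i^t}$ jointly keep the final coefficient below $36$; the powers of $\kappa,\mu,r$ line up automatically, since each bound in \eqref{eq induction hypotheis} carries at least a factor $\kappa^{-2}\mu^{-2}r^{-4}$.
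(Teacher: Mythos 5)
Your proposal is correct and follows essentially the same route as the paper: verify the hypothesis $\max_i\opnorm{\mE_i^{t-1}}\le\sigma_{\max}(\calT)/(10\kappa^2)$ from \eqref{spectral norm of error tensor}, bound $B\le\tfrac{9}{2^{20}\kappa^2\mu^2 r^4}\tfrac{1}{2^t}\sqrt{\mu r/n}$ using \eqref{L2 inf norm} together with \eqref{spectral norm of error tensor}, apply Lemma~\ref{lemma: infinite norm of X-T} and track constants to land below $36$, then repeat verbatim with \eqref{spectral norm of loo error tensor} and \eqref{2 inf norm of loo} for the leave-one-out iterate. The only discrepancies are harmless bookkeeping (e.g.\ writing $\kappa^{-4}$ where \eqref{spectral norm of error tensor} has $\kappa^{-6}$, and a sharper $2(\mu r/n)^{3/2}$ bound for $\prod_i\twoinf{\mX_i^t}$ than the paper's $8(\mu r/n)^{3/2}$), neither of which affects the conclusion.
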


\begin{lemma}
	\label{lemma: condition number of X}
Assuming \eqref{spectral norm of error tensor} and \eqref{spectral norm of loo error tensor}, the largest and the smallest nonzero singular values of $\calM_i\lb\calX^t\rb$ satisfy
	\begin{align}
		\label{eq condition number of Xt}
		&\left(1+ \frac{1}{2^9  }\right) \sigma_{\max}(\calT) \geq \sigma_{\max}\lb \calM_i\lb\calX^t\rb \rb  \geq \left(1 - \frac{1}{2^9}\right)\sigma_{\min}(\calT)\geq \frac{15}{16} \sigma_{\min}(\calT),\\
		\label{eq condition number of Xtl}
		&\left(1+ \frac{1}{2^9  }\right) \sigma_{\max}(\calT) \geq \sigma_{\max}\lb \calM_i\lb\calX^{t,\ell}\rb \rb   \geq \left(1 - \frac{1}{2^9}\right)\sigma_{\min}(\calT)\geq \frac{15}{16} \sigma_{\min}(\calT).
	\end{align}
	It follows immediately that the condition number of $\calX^t $ obeys 
	\begin{align*}
	    \kappa\lb\calX^t\rb = \frac{\sigma_{\max}\lb\calX^t\rb}{\sigma_{\min}\lb\calX^t\rb}\leq \frac{\lb 1+2^{-9}\rb\sigma_{\max}\lb\calT\rb}{\lb 1-2^{-9}\rb\sigma_{\min}\lb\calT\rb}\leq 2\kappa.
	\end{align*}
\end{lemma}

\subsubsection{Bounding \texorpdfstring{$\opnorm{\mE_i^{t} } $}{TEXT} and \texorpdfstring{$\opnorm{\mE_i^{t,\ell} } $}{TEXT}}
\label{sec: bounding E at t step}
We provide a detailed proof for $\opnorm{\mE_1^{t} }$,  while the proofs for $\opnorm{\mE_i^{t} }$ ($i=2,3$) and $\opnorm{\mE_i^{t,\ell}}$ ($i=1,2,3$) are overall similar. First,  $\opnorm{\mE_1^{t}  } $ can be bounded as follows:
\begin{align*}
	\opnorm{\mE_1^{t}}&\leq \opnorm{ \calM_1\lb \lb \calI - \calP_{T_{\calX^t}}  \rb\lb\calX^t - \calT \rb \rb} + \opnorm{\calM_1\lb \calP_{T_{\calX^t}} \lb \calI - p^{-1} \calP_{\Omega} \rb\lb\calX^t - \calT \rb  \rb} \\
	&\leq \fronorm{\calM_1\lb \lb \calI - \calP_{T_{\calX^t}}  \rb\lb\calX^t - \calT \rb \rb }+ \opnorm{\calM_1\lb \calP_{T_{\calX^t}} \lb \calI - p^{-1} \calP_{\Omega} \rb\lb\calX^t - \calT \rb  \rb} \\
	&=\fronorm{\lb \calI - \calP_{T_{\calX^t}}  \rb\lb\calX^t - \calT \rb }  + \opnorm{\calM_1\lb \calP_{T_{\calX^t}} \lb \calI - p^{-1} \calP_{\Omega} \rb\lb\calX^t - \calT \rb  \rb}\\
	&=\underbrace{ \fronorm{\lb \calI - \calP_{T_{\calX^t}}  \rb\lb \calT \rb } }_{= : \alpha_1} +\underbrace{ \opnorm{\calM_1\lb \calP_{T_{\calX^t}} \lb \calI - p^{-1} \calP_{\Omega} \rb\lb\calX^t - \calT \rb  \rb}}_{=:\alpha_2}.
\end{align*}
To proceed, we need to  decompose $\calP_{T_{\calX^t}}$  into a sum of products of several projectors \citep{cai2020provable}. 
Define the projector $\calP_{\mX^t_i }^{(i)}: \R^{n\times n\times n}\rightarrow \R^{n\times n\times n} $ by 
\begin{align}
\label{def: projection 1}
\calP_{\mX^t_i }^{(i)}(\calZ) = \calZ\times_i \mX_i^t {\mX_i^t }^\tran,~ \forall \calZ\in\R^{n\times n\times n}.
\end{align}
It is evident that
$	\calZ\ttimes  {\mX^t_i}{\mX^t_i}^\tran  = \prod_{i=1}^{3} \calP_{\mX^t_i }^{(i)}(\calZ)$.
The orthogonal complement  of $\calP_{\mX^t_i }^{(i)}$, denoted $\calP_{{\mX_i^{t, \perp}}}^{(i)}: \R^{n\times n\times n}\rightarrow \R^{n\times n\times n}$, is defined as
\begin{align*}
	\calP_{{\mX_i^{t,\perp}}}^{(i)}(\calZ) = \calZ \times_i \lb \mI - \mX^t_i  {\mX^t_i }^\tran \rb.
\end{align*}
Furthermore, we define $\calP_{\calG^t , \left\lbrace\mX^t_j \right\rbrace_{j\neq 1}}^{(j\neq 1)} : \Rn\rightarrow \Rn$ by
\begin{align}
\label{def: projection 2}
\calM_1\lb \calP_{\calG^t, \left\lbrace\mX^t_j \right\rbrace_{j\neq 1}}^{(j\neq 1)}(\calZ)  \rb = \calM_1(\calZ) \lb \mX^t_{3}\otimes \mX^t_{2} \rb \calM_1^\dagger(\calG^t)\calM_1(\calG^t) \lb \mX^t_{3}\otimes \mX^t_{2}\rb^\tran,
\end{align}
and $\calP_{\calG^t , \left\lbrace\mX^t_j \right\rbrace_{j\neq 2}}^{(j\neq 2)}$, $\calP_{\calG^t , \left\lbrace\mX^t_j \right\rbrace_{j\neq 3}}^{(j\neq 3)}$ are similarly defined. All together, it is not  hard to see that $\calP_{T_{\calX^t}}$ can be rewritten as
\begin{align}\label{definition projector}
    \calP_{T_{\calX^t}} = \prod_{i=1}^{3} \calP_{\mX^t_i }^{(i)}+\sum_{i = 1}^3\calP_{{\mX_i^{t, \perp}}}^{(i)}\calP_{\calG^t , \left\lbrace\mX^t_j \right\rbrace_{j\neq i}}^{(j\neq i)}.
\end{align}
Moreover, the following properties hold, whose proofs are provided in Section~\ref{subsec:proj_properties}.
\begin{claim}\label{property of these projection}
    By the definition of  $\calP_{\mX^t_i }^{(i)}$, $\calP_{{\mX_i^{t, \perp}}}^{(i)}$ and $\calP_{\calG^t, \left\lbrace\mX^t_j \right\rbrace_{j\neq i}}^{(j\neq i)}$, one has
    \begin{align*}
    &\calP_{{\mX_i^{t, \perp}}}^{(i)}\calP_{\calG^t , \left\lbrace\mX^t_j \right\rbrace_{j\neq i}}^{(j\neq i)} =  \calP_{\calG^t , \left\lbrace\mX^t_j \right\rbrace_{j\neq i}}^{(j\neq i)}\calP_{{\mX_i^{t, \perp}}}^{(i)},\quad\calP_{{\mX_i^{t}}}^{(i)}\calP_{\calG^t , \left\lbrace\mX^t_j \right\rbrace_{j\neq i}}^{(j\neq i)} =  \calP_{\calG^t , \left\lbrace\mX^t_j \right\rbrace_{j\neq i}}^{(j\neq i)}\calP_{{\mX_i^{t}}}^{(i)},\\
    &\calM_i\lb\calP_{\calG^t , \left\lbrace\mX^t_j \right\rbrace_{j\neq i}}^{(j\neq i)}\lb\calZ\rb\rb = \calM_i\lb\calZ\rb\mY_i^t{\mY_i^t}^\tran, \quad\opnorm{\prod_{j\neq i}\calP_{\mX^t_j }^{(j)}-\calP_{\calG^t , \left\lbrace\mX^t_j \right\rbrace_{j\neq i}}^{(j\neq i)}} = 1,\\
    &\lb \prod_{j\neq i}\calP^{(j)}_{\mX_j^{t}}   - \calP_{\calG^t, \left\lbrace\mX^t_j \right\rbrace_{j\neq i}}^{(j\neq i)}\rb  \lb \calX^t\rb =\bzero,
\end{align*}
where the columns of  $\mY_i^t$ are the top-$r$ right singular vectors of $\calM_i\lb\calX^t\rb$.
\end{claim}
\paragraph{Bounding $\alpha_1$.} 
Based on the decomposition of $\calP_{T_{\calX^t}}$, it has been pointed out in \citep{cai2020provable} that 
\begin{align*}
	\calI - \calP_{T_{\calX^t}} &= \prod_{i=1}^{3}\lb \calP^{(i)}_{\mX_i^{t}} + \calP^{(i)}_{{\mX_{i}^{t, \perp}}}  \rb - \lb \prod_{i=1}^{3}  \calP^{(i)}_{\mX_i^{t}}  + \sum_{i=1}^{3}\calP^{(i)}_{\mX_{i}^{t, \perp}}   \calP_{\calG^t, \left\lbrace\mX^t_j \right\rbrace_{j\neq i}}^{(j\neq i)}\rb\\
	&=\sum_{i=1}^{3} \calP^{(i)}_{\mX_{i}^{t, \perp}}   \lb \prod_{j\neq i}\calP^{(j)}_{\mX_j^{t}}   - \calP_{\calG^t, \left\lbrace\mX^t_j \right\rbrace_{j\neq i}}^{(j\neq i)}\rb  + \sum_{i=1}^3 \calP_{\mX_i^{t}}^{(i)} \prod_{j\neq i} \calP_{\mX_{j}^{t,\perp}}^{(j)}+\prod_{i=1}^{3}\calP^{(i)}_{\mX_{i }^{t,\perp}}.
\end{align*}
Note that, for any $i = 1,2,3$,
\begin{align*}
    \calP^{(i)}_{\mX_{i}^{t, \perp}}\lb\calX^t\rb=\bzero \quad \text{and}\quad
    \lb \prod_{j\neq i}\calP^{(j)}_{\mX_j^{t}}   - \calP_{\calG^t, \left\lbrace\mX^t_j \right\rbrace_{j\neq i}}^{(j\neq i)}\rb  \lb \calX^t\rb =\bzero.
\end{align*}
Therefore,
\begin{small}
\begin{align}\label{eq: key equation 1}
	\lb \calI - \calP_{T_{\calX^t}}  \rb\lb \calT \rb &=\sum_{i=1}^{3} \lb \calP_{\mU_i}^{(i)} - \calP_{\mX_{i}^{t}}^{(i)}  \rb \lb \prod_{j\neq i}\calP_{\mX_j^{t}}^{(j)}   - \calP_{\calG^t, \left\lbrace\mX^t_j \right\rbrace_{j\neq i}}^{(j\neq i)}\rb  \lb \calT \rb \notag\\
	&\quad + \lb \calP_{\mU_1}^{(1)} - \calP_{\mX_{1}^{t}}^{(1)} \rb \calP^{(2)}_{\mX_{2}^{t, \perp}} \calP^{(3)}_{\mX_3^{t}} \lb \calT \rb  + \lb \calP^{(2)}_{\mU_2} - \calP^{(2)}_{\mX_{2}^{t}}  \rb \calP^{(1)}_{\mX_{1}^{t}} \calP^{(3)}_{\mX_3^{t, \perp}}  \lb\calT \rb \notag\\
	&\quad  +\lb \calP^{(3)}_{\mU_3} - \calP^{(3)}_{\mX_{3}^{t}}  \rb \calP^{(1)}_{\mX_{1}^{t, \perp}} \calP^{(2)}_{\mX_2^{t}}  \lb  \calT \rb\notag + \lb \calP^{(1)}_{\mU_1} - \calP^{(1)}_{\mX_{1}^{t}}  \rb\calP^{(2)}_{\mX_{2}^{t, \perp}} \calP^{(3)}_{\mX_{3}^{t, \perp}}  \lb \calT \rb\notag\\
	&=-\sum_{i=1}^{3} \lb \calP_{\mU_i}^{(i)} - \calP_{\mX_{i}^{t}}^{(i)}  \rb \lb \prod_{j\neq i}\calP_{\mX_j^{t}}^{(j)}   - \calP_{\calG^t, \left\lbrace\mX^t_j \right\rbrace_{j\neq i}}^{(j\neq i)}\rb  \lb \calX^t - \calT \rb \notag\\
	&\quad - \lb \calP_{\mU_1}^{(1)} - \calP_{\mX_{1}^{t}}^{(1)} \rb \calP^{(2)}_{\mX_{2}^{t, \perp}} \calP^{(3)}_{\mX_3^{t}} \lb \calX^t - \calT \rb  - \lb \calP^{(2)}_{\mU_2} - \calP^{(2)}_{\mX_{2}^{t}}  \rb \calP^{(1)}_{\mX_{1}^{t}} \calP^{(3)}_{\mX_3^{t, \perp}}  \lb \calX^t - \calT \rb \notag\\
	&\quad  -\lb \calP^{(3)}_{\mU_3} - \calP^{(3)}_{\mX_{3}^{t}}  \rb \calP^{(1)}_{\mX_{1}^{t, \perp}} \calP^{(2)}_{\mX_2^{t}}  \lb \calX^t - \calT \rb-\lb \calP^{(1)}_{\mU_1} - \calP^{(1)}_{\mX_{1}^{t}}  \rb\calP^{(2)}_{\mX_{2}^{t, \perp}} \calP^{(3)}_{\mX_{3}^{t, \perp}}  \lb \calX^t - \calT \rb.
\end{align}
\end{small}
It follows that
\begin{align}
	\label{eq I1 for opnorm of E}
	\alpha_1&\leq \sum_{i=1}^{3} \opnorm{ {\mU_i}\mU_i^\tran- {\mX_i^t}{\mX_i^t}^\tran}  \cdot \opnorm{ \prod_{j\neq i}\calP^{(j)}_{\mX_j^{t}}   - \calP_{\calG^t, \left\lbrace\mX^t_j \right\rbrace_{j\neq i}}^{(j\neq i)}}\cdot \fronorm{\calX^t - \calT} \notag\\
 &\quad+ 4 \max_{i = 1,2,3} \opnorm{{\mU_i}\mU_i^\tran- {\mX_i^t}{\mX_i^t}^\tran}  \cdot  \fronorm{\calX^t - \calT}\notag \\
	&\stackrel{(a)}{\leq } 7\max_{i = 1,2,3} \opnorm{{\mU_i}\mU_i^\tran- {\mX_i^t}{\mX_i^t}^\tran}  \cdot  \fronorm{\calX^t - \calT}\notag \\
	&\leq 7\max_{i = 1,2,3} \opnorm{{\mU_i}\mU_i^\tran- {\mX_i^t}{\mX_i^t}^\tran}  \cdot  n^{3/2}\cdot\infnorm{\calX^t - \calT}\notag \\
	&\stackrel{(b)}{\leq} 7\cdot \frac{1}{2^{17} \kappa^4 \mu^2 r^4 }\frac{1}{2^t} \cdot n^{3/2}\cdot \frac{36}{2^{20}\kappa^2 \mu^2 r^{4}}\cdot \frac{1}{2^t} \cdot \left( \frac{\mu r}{n} \right)^{3/2}\cdot \sigma_{\max}(\calT) \notag \\
	&\leq \frac{1}{2^9}\frac{1}{2^{20} \kappa^6\mu^2 r^4 }\cdot \frac{1}{2^{t+1}}\cdot \sigma_{\max}(\calT),
\end{align}
where  $(a)$ is due to Claim \ref{property of these projection} and   $(b) $ follows from  \eqref{eq spectral norm of Pxt-Pu} and \eqref{infty norm at t step}.

\paragraph{\cjc{Bounding $\alpha_2$.}}
\label{sec: proof of claim alpha2}
By the definition of $\calP_{T_{\calX^t}}$ in \eqref{eq: def of projection onto tangent space} and the triangular inequality, we have
\begin{align*}
	\alpha_2 &\leq \underbrace{\opnorm{\calM_1 \lb \lb \calI - p^{-1} \calP_{\Omega} \rb\lb\calX^t - \calT \rb \ttimes {\mX_i^t}{\mX_i^t}^\tran\rb} }_{= :\alpha_{2,1}} + \underbrace{\sum_{i = 1}^3\opnorm{\calM_1\lb \calG^t \times_i\mW_i^t \jneqi \mX_j^t \rb} }_{= :\alpha_{2,2}},
\end{align*}
where $\mW_i^t$ is given by
\begin{align*}
	\mW_i^t= \lb \mI - \mX_i^t{\mX_i^t}^\tran \rb \calM_i \lb  \lb \calI - p^{-1} \calP_{\Omega} \rb \lb \calX^t - \calT \rb \jneqi {\mX_j^t}^\tran\rb \calM_i^\dagger(\calG^t), \quad i =1,2,3.
\end{align*}
\begin{itemize}
	\item Controlling $\alpha_{2,1}$.  Notice that the tensor $ \lb \calI - p^{-1} \calP_{\Omega} \rb\lb\calX^t - \calT \rb \ttimes {\mX_i^t}{\mX_i^t}^\tran $ has multilinear rank at most $(r , r, r)$. Applying Lemma~\ref{lemma: Lemma 6 in XYZ} yields that
	\begin{align*}
		\alpha_{2,1} &\leq \sqrt{r} \opnorm{\lb \calI - p^{-1} \calP_{\Omega} \rb\lb\calX^t - \calT \rb \ttimes {\mX_i^t}{\mX_i^t}^\tran }\leq \sqrt{r} \opnorm{\lb \calI - p^{-1} \calP_{\Omega} \rb\lb\calX^t - \calT \rb }\\
		&\stackrel{(a)}{\leq}\opnorm{\lb \calI - p^{-1}\calP_{\Omega} \rb (\calJ)}\cdot 2r^{3/2}\infnorm{\calX^t - \calT},
	\end{align*}
	where  $(a)$ follows from Lemma~\ref{lemma: uniform tensor operator norm} and the fact that  $\calX^t-\calT$ has multilinear rank at most $(2r,2r,2r)$. Here $\calJ$ is the all-one tensor.
	\item Controlling $\alpha_{2,2}$. Straightforward computation gives that
	\begin{align*}
		\alpha_{2,2}&\leq  \opnorm{\calM_1(\calG^t)} \cdot\lb \sum_{i=1}^{3} \opnorm{\mW_i^t} \rb\\
		&\leq \opnorm{\calM_1(\calG^t)} \cdot\lb \sum_{i=1}^{3} \opnorm{ \calM_i\lb  \lb \calI - p^{-1} \calP_{\Omega} \rb \lb \calX^t - \calT \rb \jneqi {\mX_j^t}^\tran\rb } \cdot \opnorm{ \calM_i^\dagger(\calG^t)  } \rb\\
		&\stackrel{(a)}{\leq } \opnorm{\calM_1(\calG^t)} \cdot\lb \sum_{i=1}^{3} \sqrt{r}\opnorm{  \lb \calI - p^{-1} \calP_{\Omega} \rb \lb \calX^t - \calT \rb  } \cdot \opnorm{ \calM_i^\dagger(\calG^t)  } \rb\\
		&\leq 3\opnorm{\calM_1(\calG^t)} \cdot \sqrt{r}\opnorm{  \lb \calI - p^{-1} \calP_{\Omega} \rb \lb \calX^t - \calT \rb  } \cdot \max_{i = 1,2,3} \opnorm{ \calM_i^\dagger(\calG^t)  } \\
		&\leq 3\sqrt{r} \cdot \frac{ \max_{i = 1,2,3} \sigma_{\max}\lb \calM_i(\calG^t) \rb }{\min_{i = 1,2,3}\sigma_{\min}\lb \calM_i(\calG^t) \rb }\cdot  \opnorm{  \lb \calI - p^{-1} \calP_{\Omega} \rb \lb \calX^t - \calT \rb  } \\
		&\leq 6r^{3/2}\cdot \kappa\lb\calX^t \rb \cdot \opnorm{\lb \calI -p^{-1}\calP_{\Omega} \rb (\calJ)}\cdot  \infnorm{\calX^t - \calT}\\ 
		&\stackrel{(b)}{\leq } 6r^{3/2}\cdot 2\kappa \cdot \opnorm{\lb \calI - p^{-1}\calP_{\Omega} \rb (\calJ)}\cdot  \infnorm{\calX^t - \calT},
	\end{align*}
	where $(a)$ is due to Lemma~\ref{lemma: Lemma 6 in XYZ} and  $(b)$ follows from Lemma~\ref{lemma: condition number of X}.
\end{itemize} 

Combining the upper bounds of $\alpha_{2,1}$ and $\alpha_{2,2}$ together yields that, with high probability,
	\begin{align*}
		\alpha_2 &\leq \alpha_{2,1}+ \alpha_{2,2}  \leq 14r^{3/2} \kappa \cdot \opnorm{\lb \calI - p^{-1}\calP_{\Omega} \rb (\calJ)}\cdot  \infnorm{\calX^t - \calT}\\
		&\stackrel{(a)}{\leq } 14r^{3/2}\kappa \cdot C\left( \frac{\log^3n}{p} + \sqrt{\frac{n\log^5n}{p}} \right)\cdot \frac{36}{2^{20} \kappa^2 \mu^2 r^{4}} \frac{1}{2^t} \left( \frac{\mu r}{n} \right)^{3/2} \sigma_{\max}(\calT)\\
		&\leq \frac{1}{2^9}\frac{1}{2^{20} \kappa^6\mu^2 r^4 }\cdot \frac{1}{2^{t+1}}\cdot \sigma_{\max}(\calT),
	\end{align*}
	where $(a)$ follows from Lemma~\ref{lemma: spectral norm of E initial} and the assumption
		$p \geq \max \left\{ \frac{C_1\kappa^5\mu^{3/2} r^{3}\log^3 n}{n^{3/2}}, \frac{C_2\kappa^{10}\mu^3r^6\log^5n}{n^2} \right\}$.

Putting  the upper bounds of $\alpha_1$ and $\alpha_2$ together, one has 
\begin{align}
	\label{eq Et}
	\opnorm{\mE_1^{t} } \leq  \frac{1}{2^8}\frac{1}{2^{20} \kappa^6\mu^2 r^4 }\cdot \frac{1}{2^{t+1}}\cdot \sigma_{\max}(\calT)\leq   \frac{1}{2^{20} \kappa^6\mu^2 r^4 }\cdot \frac{1}{2^{t+1}}\cdot \sigma_{\max}(\calT)\leq \frac{1}{4}\sigma_{\max}(\calT).
\end{align} 
Using the same argument as above, one can obtain
\begin{align}
	\label{eq Etl}
	\opnorm{\mE_1^{t, \ell} } \leq  \frac{1}{2^8}\frac{1}{2^{20} \kappa^6\mu^2 r^4 }\cdot \frac{1}{2^{t+1}}\cdot \sigma_{\max}(\calT)\leq   \frac{1}{2^{20} \kappa^6\mu^2 r^4 }\cdot \frac{1}{2^{t+1}}\cdot \sigma_{\max}(\calT)\leq \frac{1}{4}\sigma_{\max}(\calT).
\end{align}

\subsubsection{Bounding \texorpdfstring{$\twoinf{\mX_i^{t+1, \ell} \mR_i^{t+1,\ell} - \mU_i} $}{TEXT}}
We only provide a detailed proof for  the case $i=1$, and the proofs for the other two cases are overall similar.
Since $\mX_1^{t+1, \ell} \bSigma_1^{t+1, \ell} {\mX_1^{t+1, \ell}}^\tran$ is the top-$r$ eigenvalue decomposition of  
\begin{align*}
	\lb \mT_1 + \mE_1^{t, \ell }\rb \lb \mT_1 + \mE_1^{t, \ell}\rb^\tran&=  \mT_1\mT_1 ^\tran + \underbrace{\mT_1 {\mE_1^{t, \ell}}^\tran + \mE_1^{t, \ell} \mT_1^\tran +\mE_1^{t, \ell} {\mE_1^{t, \ell}}^\tran}_{=: \bDelta_1^{t,\ell}},
\end{align*}
we have 
\begin{align*}
	\mX_1^{t+1, \ell} &= \lb\mT_1\mT_1^\tran + \bDelta_1^{t,\ell} \rb \mX_1^{t+1,\ell} {\bSigma_1^{t+1,\ell}}^{-1}.
\end{align*}
Recall that the  eigenvalue decomposition of $\mT_1\mT_1^\tran$ is $\mT_1\mT_1^\tran = \mU_1\bLambda_1\mU_1^\tran$, $\ve_m^\tran \lb \mX_1^{t+1,\ell} \mR_1^{t+1,\ell} - \mU_1 \rb$ can be decomposed as
\begin{align*}
    &\ve_m^\tran \lb \mX_1^{t+1,\ell} \mR_1^{t+1,\ell} - \mU_1 \rb \\
    &= \ve_{m}^\tran \lb\mT_1\mT_1^\tran\mX_1^{t+1,\ell}{\bSigma_{1}^{t+1,\ell}}^{-1}\mR_1^{t+1,\ell}-\mU_1\rb+\ve_m^\tran\bDelta_1^{t,\ell}\mX_1^{t+1,\ell}{\bSigma_1^{t+1,\ell}}^{-1}\mR_1^{t+1,\ell}\\
    &= \ve_{m}^\tran\mU_1\bLambda_1 \lb\mU_1^\tran\mX_1^{t+1,\ell}{\bSigma_{1}^{t+1,\ell}}^{-1}\mR_1^{t+1,\ell}-\bLambda_1^{-1}\rb+\ve_m^\tran\bDelta_1^{t,\ell}\mX_1^{t+1,\ell}{\bSigma_1^{t+1,\ell}}^{-1}\mR_1^{t+1,\ell}\\
    & = \ve_{m}^\tran\mU_1\bLambda_1 \lb\mU_1^\tran\mX_1^{t+1,\ell}\bLambda_1^{-1}\mR_1^{t+1,\ell}-\bLambda_1^{-1}\rb\\
    &\quad + \ve_m^\tran\mU_1\bLambda_1\mU_1^\tran\mX_1^{t+1,\ell}\lb{\bSigma_1^{t+1,\ell}}^{-1}-\bLambda_1^{-1}\rb\mR_1^{t+1,\ell} +\ve_m^\tran\bDelta_1^{t,\ell}\mX_1^{t+1,\ell}{\bSigma_1^{t+1,\ell}}^{-1}\mR_1^{t+1,\ell}\\
    & =\ve_{m}^\tran\mU_1 \lb\bLambda_1\mU_1^\tran\mX_1^{t+1,\ell}-{\mR_1^{t+1,\ell}}^\tran\bLambda_1^{-1}\rb\bLambda_1^{-1}\mR_1^{t+1,\ell}\\
    &\quad + \ve_m^\tran\mU_1\bLambda_1\mU_1^\tran\mX_1^{t+1,\ell}\lb{\bSigma_1^{t+1,\ell}}^{-1}-\bLambda_1^{-1}\rb\mR_1^{t+1,\ell}
    +\ve_m^\tran\bDelta_1^{t,\ell}\mX_1^{t+1,\ell}{\bSigma_1^{t+1,\ell}}^{-1}\mR_1^{t+1,\ell}.
\end{align*}
The triangle inequality then gives that
\begin{align}\label{twoinfxtell}
	\twonorm{\ve_m^\tran \lb \mX_1^{t+1,\ell} \mR_1^{t+1,\ell} - \mU_1 \rb} &\leq \underbrace{ \twonorm{\ve_m^\tran\mU_1\lb\bLambda_1   \mU_1^\tran \mX_1^{t+1,\ell} - {\mR_{1}^{t+1,\ell}}^\tran \bLambda_1 \rb\bLambda_1^{-1}\mR_1^{t+1,\ell}  } }_{=:\beta_1}\notag\\
	&\quad + \underbrace{\twonorm{ \ve_m^\tran\mU_1\bLambda_1  \mU_1^\tran \mX_1^{t+1,\ell} \lb {\bSigma^{t+1,\ell}_1 }^{-1}-\bLambda_1^{-1} \rb\mR_1^{t+1,\ell}  } }_{=:\beta_2}\notag\\
	&\quad +\underbrace{ \twonorm{ \ve_m^\tran \bDelta_1^{t,\ell}  \mX_1^{t+1,\ell} {\bSigma^{t+1,\ell} _1}^{-1}\mR_1^{t+1,\ell} }}_{=:\beta_3}.
\end{align}
\paragraph{Bounding $\beta_1$.}
Notice that, with high probability,
\begin{align}\label{bounding delta t ell}
	\opnorm{ \bDelta_1^{t,\ell} } &= \opnorm{  \mT_1 {\mE_1^{t, \ell}}^\tran + \mE_1^{t, \ell} \mT_1^\tran +\mE_1^{t, \ell} {\mE_1^{t, \ell}}^\tran } \leq  \left(2 \opnorm{\mT_1} + \opnorm{\mE_1^{t ,\ell}} \right) \opnorm{\mE_1^{t ,\ell}}  \notag\\
	&\stackrel{(a)}{\leq }\frac{9}{4}\sigma_{\max}(\calT)\cdot\frac{1}{2^8}\frac{1}{2^{20} \kappa^6\mu^2 r^4 }\cdot \frac{1}{2^{t+1}}\cdot \sigma_{\max}(\calT) \notag\\
	&= \frac{9}{2^{10}}\frac{1}{2^{20}\kappa^4 \mu^2 r^4}\frac{1}{2^{t+1}}\sigma_{\min}^2(\calT)\leq \frac{1}{2}\sigma^2_{\min}(\calT),
\end{align}
where step (a) follows from \eqref{eq Etl}. Given the SVD of  $\mH_1^{t+1,\ell} :={\mX_1^{t+1,\ell}}^\tran\mU_1$, denoted $\mH_1^{t+1,\ell} = \widehat{\mA}_1^{t+1,\ell}\widehat{\bSigma}_1^{t+1,\ell}\widehat{\mB}_1^{{t+1,\ell}^\tran}$, it can be easily shown that $\mR_1^{t+1,\ell} = \widehat{\mA}_1^{t+1,\ell}\widehat{\mB}_1^{{t+1,\ell}^\tran}$. Therefore, the application of Lemma~\ref{lemma: ding lemma 1} yields that
\begin{align}\label{to bound 5.17}
    &\opnorm{\lb\bLambda_1   \mU_1^\tran \mX_1^{t+1,\ell} - {\mR_{1}^{t+1,\ell}}^\tran \bLambda_1 \rb\bLambda_1^{-1}}\notag\\
	&\quad\leq \opnorm{  \bLambda_1   {\mR_{1}^{t+1,\ell}} - {\mH_1^{t+1,\ell}} \bLambda_1  } \opnorm{\bLambda_1^{-1}} \leq \left(  2 + \frac{\sigma_{\max}^2(\calT)}{\sigma_{\min}^2(\calT) - \opnorm{ \bDelta_1^{t,\ell} }}  \right)\opnorm{ \bDelta_1^{t,\ell} }\frac{1}{\sigma_{\min}\lb\bLambda_1\rb} \notag\\
	&\quad \leq \left(  2 + 2\kappa^2 \right)\cdot \frac{9}{2^{10}}\frac{1}{2^{20}\kappa^4 \mu^2 r^4}\frac{1}{2^{t+1}}\leq  \frac{36}{2^{10}}\frac{1}{2^{20}\kappa^2\mu^2 r^4}\frac{1}{2^{t+1}},
\end{align} 
which occurs with high probability. Here the third line is due to the fact $\sigma_{\min}(\bLambda_1) \geq \sigma_{\min}^2(\calT)$ and \eqref{bounding delta t ell}. Thus $\beta_1$ can be bounded with high probability as follows:
\begin{align}\label{eq upper bound of beta1}
    \beta_1\leq \twoinf{\mU_1}\opnorm{\lb\bLambda_1   \mU_1^\tran \mX_1^{t+1,\ell} - {\mR_{1}^{t+1,\ell}}^\tran \bLambda_1 \rb\bLambda_1^{-1}}\leq \frac{36}{2^{10}}\frac{1}{2^{20}\kappa^2\mu^2 r^4}\frac{1}{2^{t+1}} \sqrt{\frac{\mu r}{n}}.
\end{align}

\paragraph{Bounding $\beta_2$.}
Applying the Weyl's inequality yields that $\opnorm{\bLambda_1 - \bSigma^{t+1,\ell} _1}\leq \opnorm{\bDelta_1^{t,\ell}}$. It follows that
\begin{align*}
    \sigma_{r}\left(\bSigma_1^{t+1, \ell} \right) \geq \sigma_{\min}^2(\calT) - \opnorm{\bDelta_1^{t,\ell}}\geq \frac{1}{2}\sigma_{\min}^2(\calT),
\end{align*}
and
\begin{align*}
	\opnorm{ {\bSigma^{t+1,\ell} _1 }^{-1}-\bLambda_1^{-1} }	& = \opnorm{{\bSigma^{t+1,\ell} _1 }^{-1} \left(\bLambda_1 - \bSigma^{t+1,\ell} _1  \right)\bLambda_1^{-1}   } \leq \opnorm{{\bSigma^{t+1,\ell} _1 }^{-1} } \opnorm{\bLambda_1^{-1}  }  \opnorm{ \bLambda_1 - \bSigma^{t+1,\ell} _1 } \\
	&\leq \frac{1}{\sigma_{r}\lb \bSigma_1^{t+1,\ell} \rb}\cdot \frac{1}{\sigma_{\min}^2(\calT)} \cdot \opnorm{\bLambda_1 - \bSigma^{t+1,\ell} _1  } \leq  \frac{2}{\sigma_{\min}^4(\calT)} \cdot \opnorm{\bDelta_1^{t,\ell} }\\
	&\leq  \frac{2}{\sigma_{\min}^4(\calT)} \cdot\frac{9}{2^{10}}\frac{1}{2^{20}\kappa^4 \mu^2 r^4}\frac{1}{2^{t+1}}\sigma_{\min}^2(\calT)=\frac{2}{\sigma^2_{\min}\lb\calT\rb}\frac{9}{2^{10}}\frac{1}{2^{20}\kappa^4 \mu^2 r^4}\frac{1}{2^{t+1}}.
\end{align*}
Therefore, with high probability,
\begin{align}
	\label{eq upper bound of beta2}
	\beta_2 &\leq \twoinf{\mU_1}\cdot \opnorm{\bLambda_1}\cdot \opnorm{ {\bSigma^{t+1,\ell} _1 }^{-1}-\bLambda_1^{-1} } 
	\leq    \frac{18}{2^{10}}\frac{1}{2^{20}\kappa^2 \mu^2 r^4}\frac{1}{2^{t+1}} \sqrt{\frac{\mu r}{n}} .
\end{align}

\paragraph{Bounding $\beta_3$.}
The last term $\beta_3$ can be bounded as follows:
\begin{align}
	\label{eq beta3}
	\beta_3 
	&\leq \twonorm{\ve_m^\tran \bDelta_1^{t,\ell}  }\cdot \opnorm{ {\bSigma^{t+1,\ell} _1 }^{-1}  }= \twoinf{\ve_m^\tran \left( \mT_1 {\mE_1^{t, \ell}}^\tran + \mE_1^{t, \ell} \mT_1^\tran +\mE_1^{t, \ell} {\mE_1^{t, \ell}}^\tran \right)}\cdot \opnorm{ {\bSigma^{t+1,\ell} _1 }^{-1}  }\notag  \\
	&\leq \left( \twoinf{\mT_1}\cdot \opnorm{\mE_1^{t,\ell}} +\twonorm{\ve_m^\tran \mE_1^{t,\ell}}\cdot \left( \opnorm{\mT_1} +  \opnorm{\mE_1^{t,\ell}}\right) \right)\cdot \opnorm{ {\bSigma^{t+1,\ell} _1 }^{-1}  }\notag  \\
	&\leq \sqrt{\frac{\mu r}{n}} \cdot\frac{1}{2^8}\frac{1}{2^{20} \kappa^6\mu^2 r^4 }  \frac{1}{2^{t+1}}  \sigma_{\max}^2(\calT)  \cdot \frac{2}{\sigma_{\min}^2(\calT)} + \twonorm{\ve_m^\tran\mE_1^{t,\ell} }\cdot \frac{3\sigma_{\max}(\calT)}{ \sigma_{\min}^2(\calT) }\notag  \\
	&\leq \frac{1}{2^7}\frac{1}{2^{20} \kappa^2\mu^2 r^4 }  \frac{1}{2^{t+1}}  \sqrt{\frac{\mu r}{n}}  + \twonorm{\ve_m^\tran\mE_1^{t,\ell} }\cdot \frac{3\sigma_{\max}(\calT)}{ \sigma_{\min}^2(\calT) },
\end{align} 
where the third inequality is due to \eqref{eq Etl} and $$\left( \opnorm{\mT_1} +  \opnorm{\mE_1^{t,\ell}} \right)\cdot \opnorm{ {\bSigma^{t+1,\ell} _1 }^{-1}  }\leq \frac{2}{\sigma_{\min}^2(\calT)}\cdot \left( \sigma_{\max}(\calT) + \frac{1}{4}\sigma_{\max}(\calT)) \right)\leq \frac{3\sigma_{\max}(\calT)}{\sigma_{\min}^2(\calT)}.$$
To this end, we focus on bounding  $\twonorm{\ve_m^\tran\mE_1^{t,\ell} }$. Recall that  $\mE_1^{t,\ell}$ is defined by
\begin{align*}
	\mE_1^{t,\ell} = 	\calM_1 \left( \left( \calI -  \calP_{T_{\calX^{t,\ell}}} \left( p^{-1}  \calP_{\Omega_{-\ell}} + \calP_{\ell} \right) \right) \left( \calX^t - \calT \right)\right).
\end{align*}
Thus, invoking the triangle inequality yields that
\begin{align*}
	\twonorm{\ve_m^\tran\mE_1^{t,\ell} } &\leq \twonorm{\ve_m^\tran \calM_1\lb \lb \calI - \calP_{T_{\calX^{t,\ell}}}\rb(\calX^{t,\ell} - \calT) \rb} \\
 &\quad + \twonorm{\ve_m^\tran\calM_1 \lb  \calP_{T_{\calX^{t,\ell}}} \lb \calI -p^{-1}  \calP_{\Omega_{-\ell}} -\calP_{\ell}  \rb(\calX^{t,\ell} - \calT)   \rb } =:\beta_{3,a}+\beta_{3,b}.
\end{align*}
The following two claims provide the upper  bounds for $\beta_{3,a}$ and $\beta_{3,b}$. The proofs of the claims can be found in Section  \ref{proof: claim upper bound of beta3a} and Section  \ref{proof: claim upper bound of beta3b}, respectively. \cjc{
\begin{claim}
	\label{claim: upper bound of beta3a}
Assuming the inequalities in \eqref{eq induction hypotheis} hold, one can obtain
	\begin{align*}
		\beta_{3,a} \leq \frac{1}{2^{6}}\cdot  \frac{1}{2^{20} \kappa^4 \mu^2 r^4 }\frac{1}{2^{t+1}} \sqrt{\frac{\mu r}{n}}\sigma_{\max}(\calT).
	\end{align*}
\end{claim}
\begin{claim}
	\label{claim: upper bound of beta3b}
	Suppose $p \geq\frac{C_1\kappa^8\mu^{3.5}r^6\log^3n}{n^{3/2}}$. Assuming the inequalities in \eqref{eq induction hypotheis} hold, one has
	\begin{align*}
		\beta_{3,b} \leq \frac{1}{2^{6} }\frac{1}{2^{20}\kappa^4\mu^2 r^4} \frac{1}{2^{t+1}} \sqrt{\frac{\mu r}{n}} \sigma_{\max}(\calT).
	\end{align*}
\end{claim}}
Combining Claim \ref{claim: upper bound of beta3a} and Claim \ref{claim: upper bound of beta3b} together reveals that
\begin{align}
	\label{eq 2 infty norm of Etl}
	\twonorm{\ve_m^\tran\mE_1^{t,\ell} } \leq \frac{1}{2^{5} }\frac{1}{2^{20}\kappa^4\mu^2 r^4} \frac{1}{2^{t+1}} \sqrt{\frac{\mu r}{n}} \sigma_{\max}(\calT).
\end{align}
Furthermore, putting  \eqref{eq beta3} and \eqref{eq 2 infty norm of Etl} together yields
\begin{align}
	\label{eq upper bound of beta3}
	\beta_3 \leq \frac{1}{2^3}\frac{1}{2^{20} \kappa^2\mu^2 r^4 }  \frac{1}{2^{t+1}}  \sqrt{\frac{\mu r}{n}}.
\end{align} 

\paragraph{Combining $\beta_1, \beta_2$ and $\beta_3$.}
Plugging \eqref{eq upper bound of beta1}, \eqref{eq upper bound of beta2} and \eqref{eq upper bound of beta3} into \eqref{twoinfxtell} shows that with high probability,
\begin{align}\label{emmxt+1}
	\twonorm{\ve_m^\tran \lb \mX_1^{t+1,\ell} \mR_1^{t+1,\ell} - \mU_1 \rb}
	\leq  \frac{1}{4}\frac{1}{2^{20} \kappa^2\mu^2 r^4 }  \frac{1}{2^{t+1}}  \sqrt{\frac{\mu r}{n}}.
\end{align}
Taking the maximum of \eqref{emmxt+1} over $m$ gives that
\begin{align*}
	\twoinf{ \mX_1^{t+1,\ell} \mR_1^{t+1,\ell} - \mU_1 }  \leq  \frac{1}{4}\frac{1}{2^{20} \kappa^2\mu^2 r^4 }  \frac{1}{2^{t+1}}  \sqrt{\frac{\mu r}{n}}.
\end{align*}
The same bound can be obtained for the  other two modes. Thus, we  have
\begin{align}
	\label{eq: incoherence of loo at t+1}
	\twoinf{ \mX_i^{t+1,\ell} \mR_i^{t+1,\ell} - \mU_i }  \leq  \frac{1}{4}\frac{1}{2^{20} \kappa^2\mu^2 r^4 }  \frac{1}{2^{t+1}}  \sqrt{\frac{\mu r}{n}}, \quad i = 1,2,3.
\end{align}

\subsubsection{Bounding \texorpdfstring{$\fronorm{ \calE^{t} - \calE^{t,\ell}} $}{TEXT}}
By the definition of $\calE^t$ and $\calE^{t,\ell}$ in \eqref{residual tensor et} and \eqref{residual tensor etl}, 
\begin{align*}
    \calE^{t} - \calE^{t,\ell} &= \lb\calI-p^{-1}\calP_{T_{\calX^t}}\calP_{\Omega}\rb\lb\calX^t-\calT\rb  - \lb\calI-\calP_{T_{\calX^{t,\ell}}}\lb p^{-1}\calP_{\Omega_{-\ell}}+\calP_{\ell}\rb\rb\lb\calX^{t,\ell}-\calT\rb\\
    &= \left( \calI -\calP_{T_{\calX^t}} \right) \left( \calX^t - \calT\right) +   \calP_{T_{\calX^t}} \left(\calI - p^{-1}\calP_{\Omega}\right) \left( \calX^t - \calT\right) \\
    &\quad - \left( \calI -\calP_{T_{\calX^{t,\ell}}} \right) \left( \calX^{t,\ell} - \calT\right)+   \calP_{T_{\calX^{t,\ell}}}\left( \calI -  p^{-1}\calP_{\Omega_{-\ell}} - \calP_{\ell} \right)\left( \calX^{t,\ell} - \calT\right)\\
    &=  \left( \calP_{T_{\calX^t}} - \calP_{T_{\calX^{t,\ell}}} \right) \left(  \calT\right) + \left( \calP_{T_{\calX^t}}  - \calP_{T_{\calX^{t,\ell}}} \right) \left( \calI -  p^{-1}\calP_{\Omega} \right) \left( \calX^t - \calT\right) \\
    &\quad -    \calP_{T_{\calX^{t,\ell}}}\lb\calI-p^{-1}\calP_{\Omega}\rb\lb\calX^t-\calT\rb + \calP_{T_{\calX^{t,\ell}}}\lb\calI-p^{-1}\calP_{\Omega_{-\ell}}-\calP_{\ell}\rb\lb\calX^{t,\ell}-\calT\rb.
\end{align*}
Thus we can bound 
$\fronorm{ \calE^{t} - \calE^{t,\ell}}$ as follows:
\begin{align}
	\label{eq xi1-3}
	\fronorm{\calE^t-\calE^{t,\ell}} 
	&\leq \underbrace{\fronorm{\lb\calP_{T_{\calX^{t}}}-\calP_{T_{\calX^{t,\ell}}}\rb\lb\calT\rb}}_{=:\xi_1} +\underbrace{\fronorm{\lb\calP_{T_{\calX^{t}}}-\calP_{T_{\calX^{t,\ell}}}\rb\lb\calI-p^{-1}\calP_{\Omega}\rb\lb\calX^t-\calT\rb}}_{=:\xi_2} \notag \\
	&\quad + \underbrace{\fronorm{ \calP_{T_{\calX^{t,\ell}}}\lb\calI-p^{-1}\calP_{\Omega}\rb\lb\calX^t-\calT\rb - \calP_{T_{\calX^{t,\ell}}}\lb\calI-p^{-1}\calP_{\Omega_{-\ell}}-\calP_{\ell}\rb\lb\calX^{t,\ell}-\calT\rb}}_{=:\xi_3}. 
\end{align}
For the operator $\ProjTX - \ProjTXL$, one can invoke the definition \eqref{definition projector} to deduce that
\begin{align}
	\label{eq: difference of two tangent space}
	&\ProjTX - \ProjTXL\notag\\ 
	&~ = \left(\prod_{i=1}^{3}  \calP_{\mX_i^{t}}^{(i)} -\prod_{i=1}^{3}  \calP_{\mX_i^{t,\ell}}^{(i)}\right)+\left(\sum_{i=1}^{3}  \calP^{(i)}_{\mX_i^{t , \perp}} \calP^{(j\neq i)}_{\calG^{t},\lcb \mX_{j}^{t}\rcb_{j\neq i}}-\sum_{i=1}^{3}  \calP^{(i)}_{\mX_i^{t,\ell, \perp}}\calP^{(j\neq i)}_{\calG^{t,\ell},\lcb \mX_{j}^{t, \ell}\rcb_{j\neq i}}\right)\notag\\
 &~= \prod_{i=1}^3 \left(\calP_{\mX_i^{t}}^{(i)}  -  \calP_{\mX_i^{t,\ell}}^{(i)}   \right) + \sum_{i=1}^3\calP_{\mX_i^{t,\ell}}^{(i)}  \prod_{j\neq i } \left(\calP_{\mX_j^{t}}^{(j)}  -  \calP_{\mX_j^{t,\ell}}^{(j)}   \right) + \sum_{i=1}^{3} \left(\calP_{\mX_i^{t}}^{(i)}  -  \calP_{\mX_i^{t,\ell}}^{(i)}   \right) \prod_{j\neq i} \calP_{\mX_j^{t,\ell}}^{(j)}\notag\\
 &\quad+\sum_{i=1}^{3}  \calP^{(i)}_{\mX_i^{t , \perp}} \left( \calP^{(j\neq i)}_{\calG^{t},\lcb \mX_{j}^{t}\rcb_{j\neq i}}-  \calP^{(j\neq i)}_{\calG^{t,\ell},\lcb \mX_{j}^{t, \ell}\rcb_{j\neq i}} \right) + \sum_{i=1}^{3}  \left( \calP^{(i)}_{\mX_i^{t,\ell}}  - \calP^{(i)}_{\mX_i^{t }}   \right) \calP^{(j\neq i)}_{\calG^{t,\ell},\lcb \mX_{j}^{t, \ell}\rcb_{j\neq i}}\notag\\
 &~=\prod_{i=1}^3 \left(\calP_{\mX_i^{t}}^{(i)}  -  \calP_{\mX_i^{t,\ell}}^{(i)}   \right)  + 	\sum_{i=1}^3\calP_{\mX_i^{t,\ell}}^{(i)}  \prod_{j\neq i } \left(\calP_{\mX_j^{t}}^{(j)}  -  \calP_{\mX_j^{t,\ell}}^{(j)}   \right)\notag\\
	&\quad  + \sum_{i=1}^{3} \left(\calP_{\mX_i^{t}}^{(i)}  -  \calP_{\mX_i^{t,\ell}}^{(i)}   \right) \left( \prod_{j\neq i} \calP_{\mX_j^{t,\ell}}^{(j)} - \calP^{(j\neq i)}_{\calG^{t,\ell},\lcb \mX_{j}^{t, \ell}\rcb_{j\neq i}}  \right) \notag\\
	&\quad + \sum_{i=1}^{3}  \calP^{(i)}_{\mX_i^{t , \perp}} \left( \calP^{(j\neq i)}_{\calG^{t},\lcb \mX_{j}^{t}\rcb_{j\neq i}}-  \calP^{(j\neq i)}_{\calG^{t,\ell},\lcb \mX_{j}^{t, \ell}\rcb_{j\neq i}} \right).
\end{align}
\paragraph{Bounding  $\xi_1$.}
By \eqref{eq: difference of two tangent space},  this term can be bounded as follows:
\begin{align*}
	\xi_1&\leq \underbrace{\fronorm{ \prod_{i=1}^3 \left(\calP_{\mX_i^{t}}^{(i)}  -  \calP_{\mX_i^{t,\ell}}^{(i)}   \right)(\calT) } }_{=:\xi_{1,1}} + \underbrace{\sum_{i=1}^3 \fronorm{\calP_{\mX_i^{t,\ell}}^{(i)} \prod_{j\neq i}\left(\calP_{\mX_j^{t}}^{(j)}  -  \calP_{\mX_j^{t,\ell}}^{(j)}   \right) (\calT)   }}_{=:\xi_{1,2}}\\
	&\quad  + \underbrace{\sum_{i=1}^{3} \fronorm{ \left(\calP_{\mX_i^{t}}^{(i)}  -  \calP_{\mX_i^{t,\ell}}^{(i)}   \right) \left( \prod_{j\neq i} \calP_{\mX_j^{t,\ell}}^{(j)} - \calP^{(j\neq i)}_{\calG^{t,\ell},\lcb \mX_{j}^{t, \ell}\rcb_{j\neq i}}   \right)(\calT) } }_{=:\xi_{1,3}}\\
	&\quad + \underbrace{\sum_{i=1}^{3}\fronorm{  \calP^{(i)}_{\mX_i^{t , \perp}} \left( \calP^{(j\neq i)}_{\calG^{t},\lcb \mX_{j}^{t}\rcb_{j\neq i}}-  \calP^{(j\neq i)}_{\calG^{t,\ell},\lcb \mX_{j}^{t, \ell}\rcb_{j\neq i}} \right)(\calT)  }}_{=:\xi_{1,4}}.
\end{align*}
\begin{itemize}
	\item Bounding $\xi_{1,1},\xi_{1,2}$ and \cjc{$\xi_{1,3}$}. By the definition of $\calP_{\mX_i^{t}}^{(i)}$ and $\calP_{\mX_i^{t,\ell}}^{(i)} $, we have
	\begin{align*}
		\xi_{1,1}
		&\leq \prod_{i=1}^3 \fronorm{\mX_i^t{\mX_i^t}^\tran  -  \mX_i^t{\mX_i^t}^\tran } \cdot \sigma_{\max}(\calT)\\
		&\leq \left(\max_{i=1,2,3} \fronorm{ {\mX_i^{t}}{\mX_i^{t}}^\tran -   {\mX_i^{t,\ell}}{\mX_i^{t,\ell}}^\tran } \right)^3\cdot \sigma_{\max}(\calT),\\
		\xi_{1,2}
		&\leq \sum_{i=1}^3 \fronorm{ \calM_i \left(\prod_{j\neq i}\left(\calP_{\mX_j^{t}}^{(j)}  -  \calP_{\mX_j^{t,\ell}}^{(j)}   \right) (\calT)  \right) } \\
		&\leq \sum_{i=1}^3 \prod_{j\neq i} \fronorm{\mX_j^t{\mX_j^t}^\tran  -  \mX_j^t{\mX_j^t}^\tran } \cdot \sigma_{\max}(\calT)\\
		&\leq 3 \left(\max_{i=1,2,3} \fronorm{ {\mX_i^{t}}{\mX_i^{t}}^\tran -   {\mX_i^{t,\ell}}{\mX_i^{t,\ell}}^\tran}  \right)^2\cdot \sigma_{\max}(\calT),\\
		\xi_{1,3}&\stackrel{(a)}{=} \sum_{i=1}^{3} \fronorm{ \left(\calP_{\mX_i^{t}}^{(i)}  -  \calP_{\mX_i^{t,\ell}}^{(i)}   \right) \left( \prod_{j\neq i} \calP_{\mX_j^{t,\ell}}^{(j)} - \calP^{(j\neq i)}_{\calG^{t,\ell},\lcb \mX_{j}^{t, \ell}\rcb_{j\neq i}}   \right)\lb\calX^{t, \ell} - \calT\rb } \\
		&=\sum_{i=1}^{3} \fronorm{\calM_i \left( \left(\calP_{\mX_i^{t}}^{(i)}  -  \calP_{\mX_i^{t,\ell}}^{(i)}   \right) \left( \prod_{j\neq i} \calP_{\mX_j^{t,\ell}}^{(j)} - \calP^{(j\neq i)}_{\calG^{t,\ell},\lcb \mX_{j}^{t, \ell}\rcb_{j\neq i}}   \right)\lb\calX^{t, \ell} - \calT\rb \right) } \\
		&\leq \sum_{i=1}^{3} \fronorm{ {\mX_i^{t}}{\mX_i^{t}}^\tran -   {\mX_i^{t,\ell}}{\mX_i^{t,\ell}}^\tran   }\cdot \fronorm{   \left( \prod_{j\neq i} \calP_{\mX_j^{t,\ell}}^{(j)} - \calP^{(j\neq i)}_{\calG^{t,\ell},\lcb \mX_{j}^{t, \ell}\rcb_{j\neq i}}   \right)\lb\calX^{t, \ell} - \calT\rb  } \\
		&\stackrel{(b)}{\leq } \sum_{i=1}^{3} \fronorm{ {\mX_i^{t}}{\mX_i^{t}}^\tran -   {\mX_i^{t,\ell}}{\mX_i^{t,\ell}}^\tran   }\cdot \fronorm{   \calX^{t, \ell} - \calT   } \\
		&\leq 3 \max_{i=1,2,3} \fronorm{ {\mX_i^{t}}{\mX_i^{t}}^\tran -   {\mX_i^{t,\ell}}{\mX_i^{t,\ell}}^\tran }  \cdot \fronorm{\calX^{t,\ell} - \calT},
	\end{align*}
	where step (a) and step (b) follow from the Claim~\ref{property of these projection}.
	Combining these three terms together and using \eqref{infty norm at t step for Xtl}, one can see that
	\begin{align}\label{eq xi1 to 3}
		\sum_{i=1}^{3}\xi_{1,i} 
		&\leq \left( \frac{1}{2^{19}\kappa^2 \mu^2 r^4} \frac{1}{2^t}\sqrt{\frac{\mu r}{n}}\right)^3\sigma_{\max}(\calT) + 3\left( \frac{1}{2^{19}\kappa^2 \mu^2 r^4} \frac{1}{2^t}\sqrt{\frac{\mu r}{n}}\right)^2\sigma_{\max}(\calT) \notag \\
		&\quad  + 3\cdot \frac{1}{2^{19}\kappa^2 \mu^2 r^4} \frac{1}{2^t}\sqrt{\frac{\mu r}{n}}\cdot n^{3/2}\frac{36}{2^{20}\kappa^2 \mu^2 r^4} \frac{1}{2^t} \left(\frac{\mu r}{n}\right)^{3/2}\sigma_{\max}(\calT) \notag \\
		&\leq \left( \frac{1}{2^{19}} + \frac{3}{2^{18}} + \frac{3\cdot 36}{2^{19}}\right) \frac{1}{2^{20}\kappa^4 \mu^2 r^4} \frac{1}{2^{t+1}}\sqrt{\frac{\mu r}{n}}\sigma_{\max}(\calT) \notag \\
		&\leq \frac{1}{2^{10}}\frac{1}{2^{20}\kappa^4 \mu^2 r^4} \frac{1}{2^{t+1}}\sqrt{\frac{\mu r}{n}}\sigma_{\max}(\calT),
	\end{align}
	where we have used the following  bound
	\begin{align*}
	    \fronorm{ {\mX_i^{t}}{\mX_i^{t}}^\tran -   {\mX_i^{t,\ell}}{\mX_i^{t,\ell}}^\tran } &= \fronorm{ {\mX_i^{t}\mR_i^{t}} \left({\mX_i^{t}\mR_i^t}\right)^\tran -   {\mX_i^{t,\ell}\mT_i^{t,\ell}} \left({\mX_i^{t,\ell}\mT_i^{t,\ell}} \right)^\tran } \\
	    &\leq 2\fronorm{\mX_i^{t}\mR_i^{t} - \mX_i^{t,\ell}\mT_i^{t,\ell}}\leq 2\cdot \frac{1}{2^{20}\kappa^2 \mu^2 r^4}\frac{1}{2^t}\sqrt{\frac{\mu r}{n}}.
	\end{align*}
	\item For $\xi_{1,4}$, it follows that
	\begin{align*}
		\xi_{1,4}= \sum_{i=1}^{3} \fronorm{  \calP^{(i)}_{\mX_i^{t , \perp}} \left( \calP^{(j\neq i)}_{\calG^{t},\lcb \mX_{j}^{t}\rcb_{j\neq i}}-  \calP^{(j\neq i)}_{\calG^{t,\ell},\lcb \mX_{j}^{t, \ell}\rcb_{j\neq i}} \right)(\calT)  } =:\sum_{i=1}^{3} \xi_{1,4}^i.
	\end{align*}
	The proofs for different $i$ are overall similar, so we only provide  details for $\xi_{1,4}^1$. 
   The proof starts with the following bound
    \begin{align}
		\label{eq xi141 0}
		\xi_{1,4}^1& = \fronorm{\calM_1 \left( \calP^{(1)}_{\mX_1^{t , \perp}} \left( \calP^{(j\neq 1)}_{\calG^{t},\lcb \mX_{j}^{t}\rcb_{j\neq 1}}-  \calP^{(j\neq 1)}_{\calG^{t,\ell},\lcb \mX_{j}^{t, \ell}\rcb_{j\neq 1}} \right)(\calT)\right)}  \notag \\
		&= \fronorm{ \left( {\mU_1}{\mU_1}^\tran -  {\mX_1^{t}}{\mX_1^{t}}^\tran \right) \calM_1 \left( \left( \calP^{(j\neq 1)}_{\calG^{t},\lcb \mX_{j}^{t}\rcb_{j\neq 1}}-  \calP^{(j\neq 1)}_{\calG^{t,\ell},\lcb \mX_{j}^{t, \ell}\rcb_{j\neq 1}} \right)(\calT)  \right)  } \notag \\
		&\leq \opnorm{  {\mU_1}{\mU_1}^\tran -  {\mX_1^{t}}{\mX_1^{t}}^\tran  }\cdot \fronorm{  \calM_1 \left( \left( \calP^{(j\neq 1)}_{\calG^{t},\lcb \mX_{j}^{t}\rcb_{j\neq 1}}-  \calP^{(j\neq 1)}_{\calG^{t,\ell},\lcb \mX_{j}^{t, \ell}\rcb_{j\neq 1}} \right)(\calT)  \right)  } \notag \\
		&\leq 2 \opnorm{ \mX_1^t \mR_1^t - \mU_1 } \cdot \fronorm{  \calM_1 \left( \left( \calP^{(j\neq 1)}_{\calG^{t},\lcb \mX_{j}^{t}\rcb_{j\neq 1}}-  \calP^{(j\neq 1)}_{\calG^{t,\ell},\lcb \mX_{j}^{t, \ell}\rcb_{j\neq 1}} \right)(\calT)  \right)  }.
	\end{align}
	
	Applying Claim~\ref{property of these projection} yields that
	\begin{align}\label{yt-ytl}
		&\fronorm{  \calM_1 \left( \left( \calP^{(j\neq 1)}_{\calG^{t},\lcb \mX_{j}^{t}\rcb_{j\neq 1}}-  \calP^{(j\neq 1)}_{\calG^{t,\ell},\lcb \mX_{j}^{t,\ell}\rcb_{j\neq 1}} \right)(\calT)  \right)  }\notag\\ &~=\fronorm{\calM_1(\calT) \left( \mY_1^t{\mY_1^t}^\tran  -\mY_1^{t,\ell}{\mY_1^{t,\ell}}^\tran \right) } \leq \fronorm{ \mY_1^t{\mY_1^t}^\tran  -\mY_1^{t,\ell}{\mY_1^{t,\ell}}^\tran } \cdot \sigma_{\max}(\calT)\notag\\
		&~\stackrel{(a)}{\leq} \frac{1}{2^{13}}\cdot \frac{1}{2^t}\sqrt{\frac{\mu r}{n}}\sigma_{\max}(\calT),
	\end{align}
	where the columns of $\mY_1^t$ and $\mY_1^{t,\ell}$ are the top-$r$ right singular vectors of $\calM_1\lb\calX^t\rb$ and $\calM_1\lb\calX^{t,\ell}\rb$, respectively,   $(a)$ is due to the following claim, whose proof is presented in Section~\ref{subsec:claimY}.
	\begin{claim}
		\label{claim Y distance}
		Assuming the inequalities in \eqref{eq induction hypotheis} hold,
		one has
		\begin{align*}
			\fronorm{ \mY_i^t{\mY_i^t}^\tran  -\mY_i^{t,\ell}{\mY_i^{t,\ell}}^\tran }\leq  \frac{1}{2^{13}}\cdot \frac{1}{2^t}\sqrt{\frac{\mu r}{n}},\quad i = 1,2,3.
		\end{align*}
	\end{claim}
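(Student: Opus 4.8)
The plan is to reduce Claim~\ref{claim Y distance} to a Davis--Kahan subspace perturbation estimate for the Gram matrices of the unfoldings of $\calX^t$ and $\calX^{t,\ell}$; by symmetry among the three modes I only treat $i=1$, the cases $i=2,3$ being verbatim identical. By Claim~\ref{property of these projection}, $\mY_1^t{\mY_1^t}^\tran$ is the orthogonal projector onto the row space of $\calM_1(\calX^t)$, equivalently the projector onto the top-$r$ eigenspace of the $n^2\times n^2$ positive semidefinite matrix $\mM^t := \calM_1(\calX^t)^\tran \calM_1(\calX^t)$; likewise $\mY_1^{t,\ell}{\mY_1^{t,\ell}}^\tran$ is the top-$r$ eigenprojector of $\widetilde\mM^t := \calM_1(\calX^{t,\ell})^\tran \calM_1(\calX^{t,\ell})$. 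Since $\calX^t,\calX^{t,\ell}\in\mathbb{M}_{\vr}$, both Gram matrices have rank exactly $r$, so their $(r{+}1)$-st eigenvalues vanish, while Lemma~\ref{lemma: condition number of X} gives $\lambda_r(\mM^t),\lambda_r(\widetilde\mM^t)\ge (15/16)^2\sigma_{\min}^2(\calT)$. Invoking the Davis--Kahan $\sin\Theta$ theorem for the leading $r$-dimensional eigenspace and expanding $\mM^t-\widetilde\mM^t$ with $\bDelta := \calM_1(\calX^t)-\calM_1(\calX^{t,\ell})$ as $\bDelta^\tran\calM_1(\calX^{t,\ell}) + \calM_1(\calX^{t,\ell})^\tran\bDelta + \bDelta^\tran\bDelta$, one obtains
\begin{align*}
\fronorm{\mY_1^t{\mY_1^t}^\tran - \mY_1^{t,\ell}{\mY_1^{t,\ell}}^\tran}
&\le \frac{C'\fronorm{\mM^t-\widetilde\mM^t}}{\sigma_{\min}^2(\calT)}
\le \frac{C'\lb 2\opnorm{\calM_1(\calX^{t,\ell})}+\fronorm{\bDelta}\rb\fronorm{\bDelta}}{\sigma_{\min}^2(\calT)} \\
&\le \frac{3C'\,\sigma_{\max}(\calT)}{\sigma_{\min}^2(\calT)}\,\fronorm{\calX^t-\calX^{t,\ell}},
\end{align*}
for an absolute constant $C'$, where the last step uses $\opnorm{\calM_1(\calX^{t,\ell})}\le(1+2^{-9})\sigma_{\max}(\calT)$ from Lemma~\ref{lemma: condition number of X} and $\fronorm{\bDelta}=\fronorm{\calX^t-\calX^{t,\ell}}\ll\sigma_{\max}(\calT)$. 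Thus the task reduces to bounding $\fronorm{\calX^t-\calX^{t,\ell}}$.

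For this I use the perturbation forms $\calX^t = \lb\calT+\calE^{t-1}\rb\ttimes\mX_i^t{\mX_i^t}^\tran$ and $\calX^{t,\ell} = \lb\calT+\calE^{t-1,\ell}\rb\ttimes\mX_i^{t,\ell}{\mX_i^{t,\ell}}^\tran$, which follow from $\calX^t=\calH_{\vr}(\calT+\calE^{t-1})$, $\calX^{t,\ell}=\calH_{\vr}(\calT+\calE^{t-1,\ell})$ together with the definition of $\HOSVD$, and split
\begin{align*}
\calX^t-\calX^{t,\ell} = \lb\calE^{t-1}-\calE^{t-1,\ell}\rb\ttimes\mX_i^t{\mX_i^t}^\tran + \lb\lb\calT+\calE^{t-1,\ell}\rb\ttimes\mX_i^t{\mX_i^t}^\tran - \lb\calT+\calE^{t-1,\ell}\rb\ttimes\mX_i^{t,\ell}{\mX_i^{t,\ell}}^\tran\rb.
\end{align*}
The first term has Frobenius norm at most $\fronorm{\calE^{t-1}-\calE^{t-1,\ell}}$ (projectors are contractions), controlled by \eqref{F norm of error tensor}. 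The second difference telescopes over the three modes; each summand, after matricizing along the active mode $i$, takes the form $\lb\mX_i^t{\mX_i^t}^\tran - \mX_i^{t,\ell}{\mX_i^{t,\ell}}^\tran\rb\calM_i(\calT+\calE^{t-1,\ell})(\cdot)$ with the remaining Kronecker factor of operator norm $1$, hence has Frobenius norm at most $\fronorm{\mX_i^t{\mX_i^t}^\tran-\mX_i^{t,\ell}{\mX_i^{t,\ell}}^\tran}\cdot\opnorm{\calM_i(\calT+\calE^{t-1,\ell})}\le 2\fronorm{\mX_i^t\mR_i^t-\mX_i^{t,\ell}\mT_i^{t,\ell}}\cdot 2\sigma_{\max}(\calT)$, using the identity $\fronorm{\mX_i^t{\mX_i^t}^\tran-\mX_i^{t,\ell}{\mX_i^{t,\ell}}^\tran}\le 2\fronorm{\mX_i^t\mR_i^t-\mX_i^{t,\ell}\mT_i^{t,\ell}}$, the bound \eqref{spectral norm of loo error tensor} on $\opnorm{\mE_i^{t-1,\ell}}$, and \eqref{F norm}. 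Collecting terms and using $\kappa,\mu,r\ge1$ gives $\fronorm{\calX^t-\calX^{t,\ell}}\le \frac{13}{2^{20}\kappa^2\mu^2 r^4}\frac{1}{2^t}\sqrt{\frac{\mu r}{n}}\,\sigma_{\max}(\calT)$.

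Substituting this into the first display, the factor $\sigma_{\max}^2(\calT)/\sigma_{\min}^2(\calT)=\kappa^2$ produced by Davis--Kahan exactly cancels the $\kappa^{-2}$ in the bound on $\fronorm{\calX^t-\calX^{t,\ell}}$, leaving $\fronorm{\mY_1^t{\mY_1^t}^\tran-\mY_1^{t,\ell}{\mY_1^{t,\ell}}^\tran}\le \frac{C}{2^{20}\mu^2 r^4}\frac{1}{2^t}\sqrt{\mu r/n}$ for an absolute constant $C$; a direct count of the numerical constants (the Davis--Kahan constant, the gap factor $(16/15)^2$, the factor $\le 2(1+2^{-9})$, and the $13$ above) gives $C\le 2^7$, so $\frac{C}{2^{20}\mu^2 r^4}\le\frac{1}{2^{13}}$ by $\mu^2 r^4\ge1$, which is exactly the claimed bound. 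I expect the main obstacle to be bookkeeping rather than conceptual: writing the telescoped decomposition of $\calX^t-\calX^{t,\ell}$ so that the small quantity ($\fronorm{\mX_i^t\mR_i^t-\mX_i^{t,\ell}\mT_i^{t,\ell}}$, resp.\ $\fronorm{\calE^{t-1}-\calE^{t-1,\ell}}$) always carries the Frobenius norm while $\sigma_{\max}(\calT)$ enters only linearly, and then threading the constants through Davis--Kahan tightly enough to stay below $1/2^{13}$ --- the crucial structural point being that the $\kappa^2$ from Davis--Kahan is absorbed by the extra $\kappa^{-2}$ already present in \eqref{F norm} and \eqref{F norm of error tensor}, so that no sharper (e.g.\ leave-one-out) estimate on the core tensors is needed.
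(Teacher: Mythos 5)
Your argument is correct and is essentially the paper's own proof: both reduce the claim to a Davis--Kahan bound for the top-$r$ eigenspaces of the Gram matrices of the unfoldings, with the eigengap $\geq (15/16)^2\sigma_{\min}^2(\calT)$ supplied by Lemma~\ref{lemma: condition number of X} and the smallness supplied by the induction hypotheses on $\fronorm{\mX_i^t\mR_i^t-\mX_i^{t,\ell}\mT_i^{t,\ell}}$ and $\fronorm{\calE^{t-1}-\calE^{t-1,\ell}}$ together with \eqref{spectral norm of loo error tensor}. The only difference is organizational: you first bound $\fronorm{\calX^t-\calX^{t,\ell}}$ at the tensor level and then apply the generic quadratic perturbation identity for $A^\tran A-B^\tran B$, whereas the paper telescopes the Gram-matrix difference $\mW_1$ directly into terms involving $\mD_{3,2}$, $\calE^{t-1}-\calE^{t-1,\ell}$ and $\mX_1^t{\mX_1^t}^\tran-\mX_1^{t,\ell}{\mX_1^{t,\ell}}^\tran$; the $\kappa^2$ cancellation and the final constant count work out the same either way.
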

	Plugging \eqref{yt-ytl} into  \eqref{eq xi141 0} reveals that
	\begin{align*}
		\xi_{1,4}^1 
		&\leq 2\cdot \frac{1}{2^{17} \kappa^4\mu^2 r^4} \frac{1}{2^t}\cdot  \frac{1}{2^{13}}\cdot \frac{1}{2^t}\sqrt{\frac{\mu r}{n}}\sigma_{\max}(\calT)\leq \frac{1}{2^9}\cdot \frac{1}{2^{20}\kappa^4\mu^2 r^4 } \frac{1}{2^{t+1}} \sqrt{\frac{\mu r}{n}}\sigma_{\max}(\calT).
	\end{align*}
The upper bounds of  $\xi_{1,4}^2$ and $\xi_{1,4}^3$ follow immediately via nearly the same argument as above,
\begin{align*}
    \xi_{1,4}^2\leq \frac{1}{2^9}\cdot \frac{1}{2^{20}\kappa^4\mu^2 r^4 } \frac{1}{2^{t+1}} \sqrt{\frac{\mu r}{n}}\sigma_{\max}(\calT)~\mbox{and}~
    \xi_{1,4}^3\leq \frac{1}{2^9}\cdot \frac{1}{2^{20}\kappa^4\mu^2 r^4 } \frac{1}{2^{t+1}} \sqrt{\frac{\mu r}{n}}\sigma_{\max}(\calT).
\end{align*}
Thus, we have
	\begin{align}
		\label{eq xi 14}
		\xi_{1,4} \leq \frac{3}{2^9}\cdot \frac{1}{2^{20}\kappa^4\mu^2 r^4 } \frac{1}{2^{t+1}} \sqrt{\frac{\mu r}{n}}\sigma_{\max}(\calT).
	\end{align}
	 
\end{itemize}
Combining \eqref{eq xi1 to 3} and \eqref{eq xi 14} together shows that
\begin{align}
	\label{eq xi1}
	\xi_{1}&\leq \sum_{i=1}^{4}\xi_{1,i}\leq \frac{4}{2^{9}}\frac{1}{2^{20}\kappa^4\mu^2 r^4 } \frac{1}{2^{t+1}} \sqrt{\frac{\mu r}{n}}\sigma_{\max}(\calT).
\end{align}

\paragraph{\cjc{Bounding  $\xi_2$.}}
For notational convenience, define 
	\begin{align*}
	    \calZ^t = \left(\calI - p^{-1}\calP_{\Omega}\right)(\calX^t - \calT).
	\end{align*}
First, apply \eqref{eq: difference of two tangent space} to deduce that
\begin{align}
	\label{eq xi2a}
	\xi_{2}&\leq \underbrace{ \fronorm{\prod_{i=1}^3 \left(\calP_{\mX_i^{t}}^{(i)}  -  \calP_{\mX_i^{t,\ell}}^{(i)}   \right)\calZ^t } }_{=:\xi_{2,1}}  + \underbrace{\sum_{i=1}^3 \fronorm{  \calP_{\mX_i^{t,\ell}}^{(i)} \prod_{j\neq i} \left(\calP_{\mX_j^{t}}^{(j)}  -  \calP_{\mX_j^{t,\ell}}^{(j)}   \right) \calZ^t} }_{=:\xi_{2,2}} \notag \\
	&\quad  + \underbrace{\sum_{i=1}^{3} \fronorm{ \left(\calP_{\mX_i^{t}}^{(i)}  -  \calP_{\mX_i^{t,\ell}}^{(i)}   \right) \left( \prod_{j\neq i} \calP_{\mX_j^{t,\ell}}^{(j)} - \calP^{(j\neq i)}_{\calG^{t,\ell},\lcb \mX_{j}^{t,\ell}\rcb_{j\neq i}}   \right) \calZ^t  }}_{=:\xi_{2,3}} \notag \\
	&\quad  + \underbrace{ \sum_{i=1}^{3} \fronorm{ \calP^{(i)}_{\mX_i^{t , \perp}} \left( \calP^{(j\neq i)}_{\calG^{t},\lcb \mX_{j}^{t}\rcb_{j\neq i}} -  \calP^{(j\neq i)}_{\calG^{t,\ell},\lcb \mX_{j}^{t,\ell}\rcb_{j\neq i}}  \right) \calZ^t } }_{=:\xi_{2,4}}.
\end{align}
\begin{itemize}
	\item  For $\xi_{2,1}$, it is easily seen that
	\begin{align}
		\label{eq xi21}
		\xi_{2,1}
		&\leq \prod_{i=1}^3 \fronorm{  {\mX_i^{t}}{\mX_i^{t}}^\tran -   {\mX_i^{t,\ell}} {\mX_i^{t,\ell}}^\tran  } \opnorm{  \calM_1 \left( \calZ^t\right) } \stackrel{(a)}{\leq }\prod_{i=1}^3 \fronorm{ {\mX_i^{t}}{\mX_i^{t}}^\tran -   {\mX_i^{t,\ell}} {\mX_i^{t,\ell}}^\tran   }\sqrt{n}\opnorm{  \calZ^t } \notag \\
		&\leq 8 \left( \max_{i = 1,2,3} \fronorm{  \mX_i^{t}\mR_i^t-  \mX_i^{t,\ell} \mT_i^{t,\ell}  }\right)^3 \cdot\sqrt{n}\opnorm{  \calZ^t },
	\end{align}
where  $(a)$ follows from Lemma~\ref{lemma: Lemma 6 in XYZ}. 
	\item  For $\xi_{2,2}$, it can be bounded by the same argument as above,
	\begin{align}
		\label{eq xi22}
		\xi_{2,2} &= \sum_{i=1}^3 \fronorm{\calM_1\left( \calP_{\mX_i^{t,\ell}}^{(i)}  \prod_{j\neq i}\left(\calP_{\mX_j^{t}}^{(j)}  -  \calP_{\mX_j^{t,\ell}}^{(j)}   \right) \calZ^t \right) } \notag \\
		&\leq \sum_{i=1}^3 \prod_{j\neq i} \fronorm{ {\mX_i^{t}}{\mX_i^{t}}^\tran -   {\mX_i^{t,\ell}} {\mX_i^{t,\ell}}^\tran    }\cdot \opnorm{  \calM_1 \left( \calZ^t\right) } \notag \\
		&\leq 3\cdot 4\left( \max_{i = 1,2,3} \fronorm{  \mX_i^{t}\mR_i^t-  \mX_i^{t,\ell} \mT_i^{t,\ell}  }\right)^2 \cdot\sqrt{n}\opnorm{  \calZ^t },
	\end{align}

	\item For $\xi_{2,3}$, 
	{letting
	\begin{align*}
		\calZ_i = \left(\calP_{\mX_i^{t}}^{(i)}  -  \calP_{\mX_i^{t,\ell}}^{(i)}   \right) \left( \prod_{j\neq i} \calP_{\mX_j^{t,\ell}}^{(j)} - \calP^{(j\neq i)}_{\calG^{t,\ell},\lcb \mX_{j}^{t,\ell}\rcb_{j\neq i}}   \right) \calZ^t,
	\end{align*}	
	then we have $\xi_{2,3} = \sum_{i = 1}^3\fronorm{\calZ_i}$. 
 It is not hard to see that $\calZ_i$ is a tensor of multilinear rank at most $(2r,2r,2r)$ for $i = 1,2,3$.
 Thus,  the application  of Lemma~\ref{lemma:XY 19 lemma1} yields that
	\begin{align*}
		\xi_{2,3} &\leq 4r^{3/2} \sum_{i=1}^{3}\opnorm{\left( \calP_{\mX_i^t }^{(i)} -  \calP_{ \mX_i^{t,\ell} }^{(i)}\right)   \left( \prod_{j\neq i} \calP_{\mX_j^{t,\ell}}^{(j)} - \calP^{(j\neq i)}_{\calG^{t,\ell},\lcb \mX_{j}^{t,\ell}\rcb_{j\neq i}}   \right)(\calZ^t) } \\
		&\leq 4r^{3/2} \sum_{i=1}^{3}\opnorm{  \left( \prod_{j\neq i} \calP_{\mX_j^{t,\ell}}^{(j)} - \calP^{(j\neq i)}_{\calG^{t,\ell},\lcb \mX_{j}^{t,\ell}\rcb_{j\neq i}}   \right) (\calZ^t)   }\cdot \opnorm{  {\mX_i^t } {\mX_i^t }^\tran -  { \mX_i^{t,\ell} }{ \mX_i^{t,\ell} }^\tran }.
	\end{align*}
To this end, we focus on bounding 
\begin{align*}
\xi_{2,3}^i := \opnorm{  \left( \prod_{j\neq i} \calP_{\mX_j^{t,\ell}}^{(j)} - \calP^{(j\neq i)}_{\calG^{t,\ell},\lcb \mX_{j}^{t,\ell}\rcb_{j\neq i}}   \right) (\calZ^t)  }
\end{align*}
for $i=1,2,3$. The triangular inequality yields that
\begin{align*}
\xi_{2,3}^1 \leq \opnorm{  \left( \prod_{j\neq 1} \calP_{\mX_j^{t,\ell}}^{(j)} \right) (\calZ^t)  } + \opnorm{    \calP^{(j\neq 1)}_{\calG^{t,\ell},\lcb \mX_{j}^{t,\ell}\rcb_{j\neq 1}}  (\calZ^t)   }:= \xi_{2,3}^{1,a}+\xi_{2,3}^{1,b}.
\end{align*}
\begin{itemize}
    \item Bounding $\xi_{2,3}^{1,a}$. It can be bounded as follows:
    \begin{align}
    \label{eq: xi231a}
    \xi_{2,3}^{1,a} 
    &\leq \opnorm{ \calZ^t  } \cdot \prod_{j\neq 1}\opnorm{  \mX_{j}^{t,\ell}{\mX_{j}^{t,\ell}}^\tran } \leq \opnorm{  \calZ^t   }.
    \end{align}
    
    \item Bounding $\xi_{2,3}^{1,b}$. A simple computation yields that
    \begin{align}
    \label{eq: xi231b}
    \xi_{2,3}^{1,b} 
    &\leq \opnorm{   \calM_1 \left( \calP^{(j\neq 1)}_{\calG^{t,\ell},\lcb \mX_{j}^{t,\ell}\rcb_{j\neq 1}}  ( \calZ^t )   \right)} \notag \\
    &\stackrel{(a)}{=} \opnorm{ \calM_1 \left( \calZ^t   \right) \left( \mX_3^{t,\ell} \otimes \mX_2^{t,\ell}\right) \calM_1^\dagger \left(\calG^{t,\ell} \right) \calM_1\left(\calG^{t,\ell} \right) \left( \mX_3^{t,\ell} \otimes \mX_2^{t,\ell}\right)^\tran } \notag\\
    &\leq \opnorm{ \calM_1 \left( \calZ^t   \right) \left( \mX_3^{t,\ell} \otimes \mX_2^{t,\ell}\right) } \cdot \opnorm{\calM_1^\dagger \left(\calG^{t,\ell} \right) \calM_1\left(\calG^{t,\ell} \right)  }  \cdot \opnorm{  \mX_3^{t,\ell} \otimes \mX_2^{t,\ell}} \notag \\
    &\leq \opnorm{ \calM_1 \left(  \calZ^t    \times_2 {\mX_2^{t,\ell}}^\tran \times_3{\mX_3^{t,\ell}}^\tran \right)  }\notag \\
    &\stackrel{(b)}{\leq } \sqrt{r} \opnorm{ \calZ^t  \times_2 {\mX_2^{t,\ell}}^\tran \times_3{\mX_3^{t,\ell}}^\tran } \leq \sqrt{r} \opnorm{   \calZ^t    },
    \end{align}
    where  $(a)$ is due to the definition of $\calP^{(j\neq 1)}_{\calG^{t,\ell},\lcb \mX_{j}^{t,\ell}\rcb_{j\neq 1}}$ and  $(b)$ follows from Lemma~\ref{lemma: Lemma 6 in XYZ}.
\end{itemize}
Combining \eqref{eq: xi231a} and \eqref{eq: xi231b} together yields that 
\begin{align*}
\xi_{2,3}^1 \leq  \opnorm{   \calZ^t    } + \sqrt{r} \opnorm{   \calZ^t   }\leq 2\sqrt{r} \opnorm{   \calZ^t   }.
\end{align*}
Applying the same argument as above can show that
\begin{align*}
\xi_{2,3}^i \leq 2\sqrt{r} \opnorm{   \calZ^t   }, \quad i = 2,3.
\end{align*}
Thus one has 
\begin{align}
\label{eq xi23}
\xi_{2,3} &\leq 4r^{3/2}\cdot 3\cdot 2\sqrt{r} \opnorm{   \calZ^t  } \cdot \max_{i=1,2,3}\opnorm{{\mX_i^t } {\mX_i^t }^\tran -  { \mX_i^{t,\ell} }{ \mX_i^{t,\ell} }^\tran} \notag \\
&= 24r^2\opnorm{   \calZ^t   } \cdot \max_{i=1,2,3}\fronorm{{\mX_i^t } {\mX_i^t }^\tran -  { \mX_i^{t,\ell} }{ \mX_i^{t,\ell} }^\tran}.
\end{align}

	}
	
	\item  For  $\xi_{2,4}$, it is easy to see that
	\begin{align}
		\label{eq xi24}
		\xi_{2,4}&= \sum_{i=1}^{3} \fronorm{ \calP^{(i)}_{\mX_i^{t , \perp}} \left( \calP^{(j\neq i)}_{\calG^{t},\lcb \mX_{j}^{t}\rcb_{j\neq i}} -  \calP^{(j\neq i)}_{\calG^{t,\ell},\lcb \mX_{j}^{t,\ell}\rcb_{j\neq i}}  \right)  \lb\calZ^t\rb  }  \notag \\
		&\leq \sum_{i=1}^{3}  \fronorm{ \calM_i\left( \left( \calP^{(j\neq i)}_{\calG^{t},\lcb \mX_{j}^{t}\rcb_{j\neq i}} -  \calP^{(j\neq i)}_{\calG^{t,\ell},\lcb \mX_{j}^{t,\ell}\rcb_{j\neq i}}  \right) \lb \calZ^t \rb \right)  } .
	\end{align}
The first term in \eqref{eq xi24} can be bounded as follows:
	\begin{align*}
		&\fronorm{ \calM_1\left( \left( \calP^{(j\neq 1)}_{\calG^{t},\lcb \mX_{j}^{t}\rcb_{j\neq 1}} -  \calP^{(j\neq 1)}_{\calG^{t,\ell},\lcb \mX_{j}^{t,\ell}\rcb_{j\neq 1}}  \right) (\calZ^t) \right)  }\\
		&\quad\stackrel{(a)}{\leq} \fronorm{\calM_1(\calZ^t) \left(  \left( \mX_3^t{\mX_3^t}^\tran \otimes \mX_2^t{\mX_2^t}^\tran  \right) \mY_1^t{\mY_1^t}^\tran -  \left(  {\mX_{3}^{t,\ell}} {\mX_{3}^{t,\ell}}^\tran \otimes  {\mX_{2}^{t,\ell}}{\mX_{2}^{t,\ell}}^\tran  \right) \mY_1^{t,\ell}{\mY_1^{t,\ell}}^\tran \right) }\\
		&\quad \leq \fronorm{\calM_1(\calZ^t) \left(  \mX_3^t{\mX_3^t}^\tran \otimes \mX_2^t{\mX_2^t}^\tran  -  {\mX_{3}^{t,\ell}}{\mX_{3}^{t,\ell}}^\tran \otimes  {\mX_{2}^{t,\ell}}{\mX_{2}^{t,\ell}}^\tran  \right)  \mY_1^t{\mY_1^t}^\tran  }\\
		&\quad\quad + \fronorm{\calM_1(\calZ^t)  \left(  {\mX_{3}^{t,\ell}}{\mX_{3}^{t,\ell}}^\tran \otimes  {\mX_{2}^{t,\ell}}{\mX_{2}^{t,\ell}}^\tran \right) \left( \mY_1^t{\mY_1^t}^\tran  -\mY_1^{t,\ell}{\mY_1^{t,\ell}}^\tran \right)   } =:\xi_{2,4}^a+\xi_{2,4}^{b},
	\end{align*}
	where $(a)$ is due to \eqref{one more projection no change}.
	\begin{itemize}
		\item Controlling $\xi_{2,4}^{a}$. Notice that the matrix 
		\begin{align*}
			\calM_1(\calZ^t) \left(  {\mX_{3}^{t}}{\mX_{3}^{t}}^\tran \otimes {\mX_{2}^{t}}{\mX_{2}^{t}}^\tran  - {\mX_{3}^{t,\ell}}{\mX_{3}^{t,\ell}}^\tran  \otimes  {\mX_{2}^{t,\ell}}{\mX_{2}^{t,\ell}}^\tran  \right)  \mY_1^t{\mY_1^t}^\tran  \in\R^{n\times n^2}
		\end{align*}
		is of rank at most $r$. Letting $$\mD_i^t = {\mX_{i}^{t}}{\mX_{i}^{t}}^\tran  -  {\mX_{i}^{t,\ell}} {\mX_{i}^{t,\ell}}^\tran,$$ then $\xi_{2,4}^{a}$ can be bounded as follows:
		\begin{align*}
			\xi_{2,4}^{a}&\leq \sqrt{r}\opnorm{\calM_1(\calZ^t) \left(   {\mX_{3}^{t}}{\mX_{3}^{t}}^\tran \otimes {\mX_{2}^{t}}{\mX_{2}^{t}}^\tran - {\mX_{3}^{t,\ell}}{\mX_{3}^{t,\ell}}^\tran \otimes  {\mX_{2}^{t,\ell}}{\mX_{2}^{t,\ell}}^\tran  \right)  }\\
			&\leq \sqrt{r}\opnorm{\calM_1(\calZ^t) \left(  \mD_3^t\otimes  {\mX_{2}^{t}} {\mX_{2}^{t}}^\tran \right)    }  + \sqrt{r} \opnorm{\calM_1(\calZ^t) \left(  {\mX_{3}^{t,\ell}} {\mX_{3}^{t,\ell}}^\tran \otimes \mD_2^t \right)   } \\
			&= \sqrt{r}\opnorm{\calM_1\left( \calZ^t \times_{2}   {\mX_{2}^{t}}{\mX_{2}^{t}}^\tran \times_{3} \mD_3^t  \right) } + \sqrt{r}\opnorm{\calM_1 \left(\calZ^t \times_{2} \mD_2^t \times_{3}   {\mX_{3}^{t,\ell}}{\mX_{3}^{t,\ell}}^\tran  \right)}\\
			&\stackrel{(a)}{\leq} r \opnorm{  \calZ^t \times_{2}  {\mX_{2}^{t}}{\mX_{2}^{t}}^\tran \times_{3} \mD_3^t   } + r\opnorm{\calZ^t \times_{2} \mD_2^t \times_{3}  {\mX_{3}^{t,\ell}}{\mX_{3}^{t,\ell}}^\tran   }\\
			&\leq 2r \max_{i=2,3} \fronorm{\mD_i^t } \cdot \opnorm{ \calZ^t } \leq 4r \max_{i=2,3} \fronorm{  \mX_i^{t}\mR_i^t-  \mX_i^{t,\ell} \mT_i^{t,\ell}  } \cdot \opnorm{ \calZ^t },
		\end{align*}
		where  $(a)$ follows from Lemma~\ref{lemma: Lemma 6 in XYZ}. 
		\item Controlling $\xi_{2,4}^{b}$. A simple calculation yields that
		\begin{align*}
			\xi_{2,4}^{b} &\leq 2\sqrt{r}\opnorm{\calM_1(\calZ^t) \left( \lb{\mX_{3}^{t,\ell}}{\mX_3^{t,\ell}}^\tran\rb \otimes \lb {\mX_{2}^{t,\ell}}{\mX_2^{t,\ell}}^\tran\rb \right) \left( \mY_1^t{\mY_1^t}^\tran  -\mY_1^{t,\ell}{\mY_1^{t,\ell}}^\tran \right)   }\\
			&\leq 2\sqrt{r}\opnorm{ \calM_1(\calZ^t) \left( \lb{\mX_{3}^{t,\ell}}{\mX_3^{t,\ell}}^\tran\rb \otimes \lb {\mX_{2}^{t,\ell}}{\mX_2^{t,\ell}}^\tran\rb \right) }\cdot \opnorm{ \mY_1^t{\mY_1^t}^\tran  -\mY_1^{t,\ell}{\mY_1^{t,\ell}}^\tran}\\
			&\leq 2r\opnorm{  \calZ^t \times_2 {\mX_{2}^{t,\ell}}{\mX_2^{t,\ell}}^\tran \times_3 {\mX_{3}^{t,\ell}}{\mX_3^{t,\ell}}^\tran }\cdot \fronorm{ \mY_1^t{\mY_1^t}^\tran  -\mY_1^{t,\ell}{\mY_1^{t,\ell}}^\tran}\\
			&\leq 2r \fronorm{\mY_1^t{\mY_1^t}^\tran  -\mY_1^{t,\ell}{\mY_1^{t,\ell}}^\tran  } \cdot \opnorm{\calZ^t },
		\end{align*}
		where the fourth line is due to Lemma~\ref{lemma: Lemma 6 in XYZ}.
	\end{itemize}
	Combining $\xi_{2,4}^{a}$ and $\xi_{2,4}^{b}$ together gives that
 \begin{small}
	\begin{align}
		\label{eq: upper bound of xi241}
		&\fronorm{ \calM_1\left( \left( \calP^{(j\neq 1)}_{\calG^{t},\lcb \mX_{j}^{t}\rcb_{j\neq 1}} -  \calP^{(j\neq 1)}_{\calG^{t,\ell},\lcb \mX_{j}^{t,\ell}\rcb_{j\neq 1}}  \right) (\calZ^t) \right)  }\notag\\&\quad\leq 4r \max_{i=1,2,3} \fronorm{  \mX_i^{t}\mR_i^t-  \mX_i^{t,\ell} \mT_i^{t,\ell}  } \cdot \opnorm{ \calZ^t }   + 2r  \max_{i = 1,2,3}\fronorm{\mY_i^t{\mY_i^t}^\tran  -\mY_i^{t,\ell}{\mY_i^{t,\ell}}^\tran  } \cdot \opnorm{ \calZ^t } .
	\end{align}
 \end{small}
	Applying the same argument as above shows that  for $i = 2,3$,
 \begin{small}
	\begin{align}
		\label{eq: upper bound of xi242}
		&\fronorm{ \calM_i\left( \left( \calP^{(j\neq i)}_{\calG^{t},\lcb \mX_{j}^{t}\rcb_{j\neq i}} -  \calP^{(j\neq i)}_{\calG^{t,\ell},\lcb \mX_{j}^{t,\ell}\rcb_{j\neq i}}  \right) (\calZ^t) \right)  }\notag\\
		&\quad\leq 4r \max_{i=1,2,3} \fronorm{  \mX_i^{t}\mR_i^t-  \mX_i^{t,\ell} \mT_i^{t,\ell}  } \cdot \opnorm{ \calZ^t }    + 2r  \max_{i = 1,2,3}\fronorm{\mY_i^t{\mY_i^t}^\tran  -\mY_i^{t,\ell}{\mY_i^{t,\ell}}^\tran  } \cdot \opnorm{ \calZ^t }.
	\end{align}
 \end{small}
 	Plugging \eqref{eq: upper bound of xi241} and \eqref{eq: upper bound of xi242} into \eqref{eq xi24}, we can obtain
  \begin{small}
 	\begin{align}
 		\label{eq xi24a}
 		\xi_{2,4} &\leq 12r \max_{i=1,2,3} \fronorm{  \mX_i^{t}\mR_i^t-  \mX_i^{t,\ell} \mT_i^{t,\ell}  } \cdot \opnorm{ \calZ^t }   + 6r  \max_{i = 1,2,3}\fronorm{\mY_i^t{\mY_i^t}^\tran  -\mY_i^{t,\ell}{\mY_i^{t,\ell}}^\tran  } \cdot \opnorm{ \calZ^t } .
 	\end{align}
  \end{small}
 \end{itemize}

Putting \eqref{eq xi21}, \eqref{eq xi22}, \eqref{eq xi23} and \eqref{eq xi24a} together,  one has
	\begin{align*}
		\xi_2 &\leq \lb 8 \left( \max_{i = 1,2,3} \fronorm{  \mX_i^{t}\mR_i^t-  \mX_i^{t,\ell} \mT_i^{t,\ell}  }\right)^3   + 12\left( \max_{i = 1,2,3} \fronorm{  \mX_i^{t}\mR_i^t-  \mX_i^{t,\ell} \mT_i^{t,\ell}  }\right)^2 \rb\cdot\sqrt{n}\opnorm{   \calZ^t  }\\
		&\quad + \lb24r^2\left( \max_{i=1,2,3}\fronorm{{\mX_i^t } {\mX_i^t }^\tran -  { \mX_i^{t,\ell} }{ \mX_i^{t,\ell} }^\tran}\right)   + 12r \max_{i=1,2,3} \fronorm{  \mX_i^{t}\mR_i^t-  \mX_i^{t,\ell} \mT_i^{t,\ell}  }\rb \cdot \opnorm{  \calZ^t }  \notag \\
		&\quad  + 6r  \max_{i = 1,2,3}\fronorm{\mY_i^t{\mY_i^t}^\tran  -\mY_i^{t,\ell}{\mY_i^{t,\ell}}^\tran  } \cdot \opnorm{  \calZ^t  }.
	\end{align*}
	By the induction hypothesis \eqref{F norm}, it can be seen that
	\begin{align*}
		 8\sqrt{n} \left( \max_{i = 1,2,3} \fronorm{  \mX_i^{t}\mR_i^t-  \mX_i^{t,\ell} \mT_i^{t,\ell}  }\right)^3 &\leq 8\sqrt{n} \left( \frac{1}{2^{20} \kappa^2 \mu^2 r^4 } \frac{1}{2^t} \sqrt{\frac{\mu r}{n}} \right)^3  \leq \frac{1}{5\cdot 72} \sqrt{\frac{\mu r}{n}},\\
		 12\sqrt{n}\left( \max_{i = 1,2,3} \fronorm{  \mX_i^{t}\mR_i^t-  \mX_i^{t,\ell} \mT_i^{t,\ell}  }\right)^2  &\leq 12\sqrt{n} \left( \frac{1}{2^{20} \kappa^2 \mu^2 r^4 } \frac{1}{2^t} \sqrt{\frac{\mu r}{n}} \right)^2\leq\frac{1}{5\cdot 72} \sqrt{\frac{\mu r}{n}},\\
		 24r^2 \left(\max_{i=1,2,3}\fronorm{{\mX_i^t } {\mX_i^t }^\tran -  { \mX_i^{t,\ell} }{ \mX_i^{t,\ell} }^\tran} \right) &\leq 24r^2  \frac{1}{2^{20} \kappa^2 \mu^2 r^4 } \frac{1}{2^t} \sqrt{\frac{\mu r}{n}} \leq \frac{1}{5\cdot 72}\sqrt{\frac{\mu r}{n}},\\
		 12r \max_{i=1,2,3} \fronorm{  \mX_i^{t}\mR_i^t-  \mX_i^{t,\ell} \mT_i^{t,\ell}  } &\leq 12r   \frac{1}{2^{20} \kappa^2 \mu^2 r^4 } \frac{1}{2^t} \sqrt{\frac{\mu r}{n}} \leq\frac{1}{5\cdot 72} \sqrt{\frac{\mu r}{n}}.
	\end{align*}
Moreover, Claim \ref{claim Y distance} implies that
\begin{align*}
	6r\max_{i = 1,2,3}\fronorm{\mY_i^t{\mY_i^t}^\tran  -\mY_i^{t,\ell}{\mY_i^{t,\ell}}^\tran  }  &\leq  6r\cdot \frac{1}{2^{13}} \frac{1}{2^t}\sqrt{\frac{\mu r}{n}} \leq\frac{r}{5\cdot 72} \sqrt{\frac{\mu r}{n}}.
\end{align*}
Using these bounds, we can obtain
\begin{align}
	\label{eq xi2}
	\xi_2 &\leq 5\cdot \frac{r}{5\cdot 72} \sqrt{\frac{\mu r}{n}}  \opnorm{ \left(\calI - p^{-1}\calP_{\Omega}\right)(\calX^t - \calT) } \notag \\
	& \leq 5\cdot \frac{r}{5\cdot 72} \sqrt{\frac{\mu r}{n}} \cdot 2r\cdot \opnorm{\lb\calI-p^{-1}\calP_{\Omega}\rb\lb\calJ\rb}\infnorm{\calX^t-\calT}\\
	&\leq 5\cdot \frac{r}{5\cdot 72}\sqrt{\frac{\mu r}{n}}\cdot C\left( \frac{\log^3 n}{p} + \sqrt{\frac{n\log^5 n}{p}}\right) \cdot 2r \cdot\frac{36}{2^{20} \kappa^2 \mu^2 r^{4}}\cdot \frac{1}{2^t} \cdot \left( \frac{\mu r}{n} \right)^{3/2}\cdot \sigma_{\max}(\calT) \notag  \\
	&\leq \frac{1}{2^8}\frac{1}{2^{20}\kappa^4\mu^2 r^4 } \frac{1}{2^{t+1}} \sqrt{\frac{\mu r}{n}}\sigma_{\max}(\calT),
\end{align}
	provided that
		$p\geq \max\left\{ \frac{C_1\kappa^2 \mu^{1.5} r^{3.5}\log^3n}{n^{1.5}},  \frac{C_2\kappa^4 \mu^3 r^7 \log^5n}{n^2} \right\}$.

\paragraph{Bounding  $\xi_3$.}
For notational convenience, define
\begin{align*}
	\calZ_a = \left( \calI - p^{-1} \calP_{\Omega}  \right)  \left( \calX^t-\calX^{t,\ell}\right) \quad \mbox{and}\quad \calZ_b  = \left( p^{-1}\calP_{\Omega_{-\ell}} + \calP_{\ell} - p^{-1} \calP_{\Omega}  \right) \left(\calX^{t,\ell}-\calT\right).
\end{align*}
The triangle inequality gives that
\begin{align*}
	\xi_3 &=\fronorm{ \calP_{T_{\calX^{t,\ell}}}\lb\calI-p^{-1}\calP_{\Omega}\rb\lb\calX^t-\calT\rb - \calP_{T_{\calX^{t,\ell}}}\lb\calI-p^{-1}\calP_{\Omega_{-\ell}}-\calP_{\ell}\rb\lb\calX^{t,\ell}-\calT\rb}\\
	&= \fronorm{ \Pxtl \left(  \left( \calI - p^{-1} \calP_{\Omega}  \right)  \left( \calX^t-\calX^{t,\ell}\right)  +\left( p^{-1}\calP_{\Omega_{-\ell}} + \calP_{\ell} - p^{-1} \calP_{\Omega}  \right) \left(\calX^{t,\ell}-\calT\right) \right) }   \\
	&\leq \fronorm{ \Pxtl  \left(\calZ_a\right) } + \fronorm{\Pxtl \left(\calZ_b\right)  }.
\end{align*}
By the definition of $\Pxtl$, the first term on the right hand side can be bounded as follows:
\begin{align}
	\label{eq upper bound of Za}	
	\fronorm{ \Pxtl  \left(\calZ_a\right) } &\leq \fronorm{\prod_{i=1}^{3}\calP_{\mX_i^{t,\ell} }^{(i)}(\calZ_a) } + \sum_{i=1}^{3} \fronorm{\calP_{\mX_i^{t,\ell,\perp}}^{(i)} \calP^{(j\neq i)}_{\calG^{t,\ell},\lcb \mX_{j}^{t,\ell}\rcb_{j\neq i}} \left(\calZ_a\right)  } \notag \\
	&\leq \fronorm{ \calZ_a\underset{j\neq 1}{ \times } {\mX_j^{t,\ell}}^\tran  }  + \sum_{i=1}^{3} \fronorm{ \calP^{(j\neq i)}_{\calG^{t,\ell},\lcb \mX_{j}^{t,\ell}\rcb_{j\neq i}} (\calZ_a) }.
\end{align}	
Moreover, 
\begin{align}
	\label{eq upper bound of Za 1}
	\fronorm{\calP^{(j\neq 1)}_{\calG^{t,\ell},\lcb \mX_{j}^{t,\ell}\rcb_{j\neq 1}}  (\calZ_a)  } &= \fronorm{ \calM_1\left(\calP^{(j\neq 1)}_{\calG^{t,\ell},\lcb \mX_{j}^{t,\ell}\rcb_{j\neq 1}} (\calZ_a)  \right)  } \notag \\
	&= \fronorm{ \calM_1(\calZ_a)\left(\mX_3^{t,\ell} \otimes \mX_2^{t,\ell}\right) \calM_1^\dagger \left(\calG^{t,\ell}\right)\calM_1\left(\calG^{t,\ell}\right) \left(\mX_3^{t,\ell} \otimes \mX_2^{t,\ell}\right)^\tran } \notag \\
	&\leq \fronorm{\calM_1(\calZ_a)\left(\mX_3^{t,\ell} \otimes \mX_2^{t,\ell}\right) } = \fronorm{\calZ_a\underset{j\neq 1}{ \times } {\mX_j^{t,\ell} }^\tran }.
\end{align}
Similarly, one has
\begin{align}
	\label{eq upper bound of Za 2}
	\fronorm{\calP^{(j\neq i)}_{\calG^{t,\ell},\lcb \mX_{j}^{t,\ell}\rcb_{j\neq i}} (\calZ_a)  } \leq \fronorm{\calZ_a\jneqi {\mX_j^{t,\ell} }^\tran },\quad i=2,3.
\end{align}
Plugging \eqref{eq upper bound of Za 1} and \eqref{eq upper bound of Za 2} into \eqref{eq upper bound of Za} yields that
\begin{align}
	\label{eq Za}
	\fronorm{ \Pxtl  \left(\calZ_a\right) }  &\leq 4\max_{i=1,2,3 }\fronorm{    \calZ_a\jneqi {\mX_{j}^{t,\ell}}^\tran    } .
\end{align}
Using the same argument, one can obtain
\begin{align}
	\label{eq Zb}
	\fronorm{ \Pxtl  \left(\calZ_b\right) } &\leq 4\max_{i=1,2,3}\fronorm{    \calZ_b\jneqi {\mX_{j}^{t,\ell}}^\tran   }.
\end{align}
Combining \eqref{eq Za} and \eqref{eq Zb} together shows that 
\begin{align*} 
	\xi_3&\leq 4\max_{i=1,2,3 }\fronorm{   \calZ_a\jneqi  {\mX_{j}^{t,\ell}}^\tran    }   + 4\max_{i=1,2,3}\fronorm{    \calZ_b\jneqi {\mX_{j}^{t,\ell}}^\tran   }.
\end{align*}
To proceed, we first present the upper bounds of these terms in the following claim, whose proofs are deferred to Section~\ref{first term in 5.5} and Section~\ref{second term in 5.5}, respectively.
\begin{claim}
	\label{claim in R_c}
	Suppose $p\geq \max\left\{\frac{C_1\kappa^2\mu^{1.5}r^{2.5}\log^3n}{n^{1.5}} , \frac{C_2\kappa^4\mu^{3}r^{5}\log^5n}{n^{2}}\right\}$. For any $i = 1,2,3$, the following inequalities hold with high probability
	\begin{align}
		\label{eq: claim rc 1}
		\fronorm{   \calZ_a\jneqi  {\mX_{j}^{t,\ell}}^\tran    }  &\leq \frac{1}{2^{11}}\frac{1}{2^{20}\kappa^4\mu^2 r^4 } \frac{1}{2^{t+1}} \sqrt{\frac{\mu r}{n}}\sigma_{\max}(\calT),\\
		\label{eq: claim rc 2}
		\fronorm{   \calZ_b\jneqi {\mX_{j}^{t,\ell}}^\tran   }&\leq \frac{1}{2^{11}}\frac{1}{2^{20}\kappa^4\mu^2 r^4 } \frac{1}{2^{t+1}} \sqrt{\frac{\mu r}{n}}\sigma_{\max}(\calT).
	\end{align}
\end{claim}

It follows immediately from the claim that
\begin{align}
	\label{eq xi3}
	\xi_3 &\leq  \frac{1}{2^8}\frac{1}{2^{20}\kappa^4\mu^2 r^4 } \frac{1}{2^{t+1}} \sqrt{\frac{\mu r}{n}}\sigma_{\max}(\calT).
\end{align}

\paragraph{Combining $\xi_1, \xi_2$ and $\xi_3$ together.}
Plugging \eqref{eq xi1}, \eqref{eq xi2} and \eqref{eq xi3} into \eqref{eq xi1-3} yields that
\begin{small}
\begin{align}
	\label{eq upper bound of difference E at t+1}
	\fronorm{\calE^t - \calE^{t,\ell}} \leq \frac{1}{2^6}  \frac{1}{2^{20}\kappa^4 \mu^2 r^4}\frac{1}{2^{t+1}} \sqrt{\frac{\mu r}{n}}\sigma_{\max}(\calT).
\end{align}
\end{small}

\subsubsection{Bounding  \texorpdfstring{$\fronorm{\mX_i^{t+1}\mR_i^{t+1} - \mX_i^{t+1,\ell} \mT_i^{t+1,\ell}} $}{TEXT}} 
 Recall that  $\mX_i^{t+1}\bSigma_i^{t+1}{ \mX_i^{t+1}}^\tran $ and $\mX_i^{t+1,\ell}\bSigma_i^{t+1,\ell}{ \mX_i^{t+1,\ell}}^\tran $  are the top-$r$ eigenvalue decomposition of $\lb\mT_i+\mE_i^{t}\rb\lb\mT_i+\mE_i^{t}\rb^\tran$ and $\lb\mT_i+\mE_i^{t,\ell}\rb\lb\mT_i+\mE_i^{t,\ell}\rb^\tran$, respectively.
By the Weyl's inequality, the eigengap $\delta$ between the $r$-th and $r+1$-th eigenvalues of $\lb\mT_i+\mE_i^{t,\ell}\rb\lb\mT_i+\mE_i^{t,\ell}\rb^\tran$ is bounded below as follows:
\begin{align*}
	\delta &\geq \sigma_r(\mT_i\mT_i^\tran) -2\opnorm{\lb\mT_i+\mE_i^{t,\ell}\rb\lb\mT_i+\mE_i^{t,\ell}\rb^\tran-\mT_i\mT_i^\tran}\\
	& \geq \sigma^2_{\min}(\calT) - 2\opnorm{\mT_i{ \mE_i^{t,\ell} }^\tran + \mE_i^{t,\ell} \mT_i^\tran + \mE_i^{t,\ell} {\mE_i^{t,\ell}}^\tran } \geq \left(1 -  \frac{1}{2^{20} } \right) \cdot \sigma_{\min}^2(\calT),
	\end{align*}
where the last inequality is due to \eqref{bounding delta t ell}. Define the perturbation matrix $\mW_i$ by
\begin{align*}
    \mW_i &:=  \lb\mT_i+\mE_i^{t}\rb\lb\mT_i+\mE_i^{t}\rb^\tran- \lb\mT_i+\mE_i^{t,\ell}\rb\lb\mT_i+\mE_i^{t,\ell}\rb^\tran\\
    &= \left(\mT_i + \mE_i^{t}\right)\left( \mE_i^{t} - \mE_i^{t,\ell}\right)^\tran + \left(\mE_i^{t} -  \mE_i^{t, \ell}\right)\left(\mT_i + \mE_i^{t, \ell}\right)^\tran.
\end{align*}
We have 
\begin{align}
	\label{eq f norm of W}
	\opnorm{\mW_i} \leq \fronorm{\mW_i}
	&\leq  \fronorm{\left(\mT_i + \mE_i^{t}\right)\left( \mE_i^{t} - \mE_i^{t,\ell}\right)^\tran   } + \fronorm{ \left(\mE_i^{t} -  \mE_i^{t, \ell}\right)\left(\mT_i + \mE_i^{t, \ell}\right)^\tran } \notag  \\
	&\leq \left(  \opnorm{\mT_i + \mE_i^{t} } + \opnorm{\mT_i + \mE_i^{t, \ell} } \right)   \fronorm{  \calE^{t} - \calE^{t,\ell}} \leq \frac{5}{2}\sigma_{\max}(\calT) \fronorm{ \calE^{t} - \calE^{t,\ell}  }\\
	&\leq \frac{5}{2}\sigma_{\max}(\calT)\cdot \frac{1}{ 2^{20} \kappa^4 \mu^2 r^4} \frac{1}{2^{t+1}} \sqrt{\frac{\mu r}{n}} \cdot \sigma_{\max}(\calT)  \leq \frac{1}{2^{18}}\sigma^2_{\min}(\calT).  \notag 
\end{align}  Applying Lemma~\ref{rateoptimal lemma 1} and Lemma~\ref{Davis-kahan sintheta theorem} yields that 
\begin{align}
	\label{eq F norm at t+1}
	\fronorm{\mX_i^{t+1}\mR_i^{t+1} - \mX_i^{t+1,\ell} \mT_i^{t+1,\ell}}  &\leq \frac{\sqrt{2} \fronorm{\mW_i}}{\delta - \opnorm{\mW_i} } \leq  2\frac{\fronorm{\mW_i}}{ \sigma_{\min}^2(\calT)} \notag \\
	&\stackrel{(a)}{\leq }\frac{2}{ \sigma_{\min}^2(\calT)}\cdot \frac{5}{2}\sigma_{\max}(\calT)\cdot \fronorm{ \calE^{t} - \calE^{t,\ell}  } \notag \\
	&\stackrel{(b)}{\leq} \frac{2}{ \sigma_{\min}^2(\calT)}\cdot \frac{5}{2}\sigma_{\max}(\calT)\cdot  \frac{1}{2^6} \cdot \frac{1}{2^{20}\kappa^4 \mu^2 r^4}\frac{1}{2^{t+1}}\sqrt{\frac{\mu r}{n}} \sigma_{\max}(\calT) \notag \\
	&\leq\frac{1}{2^3}\frac{1}{2^{20}\kappa^2 \mu^2 r^4}\frac{1}{2^{t+1}} \sqrt{\frac{\mu r}{n}},
\end{align}
where  $(a)$ follows from \eqref{eq f norm of W} and  $(b)$ is due to \eqref{eq upper bound of difference E at t+1}.

\subsubsection{Bounding \texorpdfstring{$\twoinf{\mX_i^{t+1}\mR_i^{t+1} - \mU_i}$}{TEXT}}
A direct computation yields that
\begin{align}
	\label{eq XrU}
	\twoinf{\mX_i^{t+1}\mR_i^{t+1} - \mU_i} &\leq \twoinf{\mX_i^{t+1}\mR_i^{t+1} - \mX_i^{t+1,\ell}\mR_i^{t+1, \ell}} + \twoinf{\mX_i^{t+1,\ell}\mR_i^{t+1, \ell} - \mU_i} \notag \\
	&\leq \fronorm{\mX_i^{t+1}\mR_i^{t+1} - \mX_i^{t+1,\ell}\mR_i^{t+1, \ell}} + \twoinf{\mX_i^{t+1,\ell}\mR_i^{t+1, \ell} - \mU_i}.
\end{align}
We will use Lemma~\ref{lemma Ma 2017 lemma 37} with $\mX_1 = \mX_i^{t+1}\mR_i^{t+1}$ and $\mX_2 = \mX_i^{t+1,\ell}\mT_i^{t+1,\ell}$ to bound the first term in~\eqref{eq XrU}.  Following the same argument as in the proof of Lemma~\ref{lemma: spectral norm of Xt}, one can obtain
\begin{align*}
	\opnorm{\mX_i^{t+1} \mR_i^{t+1}- \mU_i}\opnorm{\mU_i} \leq \frac{1}{2^{17} \kappa^4  \mu^2 r^4 }\frac{1}{2^{t+1}} \leq \frac{1}{2}.
\end{align*} 
Furthermore, the inequality \eqref{eq F norm at t+1} shows that
\begin{align*}
	\opnorm{\mX_i^{t+1}\mR_i^{t+1}-\mX_i^{t+1,\ell}\mT_i^{t+1,\ell}}\opnorm{\mU_i}\leq \frac{1}{2^3} \frac{1}{2^{20}\kappa^2 \mu^2 r^4}\frac{1}{2^{t+1}}  \leq  \frac{1}{4}.
\end{align*}
Thus, by Lemma~\ref{lemma Ma 2017 lemma 37}, one has
\begin{align}
	\label{eq first term}
	\fronorm{\mX_i^{t+1}\mR_i^{t+1} - \mX_i^{t+1,\ell}\mR_i^{t+1, \ell}} &\leq 5\fronorm{\mX_i^{t+1}\mR_i^{t+1}-\mX_i^{t+1,\ell}\mT_i^{t+1,\ell}}\notag \\
 &\leq  \frac{5}{2^3} \frac{1}{2^{20}\kappa^2 \mu^2 r^4}\frac{1}{2^{t+1}}  .
\end{align}
Plugging \eqref{eq: incoherence of loo at t+1} and \eqref{eq first term} into \eqref{eq XrU} shows that
\begin{align*}
	\twoinf{\mX_i^{t+1}\mR_i^{t+1}-\mU_i} \leq \frac{1}{2^{20} \kappa^2\mu^2 r^4 }  \frac{1}{2^{t+1}}  \sqrt{\frac{\mu r}{n}}.
\end{align*}


\section{Proofs of Key Lemmas}
\label{sec: proof of key lemmas}

\subsection{Proof of Lemma~\ref{lemma: incoherence for T}}\label{subsec:incoherenceT}
The $\ell_{2,\infty}$ norm of $\mT_1$ can be bounded as follows
\begin{align*}
	\twoinf{\mT_1} &= \twoinf{\mU_1 \calM_1(\calS) (\mU_{3}\otimes \mU_{2} )^\tran} \leq \twoinf{\mU_1}\opnorm{\calM_1(\calS)}  \leq \sqrt{\frac{\mu r}{n}}\sigma_{\max}(\calT).
\end{align*}
For $\twoinf{\mT_1^\tran}$, a direct calculation yields that
\begin{align*}
	\twoinf{\mT_1^\tran} &= \twoinf{(\mU_{3}\otimes \mU_{2} ) \calM_1^\tran (\calS)\mU_1^\tran  } \leq \twoinf{\mU_{2}}\cdot \twoinf{\mU_{3}}\cdot \sigma_{\max}(\calT) \leq \frac{\mu r}{n}\sigma_{\max}(\calT),
\end{align*}
where the second line following from the fact $\twoinf{\mU_3\otimes \mU_2} \leq \twoinf{\mU_2}\cdot \twoinf{\mU_3}$. 
Using the same argument, one can obtain
\begin{align*}
    \twoinf{\mT_i} \leq \sqrt{\frac{\mu r}{n}}\sigma_{\max}(\calT) \quad\mbox{and}\quad \twoinf{\mT_i^\tran} \leq \frac{\mu r}{n}\sigma_{\max}(\calT),\quad i = 2,3.
\end{align*}
Lastly, by the definition of tensor infinity norm, one has
\begin{align*}
	\infnorm{\calT} &=\infnorm{\calM_1(\calT)} =\infnorm{\mU_1\calM_1(\calS)\lb \mU_{3} \otimes \mU_{2} \rb^\tran} \\
	&\leq \prod_{j=1}^{3}\twoinf{\mU_j}\cdot \opnorm{\calM_1(\calS)} \leq \lb \frac{\mu r}{n}\rb^{3/2}\sigma_{\max}(\calT).
\end{align*}

\subsection{Proof of Lemma~\ref{lemma: infinite norm of X-T}}\label{subsec:proof34}
Notice that $\mX_i$ is the top-$r$ singular vectors of 
\begin{align*}
	\lb\mT_i+\mE_i\rb \lb\mT_i+\mE_i\rb^\tran= \mT_i\mT_i^\tran+ \underbrace{ \mT_i\mE_i^\tran + \mE_i\mT_i^\tran + \mE_i\mE_i^\tran}_{=:\bDelta_i},
\end{align*}
where $\mT_i = \calM_i\lb\calT\rb$ and $\mE_i = \calM_i\lb\calE\rb$. Let  $\mT_i\mT_i^\tran=\mU_i\bLambda_i\mU_i^\tran$ be the eigenvalue decomposition. 
We will apply Lemma~\ref{lemma: Ma 2017 lemma 45} to bound $\opnorm{\mX_i\mR_i - \mU_i}$ where $\mR_i = \arg\min_{\mR^\tran\mR = \mI}\fronorm{\mX_i\mR-\mU_i}$. Invoking the triangle inequality shows that
\begin{align*}
	\opnorm{\bDelta_i} &\leq \left( 2\opnorm{\mT_i} +\opnorm{\mE_i} \right)  \opnorm{\mE_i}\leq \frac{5}{2}\sigma_{\max} \left(\calT\right)  \opnorm{\mE_i}\leq \frac{5}{2}\sigma_{\max}\lb\calT\rb\cdot  \frac{1}{10\kappa^2}\sigma_{\max}\lb\calT\rb\leq \frac{1}{4}\sigma_{\min}^2\lb\calT\rb,
\end{align*}
where the second and third lines follow from the assumption $\max_{i = 1,2,3}\opnorm{\mE_i}\leq \frac{\sigma_{\max}\lb\calT\rb}{10\kappa^2}\leq \frac{\sigma_{\max}(\calT)}{2}$.
Thus, the requirement in Lemma~\ref{lemma: Ma 2017 lemma 45} is valid, and we have
\begin{align*}
	\opnorm{\mX_i\mR_i - \mU_i} \leq \frac{3}{\sigma_{\min}^2(\calT)} \opnorm{\bDelta_i} \leq \frac{15\sigma_{\max}(\calT)}{2\sigma_{\min}^2(\calT)}  \opnorm{\mE_i}.
\end{align*}
Furthermore,  $\|\mX_i\mX_i^\tran - \mU_i \mU_i^\tran\|_{2,\infty}$ can be bounded as follows:
	\begin{align}\label{eq estimation of Px-Pu}
		\twoinf{\mX_i\mX_i^\tran - \mU_i \mU_i^\tran} 
		&\leq \twoinf{(\mX_i\mR_i - \mU_i ) (\mX_i\mR_i)^\tran } + \twoinf{ \mU_i (\mX_i\mR_i - \mU_i )^\tran}\notag \\
		&\leq \twoinf{\mX_i\mR_i - \mU_i } + \twoinf{\mU_i}\cdot \opnorm{ \mX_i\mR_i - \mU_i} \notag\\
		&\leq\twoinf{\mX_i\mR_i - \mU_i } + \frac{15\sigma_{\max}(\calT)}{2\sigma_{\min}^2(\calT)}  \opnorm{\mE_i} \cdot \twoinf{\mU_i} \notag\\
		&\leq \max_{i=1,2,3} \left( \twoinf{\mX_i\mR_i - \mU_i } + \frac{15\sigma_{\max}(\calT)}{2\sigma_{\min}^2(\calT)}  \opnorm{\mE_i}  \twoinf{\mU_i}\right)=:B.
	\end{align}
Next, we turn to control $\infnorm{\calX - \calT}$,
\begin{align*}
	\infnorm{\calX - \calT} &= \infnorm{ \lb \calT + \calE\rb \ttimes {\mX_i}{\mX_i}^\tran - \calT} \\
	&= \infnorm{ \calT  \ttimes {\mX_i}{\mX_i}^\tran - \calT \ttimes {\mU_i}{\mU_i}^\tran + \calE  \ttimes {\mX_i}{\mX_i}^\tran } \\
	&\leq \infnorm{\calT  \ttimes {\mX_i}{\mX_i}^\tran - \calT \ttimes {\mU_i}{\mU_i}^\tran}+\infnorm{ \calE  \ttimes {\mX_i}{\mX_i}^\tran}\\
	&\leq \underbrace{ \infnorm{ \calT \ttimes \lb {\mX_i}{\mX_i}^\tran - {\mU_i}{\mU_i}^\tran\rb } }_{=:\omega_1}  + \underbrace{ \sum_{i=1}^{3} \infnorm{\calT \times_i{\mU_i}{\mU_i}^\tran \jneqi \lb {\mX_j}{\mX_j}^\tran - {\mU_j}{\mU_j}^\tran\rb} }_{=:\omega_2} \\
	&\quad +\underbrace{  \sum_{i=1}^{3} \infnorm{\calT \times_i\lb {\mX_i}{\mX_i}^\tran - {\mU_i}{\mU_i}^\tran\rb  \jneqi {\mU_j}{\mU_j}^\tran } }_{=:\omega_3}  + \underbrace{ \infnorm{ \calE  \ttimes {\mX_i}{\mX_i}^\tran} }_{=:\omega_4}.
\end{align*}
\paragraph{Bounding $\omega_1$.} It is easy to see that
	\begin{align*}
		\omega_1 &=\infnorm{\calM_1\lb \calT \ttimes \lb {\mX_i}{\mX_i}^\tran - {\mU_i}{\mU_i}^\tran\rb \rb} \\
		&\leq \twoinf{{\mX_1}{\mX_1}^\tran - {\mU_1}{\mU_1}^\tran} \cdot \opnorm{\mT_1}\cdot \twoinf{\lb{\mX_3}{\mX_3}^\tran - {\mU_3}{\mU_3}^\tran\rb\otimes\lb{\mX_2}{\mX_2}^\tran - {\mU_2}{\mU_2}^\tran\rb} \\
		&\leq  \sigma_{\max}\lb\calT\rb \cdot \prod_{i=1}^3 \twoinf{ {\mX_i}{\mX_i}^\tran -  {\mU_i}\mU_i^\tran}\leq  B^3\cdot \sigma_{\max}(\calT),
	\end{align*}
	where the last inequality is due to \eqref{eq estimation of Px-Pu}.
\paragraph{Bounding $\omega_2$.} A  directly calculation gives
\begin{align*}
		\omega_2&=\sum_{i=1}^{3} \infnorm{\calM_i\lb\calT \times_i{\mU_i}{\mU_i}^\tran \jneqi \lb {\mX_j}{\mX_j}^\tran - {\mU_j}{\mU_j}^\tran\rb\rb} \\
		&\leq \sum_{i=1}^3 \twoinf{\mU_i} \opnorm{\mT_i} \prod_{j\neq i}\twoinf{{\mX_j}{\mX_j}^\tran - {\mU_j}{\mU_j}^\tran}\leq  3 \max_{i=1,2,3}\twoinf{\mU_i} \cdot \sigma_{\max}(\calT)\cdot B^2.
	\end{align*}
	\paragraph{Bounding $\omega_3$.} A straightforward computation shows that
	\begin{align*}
		\omega_3 & = \sum_{i=1}^{3} \infnorm{\calT \times_i\lb {\mX_i}{\mX_i}^\tran - {\mU_i}{\mU_i}^\tran\rb  \jneqi {\mU_j}{\mU_j}^\tran } \leq 3 \left( \max_{i=1,2,3}\twoinf{\mU_i} \right)^2 \cdot \sigma_{\max}(\calT)\cdot B.
	\end{align*}
	\paragraph{Bounding $\omega_4$.} By the definition of the infinity norm, one has
	\begin{align*}
		\omega_4 &=\sup_{\ve_{i_1}, \ve_{i_2}, \ve_{i_3} } \lab\calE  \times_1 \ve_{i_1}^\tran{\mX_1}{\mX_1}^\tran  \times_2\ve_{i_2}^\tran{\mX_2}{\mX_2}^\tran \times_3\ve_{i_3}^\tran{\mX_3}{\mX_3}^\tran\rab \\
		&\leq \opnorm{\calE}\cdot \left( \max_{i=1,2,3}\twoinf{\mX_i} \right)^3\stackrel{(a)}{\leq} \opnorm{\calM_1(\calE)}\cdot  \left( \max_{i=1,2,3}\twoinf{\mX_i} \right)^3.
	\end{align*}
	where step (a) follows from the fact $\|\calX\|\leq  \|\calM_i(\calX)\|$ for any tensor $\calX$.
\\

Combining these four upper bounds together, we complete the proof.

\subsection{Proof of Lemma~\ref{lemma: entry upper bound}}
By the definition of operator $\calP_{T}$, one can obtain
\begin{align*}
	\fronorm{\calP_{T} \left( \ve_{i_1}\circ \ve_{i_2}\circ\ve_{i_3}\right)}^2 = \fronorm{  \left( \ve_{i_1}\circ \ve_{i_2}\circ\ve_{i_3} \right)  \ttimes {\mU_i}{\mU_i}^\tran}^2 + \sum_{i=1}^3\fronorm{\calS \times_i \mW_i\jneqi \mU_j}^2 =: \psi_1+\psi_2.
\end{align*}
\paragraph{Bounding $\psi_1$.} Recall the definition of $\ve_{i_1}\circ \ve_{i_2}\circ\ve_{i_3}$ in \eqref{rank one tensor}. One has
	\begin{align*}
		\psi_1  &=\fronorm{\mU_1\mU_1^\tran \calM_1\lb\ve_{i_1}\circ \ve_{i_2}\circ\ve_{i_3}\rb\lb \mU_3\mU_3^\tran \otimes \mU_2\mU_2^\tran \rb^\tran}^2\\
		&=\fronorm{\mU_1\mU_1^\tran \ve_{i_1}\lb\ve_{i_3}\otimes\ve_{i_2}\rb^\tran \lb \mU_3\mU_3^\tran \otimes \mU_2\mU_2^\tran \rb^\tran}^2\\
		&=\fronorm{\mU_1\mU_1^\tran\ve_{i_1}\lb \mU_3\mU_3^\tran \ve_{i_3}\otimes \mU_2\mU_2^\tran\ve_{i_2} \rb^\tran }^2\\
		&\leq \twonorm{\mU_1^\tran\ve_{i_1}}^2\cdot \twonorm{\mU_2^\tran\ve_{i_2}}^2\cdot \twonorm{\mU_3^\tran\ve_{i_3}}^2\leq \lb \frac{\mu r}{n}\rb^{3}.
	\end{align*}
\paragraph{Bounding $\psi_2$.} We only provide details for bounding $\fronorm{\calS\times_1\mW_1\times_2\mU_2\times_3\mU_3 }^2$,
	\begin{align*}
		&\fronorm{\calS\times_1\mW_1\times_2\mU_2\times_3\mU_3 }^2 \\
  &\quad= \fronorm{\mW_1\calM_1(\calS)(\mU_3\otimes\mU_2)^\tran}^2\\
		&\quad=\fronorm{\lb\mI - \mU_1\mU_1^\tran \rb\calM_1\lb\ve_{i_1}\circ \ve_{i_2}\circ\ve_{i_3}\rb\lb\mU_3\otimes\mU_2\rb \calM_1(\calS)^{\dagger}\calM_1(\calS)(\mU_3\otimes\mU_2)^\tran}^2\\
		&\quad\leq \twonorm{(\ve_{i_3}\otimes\ve_{i_2})^\tran(\mU_3\otimes\mU_2) }^2 \cdot \opnorm{\calM_1(\calS)^{\dagger}\calM_1(\calS)}^2\leq \twonorm{\mU_2^\tran \ve_{i_2}}^2\cdot \twonorm{\mU_3^\tran\ve_{i_3}}^2\leq  \lb\frac{\mu r}{n}\rb^2 .
	\end{align*}
	Similarly, one can obtain
	\begin{align*}
		\fronorm{\calS\times_1\mU_1\times_2\mW_2\times_3\mU_3 }^2 \leq   \lb\frac{\mu r}{n}\rb^2  \text{ and } \fronorm{\calS\times_1\mU_1\times_2\mU_2\times_3\mW_3 }^2 \leq  \lb\frac{\mu r}{n}\rb^2.
	\end{align*}
	Therefore, $\psi_2\leq 3\lb \frac{\mu r}{n} \rb^2.$
	\\
	
	Combining the above bounds together shows that
	\begin{align*}
		\fronorm{\calP_{T} \left( \ve_{i_1}\circ \ve_{i_2}\circ\ve_{i_3} \right)}^2 \leq 4\lb \frac{\mu r}{n} \rb^2.
	\end{align*}

\subsection{Proof of Lemma~\ref{lemma: local isotropy}}
Let $\calE_{i_1,i_2,i_3}$ be the tensor with  the $(i_1,i_2,i_3)$-th entry being  1 and all the other entries being 0. The operator norm $\opnorm{\calP_{T}\lb p^{-1} \calP_{\Omega}-\calI\rb\calP_{T}}$ can be expressed as follows:
\begin{align*}
	&\opnorm{\calP_{T}\lb p^{-1} \calP_{\Omega}-\calI\rb\calP_{T}}\\
 &~= \sup_{\fronorm{\calZ} = 1,\calZ\in\R^{n\times n\times n}}\fronorm{\calP_{T}\lb p^{-1}\calP_{\Omega}-\calI\rb\calP_{T}\lb\calZ\rb}\\
	&~ = \sup_{\fronorm{\calZ} = 1,\calZ\in\R^{n\times n\times n}} \fronorm{\sum_{i_1,i_2,i_3}\lb p^{-1}\delta_{i_1,i_2,i_3}-1\rb\left\langle\calP_{T}\lb\calZ\rb,\calE_{i_1,i_2,i_3}\right\rangle\calP_{T}\lb\calE_{i_1,i_2,i_3}\rb}\\
	&~ = \sup_{\fronorm{\calZ} = 1,\calZ\in\R^{n\times n\times n}} \opnorm{\sum_{i_1,i_2,i_3}\lb p^{-1}\delta_{i_1,i_2,i_3}-1\rb\left\langle\calZ,\calP_{T}\lb\calE_{i_1,i_2,i_3}\rb\right\rangle\vect\lb\calP_{T}\lb\calE_{i_1,i_2,i_3}\rb\rb}\\
	&~ = \sup_{\fronorm{\calZ} = 1,\calZ\in\R^{n\times n\times n}}\opnorm{\sum_{i_1,i_2,i_3}\lb p^{-1}\delta_{i_1,i_2,i_3}-1\rb\left\langle\vect(\calZ),\vect(\calP_{T}(\calE_{i_1,i_2,i_3}))\right\rangle\vect\lb\calP_{T}\lb\calE_{i_1,i_2,i_3}\rb\rb}\\
	&~ =  \sup_{\fronorm{\calZ} = 1,\calZ\in\R^{n\times n\times n}}\opnorm{\sum_{i_1,i_2,i_3}\lb p^{-1}\delta_{i_1,i_2,i_3}-1\rb\vect\lb\calP_{T}\lb\calE_{i_1,i_2,i_3}\rb\rb\vect\lb\calP_{T}\lb\calE_{i_1,i_2,i_3}\rb\rb^{\tran}\vect\lb\calZ\rb}\\
	&~ = \opnorm{\sum_{i_1,i_2,i_3}\lb p^{-1}\delta_{i_1,i_2,i_3}-1\rb\vect(\calP_{T}(\calE_{i_1,i_2,i_3})\vect(\calP_{T}(\calE_{i_1,i_2,i_3}))^{\tran}},
\end{align*}
where $\vect(\cdot)$ is used to denote the vectorization of a tensor. Define 
\begin{align*}
    \mS_{i_1,i_2,i_3} :=\lb p^{-1}\delta_{i_1,i_2,i_3}-1\rb\vect(\calP_{T}(\calE_{i_1,i_2,i_3}))\vect(\calP_{T}(\calE_{i_1,i_2,i_3}))^{\tran}\in\R^{n^3\times n^3},
\end{align*}
which are independent symmetric matrices with mean zero. Thus we can apply the matrix Bernstein inequality to bound the above term. Lemma~\ref{lemma: F norm of E projection onto T} gives that
\begin{align*}
	&\opnorm{\mS_{i_1,i_2,i_3}}\leq p^{-1}\fronorm{\calP_{T}\lb\calE_{i_1,i_2,i_3}\rb}^2\leq \frac{4}{p}\lb\frac{\mu r}{n}\rb^2.
\end{align*}
Moreover, the operator norm of $\sum_{i_1,i_2,i_3}\mathbb{E}\left[ \mS_{i_1,i_2,i_3}\mS_{i_1,i_2,i_3}^{\tran}\right]$ can be bounded as follows:
\begin{align*}
	&\opnorm{\sum_{i_1,i_2,i_3}\mathbb{E}\left[ \mS_{i_1,i_2,i_3}\mS_{i_1,i_2,i_3}^{\tran}\right]}\\
 &\leq p^{-1}\opnorm{\sum_{i_1,i_2,i_3}\vect(\calP_{T}(\calE_{i_1,i_2,i_3}))\vect(\calP_{T}(\calE_{i_1,i_2,i_3}))^{\tran}\vect(\calP_{T}(\calE_{i_1,i_2,i_3}))\vect(\calP_{T}(\calE_{i_1,i_2,i_3}))^{\tran}}\\
	&\leq p^{-1}\max_{i_1,i_2,i_3}\fronorm{\calP_{T}(\calE_{i_1,i_2,i_3})}^2\opnorm{\sum_{i_1,i_2,i_3}\vect(\calP_{T}(\calE_{i_1,i_2,i_3}))\vect(\calP_{T}(\calE_{i_1,i_2,i_3}))^{\tran}}\leq \frac{4}{p}\lb\frac{\mu r}{n}\rb^2,
\end{align*}
where the least inequality follows from  Lemma~\ref{lemma: F norm of E projection onto T} and the fact 
\begin{align*}
	&\opnorm{\sum_{i_1,i_2,i_3}\vect(\calP_{T}(\calE_{i_1,i_2,i_3}))\vect(\calP_{T}(\calE_{i_1,i_2,i_3}))^{\tran}} \\&~ = \sup_{\fronorm{\calZ}  = 1,\calZ\in\R^{n\times n\times n}}\twonorm{\sum_{i_1,i_2,i_3}\vect(\calP_{T}(\calE_{i_1,i_2,i_3}))\vect(\calP_{T}(\calE_{i_1,i_2,i_3}))^{\tran}\vect(\calZ)}\\
	&~ = \sup_{\fronorm{\calZ}  = 1,\calZ\in\R^{n\times n\times n}} \fronorm{\sum_{i_1,i_2,i_3}\langle\calP_{T}(\calE_{i_1,i_2,i_3}),\calZ\rangle\calP_{T}(\calE_{i_1,i_2,i_3})}\\
	&~ = \sup_{\fronorm{\calZ}  = 1,\calZ\in\R^{n\times n\times n}} \fronorm{\calP_{T}\lb\sum_{i_1,i_2,i_3}\langle\calP_{T}(\calE_{i_1,i_2,i_3}),\calZ\rangle\calE_{i_1,i_2,i_3}\rb}\\
	&~\leq \sup_{\fronorm{\calZ}  = 1,\calZ\in\R^{n\times n\times n}} \fronorm{\sum_{i_1,i_2,i_3}\langle\calP_{T}(\calE_{i_1,i_2,i_3}),\calZ\rangle\calE_{i_1,i_2,i_3}}\\
	& ~= \sup_{\fronorm{\calZ}  = 1,\calZ\in\R^{n\times n\times n}}\sqrt{\sum_{i_1,i_2,i_3}\langle\calP_{T}(\calE_{i_1,i_2,i_3}),\calZ\rangle^2} \\
 &~= \sup_{\fronorm{\calZ}  = 1,\calZ\in\R^{n\times n\times n}}\sqrt{\sum_{i_1,i_2,i_3}\langle\calE_{i_1,i_2,i_3},\calP_{T}(\calZ)\rangle^2}\leq 1.
\end{align*}
The application of Bernstein inequality yields that with high probability,
\begin{align*}
	&\opnorm{\sum_{i_1,i_2,i_3}\lb p^{-1}\delta_{i_1,i_2,i_3}-1\rb\vect(\calP_{T}(\calE_{i_1,i_2,i_3})\vect(\calP_{T}(\calE_{i_1,i_2,i_3}))^{\tran}} \\&\quad\leq C\lb\frac{\log n}{p}\lb\frac{\mu r}{n}\rb^2+\sqrt{\frac{\log n}{p}\lb\frac{\mu r}{n}\rb^2}\rb\leq \varepsilon,
\end{align*}
provided $p\geq\frac{C_2\mu^2r^2\log n}{\varepsilon\cdot  n^2}$, where $\varepsilon $ is a small absolute constant.

\subsection{Proof of Lemma~\ref{lemma: technique lemmas}}
Applying Lemma~\ref{lemma: lemma 4.1 in Wei} gives that for $i = 1,2,3$,
\begin{align*}
    \opnorm{\mX_i^t{\mX_i^t}^\tran-\mU_i\mU_i^\tran} \leq \frac{\fronorm{\calM_i\lb\calX^t\rb-\calM_i\lb\calT\rb}}{\sigma_{\min}\lb\calM_i\lb\calT\rb\rb}\leq \frac{\fronorm{\calX^t-\calT}}{\sigma_{\min}\lb\calT\rb}.
\end{align*}
To prove  the second inequality of Lemma~\ref{lemma: technique lemmas}, first note that for any tensor $\calZ\in\R^{n\times n\times n}$,
\begin{align*}
\left(\calP_{T_{\calX^t}} - 	\calP_{T}\right)(\calZ)  = \prod_{i = 1}^3\calP_{\mX_i^t}^{(i)}(\calZ) -  \prod_{i = 1}^3\calP_{\mU_i}^{(i)}(\calZ)  + \sum_{i = 1}^3\left( \calP^{(j\neq i)}_{\calG^{t},\lcb \mX_{j}^{t}\rcb_{j\neq i}}\calP_{\mX_i^{t, \perp}}^{(i)} -  \calP^{(j\neq i)}_{\calS,\lcb \mU_{j}\rcb_{j\neq i}}\calP_{{\mU_i}^\bot}^{(i)} \right)(\calZ).
\end{align*}
Taking the Frobenius norm on both sides and applying the triangle inequality give
\begin{small}
\begin{align*}
\fronorm{ \left(\calP_{T_{\calX^t}} - 	\calP_{T}\right)(\calZ) } 
\leq\sum_{i=1}^{3} \underbrace{ \fronorm{ \left( \calP^{(j\neq i)}_{\calG^{t},\lcb \mX_{j}^{t}\rcb_{j\neq i}}\calP_{\mX_i^{t, \perp}}^{(i)} -  \calP^{(j\neq i)}_{\calS,\lcb \mU_{j}\rcb_{j\neq i}}\calP_{{\mU_i}^\bot}^{(i)} \right)(\calZ) }}_{=:\varsigma_i} +  \underbrace{ \fronorm{\prod_{i = 1}^3\calP_{\mX_i^t}^{(i)}(\calZ)  - \prod_{i = 1}^3\calP_{\mU_i}^{(i)}(\calZ)}  }_{=: \varsigma_4 }.
\end{align*}
\end{small}
\paragraph{Bounding $\varsigma_1$.} It is easy to see that
	\begin{align}
		\label{eq varsigma1}
			\varsigma_1 &= \fronorm{\left( \calP^{(j\neq 1)}_{\calG^{t},\lcb \mX_{j}^{t}\rcb_{j\neq 1}}\calP_{\mX_1^{t, \perp}}^{(1)} -  \calP^{(j\neq 1)}_{\calS,\lcb \mU_{j}\rcb_{j\neq 1}}\calP_{{\mU_1}^\perp}^{(1)} \right)(\calZ)}\notag \\
			&\leq \fronorm{\left( \lb\calP^{(j\neq 1)}_{\calG^{t},\lcb \mX_{j}^{t}\rcb_{j\neq 1}}-\calP^{(j\neq 1)}_{\calS,\lcb \mU_{j}\rcb_{j\neq 1}}\rb\calP_{\mX_1^{t, \perp}}^{(1)}\right)(\calZ)}+\fronorm{\lb \calP^{(j\neq 1)}_{\calS,\lcb \mU_{j}\rcb_{j\neq 1}}\lb\calP_{\mX_1^{t, \perp}}^{(1)}-\calP_{{\mU_1}^\perp}^{(1)} \rb\rb\lb\calZ\rb}\notag\\
			&\leq \fronorm{\calM_1\left( \lb\calP^{(j\neq 1)}_{\calG^{t},\lcb \mX_{j}^{t}\rcb_{j\neq 1}}-\calP^{(j\neq 1)}_{\calS,\lcb \mU_{j}\rcb_{j\neq 1}}\rb(\calZ) \right)}+\opnorm{{\mX_1^t}{\mX_1^t}^\tran-{\mU_1}{\mU_1}^\tran}\cdot \fronorm{\calZ} \notag \\
			&\leq \fronorm{\calM_1\left( \lb\calP^{(j\neq 1)}_{\calG^{t},\lcb \mX_{j}^{t}\rcb_{j\neq 1}}-\calP^{(j\neq 1)}_{\calS,\lcb \mU_{j}\rcb_{j\neq 1}}\rb(\calZ) \right)}+\frac{\fronorm{\calX^t - \calT}}{\sigma_{\min}(\calT)}\fronorm{\calZ},
	\end{align}
where we have used the first inequality in this lemma. Denote by $\mY_1^t$ and $\mV_1$ the top-$r$ right singular vectors of $\calM_1\lb\calX^t\rb$ and $\calM_1\lb\calT\rb$, respectively. Claim~\ref{property of these projection} gives that
\begin{align}\label{spectral norm yt-v}
	&\fronorm{\calM_1\left( \lb\calP^{(j\neq 1)}_{\calG^{t},\lcb \mX_{j}^{t}\rcb_{j\neq 1}}-\calP^{(j\neq 1)}_{\calS,\lcb \mU_{j}\rcb_{j\neq 1}}\rb(\calZ) \right)}\notag \\
	&~= \fronorm{\calM_1(\calZ)\left( \mY_1^t {\mY_1^t}^\tran -\mV_1\mV_1^\tran \right)} \leq \opnorm{\mY_1^t {\mY_1^t}^\tran -\mV_1\mV_1^\tran} \fronorm{\calZ}\leq \frac{\fronorm{\calX^t - \calT}}{\sigma_{\min}(\calT)}\fronorm{\calZ},
\end{align}
where the last inequality is due to Lemma~\ref{lemma: lemma 4.1 in Wei}. Plugging \eqref{spectral norm yt-v} into  \eqref{eq varsigma1} gives  
\begin{align*}
	\varsigma_1 &\leq \frac{2\fronorm{\calX^t - \calT}}{\sigma_{\min}(\calT)}\fronorm{\calZ}.
\end{align*}

\paragraph{Bounding $\varsigma_2$ and $\varsigma_3$.}
Following a similar argument as above, we can obtain
\begin{align*}
	\varsigma_2 \leq \frac{2\fronorm{\calX^t - \calT}}{\sigma_{\min}(\calT)}\fronorm{\calZ}\quad\mbox{and}\quad
	\varsigma_3 \leq \frac{2\fronorm{\calX^t - \calT}}{\sigma_{\min}(\calT)}\fronorm{\calZ}.
\end{align*}
\paragraph{Bounding $\varsigma_4$.} A simple calculation yields that
\begin{align*} 
	\varsigma_4 &\leq \fronorm{\calZ\times_1\lb {\mX_1^t}{\mX_1^t}^\tran -  {\mU_1}\mU_1^\tran\rb\times_2 {\mX_2^t}{\mX_2^t}^\tran \times_3 {\mX_3^t}{\mX_3^t}^\tran }\\
	&\quad+ \fronorm{  \calZ\times_1 {\mU_1}\mU_1^\tran \times_2\lb {\mX_2^t}{\mX_2^t}^\tran -  {\mU_2}\mU_2^\tran \rb\times_3 {\mX_3^t} {\mX_3^t}^\tran } \\
	&\quad + \fronorm{  \calZ\times_1 {\mU_1}\mU_1^\tran \times_2 {\mU_2}\mU_2^\tran \times_3\lb {\mX_3^t}{\mX_3^t}^\tran -  {\mU_3}{\mU_3}^\tran\rb}\\
	&\leq 3\max_{i=1,2,3}\opnorm{ {\mX_i^t}{\mX_i^t}^\tran - {\mU_i}{\mU_i}^\tran  }\cdot \fronorm{\calZ}\\
	&\stackrel{(a)}{\leq}3\max_{i=1,2,3} \frac{\fronorm{\calM_i\lb\calX^t - \calT\rb}}{\sigma_{\min}\lb\calM_i\lb\calT\rb\rb}\fronorm{\calZ}\leq 3\frac{\fronorm{\calX^t-\calT}}{\sigma_{\min}\lb\calT\rb}\fronorm{\calZ},
\end{align*}
where $(a)$ is due to Lemma~\ref{lemma: lemma 4.1 in Wei}.
\\

Putting together all of the preceding bounds on $\varsigma_1$, $\varsigma_2$, $\varsigma_3$ and $\varsigma_4$ immediately establishes the lemma.

\subsection{Proof of Lemma~\ref{lemma 2}} 

For any $\calZ\in\R^{n\times n\times n}$, we have
\begin{align*}
	\fronorm{\calP_{\Omega}\calP_T(\calZ) }^2 &= \la \calP_{\Omega}\calP_T(\calZ) , \calP_{\Omega}\calP_T(\calZ) \ra\\
	&=\la \calP_T(\calZ) , \calP_{\Omega}\calP_T(\calZ) \ra - p\la \calP_{T}(\calZ),  \calP_{T}(\calZ)\ra + p\la \calP_{T}(\calZ),  \calP_{T}(\calZ)\ra\\
	&\leq p\lb \opnorm{ \calP_T - p^{-1}\calP_{T} \calP_{\Omega} \calP_{T} } + 1\rb \cdot \fronorm{\calZ}^2\leq p(1+\varepsilon)\fronorm{\calZ}^2,
\end{align*}
which implies that
	$\opnorm{\calP_{\Omega}\calP_T} \leq \sqrt{p(1+\varepsilon)}.$
Consequently,
\begin{align*}
	\opnorm{ \calP_{\Omega} \calP_{T_{\calX^t}} } & \leq \opnorm{\calP_{\Omega} \lb \calP_T - \calP_{T_{\calX^t}}\rb} + \opnorm{\calP_{\Omega} \calP_{T}}\leq  \opnorm{  \calP_T - \calP_{T_{\calX^t}} } + \opnorm{\calP_{\Omega} \calP_{T}}\\
	&\stackrel{(a)}{\leq } \frac{9}{\sigma_{\min}(\calT)}\fronorm{\calX^t - \calT} + \sqrt{(1+\epsilon)p}\stackrel{(b)}{\leq } \sqrt{p}\lb \frac{\varepsilon}{1+ \varepsilon} + \sqrt{1+\varepsilon} \rb\leq 2\sqrt{p}(1+\varepsilon),
\end{align*}
where  $(a)$ follows from Lemma~\ref{lemma: technique lemmas} and  $(b)$ is due to the assumption \eqref{assumption in lemma 2}. 

The spectral norm of $\calP_{T_{\calX^t}} - p^{-1}\calP_{T_{\calX^t}} \calP_{\Omega} \calP_{T_{\calX^t}}$ can be bounded as follows:
\begin{align*}
	\opnorm{\calP_{T_{\calX^t}} - p^{-1}\calP_{T_{\calX^t}} \calP_{\Omega} \calP_{T_{\calX^t}} } 
	&\leq \opnorm{  \calP_{T_{\calX^t}} - \calP_T  } +p^{-1} \opnorm{  \lb \calP_{T_{\calX^t}} - \calP_T \rb\calP_{\Omega} \calP_{T_{\calX^t}}     } \\
	&\quad + p^{-1} \opnorm{  \calP_T \calP_{\Omega} \lb \calP_{T_{\calX^t}} - \calP_T  \rb  }+\opnorm{ \calP_{T } - p^{-1}\calP_{T} \calP_{\Omega} \calP_{T}   }\\
	&\leq \opnorm{\calP_{T_{\calX^t}} - \calP_T} + p^{-1} \opnorm{\calP_{T_{\calX^t}} - \calP_T}\cdot \opnorm{\calP_{\Omega} \calP_{T_{\calX^t}} } \\
	&\quad + p^{-1} \opnorm{  \calP_T \calP_{\Omega}} \cdot \opnorm{ \calP_{T_{\calX^t}} - \calP_T  }+\opnorm{ \calP_{T } - p^{-1}\calP_{T} \calP_{\Omega} \calP_{T}   }\\
	&\leq \opnorm{\calP_{T_{\calX^t}} - \calP_T}  \lb 1+ p^{-1}\opnorm{\calP_{\Omega} \calP_{T_{\calX^t}} } + p^{-1} \opnorm{  \calP_T \calP_{\Omega}}    \rb + \varepsilon\\
	&\leq \frac{9}{\sigma_{\min}(\calT)}\fronorm{\calX^t - \calT}  \lb 1+ p^{-1} \frac{\sqrt{p}\varepsilon}{1+\varepsilon}  + \frac{2}{p} \opnorm{  \calP_T \calP_{\Omega}}    \rb + \varepsilon\\
	&\leq \sqrt{p}\frac{\varepsilon}{1+\varepsilon} \lb 1+ p^{-1} \frac{\sqrt{p}\varepsilon}{1+\varepsilon} + \frac{2}{p}\cdot \sqrt{p}\cdot \sqrt{1+\varepsilon}\rb + \varepsilon\\
	&\leq \frac{\varepsilon}{1+\varepsilon} \lb 1+  \frac{ \varepsilon}{1+\varepsilon} +2 \sqrt{1+\varepsilon}\rb + \varepsilon\\
	&\leq \frac{\varepsilon}{1+\varepsilon} \lb \frac{1+2\varepsilon+ 2(1+\varepsilon)^2 }{1+\varepsilon} \rb + \varepsilon\leq 5\varepsilon,
\end{align*}
which completes the proof.

\subsection{Proof of Lemma~\ref{spectral norm of delta 1,ell}}
\label{Proof of Lemma spectral norm of delta 1,ell}
Lemma~\ref{spectral norm of delta 1,ell} is similar to Lemma $6$ in \citep{cai2021subspace}, but with three slices being  left  out instead  of one slice. As being noted  in \citep{cai2021nonconvex}, the proof can be easily adapted from the one slice case to the three slices case.  Here we only point  out  that the structure of Tucker decomposition can be used to simplify the proof in our problem. First the triangle inequality gives that
\begin{align}
\label{eq: tt-tt}
	&\opnorm{\calP_{\offdiag}\left(\widehat{\mT}_i^{\ell}\widehat{\mT}_i^{{\ell}^{\tran}} \right)-\mT_i\mT_i^\tran}\notag\\
	&\quad\leq \opnorm{ \calP_{\offdiag} \lb\mE_i^{-1,\ell}{\mE_i^{-1,\ell}}^\tran \rb}+\opnorm{ \calP_{\offdiag} \lb \mT_i{\mE_i^{-1,\ell}}^\tran + \mE_i^{-1,\ell}{\mT_i}^\tran \rb } +  \opnorm{ \calP_{\diag}\lb \mT_i\mT_i^\tran \rb},
\end{align}
where $\mE_i^{-1,\ell}$ is defined as $\mE_i^{-1,\ell} = \calM_i\lb\lb p^{-1}\calP_{\Omega_{-\ell}}+\calP_{\ell}-\calI\rb\lb\calT\rb\rb$.
\paragraph{Bounding $\opnorm{\calP_{\offdiag} \lb\mE_i^{-1,\ell}{\mE_i^{-1,\ell}}^\tran\rb}$.} 
By adapting the analysis for $\calP_{\offdiag}\lb\mE_i^{-1}\mE_i^{-1}\rb$ in \citep[Lemma 1]{cai2021subspace} where $\mE_i^{-1} = \calM_i\lb p^{-1}\calP_{\Omega}-\calI\rb\lb\calT\rb$, we have
\begin{align}\label{step1}
	\opnorm{\calP_{\offdiag} \lb\mE_i^{-1,\ell}{\mE_i^{-1,\ell}}^\tran\rb}\leq C \lb\frac{\mu^{3/2} r^{3/2}}{n^{3/2}p}+\frac{\mu^2 r^2}{n^2p}\rb\log n\cdot\sigma^2_{\max}\lb\calT\rb.
\end{align}

\paragraph{Bounding $\opnorm{\calP_{\offdiag} \lb\mT_i{\mE_i^{-1,\ell}}^\tran  + \mE_i^{-1,\ell}{\mT_i}^\tran \rb}$.}
For this term, we can utilize  the Tucker structure to immediately obtain the upper bound. Invoking the triangular inequality gives that
\begin{align*}
	\opnorm{\calP_{\offdiag} \lb\mT_i{\mE_i^{-1,\ell}}^\tran  + \mE_i^{-1,\ell}{\mT_i}^\tran \rb }\leq \opnorm{\calP_{\offdiag} \lb\mT_i{\mE_i^{-1,\ell}}^\tran\rb}  + \opnorm{\calP_{\offdiag} \lb \mE_i^{-1,\ell}{\mT_i}^\tran \rb}.
\end{align*}	
It suffices to control the spectral norm of $\calP_{\offdiag} \lb\mT_1{\mE_1^{-1,\ell}}^\tran\rb$.  
To this end, we have
\begin{align*}
	\opnorm{\calP_{\offdiag} \lb\mT_1{\mE_1^{-1,\ell}}^\tran\rb}  &\leq  2\opnorm{\mT_1{\mE_1^{-1,\ell}}^\tran} =2\opnorm{\calM_1(\calT)\calM_1^\tran(\calE^{-1,\ell})} \\
	&=2\opnorm{\mU_1\calM_1(\calS)(\mU_3\otimes\mU_2)^\tran\calM_1^\tran(\calE^{-1,\ell}) }\\
	&\leq 2\sigma_{\max}\lb\calT\rb\opnorm{\calM_1\lb \calE^{-1,\ell} \times_2 \mU_2^\tran \times_3 \mU_3^\tran \rb}\\
	&\stackrel{(a)}{\leq} 2\sigma_{\max}\lb\calT\rb\sqrt{r}\opnorm{\calE^{-1,\ell} \times_2 \mU_2^\tran \times_3 \mU_3^\tran}\\
	&\leq 2\sigma_{\max}\lb\calT\rb \sqrt{r}\opnorm{\lb\calI-p^{-1}\calP_{\Omega}\rb\lb\calT\rb}\\
	&\stackrel{(b)}{\leq} \sigma_{\max}(\calT)\cdot\sqrt{r} C\lb \frac{\log^3 n}{p} \infnorm{\calT} + \sqrt{\frac{3\log^5 n}{p} \twoinf{  \mT_1^\tran }^2 } \rb,
\end{align*}
where $(a)$ follows from Lemma~\ref{lemma: Lemma 6 in XYZ} and $(b)$ is due to Lemma~\ref{lemma: spectral norm of E initial}.
Therefore, the following inequality holds with high probability,
\begin{align}\label{step2}
\opnorm{\calP_{\offdiag} \lb\mT_i{\mE_i^{-1,\ell}}^\tran  + \mE_i^{-1,\ell}{\mT_i}^\tran \rb } \leq C \sigma^2_{\max}(\calT)\cdot\sqrt{r}  \lb \frac{\mu^{3/2} r^{3/2}\log^3 n}{n^{3/2}p}  + \sqrt{\frac{\mu ^2 r^2 \log^5 n }{n^2p} }  \rb.
\end{align}

\paragraph{Bounding $\opnorm{\calP_{\diag}\lb \mT_i\mT_i^\tran \rb}$.}
A straightforward computation implies that
\begin{align}\label{step3}
	\opnorm{ \calP_{\diag}\lb \mT_i\mT_i^\tran \rb } &=\max_{j\in [n]} \left| \lb \mT_i\mT_i^\tran\rb_{j,j} \right|=\twoinf{\mT_i}^2\leq \frac{\mu r}{n}\sigma^2_{\max}\lb\calT\rb.
\end{align}
\\

Plugging \eqref{step1}, \eqref{step2} and \eqref{step3} into \eqref{eq: tt-tt} yields that, with high probability,
\begin{align*}
	&\opnorm{\calP_{\offdiag}\left(\widehat{\mT}_i^{\ell}{\widehat{\mT}_i^{{\ell}^{\tran}}} \right)-\mT_i\mT_i^\tran} \\
	&\quad  \leq C \lb\lb\frac{\mu^{3/2} r^{3/2}}{n^{3/2}p}+\frac{\mu^2 r^2}{n^2p}\rb\log n+ \sqrt{r}  \lb \frac{\mu^{3/2} r^{3/2}\log^3 n}{n^{3/2}p}  + \sqrt{\frac{\mu ^2 r^2 \log^5 n }{n^2p} }  \rb+\frac{\mu r}{n}\rb\sigma^2_{\max}\lb\calT\rb.
\end{align*}

\subsection{Proof of Lemma~\ref{lemma: spectral norm distance of initialization}}
\label{proof of lemma spectral norm distance of initialization}
	Recall that $\mX_i^{1} \bSigma^{1} _i {\mX_i^{1} }^\tran$ is the top-$r$ eigenvalue decomposition of 
	\begin{align}
	\label{eq: G}
		\mG:=\calP_{\offdiag}\lb\widehat{\mT_i}\widehat{\mT_i}^{\tran}\rb.
	\end{align}
	We denote by $\mG^\natural$ the Gram matrix $ \calM_i(\calT)\calM_i^\tran(\calT)$.  Applying Lemma 1 in \citep{cai2021subspace} yields that, with high probability,
	\begin{align}
		\label{eq spectral norm of G}
		\opnorm{\mG - \mG^\natural} &\leq C \lb \frac{ \mu^{3/2} r^{3/2}\sigma_{\max}^2( \calT ) \log n}{n^{3/2}p}  + \sqrt{ \frac{ \mu^2 r^2 \sigma_{\max}^4( \calT ) \log n}{n^2 p}} + \twoinf{\calM_i(\calT)}^2\rb \notag \\
		&\stackrel{(a)}{\leq} \frac{1}{2^7}\frac{1}{2^{20} \kappa^6 \mu^2 r^4 }\sigma_{\min}^2(\calT),
	\end{align}
	 where  $(a)$ follows from  \eqref{cond: p} and $n\geq C_0 \mu^3 r^5 \kappa^8$. 	Since $\calT$ is a tensor with multilinear rank $\vr=(r,r,r)$, we have $\sigma_{r}\lb\mG^\natural\rb = \sigma^2_{\min}\lb\mT_i\rb>0$, $\sigma_{r+1}\lb\mG^\natural\rb=0$. From \eqref{eq spectral norm of G}, one can see that $\opnorm{\mG - \mG^\natural}\leq \frac{1}{4}\sigma_{r}\lb\mG^\natural\rb$. Therefore, Lemma~\ref{lemma: Ma 2017 lemma 45} is applicable, which gives that 
	\begin{align*}
		\opnorm{\mX_i^1 \mR_i^1 - \mU_i} & \leq \frac{3}{ \sigma_{r}(\mG^\natural) } \opnorm{ \mG - \mG^\natural}\leq \frac{3}{ \sigma_{\min}^2(\calT) } \opnorm{ \mG - \mG^\natural}\\
		&\leq \frac{3}{ \sigma_{\min}^2(\calT) } \frac{1}{2^7}\frac{1}{2^{20} \kappa^6 \mu^2 r^4 }\sigma_{\min}^2(\calT)\leq \frac{1}{2^5}\frac{1}{2^{20} \kappa^6 \mu^2 r^4 }.
	\end{align*} 
Next, we turn to bound $\opnorm{\mX_i^{1,\ell}\mR_i^{1,\ell} - \mU_i}$. Notice that $\mX_i^{1,\ell}\bSigma_{i}^{1,\ell}{\mX_i^{1,\ell}}^\tran$ is the top-$r$ eigenvalue decomposition of 
$\mG^\ell:= \calP_{\offdiag}\left( \widehat{\mT}_i^{\ell}\widehat{\mT}_i^{{\ell}^\tran} \right).$
By Lemma~\ref{spectral norm of delta 1,ell}, one can obtain that, with high probability,
\begin{align*}
&\opnorm{\mG^\ell - \mG^\natural} \\
&\leq  C\lb\lb\frac{\mu^{3/2} r^{3/2}}{n^{3/2}p}+\frac{\mu^2 r^2}{n^2p}\rb\log n+ \sqrt{r}  \lb \frac{\mu^{3/2} r^{3/2}\log^3 n}{n^{3/2}p}  + \sqrt{\frac{\mu ^2 r^2 \log^5 n }{n^2p} }  \rb+\frac{\mu r}{n}\rb\sigma^2_{\max}\lb\calT\rb\\
&\stackrel{(a)}{\leq} \frac{1}{2^7}\frac{1}{2^{20} \kappa^6 \mu^2 r^4 }\sigma_{\min}^2(\calT)\leq \frac{1}{4}\sigma_{\min}^2(\calT),
\end{align*}
 where  $(a)$ follows from  \eqref{cond: p} and $n\geq C_0 \mu^3 r^5 \kappa^8$. Applying Lemma~\ref{lemma: Ma 2017 lemma 45} immediately yields that \begin{align*}
\opnorm{\mX_i^{1,\ell}\mR^{1,\ell} - \mU_i} &\leq \frac{3}{ \sigma_{\min}^2(\calT) } \opnorm{ \mG^\ell - \mG^\natural}\leq \frac{3}{ \sigma_{\min}^2(\calT) } \frac{1}{2^7}\frac{1}{2^{20} \kappa^6 \mu^2 r^4 }\sigma_{\min}^2(\calT)\leq \frac{1}{2^5}\frac{1}{2^{20} \kappa^6 \mu^2 r^4 }.
\end{align*}

\subsection{Proof of Lemma \ref{lemma: spectral norm of Xt}} 
Noting that $\calE^0$ is indeed associated with $\calX^1$ (see \eqref{calE0}), we need to show Lemma~\ref{lemma: spectral norm of Xt} separately  for the case $t=1$.
 According to Lemma~\ref{lemma: spectral norm distance of initialization}, one can see that
\begin{align*}
    \opnorm{\mX_i^{1}\mR_i^1 - \mU_i}  \leq \frac{1}{2^{17} \kappa^4\mu^2 r^4} \frac{1}{2}\quad\mbox{and}\quad\opnorm{\mX_i^{1,\ell}\mR_i^{1,\ell} - \mU_i} \leq \frac{1}{2^{25} \kappa^6\mu^2 r^4} \leq \frac{1}{2^{17} \kappa^4\mu^2 r^4} \frac{1}{2}.
\end{align*}
Noticing the definition of $\mG$ (see \eqref{eq: G}) and $\mG^\natural$ ($\mG^\natural=\calM_i(\calT)\calM_i^\tran(\calT)$),  the spectral norm of $\mX_i^1{\mX_i^1}^\tran - \mU_i\mU_i^\tran$ can be bounded as follows:
\begin{align*}
\opnorm{\mX_i^1{\mX_i^1}^\tran - \mU_i\mU_i^\tran} &\leq \frac{2\opnorm{\mG - \mG^\natural}}{\sigma_{\min}^2(\calT) -  \opnorm{\mG - \mG^\natural}}\\
&\leq \frac{2}{\left(1-\frac{1}{8} \right)\sigma^2_{\min}(\calT)}\cdot \frac{1}{2^7} \frac{1}{2^{20}\kappa^6 \mu^2 r^4}\sigma_{\min}^2(\calT)\leq \frac{1}{2^{17} \kappa^4 \mu^2 r^4}\frac{1}{2},
\end{align*}
where first line is due to Lemma~\ref{rateoptimal lemma 1} and Lemma~\ref{Davis-kahan sintheta theorem}, and the second line follows from \eqref{eq spectral norm of G}.
Using the same argument yields that
\begin{align*}
\opnorm{\mX_i^{1,\ell}{\mX_i^{1,\ell}}^\tran - \mU_i\mU_i^\tran} \leq \frac{1}{2^{17} \kappa^4 \mu^2 r^4}\frac{1}{2}.
\end{align*} 
By the triangle inequality,
\begin{align*}
	\twoinf{ {\mX_i^{1,\ell}}{\mX_i^{1,\ell}}^\tran - \mU_i\mU_i^\tran } &\leq \twoinf{\left( \mX_i^{1,\ell}\mR_i^{1,\ell} -\mU_i \right) \left(\mX_i^{1,\ell}\mR_i^{1,\ell} \right)^\tran } + \twoinf{ \mU_i \left( \mX_i^{1,\ell}\mR_i^{1,\ell} -\mU_i \right)^\tran}\\
	&\leq \twoinf{  \mX_i^{1,\ell}\mR_i^{1,\ell} -\mU_i } + \twoinf{\mU_i}\cdot \opnorm{ \mX_i^{1,\ell}\mR_i^{1,\ell} -\mU_i }\\
	&\leq \frac{1}{2^{20}\kappa^2\mu^2 r^4} \frac{1}{2} \sqrt{\frac{\mu r}{n}}  + \sqrt{\frac{\mu r}{n}}\cdot \frac{1}{2^{17} \kappa^4  \mu^2 r^4 }\frac{1}{2}\leq \frac{1}{2^{16}\kappa^2 \mu^2 r^4}\frac{1}{2}\sqrt{\frac{\mu r}{n}}.
\end{align*}

For the case $t\geq 2$, first recall that the columns of $\mX_i^t$ are the top-r eigenvectors of 
\begin{align*}
	(\mT_i + \mE_i^{t-1} )(\mT_i + \mE_i^{t-1} )^\tran &= \mT_i \mT_i^\tran + \underbrace{ \mT_i{\mE_i^{t-1}}^\tran + \mE_i^{t-1} \mT_i^\tran + \mE_i^{t-1} {\mE_i^{t-1}}^\tran}_{=:\bDelta_i^{t-1}}.
\end{align*}
Thus a simple computation yields that
\begin{align}
	\label{eq: upper bound of delta}
	\opnorm{\bDelta_i^{t-1}} &\leq \left(2 \opnorm{\mT_i} + \opnorm{\mE_i^{t-1}}\right) \cdot \opnorm{\mE_i^{t-1}} \notag \\
	&\stackrel{(a)}{\leq} \left(2 \sigma_{\max}(\calT) + \frac{1}{8}\sigma_{\max}(\calT)\right)\cdot \frac{1}{2^{20} \kappa^6\mu^2 r^4} \frac{1}{2^t}  \sigma_{\max}(\calT) \notag  \\
	&= \frac{17}{2^3}\cdot \frac{1}{2^{20} \kappa^6 \mu^2 r^4} \frac{1}{2^t}  \sigma_{\max}^2(\calT) \leq \frac{1}{4}  \sigma_{\min}^2(\calT), 
\end{align}
where  $(a)$ follows from the induction inequality \eqref{spectral norm of error tensor}.
Applying Lemma~\ref{lemma: Ma 2017 lemma 45} gives that
\begin{align*}
	\opnorm{\mX_i^t \mR_i^t - \mU_i} &\leq \frac{3}{\sigma_{r}(\mT_i\mT_i^\tran)} \opnorm{\bDelta_i^{t-1}}\stackrel{(a)}{\leq }\frac{3}{\sigma_{\min}^2(\calT)} \cdot\frac{17}{2^{23} \kappa^6\mu^2 r^4} \frac{1}{2^t}  \sigma_{\max}^2(\calT)
 \leq \frac{1}{2^{17} \kappa^4 \mu^2 r^4 }\frac{1}{2^t},
\end{align*}
where  $(a)$ follows from \eqref{eq: upper bound of delta}. For \eqref{eq spectral norm of Pxt-Pu}, a direct calculation yields that
\begin{align*}
	\opnorm{ {\mX_i^t}{\mX_i^t}^\tran  -  {\mU_i}{\mU_i}^\tran } &\stackrel{(a)}{\leq} 2\cdot  \frac{8}{7\sigma_{\min}^2(\calT)} \opnorm{\bDelta_i^{t-1}} \\
 &\leq  \frac{16}{7\sigma_{\min}^2(\calT)} \cdot  \frac{17}{2^{23} \kappa^6 \mu^2 r^4} \frac{1}{2^t}  \sigma_{\max}^2(\calT)\leq \frac{1}{2^{17} \kappa^4\mu^2 r^4} \frac{1}{2^t},
\end{align*}
where  $(a)$ is due to Lemma~\ref{rateoptimal lemma 1} and Lemma~\ref{Davis-kahan sintheta theorem}.  Following a  similar argument, one also has 
\begin{align*}
    \opnorm{ {\mX_i^{t,\ell}}{\mX_i^{t,\ell}}^\tran  -  {\mU_i}{\mU_i}^\tran }\leq \frac{1}{2^{17} \kappa^4\mu^2 r^4} \frac{1}{2^t}~\mbox{and}~ \twoinf{ {\mX_i^{t,\ell}}{\mX_i^{t,\ell}}^\tran - \mU_i\mU_i^\tran }\leq \frac{1}{2^{16} \kappa^2  \mu^2 r^4 }\frac{1}{2^t}\sqrt{\frac{\mu r}{n}} .
\end{align*}

\subsection{Proof of Lemma~\ref{lemma: infty norm at t step}}
Recognizing that  $\opnorm{\mE_i^{t-1}} \leq \frac{1}{2^{20}\kappa^6 \mu^2 r^4}\frac{1}{2^t}\sigma_{\max}(\calT)\leq \sigma_{\max}\lb\calT\rb/\lb 10\kappa^2\rb$ (\eqref{spectral norm of error tensor}), we can apply Lemma~\ref{lemma: infinite norm of X-T} to prove this lemma.
Combining \eqref{spectral norm of error tensor} and \eqref{L2 inf norm} together gives that
\begin{align*}
    B &= \max_{i = 1,2,3} \left(\twoinf{\mX_i^t\mR_i^t - \mU_i}+\frac{15\sigma_{\max}\lb\calT\rb}{2\sigma^2_{\min}\lb\calT\rb} \twoinf{\mU_i}\cdot \opnorm{\mE_i^{t-1}} \right)\\
    &\leq \frac{1}{2^{20}\kappa^2 \mu^2 r^4 } \frac{1}{2^t}\sqrt{\frac{\mu r}{n}} + \frac{15\sigma_{\max}\lb\calT\rb}{2\sigma^2_{\min}\lb\calT\rb}\cdot \sqrt{\frac{\mu r}{n}}\cdot \frac{1}{2^{20}\kappa^6 \mu^2 r^4}\frac{1}{2^t}\sigma_{\max}(\calT)\leq\frac{9}{2^{20} \kappa^2 \mu^2 r^4 }\cdot \frac{1}{2^t}\sqrt{\frac{\mu r}{n}}.
\end{align*}
 Using the same argument as proving Theorem~\ref{thm: finite iteration} yields that

\begin{align*}
    \infnorm{\calX^t - \calT} 
	&\leq \frac{36}{2^{20} \kappa^2 \mu^2 r^{4}}\cdot \frac{1}{2^t} \cdot \left( \frac{\mu r}{n} \right)^{3/2}\cdot \sigma_{\max}(\calT).
\end{align*}
Similarly, we have
\begin{align*}
    \infnorm{\calX^{t,\ell} - \calT}\leq \frac{36}{2^{20} \kappa^2 \mu^2 r^{4}}\cdot \frac{1}{2^t} \cdot \left( \frac{\mu r}{n} \right)^{3/2}\cdot \sigma_{\max}(\calT).
\end{align*}

\subsection{Proof of Lemma~\ref{lemma: condition number of X}}
We first prove the inequality \eqref{eq condition number of Xt}. Recall that the tensor $\calX^t$ can be rewritten as
\begin{align*}
	\calX^t = \lb \calT + \calE^{t-1} \rb \ttimes {\mX_i^t}{\mX_i^t}^\tran = \calT + \lb \calT + \calE^{t-1} \rb \ttimes {\mX_i^t}{\mX_i^t}^\tran - \calT.
\end{align*}
Applying the Weyl's inequality reveals that
\begin{align*}
	\left| \sigma_k\lb \calM_i\lb\calX^t\rb \rb - \sigma_{k}\lb \calM_i\lb \calT \rb\rb \right| &\leq \opnorm{\calM_i \lb  \lb \calT + \calE^{t-1} \rb \ttimes {\mX_i^t}{\mX_i^t}^\tran - \calT \rb}\\
	&\leq \underbrace{\opnorm{\calM_i \lb  \calT  \ttimes {\mX_i^t}{\mX_i^t}^\tran - \calT \rb}}_{=:\vartheta_1} + \underbrace{\opnorm{\calM_i\lb  \calE^{t-1}   \ttimes {\mX_i^t}{\mX_i^t}^\tran  \rb }}_{=:\vartheta_2}.
\end{align*}
\paragraph{Bounding $\vartheta_1$.} 
Using the same argument as controlling~\eqref{eq I2}, one can obtain
	\begin{align}
		\label{eq condition I1}
		\vartheta_1 
		&\leq 3\sigma_{\max}(\calT)\cdot  \frac{1}{2^{17}\kappa^4 \mu^2 r^4 } \leq \frac{1}{2^{10}} \sigma_{\min}(\calT).
	\end{align}
\paragraph{Bounding $\vartheta_2$.} It is easily seen that
	\begin{align}
		\label{eq: condition I2}
		\vartheta_2 \leq \max_{i = 1,2,3}\opnorm{\calM_i\lb  \calE^{t-1}  \rb } 
		&\stackrel{(a)}{\leq } \frac{1}{2^{20} \kappa^6 \mu^2 r^4} \frac{1}{2^t} \cdot \sigma_{\max}(\calT) \leq \frac{1}{2^{10} }\sigma_{\min}(\calT),
	\end{align}
	where $(a)$ is due to \eqref{spectral norm of error tensor} and the fact $\kappa \geq 1$. 
\\

Combining \eqref{eq condition I1} and \eqref{eq: condition I2} together shows that
\begin{align*}
	\left| \sigma_k\lb \calM_i\lb\calX^t\rb \rb - \sigma_{k}\lb \calM_i\lb \calT \rb\rb \right|  \leq \frac{1}{2^{9}  }\sigma_{\min}(\calT),
\end{align*}
which implies that
\begin{align*}
	\left(1+ \frac{1}{2^9  }\right) \sigma_{\max}(\calT) \geq \sigma_{\max}\lb \calM_i\lb\calX^t\rb \rb  \geq  \sigma_{\min}\lb \calM_i\lb\calX^t\rb \rb \geq \left(1 - \frac{1}{2^9}\right)\sigma_{\min}(\calT),
\end{align*}
as well as
	$\kappa\lb\calX^t\rb \leq 2\kappa.$
The inequality  \eqref{eq condition number of Xtl} can be proved in a similar way and the details are omitted.

\section{Proofs of Claims}

\subsection{Proof of Claim \ref{property of these projection}}\label{subsec:proj_properties}
By the definition of projectors, it is easy to verify that
\begin{align*}
    \calM_i\lb\lb\calP_{{\mX_i^{t, \perp}}}^{(i)}\calP_{\calG^t , \left\lbrace\mX^t_j \right\rbrace_{j\neq i}}^{(j\neq i)}\rb\lb\calZ\rb\rb = \calM_i\lb\lb\calP_{\calG^t , \left\lbrace\mX^t_j \right\rbrace_{j\neq i}}^{(j\neq i)}\calP_{{\mX_i^{t, \perp}}}^{(i)}\rb\lb\calZ\rb\rb,
\end{align*}
which implies that the above two projectors are commutable. Using the same strategy, one can obtain the second equality.

For the third equality, by symmetry, it suffices to prove $\calM_1\lb\calP_{\calG^t , \left\lbrace\mX^t_j \right\rbrace_{j\neq 1}}^{(j\neq 1)}\lb\calZ\rb\rb = \calM_1\lb\calZ\rb\mY_1^t{\mY_1^t}^\tran$. 
Let ${\mQ^t_1}\bSigma_1^t{\mY_1^t}^\tran$ be the singular vector decomposition of $\calM_1\lb\calG^t\rb\lb\mX_3^t\otimes\mX_2^t\rb^\tran$, where $\mQ_1^t\in\R^{r\times r}$, $\bSigma_1^t\in\R^{r\times r}$ and $\mY_1^t\in\R^{n^2\times r}$. We find that
\begin{align*}
    \calM_1\lb\calX^t\rb &= \mX_1^t\calM_1\lb\calG^t\rb\lb\mX_3^t\otimes\mX_2^t\rb^\tran
 = \mX_1^t{\mQ^t_1}\bSigma_1^t{\mY_1^t}^\tran,
\end{align*}
which implies that the columns of  $\mY_1^t$ are the top-$r$ right singular vectors of $\calM_1\lb\calX^t\rb$. A direct calculation gives that
\begin{align}\label{as a projection}
    \calM_1\lb\calP_{\calG^t , \left\lbrace\mX^t_j \right\rbrace_{j\neq 1}}^{(j\neq 1)}\lb\calZ\rb\rb &= \calM_1\lb\calZ\rb \lb\mX^t_3\otimes\mX^t_2\rb\calM_1^{\dagger}\lb\calG^t\rb\calM_1\lb\calG^t\rb\lb\mX^t_3\otimes\mX^t_2\rb^\tran\notag\\
    & \stackrel{(a)}{=}  \calM_1\lb\calZ\rb\lb\calM_1\lb\calG^t\rb\lb\mX^t_3\otimes\mX^t_2\rb^\tran\rb^\dagger\calM_1\lb\calG^t\rb\lb\mX^t_3\otimes\mX^t_2\rb^\tran\notag\\
    &= \calM_1\lb\calZ\rb\lb{\mQ^t_1}\bSigma_1^t{\mY_1^t}^\tran\rb^\dagger{\mQ^t_1}\bSigma_1^t{\mY_1^t}^\tran \notag \\
    &=\calM_1\lb\calZ\rb{\mY_1^t} {\bSigma_1^t}^{-1}{\mQ^t_1}^\tran{\mQ^t_1}\bSigma_1^t{\mY_1^t}^\tran= \calM_1\lb\calZ\rb\mY_1^t{\mY_1^t}^\tran,
\end{align}
where $(a)$ follows from the fact that $\mX_3^t\otimes\mX_2^t$ has orthonormal columns. 

In addition, one has
\begin{align}\label{one more projection no change}
    &\calM_1\lb\calP_{\calG^t , \left\lbrace\mX^t_j \right\rbrace_{j\neq 1}}^{(j\neq 1)}\lb\calZ\rb\rb \notag\\&~=\calM_1\lb\calZ\rb\lb\mX^t_3\otimes\mX^t_2\rb\lb\mX^t_3\otimes\mX^t_2\rb^\tran \lb\mX^t_3\otimes\mX^t_2\rb\calM_1^{\dagger}\lb\calG^t\rb\calM_1\lb\calG^t\rb\lb\mX^t_3\otimes\mX^t_2\rb^\tran \notag\\
    &~= \calM_1\lb\calZ\rb \lb\mX^t_3\otimes\mX^t_2\rb\lb\mX^t_3\otimes\mX^t_2\rb^\tran\mY_1^t{\mY_1^t}^\tran.
\end{align}

The last two inequalities in Claim \ref{property of these projection} had already been used in \citep{cai2020provable}and can be easily verified by definition. Thus we omit the proof here.

\subsection{Proof of Claim \ref{claim: upper bound of beta3a}}
\label{proof: claim upper bound of beta3a}
By the equation \eqref{eq: key equation 1}, we have
\begin{align*}
	\beta_{3,a} &= \twonorm{\ve_m^\tran \calM_1\lb \lb \calI - \calP_{T_{\calX^{t,\ell}}}\rb\lb\calX^{t,\ell} - \calT\rb \rb} \\ 
	&\leq  \sum_{i=1}^{3} \underbrace{ \twonorm{ \ve_m^\tran \calM_1\lb  \lb \calP^{(i)}_{\mU_i} - \calP^{(i)}_{\mX_{i}^{t,\ell}}  \rb \lb \prod_{j\neq i}\calP^{(j)}_{\mX_j^{t, \ell }}   - \calP^{(j\neq i)}_{\calG^{t,\ell},\lcb \mX_{j}^{t,\ell}\rcb_{j\neq i}}\rb  \lb \calX^{t,\ell}- \calT \rb \rb }}_{=:\beta_{3,a}^i}\\
	&\quad + \underbrace{ \twonorm{\ve_m^\tran \calM_1\lb \lb \calP^{(1)}_{\mU_1} - \calP^{(1)}_{\mX_1^{t, \ell }}  \rb \calP^{(2)}_{\mX_{2}^{t,\ell, \perp}} \calP^{(3)}_{\mX_3^{t,\ell}} \lb \calX^{t,\ell} - \calT \rb \rb}  }_{=: \beta_{3,a}^4}\\
	&\quad + \underbrace{ \twonorm{\ve_m^\tran \calM_1 \lb  \lb \calP^{(3)}_{\mU_3} - \calP^{(3)}_{\mX_3^{t, \ell }}  \rb \calP^{(1)}_{\mX_{1}^{t,\ell,  \perp}} \calP^{(2)}_{\mX_2^{t, \ell}}  \lb \calX^{t,\ell} - \calT \rb \rb}}_{=: \beta_{3,a}^5}\\
	&\quad  +\underbrace{ \twonorm{\ve_m^\tran \calM_1 \lb \lb \calP^{(2)}_{\mU_2} - \calP^{(2)}_{\mX_{2}^{t,\ell}}  \rb \calP^{(3)}_{\mX_{3}^{t,\ell, \perp}} \calP^{(1)}_{\mX_1^{t,\ell}}  \lb \calX^{t,\ell} - \calT \rb\rb} }_{=:\beta_{3,a}^6}\\
	&\quad + \underbrace{ \twonorm{\ve_m^\tran \calM_1 \lb \lb \calP^{(1)}_{\mU_1} - \calP^{(1)}_{\mX_{1}^{t,\ell}}  \rb\calP^{(2)}_{\mX_{2}^{t,\ell,\perp}} \calP^{(3)}_{\mX_{3}^{t,\ell, \perp}}  \lb \calX^{t,\ell}- \calT \rb  \rb}}_{=:\beta_{3,a}^7}.
\end{align*}
\paragraph{Bounding $\beta_{3,a}^1$.} A straightforward computation yields that
\begin{align*}
		\beta_{3,a}^1&\leq  \twonorm{\ve_m^\tran \lb  {\mU_1} \mU_1^\tran -  {\mX_1^{t,\ell}}{\mX_1^{t,\ell}}^\tran \rb}\cdot \opnorm{ \prod_{j\neq 1}\calP^{(j)}_{\mX_j^{t, \ell }}   - \calP^{(j\neq 1)}_{\calG^{t,\ell},\lcb \mX_{j}^{t,\ell}\rcb_{j\neq 1}} }\cdot \fronorm{\calX^{t,\ell} - \calT} \\
		&= \twonorm{\ve_m^\tran \lb {\mU_1} \mU_1^\tran -  {\mX_1^{t,\ell}}{\mX_1^{t,\ell}}^\tran \rb} \fronorm{\calX^{t,\ell} - \calT},
	\end{align*}
	where the last line follows from Claim~\ref{property of these projection}.
\paragraph{Bounding $\beta_{3,a}^2$ and $\beta_{3,a}^3$.} It follows from the triangle inequality that
\begin{align*}
		\beta_{3,a}^2& =  \twonorm{ \ve_m^\tran \calM_1\lb  \lb \calP^{(2)}_{\mU_2} - \calP^{(2)}_{\mX_{2}^{t,\ell}}  \rb \lb \prod_{j\neq 2}\calP^{(j)}_{\mX_j^{t, \ell }}   - \calP^{(j\neq 2)}_{\calG^{t,\ell},\lcb \mX_{j}^{t,\ell}\rcb_{j\neq 2}}\rb  \lb \calX^{t,\ell}- \calT \rb \rb }\\
		&\leq  \twonorm{ \ve_m^\tran \calM_1\lb  \lb \calP^{(2)}_{\mU_2} - \calP^{(2)}_{\mX_{2}^{t, \ell}}  \rb \lb \prod_{j\neq 2}\calP^{(j)}_{\mX_j^{t, \ell }} \rb  \lb \calX^{t,\ell}- \calT \rb \rb } \\
		&\quad  +  \twonorm{ \ve_m^\tran \calM_1\lb  \lb \calP^{(2)}_{\mU_2} - \calP^{(2)}_{\mX_{2}^{t, \ell}}  \rb \lb \calP^{(j\neq 2)}_{\calG^{t,\ell},\lcb \mX_{j}^{t,\ell}\rcb_{j\neq 2}}\rb  \lb \calX^{t,\ell}- \calT \rb \rb }.
	\end{align*}
The first term on the right hand side can be bounded by
\begin{align*}
	&\twonorm{ \ve_m^\tran \calM_1\lb  \lb \calP^{(2)}_{\mU_2} - \calP^{(2)}_{\mX_{2}^{t, \ell}}  \rb \lb \prod_{j\neq 2}\calP^{(j)}_{\mX_j^{t, \ell }} \rb  \lb \calX^{t,\ell}- \calT \rb \rb }\\
	&\quad = \twonorm{ \ve_m^\tran \mX_1^{t,\ell} {\mX_1^{t,\ell}}^\tran \calM_1\lb     \calX^{t,\ell}- \calT \rb \left( \left(\mX_3^{t,\ell} {\mX_3^{t,\ell}}^\tran \right) \otimes \left({\mU_2} \mU_2^\tran -  {\mX_2^{t,\ell}}{\mX_2^{t,\ell}}^\tran  \right)\right) }\\
	&\quad \leq \twonorm{ \ve_m^\tran \mX_1^{t,\ell} } \opnorm{\calM_1\lb     \calX^{t,\ell}- \calT \rb }\cdot \opnorm{  \left(\mX_3^{t,\ell} {\mX_3^{t,\ell}}^\tran \right) \otimes \left({\mU_2} \mU_2^\tran -  {\mX_2^{t,\ell}}{\mX_2^{t,\ell}}^\tran \right)  }\\
	&\quad \leq  \twoinf{\mX_1^{t,\ell}} \cdot \opnorm{ {\mU_2} \mU_2^\tran -  {\mX_2^{t,\ell}}{\mX_2^{t,\ell}}^\tran   }\cdot \fronorm{\calX^{t,\ell} - \calT}.
\end{align*}
By the definition of $\calP^{(j\neq 2)}_{\calG^{t,\ell},\lcb \mX_{j}^{t,\ell}\rcb_{j\neq 2}}$ in \eqref{def: projection 2}, we have
\begin{align*}
&\calM_2\lb \calP_{\calG^{t,\ell}, \left\lbrace\mX^{t,\ell}_j \right\rbrace_{j\neq 2}}^{(j\neq 2)}\lb\calX^{t,\ell} - \calT\rb  \rb\\ &\quad= \calM_2\lb\calX^{t,\ell} - \calT\rb \lb \mX^{t,\ell}_{3}\otimes \mX^{t,\ell}_{1} \rb \calM_2^\dagger\lb\calG^{t,\ell}\rb\calM_2\lb\calG^{t,\ell}\rb \lb \mX^{t,\ell}_{3}\otimes \mX^{t,\ell}_{1}\rb^\tran\\
&\quad=\mW_2^{t,\ell}\calM_2\lb\calG^{t,\ell}\rb \lb \mX^{t,\ell}_{3}\otimes \mX^{t,\ell}_{1}\rb^\tran=\calM_2\left( \calG^{t,\ell} \times_1 \mX^{t,\ell}_{1} \times_2 \mW_2^{t,\ell} \times_3 \mX^{t,\ell}_{3}\right),
\end{align*}
where $\mW_2^{t,\ell} = \calM_2(\calX^{t,\ell} - \calT) \lb \mX^{t,\ell}_{3}\otimes \mX^{t,\ell}_{1} \rb \calM_2^\dagger\lb\calG^{t,\ell}\rb$.  It follows that
\begin{align*}
 \calP_{\calG^{t,\ell}, \left\lbrace\mX^{t,\ell}_j \right\rbrace_{j\neq 2}}^{(j\neq 2)}\lb\calX^{t,\ell} - \calT\rb  =  \calG^{t,\ell} \times_1 \mX^{t,\ell}_{1} \times_2 \mW_2^{t,\ell} \times_3 \mX^{t,\ell}_{3}.
\end{align*}
Thus one has 
\begin{align}\label{second term on right hand side}
	 &\twonorm{ \ve_m^\tran \calM_1\lb  \lb \calP^{(2)}_{\mU_2} - \calP^{(2)}_{\mX_{2}^{t, \ell}}  \rb \lb \calP^{(j\neq 2)}_{\calG^{t,\ell},\lcb \mX_{j}^{t,\ell}\rcb_{j\neq 2}}\rb  \lb \calX^{t,\ell}- \calT \rb \rb }\notag\\
	 &\quad \leq  \twonorm{ \ve_m^\tran \calM_1\lb \lb \calP^{(j\neq 2)}_{\calG^{t,\ell},\lcb \mX_{j}^{t,\ell}\rcb_{j\neq 2}}\rb  \lb \calX^{t,\ell}- \calT \rb \rb }\cdot \opnorm{ {\mU_2} \mU_2^\tran -  {\mX_2^{t,\ell}}{\mX_2^{t,\ell}}^\tran  } \notag\\
	&\quad = \twonorm{ \ve_m^\tran \calM_1\left( \calG^{t,\ell} \times_1\mX_1^{t,\ell}\times_2 \mW_2^{t,\ell} \times_3\mX_3^{t,\ell} \right)}\cdot \opnorm{{\mU_2} \mU_2^\tran -  {\mX_2^{t,\ell}}{\mX_2^{t,\ell}}^\tran   } \notag\\
	&\quad \leq  \twoinf{\mX_1^{t,\ell}}\cdot \sigma_{1}\left(\calM_1\left(\calX^{t,\ell}\right)\right)\cdot  \opnorm{\mW_2^{t,\ell}}\cdot \opnorm{ {\mU_2} \mU_2^\tran -  {\mX_2^{t,\ell}}{\mX_2^{t,\ell}}^\tran  }\notag\\
 &\quad\leq \twoinf{\mX_1^{t,\ell}}\cdot \kappa\lb\calX^{t,\ell}\rb\cdot  \fronorm{\calX^{t,\ell} - \calT}\cdot \opnorm{ {\mU_2} \mU_2^\tran -  {\mX_2^{t,\ell}}{\mX_2^{t,\ell}}^\tran   }
\end{align}
where the last line follows from
\begin{align*}
	\opnorm{\mW_2^{t,\ell}} &\leq \opnorm{\calM_2\lb\calX^{t,\ell} - \calT\rb}\cdot \opnorm{ \calM_2^\dagger\left(\calG^{t,\ell}\right)} \leq \fronorm{\calX^{t,\ell} - \calT} \cdot \frac{1}{\sigma_{r}\lb\calM_2\lb\calX^{t,\ell}\rb\rb  }.
\end{align*}
Consequently,
\begin{align*}
	\beta_{3,a}^2 	&\leq \twoinf{\mX_1^{t,\ell}} \cdot \opnorm{{\mU_2} \mU_2^\tran -  {\mX_2^{t,\ell}}{\mX_2^{t,\ell}}^\tran    }\cdot \fronorm{\calX^{t,\ell} - \calT} \cdot \lb 1+ \kappa\lb \calX^{t,\ell}\rb\rb \\
	&\leq \twoinf{\mX_1^{t,\ell}} \cdot \opnorm{ {\mU_2} \mU_2^\tran -  {\mX_2^{t,\ell}}{\mX_2^{t,\ell}}^\tran   }\cdot \fronorm{\calX^{t,\ell} - \calT} \cdot (1+ 2\kappa ).
\end{align*}
Using the same argument, one can obtain
	\begin{align*}
		\beta_{3,a}^3	&\leq \twoinf{\mX_1^{t,\ell}} \cdot \opnorm{ {\mU_3} \mU_3^\tran -  {\mX_3^{t,\ell}}{\mX_3^{t,\ell}}^\tran     }\cdot \fronorm{\calX^{t,\ell} - \calT}  \cdot (1+ 2\kappa ).
	\end{align*}
\paragraph{Bounding $\beta_{3,a}^j$ for $j = 4,5,6,7$.} A straightforward computation yields that
	\begin{align*}
		\beta_{3,a}^4 & \leq \twonorm{\ve_m^\tran \lb  {\mU_1}\mU_1^\tran -  {\mX_1^{t,\ell}}{\mX_1^{t,\ell}}^\tran  \rb}\cdot \fronorm{\calX^{t,\ell} - \calT}, \\
		\beta_{3,a}^5 & \leq  \twoinf{\mX_1^{t,\ell}} \cdot \opnorm{ {\mU_3}\mU_3^\tran -  {\mX_3^{t,\ell}}{\mX_3^{t,\ell}}^\tran } \cdot \fronorm{\calX^{t,\ell} - \calT}, \\
		\beta_{3,a}^6 & \leq  \twoinf{\mX_1^{t,\ell}} \cdot \opnorm{ {\mU_2}\mU_2^\tran -  {\mX_2^{t,\ell}}{\mX_2^{t,\ell}}^\tran } \cdot \fronorm{\calX^{t,\ell} - \calT}, \\
		\beta_{3,a}^7 & \leq \twonorm{\ve_m^\tran \lb {\mU_1}\mU_1^\tran -  {\mX_1^{t,\ell}}{\mX_1^{t,\ell}}^\tran \rb}\cdot \fronorm{\calX^{t,\ell} - \calT}.
	\end{align*}
\\

Combining all the bounds together shows that
\begin{small}
\begin{align*}
	\beta_{3,a} &\leq\lb 3\twonorm{\ve_m^\tran \lb {\mU_1}\mU_1^\tran -  {\mX_1^{t,\ell}}{\mX_1^{t,\ell}}^\tran \rb} + 8\kappa\twoinf{\mX_1^{t,\ell}}   \max_{i = 2,3}\opnorm{ {\mU_i}\mU_i^\tran -  {\mX_i^{t,\ell}}{\mX_i^{t,\ell}}^\tran}  \rb\fronorm{\calX^{t,\ell} - \calT} \\
	& \stackrel{(a)}{\leq } \left(   \frac{3}{2^{16} \kappa^2  \mu^2 r^4 }\frac{1}{2^t}\sqrt{\frac{\mu r}{n}} +   \frac{16\kappa}{2^{16} \kappa^4 \mu^2 r^4 }\frac{1}{2^t}\sqrt{\frac{\mu r}{n}}  \right) \cdot n^{3/2}\cdot \frac{36}{2^{20} \kappa^2 \mu^2 r^{4}}\cdot \frac{1}{2^t} \cdot \left( \frac{\mu r}{n} \right)^{3/2}\cdot \sigma_{\max}(\calT) \\
	 &\leq \frac{19\cdot 36}{2^{16}}\cdot  \frac{1}{2^{20} \kappa^4 \mu^2 r^4 }\frac{1}{2^{t+1}} \sqrt{\frac{\mu r}{n}}\sigma_{\max}(\calT)\leq \frac{1}{2^{6}}\cdot  \frac{1}{2^{20} \kappa^4 \mu^2 r^4 }\frac{1}{2^{t+1}} \sqrt{\frac{\mu r}{n}}\sigma_{\max}(\calT),
\end{align*}
\end{small}
where  $(a)$ follows from Lemma~\ref{lemma: spectral norm of Xt}.

\subsection{Proof of Claim~\ref{claim: upper bound of beta3b}}
\label{proof: claim upper bound of beta3b}
Let $\calZ^{t,\ell} := \lb \calI -p^{-1}  \calP_{\Omega_{-\ell}} -\calP_{\ell}  \rb\lb\calX^{t,\ell} - \calT\rb $. By the definition of $\calP_{T_{\calX^{t,\ell}}}$ in \eqref{eq: def of projection onto tangent space}, the term $\beta_{3,b}$ can be bounded as follows:
\begin{align*}
	\beta_{3,b} &= \twonorm{\ve_m^\tran \calM_1 \lb \calZ^{t,\ell }\ttimes {\mX_i^{t,\ell}}{\mX_i^{t,\ell}}^\tran + \sum_{i=1}^3 \calG^{t,\ell}\times_i \mW_i^{t,\ell}\jneqi \mX_j^{t,\ell} \rb}\\
	&\leq \sum_{i=1}^3 \underbrace{ \twonorm{\ve_m^\tran \calM_1 \lb \calG^{t,\ell}\times_i \mW_i^{t,\ell}\jneqi \mX_j^{t,\ell}\rb}}_{=:\beta_{3,b}^i} + \underbrace{ \twonorm{\ve_m^\tran \calM_1 \lb \calZ^{t,\ell }\ttimes {\mX_i^{t,\ell}}{\mX_i^{t,\ell}}^\tran  \rb}}_{=:\beta_{3,b}^{4} },
\end{align*}
where $\mW_i^{t,\ell}$ is given by
	$\mW_i^{t,\ell} = \left( \mI - \mX_i^{t,\ell}{\mX_i^{t,\ell}}^\tran\right) \calM_i\left(\calZ^{t,\ell} \jneqi { \mX_j^{t,\ell}}^\tran \right) \calM_i^\dagger \lb\calG^{t,\ell}\rb.$

\paragraph{Bounding $\beta_{3,b}^1$.} 
It can be bounded as follows:
\begin{small}
\begin{align*}
		\beta_{3,b}^1 &=  \twonorm{\ve_m^\tran \calM_1 \lb \calG^{t,\ell}\times_1 \mW_1^{t,\ell}\times_2 \mX_2^{t,\ell} \times_3 \mX_3^{t,\ell}\rb}= \twonorm{\ve_m^\tran\mW_1^{t,\ell}  \calM_1 \left( \calG^{t,\ell} \right)\left( \mX_3^{t,\ell} \otimes  \mX_2^{t,\ell}  \right)^\tran }\\
		&= \twonorm{\ve_m^\tran\left( \mI- \mX_1^{t,\ell} {\mX_1^{t,\ell}}^\tran  \right) \calM_1\lb\calZ^{t,\ell}\rb  \left( \mX_3^{t,\ell} \otimes  \mX_2^{t,\ell}  \right)  \calM_1^\dagger \lb\calG^{t,\ell}\rb \calM_1 \left( \calG^{t,\ell} \right)\left( \mX_3^{t,\ell} \otimes  \mX_2^{t,\ell}  \right)^\tran}\\
		&\leq \twonorm{\ve_m^\tran \calM_1\lb\calZ^{t,\ell}\rb \left( \mX_3^{t,\ell} \otimes  \mX_2^{t,\ell}  \right)  \calM_1^\dagger \lb\calG^{t,\ell}\rb \calM_1 \left( \calG^{t,\ell} \right)\left( \mX_3^{t,\ell} \otimes  \mX_2^{t,\ell}  \right)^\tran} \\
		&\quad +\twonorm{\ve_m^\tran  \mX_1^{t,\ell} {\mX_1^{t,\ell}}^\tran  \calM_1\lb\calZ^{t,\ell}\rb  \left( \mX_3^{t,\ell} \otimes  \mX_2^{t,\ell}  \right)  \calM_1^\dagger \lb\calG^{t,\ell}\rb \calM_1 \left( \calG^{t,\ell} \right)\left( \mX_3^{t,\ell} \otimes  \mX_2^{t,\ell}  \right)^\tran}\\
		&\stackrel{(a)}{\leq} \twonorm{\ve_m^\tran \calM_1\lb \calZ^{t,\ell}\rb \left(\mX_3^{t,\ell} \otimes \mX_2^{t,\ell}\right)}  + \twonorm{\ve_m^\tran  \mX_1^{t,\ell}}\cdot \opnorm{\calM_1\lb\calZ^{t,\ell}\rb  \left( \mX_3^{t,\ell} \otimes  \mX_2^{t,\ell}  \right) }\\
		&=\twonorm{\ve_m^\tran \calM_1\lb \calZ^{t,\ell}\rb \left(\mX_3^{t,\ell} \otimes \mX_2^{t,\ell}\right)}  + \twonorm{\ve_m^\tran  \mX_1^{t,\ell}}\cdot \opnorm{\calM_1 \left(\calZ^{t,\ell}\times_2 {\mX_2^{t,\ell}}^\tran \times_3{\mX_3^{t,\ell}}^\tran \right) }\\
		&\stackrel{(b)}{\leq}  \twonorm{\ve_m^\tran \calM_1\lb \calZ^{t,\ell}\rb \left(\mX_3^{t,\ell} \otimes \mX_2^{t,\ell}\right)}  + \sqrt{r} \twoinf{  \mX_1^{t,\ell}} \opnorm{\calZ^{t,\ell}\times_2 {\mX_2^{t,\ell}}^\tran \times_3{\mX_3^{t,\ell}}^\tran}\\
		&\leq \twonorm{\ve_m^\tran \calM_1\lb \calZ^{t,\ell}\rb \left(\mX_3^{t,\ell} \otimes \mX_2^{t,\ell}\right)}  + \sqrt{r} \twoinf{  \mX_1^{t,\ell}} \opnorm{\calZ^{t,\ell}},
	\end{align*}
 \end{small}
where step (a) is due to the fact $\opnorm{\calM_1^\dagger \left(\calG^{t,\ell}\right) \calM_1 \left( \calG^{t,\ell}\right)}\leq 1$ and step (b) has used Lemma~\ref{lemma: Lemma 6 in XYZ}.
\paragraph{Bounding $\beta_{3,b}^2$ and $\beta_{3,b}^3$.}
A direct calculation yields that
	\begin{align*}
		\beta_{3,b}^2 &=\twonorm{\ve_m^\tran \calM_1 \lb \calG^{t,\ell}\times_1 \mX_1^{t,\ell} \times_2 \mW_2^{t,\ell} \times_3 \mX_3^{t,\ell} \rb } \\
		&= \twonorm{\ve_m^\tran \mX_1^{t,\ell}   \calM_1 \lb \calG^{t,\ell} \rb \left(  \mX_3^{t,\ell } \otimes \mW_2^{t,\ell} \right)^\tran  } \leq \twoinf{\mX_1^{t,\ell}} \cdot \opnorm{  \calM_1 \lb \calG^{t,\ell} \rb }\cdot \opnorm{\mW_2^{t,\ell}}\\
		&\leq \twoinf{\mX_1^{t,\ell}} \cdot \opnorm{  \calM_1 \lb \calG^{t,\ell} \rb }\cdot  \opnorm{\calM_2\left(\calZ^{t,\ell} \underset{j\neq 2}{ \times } {\mX_j^{t,\ell}}^\tran \right) } \cdot \opnorm{\calM_2^\dagger\left(\calG^{t,\ell}\right)}\\
		&\leq  \twoinf{\mX_1^{t,\ell}} \cdot \sqrt{r} \opnorm{ \calZ^{t,\ell}} \cdot \kappa\left(\calX^{t,\ell}\right)\leq 2\kappa\sqrt{r} \twoinf{\mX_1^{t,\ell}}\cdot \opnorm{ \calZ^{t,\ell}},
	\end{align*}
	where the last line is due to $\kappa\lb\calX^{t,\ell}\rb\leq 2\kappa$, see Lemma~\ref{lemma: condition number of X}.
	Using the same argument, one can obtain
	\begin{align*}
	\beta_{3,b}^3 &=  \twonorm{\ve_m^\tran \calM_1 \lb \calG^{t,\ell} \times_1 \mX_1^{t,\ell} \times_2 \mX_2^{t,\ell}  \times_3 \mW_3^{t,\ell} \rb} \leq 2\kappa\sqrt{r} \twoinf{\mX_1^{t,\ell}}\cdot \opnorm{ \calZ^{t,\ell}}.
\end{align*}
\paragraph{Bounding $\beta_{3,b}^4$.} It can be bounded as follows:
\begin{align*}
	\beta_{3,b}^4 &=  \twonorm{\ve_m^\tran \calM_1 \lb \calZ^{t,\ell }\ttimes {\mX_i^{t,\ell}}{\mX_i^{t,\ell}}^\tran  \rb}=\twonorm{\ve_m^\tran \calM_1 \lb \calZ^{t,\ell }\ttimes {\mX_i^{t,\ell}}^\tran \ttimes \mX_i^{t,\ell} \rb}\\ 
	&= \twonorm{\ve_m^\tran \mX_1^{t,\ell}\calM_1 \lb \calZ^{t,\ell }\ttimes {\mX_i^{t,\ell}}^\tran\rb \left( \mX_3^{t,\ell} \otimes \mX_2^{t,\ell} \right) ^\tran}\leq \twoinf{ \mX_1^{t,\ell}}  \opnorm{ \calM_1 \lb \calZ^{t,\ell }\ttimes {\mX_i^{t,\ell}}^\tran\rb } \\
	&\stackrel{(a)}{\leq} \sqrt{r} \twoinf{\mX_1^{t,\ell}}\cdot \opnorm{ \calZ^{t,\ell }\ttimes {\mX_i^{t,\ell}}^\tran}\leq \sqrt{r} \twoinf{\mX_1^{t,\ell}}\cdot \opnorm{ \calZ^{t,\ell}},
\end{align*}
where step (a) is due to Lemma~\ref{lemma: Lemma 6 in XYZ}.
\\

Combining these four terms together reveals that
\begin{align}
	\label{eq beta3b 0}
	\beta_{3,b} &\leq   \underbrace{ \twonorm{\ve_m^\tran \calM_1\lb \calZ^{t,\ell}\rb(\mX_3^{t,\ell} \otimes \mX_2^{t,\ell})} }_{=:\gamma} +6\kappa  \twoinf{  \mX_1^{t,\ell}}\cdot \sqrt{r}\opnorm{\calZ^{t,\ell}}.
\end{align}
For the second term on the right hand side, we apply Lemma~\ref{lemma: uniform tensor operator norm} to show that
\begin{align}
	\label{eq 2th term in beta3b}
	6\kappa  \twoinf{  \mX_1^{t,\ell}}\cdot \sqrt{r} \opnorm{\calZ^{t,\ell}} &=6\kappa  \twoinf{  \mX_1^{t,\ell}}\cdot \sqrt{r}  \opnorm{   \lb \calI -p^{-1}  \calP_{\Omega_{-\ell}} -\calP_{\ell}  \rb(\calX^{t,\ell} - \calT)  } \notag \\
	&\leq  12\kappa  \sqrt{\frac{\mu r}{n}} \sqrt{r} \opnorm{   \lb \calI -p^{-1}  \calP_{\Omega }    \rb(\calX^{t,\ell} - \calT)  } \notag \\
	&\leq 12\kappa  \sqrt{\frac{\mu r}{n}}  \sqrt{r} \opnorm{   \lb \calI -p^{-1}  \calP_{\Omega }    \rb(\calJ)  } \cdot 2r \infnorm{\calX^{t,\ell} - \calT}\notag \\
	&\leq \frac{1}{2^{9} }\frac{1}{2^{20}\kappa^4\mu^2 r^4} \frac{1}{2^{t+1}} \sqrt{\frac{\mu r}{n}} \sigma_{\max}(\calT),
\end{align}
where the second line is due to $\twoinf{\mX_1^{t,\ell}}\leq \twoinf{\mX_1^{t,\ell}\mR_1^{t,\ell} - \mU_1} + \twoinf{\mU_1}\leq 2\sqrt{\frac{\mu r}{n}}$ and the last step has used the assumption
	$p \geq \max\left\{ \frac{ C_1 \kappa^3\mu^{1.5}r^{3}\log^3n }{n^{3/2}}, \frac{C_2 \kappa^6\mu^3 r^6\log^5n }{n^2} \right\}$.

Next, we turn to control the term $\gamma$. 
Recall that $\calZ^{t,\ell} = \lb \calI - p^{-1}\calP_{\Omega_{-\ell}} - \calP_{\ell}\rb(\calX^{t,\ell} - \calT)$. Letting 
\begin{align*}
	\calZ^{t,m} :&= \lb \calI - p^{-1}\calP_{\Omega_{-\ell}} - \calP_{\ell}\rb(\calX^{t,m} - \calT),\\
	\calZ^{t,m,\ell}:&= \lb \calI - p^{-1}\calP_{\Omega_{-\ell}} - \calP_{\ell}\rb(\calX^{t,\ell} -\calX^{t,m}),
\end{align*}
we have $\calZ^{t,\ell} = \calZ^{t,m} + \calZ^{t,m,\ell}$. If $m = \ell$, it is not hard to see that $\gamma = 0$. Thus,  without loss of generality, we assume  $m\neq \ell$. Denoting by $\mO_i = \mR_i^{t,m} {\mR_i^{t,\ell}}^\tran$ the special orthogonal matrix,  the triangle inequality gives that
\begin{small}
\begin{align*}
	\gamma &= \twonorm{\ve_m^\tran \calM_1\lb \calZ^{t,\ell}\rb\lb\mX_3^{t,\ell} \otimes \mX_2^{t,\ell}\rb} \\
	&\leq \twonorm{\ve_m^\tran \calM_1\lb \calZ^{t,\ell}\rb\lb \mX_3^{t,\ell} \otimes \mX_2^{t,\ell}  - \mX_3^{t,m}\mO_3  \otimes \mX_2^{t,m}\mO_2\rb} +  \twonorm{\ve_m^\tran \calM_1\lb \calZ^{t,\ell}\rb\lb \mX_3^{t,m}\mO_3  \otimes \mX_2^{t,m}\mO_2\rb}\\
	&\leq  \twonorm{\ve_m^\tran \calM_1\lb \calZ^{t,\ell}\rb\lb \mX_3^{t,\ell} \otimes \mX_2^{t,\ell}  - \mX_3^{t,m}\mO_3  \otimes \mX_2^{t,m}\mO_2\rb} +  \twonorm{\ve_m^\tran \calM_1\lb \calZ^{t,m}\rb\lb \mX_3^{t,m}\mO_3  \otimes \mX_2^{t,m}\mO_2\rb}\\
	&\quad +  \twonorm{\ve_m^\tran \calM_1\lb \calZ^{t,m, \ell}\rb\lb \mX_3^{t,m}\mO_3  \otimes \mX_2^{t,m}\mO_2\rb}\\
	& =\twonorm{\ve_m^\tran \calM_1\lb \calZ^{t,\ell}\rb\lb \mX_3^{t,\ell} \otimes \mX_2^{t,\ell}  - \mX_3^{t,m}\mO_3  \otimes \mX_2^{t,m}\mO_2\rb}  +  \twonorm{\ve_m^\tran \calM_1\lb \calZ^{t,m}\rb\lb \mX_3^{t,m} \otimes \mX_2^{t,m} \rb} \\
 &\quad+  \twonorm{\ve_m^\tran \calM_1\lb \calZ^{t,m, \ell}\rb\lb \mX_3^{t,m} \otimes \mX_2^{t,m} \rb}\\
	:&=\gamma_1 + \gamma_2 + \gamma_3.
\end{align*}
\end{small}
For the sake of clarity, we first give the upper bounds for the above three terms whose proofs can be found in Sections~\ref{proof of gamma 1}, \ref{proof of gamma 2} and \ref{proof of gamma 3}:
\begin{align}
\label{eq upper bound of gamma1}
\gamma_1 &\leq  \frac{1}{2^{11} }\frac{1}{2^{20}\kappa^4\mu^2 r^4} \frac{1}{2^{t+1}} \sqrt{\frac{\mu r}{n}} \sigma_{\max}(\calT),\\
\label{eq upper bound of gamma2}
\gamma_2 &\leq  \frac{1}{2^{11} }\frac{1}{2^{20}\kappa^4\mu^2 r^4} \frac{1}{2^{t+1}} \sqrt{\frac{\mu r}{n}} \sigma_{\max}(\calT),\\
\label{eq upper bound of gamma3}
\gamma_3&\leq  \frac{1}{2^{11} }\frac{1}{2^{20}\kappa^4\mu^2 r^4} \frac{1}{2^{t+1}} \sqrt{\frac{\mu r}{n}} \sigma_{\max}(\calT).
\end{align}
Thus we have
\begin{align}
	\label{eq 1th term in beta3b}
	\gamma &\leq \gamma_1 + \gamma_2 + \gamma_3 \leq  \frac{3}{2^{11} }\frac{1}{2^{20}\kappa^4\mu^2 r^4} \frac{1}{2^{t+1}} \sqrt{\frac{\mu r}{n}} \sigma_{\max}(\calT).
\end{align}\\

Plugging  \eqref{eq 1th term in beta3b} and \eqref{eq 2th term in beta3b} into \eqref{eq beta3b 0} yields that
\begin{align*}
	\beta_{3,b} &\leq \frac{1}{2^{6} }\frac{1}{2^{20}\kappa^4\mu^2 r^4} \frac{1}{2^{t+1}} \sqrt{\frac{\mu r}{n}} \sigma_{\max}(\calT).
\end{align*}

\subsubsection{Proof of \texorpdfstring{\eqref{eq upper bound of gamma1}}{TEXT}}
\label{proof of gamma 1}
A simple calculation gives that
\begin{align*}
	\gamma_1 &=\twonorm{\ve_m^\tran \calM_1\lb \calZ^{t,\ell}\rb\lb \mX_3^{t,\ell} \otimes \mX_2^{t,\ell}  - \mX_3^{t,m}\mR_3^{t,m}{\mR_3^{t,\ell}}^\tran  \otimes \mX_2^{t,m}\mR_2^{t,m}{\mR_2^{t,\ell}}^\tran \rb} \\
	&=\twonorm{\ve_m^\tran \calM_1\lb \calZ^{t,\ell}\rb\lb \mX_3^{t,\ell} \otimes \mX_2^{t,\ell}  - \mX_3^{t,m}\mR_3^{t,m}{\mR_3^{t,\ell}}^\tran  \otimes \mX_2^{t,m}\mR_2^{t,m}{\mR_2^{t,\ell}}^\tran \rb \left( \mR_3^{t,\ell} \otimes \mR_2^{t,\ell} \right)} \\
	&=\twonorm{\ve_m^\tran \calM_1\lb \calZ^{t,\ell}\rb\lb \mX_3^{t,\ell} \mR_3^{t,\ell} \otimes \mX_2^{t,\ell} \mR_2^{t,\ell} - \mX_3^{t,m}\mR_3^{t,m}  \otimes \mX_2^{t,m}\mR_2^{t,m}\rb} \\ 
	&\leq \underbrace{ \twonorm{\ve_m^\tran \calM_1\lb \calZ^{t,\ell}\rb\lb \mX_3^{t,\ell} \mR_3^{t,\ell} \otimes \mX_2^{t,\ell} \mR_2^{t,\ell} - \mU_3\otimes \mU_2\rb}  }_{=:\gamma_{1,a}}\\
	&\quad + \underbrace{\twonorm{\ve_m^\tran \calM_1\lb \calZ^{t,\ell}\rb\lb \mX_3^{t,m} \mR_3^{t,m} \otimes \mX_2^{t,m} \mR_2^{t,m} - \mU_3\otimes \mU_2\rb} }_{=:\gamma_{1,b}}.
\end{align*}
For the first term  $\gamma_{1,a}$, it can be bounded as follows:
\begin{small}
	\begin{align}
	\label{eq: sum of delta}
		\gamma_{1,a}&=\sqrt{\sum_{s=1}^{r^2} \lb \sum_{j=1}^{n^2} \lsb \calM_1\lb \calZ^{t,\ell}\rb \rsb_{m,j} \lsb\mX_3^{t,\ell} \mR_3^{t,\ell} \otimes \mX_2^{t,\ell} \mR_2^{t,\ell} - \mU_3\otimes \mU_2 \rsb_{j,s} \rb^2} \notag \\
		&=\sqrt{\sum_{s=1}^{r^2} \lb \sum_{j\notin\Gamma} \lb p^{-1}\delta_{m,j}-1 \rb \lsb \calM_1\lb \calX^{t,\ell} - \calT\rb \rsb_{m,j} \lsb\mX_3^{t,\ell} \mR_3^{t,\ell} \otimes \mX_2^{t,\ell} \mR_2^{t,\ell} - \mU_3\otimes \mU_2 \rsb_{j,s} \rb^2} \notag \\
		&\leq r \max_{s\in [r^2]}  \left| \sum_{j\notin \Gamma} \lb p^{-1}\delta_{m,j}-1 \rb \lsb \calM_1\lb \calX^{t,\ell} - \calT\rb \rsb_{m,j} \lsb\mX_3^{t,\ell} \mR_3^{t,\ell} \otimes \mX_2^{t,\ell} \mR_2^{t,\ell} - \mU_3\otimes \mU_2 \rsb_{j,s} \right|  \notag \\
		&\leq r \max_{s\in [r^2]}  \sum_{j\notin \Gamma} \left|  p^{-1}\delta_{m,j}-1   \right| \cdot  \left|\lsb \calM_1\lb \calX^{t,\ell} - \calT\rb \rsb_{m,j} \right| \cdot \left| \lsb\mX_3^{t,\ell} \mR_3^{t,\ell} \otimes \mX_2^{t,\ell} \mR_2^{t,\ell} - \mU_3\otimes \mU_2 \rsb_{j,s} \right| \notag  \\
		&\leq r \sum_{ j\notin \Gamma}  \lb p^{-1}\delta_{m,j}+1 \rb \cdot \infnorm{\calX^{t,\ell} - \calT} \cdot  \infnorm{\mX_3^{t,\ell} \mR_3^{t,\ell} \otimes \mX_2^{t,\ell} \mR_2^{t,\ell} - \mU_3\otimes \mU_2} \notag \\
		&\stackrel{(a)}{\leq} r\cdot 3n^2\cdot \infnorm{\calX^{t,\ell} - \calT} \cdot \twoinf{ \mX_3^{t,\ell} \mR_3^{t,\ell} \otimes \mX_2^{t,\ell} \mR_2^{t,\ell} - \mU_3\otimes \mU_2 }\\
		&\leq r\cdot 3n^2\cdot \infnorm{\calX^{t,\ell} - \calT}  \lb\twoinf{ \mX_3^{t,\ell} \mR_3^{t,\ell} - \mU_3}  \twoinf{\mX_2^{t,\ell}} +\twoinf{ \mX_2^{t,\ell} \mR_2^{t,\ell} - \mU_2}  \twoinf{\mU_3}\rb \notag \notag \\
		&\stackrel{(b)}{\leq} 3n^2r \cdot \frac{36}{2^{20} \kappa^2 \mu^2 r^{4}} \frac{1}{2^t}  \left( \frac{\mu r}{n} \right)^{3/2}\sigma_{\max}(\calT)\cdot  \frac{1}{2^{20}\kappa^2\mu^2 r^4} \frac{1}{2^t} \sqrt{\frac{\mu r}{n}} \cdot \frac{7}{3}\sqrt{\frac{\mu r}{n}} \notag \\
		&\leq \frac{1}{2^{12} }\frac{1}{2^{20}\kappa^4\mu^2 r^4} \frac{1}{2^{t+1}} \sqrt{\frac{\mu r}{n}} \sigma_{\max}(\calT). \notag
	\end{align}
 \end{small}
	Here in $(a)$, we use the fact that whenever $p\geq \frac{C_1\log n}{n^{3/2}},$ with high probability, all rows have at most $2n^2p$ observed entries \citep[Theorem 1]{arratia1989tutorial}. Step (b) has used the fact that
	\begin{align}\label{eq: xitl}
	&\twoinf{\mX_i^{t,\ell} \mR_i^{t,\ell} - \mU_i} \leq \frac{1}{2^{20}\kappa^2\mu^2 r^4} \frac{1}{2^t} \sqrt{\frac{\mu r}{n}},\notag\\
	   &\twoinf{\mX_i^{t,\ell} }\leq \twoinf{\mX_i^{t,\ell} \mR_i^{t,\ell} - \mU_i} + \twoinf{\mU_i}\leq\frac{9}{8}\sqrt{\frac{\mu r}{n}}\leq \frac{4}{3}\sqrt{\frac{\mu r}{n}}.
	\end{align}
	Similarly, one can obtain
	\begin{align*}
		\gamma_{1,b} \leq \frac{1}{2^{12} }\frac{1}{2^{20}\kappa^4\mu^2 r^4} \frac{1}{2^{t+1}} \sqrt{\frac{\mu r}{n}} \sigma_{\max}(\calT).
	\end{align*}
Combining the above bounds together gives that
\begin{align}
	\label{eq gamm1}
	\gamma_1 &\leq \frac{1}{2^{11} }\frac{1}{2^{20}\kappa^4\mu^2 r^4} \frac{1}{2^{t+1}} \sqrt{\frac{\mu r}{n}} \sigma_{\max}(\calT).
\end{align}

\subsubsection{Proof of \texorpdfstring{\eqref{eq upper bound of gamma2}}{TEXT}}
\label{proof of gamma 2}
Since $\calX^{t,m}$ and $\mX_i^{t,m}$ are independent of $\{\delta_{m,j}\}$ by construction, for any fixed $s\in [r^2]$,
\begin{align*}
    &\sum_{j=1}^{n^2}  \lsb \calM_1(\calZ^{t,m})\rsb_{m,j} \lsb \mX_3^{t,m}  \otimes \mX_{2}^{t,m}  \rsb_{j,s} \\
    &\quad= \sum_{j\notin \Gamma_m}  \lb p^{-1}\delta_{m,j} - 1\rb \lsb \calM_1(\calX^{t,m} - \calT)\rsb_{m,j} \lsb \mX_3^{t,m}  \otimes \mX_{2}^{t,m}  \rsb_{j,s}:= \sum_{j\notin \Gamma_m} X_{j},
\end{align*}
where $\Gamma_m$ is an index set defined by
\begin{small}
\begin{align*}
    \Gamma_m = \left\{ m, n+m, \cdots, n(m-2)+m, n(m-1)+1,\cdots, n(m-1)+n, nm+m,\cdots, n(n-1)+m\right\}.
\end{align*}
\end{small}
 Thus,
\begin{align*}
	 \gamma_2 &= \sqrt{\sum_{s=1}^{r^2} \lb \sum_{j=1}^{n^2}  \lsb \calM_1(\calZ^{t,m})\rsb_{m,j} \lsb \mX_3^{t,m}   \otimes \mX_{2}^{t,m}  \rsb_{j,s} \rb^2 }\\
	&\leq r\max_{1\leq s\leq r^2} \left| \sum_{j=1}^{n^2}  \lsb \calM_1(\calZ^{t,m})\rsb_{m,j} \lsb \mX_3^{t,m}   \otimes \mX_{2}^{t,m}  \rsb_{j,s}  \right| = r \max_{1\leq s\leq r^2 } \left| \sum_{j\notin \Gamma_m}  X_j\right|.
\end{align*}
A direct calculation gives that
\begin{align*}
	|X_j| &\leq p^{-1}\cdot \infnorm{\calX^{t,m} - \calT}\cdot \lb \max_{i = 1,2,3}\twoinf{\mX_i^{t,m}} \rb^2, \\
	\left| \sum_{ j\notin \Gamma_m} \E{X_j^2} \right| &\leq p^{-1}\sum_{ j\notin \Gamma_m}\left| \lsb \calM_1(\calX^{t,m} - \calT)\rsb_{m,j} \right|^2 \cdot \left|  \lsb \mX_3^{t,m}   \otimes \mX_{2}^{t,m} \rsb_{j,s}  \right|^2\\
	&\leq p^{-1} \lb \max_{i = 1,2,3}\twoinf{\mX_i^{t,m}} \rb^4 \cdot n^2 \infnorm{  \calX^{t,m} - \calT }^2.
\end{align*}
Applying the matrix Bernstein inequality and taking a union bound over $s$ shows that, with high probability,
\begin{align}
	\label{eq gamm2}
	\gamma_2&\leq C\cdot r\left( \frac{\log n}{p}  + \sqrt{\frac{n^2 \log n}{p}  } \right) \infnorm{\calX^{t,m} - \calT} \lb \max_{i = 1,2,3}\twoinf{\mX_i^{t,m}} \rb^2 \notag  \\
	&\leq C \cdot r\left( \frac{\log n}{p}  + \sqrt{\frac{n^2 \log n}{p}  } \right)\cdot  \frac{36}{2^{20} \kappa^2 \mu^2 r^{4}}  \frac{1}{2^t}  \left( \frac{\mu r}{n} \right)^{3/2} \sigma_{\max}(\calT)\cdot 4\frac{\mu r}{n} \notag \\
	&\leq \frac{1}{2^{11} }\frac{1}{2^{20}\kappa^4\mu^2 r^4} \frac{1}{2^{t+1}} \sqrt{\frac{\mu r}{n}} \sigma_{\max}(\calT),
\end{align}
provided that 
	$p\geq  \frac{C_2 \kappa^4\,\mu^4 r^6\log n}{n^2}.$

\subsubsection{Proof of \texorpdfstring{\eqref{eq upper bound of gamma3}}{TEXT}}
\label{proof of gamma 3}
Now we  bound $\gamma_3 = \twonorm{\ve_m^\tran \calM_1\lb \calZ^{t,m, \ell}\rb\lb \mX_3^{t,m} \otimes \mX_2^{t,m} \rb}$. To simplify notation, we denote by $\calH_{\Omega_{\ell}}$ the operator $\calI - p^{-1}\calP_{\Omega_{-\ell}} - \calP_{\ell}$ and define 
\begin{align*}
   \mC_i^{t,m,\ell} = {\mX_i^{t,\ell}}{\mX_i^{t,\ell}}^\tran-{\mX_i^{t,m}}{\mX_i^{t,m}}^\tran\quad \mbox{and}\quad
   \mD_i^{t,m,\ell} =  \mX_i^{t,m} \mT_i^{t,m}- \mX_i^{t,\ell} \mT_i^{t,\ell}.
\end{align*}
Then ${\mX_i^{t,m}}{\mX_i^{t,m}}^\tran-{\mX_i^{t,\ell}}{\mX_i^{t,\ell}}^\tran$ can be rewritten as 
\begin{align}
	\label{eq: split projection}
	{\mX_i^{t,m}}{\mX_i^{t,m}}^\tran-{\mX_i^{t,\ell}}{\mX_i^{t,\ell}}^\tran 
	&= \mD_i^{t,m,\ell}  \left( \mX_i^{t,m}  \mT_i^{t,m}\right)^\tran + \lb\mX_i^{t,\ell}  \mT_i^{t,\ell}  \rb \left( \mD_i^{t,m,\ell}  \right)^\tran.
\end{align}
It is not hard to see that
\begin{align}
\label{eq: upper bound of Dlm}
\fronorm{\mD_i^{t,m,\ell} } \leq  \fronorm{\mX_i^{t,m}  \mT_i^{t,m} - \mX_i^t \mR_i^t} + \fronorm{ \mX_i^{t,\ell}  \mT_i^{t,\ell} - \mX_i^t \mR_i^t }\leq 2\cdot \frac{1}{2^{20}\kappa^2\mu^2 r^4}\frac{1}{2^t}\sqrt{\frac{\mu r}{n}},
\end{align}
where the last inequality is due to \eqref{F norm}.
Recall that $\calZ^{t,m,\ell}= \calH_{\Omega_{\ell}}(\calX^{t,\ell} -\calX^{t,m})$. We have \cjc{
\begin{align*}
	\calX^{t,\ell}-\calX^{t,m} &= \lb\calT+\calE^{t-1,\ell}\rb\ttimes  {\mX_i^{t,\ell}}{\mX_i^{t,\ell}}^\tran - \lb\calT+\calE^{t-1,m}\rb\ttimes {\mX_i^{t,m}}{\mX_i^{t,m}}^\tran \notag \\
	&= \lb\calT+\calE^{t-1,\ell}\rb\ttimes {\mX_i^{t,\ell}}{\mX_i^{t,\ell}}^\tran  - \lb\calT+\calE^{t-1,\ell}\rb\ttimes {\mX_i^{t,m}}{\mX_i^{t,m}}^\tran  \\
 &\quad+  \lb  \calE^{t-1,\ell} - \calE^{t-1,m}\rb \ttimes {\mX_i^{t,m}}{\mX_i^{t,m}}^\tran \notag \\ 
 &=\lb\calT+\calE^{t-1,\ell}\rb\times_1\mC_1^{t,m,\ell}\times_2{\mX_2^{t,m}}{\mX_2^{t,m}}^\tran \times_3{\mX_3^{t,m}}{\mX_3^{t,m}}^\tran \notag \\
	&\quad + \lb\calT+\calE^{t-1,\ell}\rb\times_1{\mX_1^{t,m}}{\mX_1^{t,m}}^\tran \times_2\mC_2^{t,m,\ell}\times_3{\mX_3^{t,m}}{\mX_3^{t,m}}^\tran \notag \\
	&\quad +\lb\calT+\calE^{t-1,\ell}\rb\times_1{\mX_1^{t,m}}{\mX_1^{t,m}}^\tran \times_2{\mX_2^{t,m}}{\mX_2^{t,m}}^\tran \times_3\mC_3^{t,m,\ell} \notag\\
	&\quad +\lb\calT+\calE^{t-1,\ell}\rb\times_1\mC_1^{t,m,\ell}\times_2\mC_2^{t,m,\ell}\times_3{\mX_3^{t,m}}{\mX_3^{t,m}}^\tran \notag \\
	&\quad +\lb\calT+\calE^{t-1,\ell}\rb\times_1\mC_1^{t,m,\ell}\times_2{\mX_2^{t,m}}{\mX_2^{t,m}}^\tran \times_3\mC_3^{t,m,\ell} \notag \\
	&\quad +\lb\calT+\calE^{t-1,\ell}\rb\times_1{\mX_1^{t,m}}{\mX_1^{t,m}}^\tran \times_2\mC_2^{t,m,\ell}\times_3\mC_3^{t,m,\ell} +\lb\calT+\calE^{t-1,\ell}\rb\ttimes\mC_i^{t,m,\ell}\\
 &\quad + \lb\calE^{t-1,\ell}-\calE^{t-1}\rb\ttimes{\mX_i^{t,m}}{\mX_i^{t,m}}^\tran  + \lb\calE^{t-1}-\calE^{t-1,m}\rb\ttimes{\mX_i^{t,m}}{\mX_i^{t,m}}^\tran \\
	:&= \sum_{i=1}^{9}\calZ_i.
\end{align*}
}
It follows that
\begin{align*}
	\gamma_3&= \twonorm{\ve_m^\tran \calM_1\lb \calH_{\Omega_{\ell}}\lb\calX^{t,\ell} -\calX^{t,m}\rb \rb \lb \mX_3^{t,m}  \otimes \mX_2^{t,m} \rb}\\
	&\leq \sum_{i=1}^{9} \twonorm{\ve_m^\tran \calM_1\lb \calH_{\Omega_{\ell}}\lb\calZ_i\rb \rb \lb \mX_3^{t,m}  \otimes \mX_2^{t,m} \rb} :=\sum_{i=1}^{9} \gamma_{3,i}.
\end{align*}

\paragraph{Bounding  $\gamma_{3,1}$.} From \eqref{eq: split projection}, $\gamma_{3,1}$ can be decomposed as follows
\begin{small}
\begin{align*}
	\gamma_{3,1}
	&\leq  \twonorm{\ve_{m}^{\tran}\lb\calM_1\lb\calH_{\Omega_{\ell}}\lb\lb\calT+\calE^{t-1,\ell}\rb\times_1\lb \mX_1^{t,\ell} \mT_1^{t,\ell} \rb{\mD_1^{t,m,\ell} }^\tran\underset{i\neq 1}{ \times }{\mX_i^{t,m}}{\mX_i^{t,m}}^\tran  \rb\rb\rb\lb\mX_3^{t,m}\otimes\mX_2^{t,m}\rb}\\
	&\quad + \twonorm{\ve_{m}^{\tran}\lb\calM_1\lb\calH_{\Omega_{\ell}}\lb\lb\calT+\calE^{t-1,\ell}\rb\times_1\mD_1^{t,m,\ell}  \lb \mX_1^{t,m} \mT_1^{t,m}\rb^\tran \underset{i\neq 1}{ \times }{\mX_i^{t,m}}{\mX_i^{t,m}}^\tran \rb\rb\rb\lb\mX_3^{t,m}\otimes\mX_2^{t,m}\rb}\\
	& =: \gamma_{3,1}^a+\gamma_{3,1}^b.
\end{align*}
\end{small}
\begin{itemize}
    \item Bounding $\gamma_{3,1}^a$. For simplification, we define 
    \begin{align*}
        \calC_1 := \left( \calT + \calE^{t-1, \ell} \right) \times_1 \left( \mX_1^{t,\ell} \mT_1^{t,\ell}\right){\mD_1^{t,m,\ell}}^\tran \times_2 {\mX_2^{t,m}}^\tran \times_3{\mX_3^{t,m}}^\tran.
    \end{align*}
It can be seen that
\begin{align*}
\twoinf{\calM_1\left(\calC_1 \right)} &\leq \twoinf{\left( \mX_1^{t,\ell} \mT_1^{t,\ell}\right){\mD_1^{t,m,\ell}}^\tran}\cdot \opnorm{\calM_1\left(\calT + \calE^{t-1,\ell}\right)}\\
&\leq \twoinf{ \mX_1^{t,\ell}}\cdot \fronorm{{\mD_1^{t,m,\ell}}}\cdot \opnorm{\calM_1\left(\calT + \calE^{t-1,\ell}\right)}\\
&\leq 2\sqrt{\frac{\mu r}{n}}\cdot   \frac{2}{2^{20}\kappa^2\mu^2 r^4}\frac{1}{2^t}\sqrt{\frac{\mu r}{n}}\cdot 2\sigma_{\max}(\calT),
\end{align*}
where the last line is due to $\twoinf{\mX_i^{t,\ell}}\leq 2\sqrt{\frac{\mu r}{n}}$, $\opnorm{\calM_1\lb\calE^{t,\ell}\rb}\leq \sigma_{\max}(\calT)$, and \eqref{eq: upper bound of Dlm}.
Then the term $\gamma_{3,1}^a$ can be bounded as follows:
\begin{align}
\label{eq gamma31a}
\gamma_{3,1}^a &\leq \fronorm{ \calM_1\left( \calH_{\Omega_{\ell}} \left( \calC_1 \times_2 \mX_2^{t,m} \times_3 \mX_3^{t,m} \right) \right) \left( \mX_3^{t,m} \otimes \mX_2^{t,m}\right)} \notag \\
&=\fronorm{  \calH_{\Omega_{\ell}} \left( \calC_1 \times_2 \mX_2^{t,m} \times_3 \mX_3^{t,m} \right)  \times_2 {\mX_2^{t,m}}^\tran \times_3 {\mX_3^{t,m}}^\tran } \notag \\
&=\sup_{\calZ\in\R^{n\times r\times r}: \fronorm{\calZ}=1} \la\calH_{\Omega_{\ell}} \left( \calC_1 \times_2 \mX_2^{t,m} \times_3 \mX_3^{t,m} \right)  \times_2 {\mX_2^{t,m}}^\tran \times_3 {\mX_3^{t,m}}^\tran, \calZ \ra \notag \\
&=\sup_{\calZ\in\R^{n\times r\times r}: \fronorm{\calZ}=1} \la \calH_{\Omega_{\ell}}\left( \calC_1 \times_2 \mX_2^{t,m} \times_3 \mX_3^{t,m} \right) , \calZ \times_2 {\mX_2^{t,m}}  \times_3 {\mX_3^{t,m}} \ra \notag \\
&\stackrel{(a)}{\leq} C\left( p^{-1}
\log ^3 n + \sqrt{p^{-1} n \log^5n}\right)\cdot \twoinf{\calM_1\left(\calC_1\right)} \prod_{i = 2}^3\lb\fronorm{\mX_i^{t,m}} \cdot\twoinf{\mX_i^{t,m}}\rb  \notag \\
&\leq C\left( p^{-1}
\log ^3 n + \sqrt{p^{-1} n \log^5n}\right)\cdot \sqrt{\frac{\mu r}{n}} \frac{8}{2^{20}\kappa^2\mu^2 r^4}\frac{1}{2^t}\sqrt{\frac{\mu r}{n}}\sigma_{\max}(\calT)\cdot r\cdot 4\frac{\mu r}{n} \notag \\
&\leq  \frac{1}{2^{16}}\cdot \frac{1}{2^{20}\kappa^4 \mu^2 r^4} \frac{1}{2^{t+1}} \sqrt{\frac{\mu r}{n}}\sigma_{\max}(\calT),
\end{align}
where step (a) follows from Lemma~\ref{lemma: loo tong lemma 14} and the last step has used the assumption 
\begin{align*}
p\geq \max\left\lbrace \frac{C_1\kappa^2 \mu^{1.5}r^{2.5}\log^3n}{n^{3/2}}, \frac{C_2\kappa^4\mu^3r^5\log^5n}{n^2}\right\rbrace
\end{align*}

\item Bounding $\gamma_{3,1}^b$. Let $\mC_2, \mC_3$ be matrices defined by
\begin{align*}
    &\mC_2:= \mD_1^{t,m,\ell}  \lb \mX_1^{t,m} \mT_1^{t,m} \rb^\tran \lb\calM_1\lb\calT+\calE^{t-1,m}\rb\rb\lb\mX_3^{t,m}\otimes\mX_2^{t,m}\rb \in\R^{n\times r^2},\\
	&\mC_3:= \lb\mX_3^{t,m}\otimes\mX_2^{t,m}\rb^{\tran}\in\R^{r^2\times n^2}.
\end{align*}
Then we have
\begin{align*}
	\gamma_{3,1}^b &= \sqrt{\sum_{s= 1}^{r^2}\lb\sum_{j = 1,j\notin\Gamma}^{n^2}\sum_{k = 1}^{r^2}\lsb\mC_2\rsb_{m,k}\lsb\mC_3\rsb_{k,j}\lb 1-p^{-1}\delta_{m,j}\rb\lsb\mX_3^{t,m}\otimes\mX_2^{t,m}\rsb_{j,s}\rb^2} \notag \\
	&=\sqrt{\sum_{s= 1}^{r^2}\lb\sum_{k = 1}^{r^2}\lsb\mC_2\rsb_{m,k} \sum_{j = 1,j\notin\Gamma}^{n^2} \lsb\mC_3\rsb_{k,j}\lb 1-p^{-1}\delta_{m,j}\rb\lsb\mX_3^{t,m}\otimes\mX_2^{t,m}\rsb_{j,s}\rb^2} \notag \\
	&\leq \sqrt{\sum_{s= 1}^{r^2} \sum_{k = 1}^{r^2}\lsb\mC_2\rsb_{m,k}^2\cdot \sum_{k=1}^{r^2}  \lb\sum_{j = 1,j\notin\Gamma}^{n^2} \lsb\mC_3\rsb_{k,j}\lb 1-p^{-1}\delta_{m,j}\rb \lsb\mX_3^{t,m}\otimes\mX_2^{t,m}\rsb_{j,s}\rb^2} \notag \\
	&\leq \twoinf{\mC_2}\cdot r^2 \cdot  \max_{1\leq s,k\leq r^2} \lab \sum_{j = 1,j\notin\Gamma}^{n^2}\lsb \mC_3\rsb_{k,j}\lb 1-p^{-1}\delta_{m,j}\rb\lsb\mX_3^{t,m}\otimes\mX_2^{t,m}\rsb_{j,s}\rab \notag \\
	:&=\twoinf{\mC_2}\cdot r^2 \cdot  \max_{1\leq s,k\leq r^2} \lab \sum_{j = 1,j\notin\Gamma}^{n^2}X_{j}^{s,k}\rab. \notag
\end{align*}

Since  $\{\delta_{m,j}\}_{j\in[n^2]}$ and $\mX_i^{t,m}$ are independent by construction,   $X_j^{s,k}$ are independent mean zero random variables with 
\begin{align*}
	\lab X_j^{s,k}\rab &\leq p^{-1}\infnorm{\mC_3}  \infnorm{\mX_3^{t,m} \otimes \mX_2^{t,m}}
\leq 
p^{-1}\twoinf{\mX_3^{t,m}\otimes\mX_2^{t,m} }^2,\\
	\lab \sum_{ j = 1,j\notin \Gamma}^{n^2} \E{ {X_j^{s,k}}^2 } \rab &\leq p^{-1} \sum_{j = 1}^{n^2}\lsb \mC_3\rsb_{k,j}^2 \twoinf{\mX_3^{t,m}\otimes\mX_2^{t,m} }^2 \leq p^{-1}  \twoinf{\mX_3^{t,m}\otimes\mX_2^{t,m} }^2.
\end{align*}
We apply the Bernstein inequality to obtain that,  with high probability,
\begin{align*}
	\lab \sum_{j = 1,j\notin \Gamma}^{n^2}X_{j}^{s,k}\rab &\leq C\left( \frac{\log n}{p}  \twoinf{\mX_3^{t,m}\otimes\mX_2^{t,m} }^2+ \sqrt{\frac{\log n}{p}} \twoinf{\mX_3^{t,m}\otimes\mX_2^{t,m} }\right)\\
 &\leq C\left( \frac{\log n}{p} \cdot 2^4\left( \frac{\mu r}{n}\right)^2+ \sqrt{\frac{\log n}{p}}\cdot 2^2 \frac{\mu r}{n}\right).
\end{align*}
 Furthermore, a simple computation yields that
\begin{align*}
	\twoinf{\mC_2} &\leq \twoinf{ \mD_i^{t,m,\ell} }\cdot \opnorm{\calM_1\lb\calT+\calE^{t-1,m}\rb} \\
	&\leq 2\max_{\ell\in [n]}\fronorm{\mX_i^{t,\ell} \mT_i^{t,\ell} - \mX_i^{t} \mR_i^{t} }\cdot \opnorm{\calM_1\lb\calT+\calE^{t-1,m}\rb} \\
	&\leq 4\sigma_{\max}(\calT)\cdot \max_{\ell\in [n]}\fronorm{\mX_i^{t,\ell} \mT_i^{t,\ell} - \mX_i^{t} \mR_i^{t} }.
\end{align*}
Thus we have
\begin{align}
\label{eq: gamma31b}
	\gamma_{3,1}^b &\leq    \twoinf{\mC_2} \cdot r^2 \cdot  \max_{1\leq s,k\leq r^2} \lab \sum_{j = 1,j\notin\Gamma}^{n^2}X_{j}^{s,k}\rab \notag\\
	&\leq 4\sigma_{\max}(\calT)\max_{\ell\in [n]}\fronorm{\mX_i^{t,\ell} \mT_i^{t,\ell} - \mX_i^{t} \mR_i^{t} } \cdot r^2 \cdot  \max_{1\leq s,k\leq r^2} \lab \sum_{j = 1,j\notin\Gamma}^{n^2}X_{j}^{s,k}\rab \notag\\
	&\leq  4\sigma_{\max}(\calT)\frac{1}{2^{20} \kappa^2\mu^2 r^4} \frac{1}{2^t} \sqrt{\frac{\mu r}{n}} \cdot r^2 \cdot \left( \frac{\log n}{p} \cdot 2^4\left( \frac{\mu r}{n}\right)^2+ \sqrt{\frac{\log n}{p}}\cdot 2^2 \frac{\mu r}{n}\right) \notag \\
	&\leq \frac{1}{2^{16}}\frac{1}{2^{20} \kappa^4\mu^2 r^4} \frac{1}{2^{t+1}} \sqrt{\frac{\mu r}{n}}\sigma_{\max}(\calT),
\end{align}
provided that
	$p\geq \frac{C_2 \kappa^2\mu^2r^6\log n}{n^2}.$
\end{itemize}

Combining \eqref{eq gamma31a} and \eqref{eq: gamma31b} together yields that
\begin{align*}
\gamma_{3,1} \leq \frac{1}{2^{15}}\frac{1}{2^{20} \kappa^4\mu^2 r^4} \frac{1}{2^{t+1}} \sqrt{\frac{\mu r}{n}}\sigma_{\max}(\calT).
\end{align*}

\paragraph{Bounding of $\gamma_{3,i}$ for $i=2,\cdots,9$.}
Following the same argument of bounding $\gamma_{3,1}^a$, one can obtain
\begin{align*}
	\gamma_{3,i} \leq \frac{1}{2^{15}}\frac{1}{2^{20} \kappa^4\mu^2 r^4} \frac{1}{2^{t+1}} \sqrt{\frac{\mu r}{n}}\sigma_{\max}(\calT),\quad i=2,\cdots, 9.
\end{align*}
\\

Putting together all of the bounds on $\gamma_{3,i}$ for $i = 1,\cdots,9$ yields that
\begin{align*}
	\gamma_{3}&\leq \frac{1}{2^{11}}\frac{1}{2^{20} \kappa^4\mu^2 r^4} \frac{1}{2^{t+1}} \sqrt{\frac{\mu r}{n}}\sigma_{\max}(\calT).
\end{align*}

\subsection{Proof of Claim \ref{claim Y distance}}\label{subsec:claimY}
We only show the details  for bounding $\fronorm{\mY^t_1\mY^t_1-\mY_{1}^{t,\ell}{\mY_1^{t,\ell}}^\tran}$, and the proofs for the other two cases are similar.
 Recall that $\calX^t = \left( \calT  + \calE^{t-1}\right)\ttimes  {\mX_i^t}{\mX_i^t}^\tran$ and $\calX^{t,\ell} = \left( \calT  + \calE^{t-1, \ell}\right)\ttimes {\mX_i^{t, \ell}}{\mX_i^{t, \ell}}^\tran$. In addition, $\mY_1^t$ and $\mY_1^{t,\ell}$ are the top-$r$ eigenvectors of $\calM_1^\tran\lb\calX^t\rb\calM_1\lb\calX^t\rb$ and $\calM_1^\tran \lb\calX^{t,\ell}\rb\calM_1\lb\calX^{t,\ell}\rb$, respectively. Let $\mW_1$,  $\mD_1$ and $\mD_{3,2}$ be the matrices defined as
\begin{align*}
	&\mW_1 :=  \calM_1^\tran\lb\calX^t\rb\calM_1\lb\calX^t\rb - \calM_1^\tran \lb\calX^{t,\ell}\rb\calM_1\lb\calX^{t,\ell}\rb,\quad\mD_1:={\mX_1^{t}}{\mX_1^{t}}^{\tran} -{\mX_1^{t,\ell}}{\mX_1^{t,\ell}}^{\tran},\\
	&\mD_{3,2} := \mX_3^t{\mX_3^t}^\tran\otimes\mX_2^t{\mX_2^t}^\tran -{\mX_3^{t,\ell}}{\mX_3^{t,\ell}}^\tran \otimes{\mX_2^{t,\ell}}{\mX_2^{t,\ell}}^\tran.
\end{align*}
By the definition of $\calX^{t}$ and $\calX^{t,\ell}$, we have
\begin{small}
\begin{align*}
	\fronorm{\mW_1} &\leq \fronorm{\mD_{3,2}\calM_1^{\tran}\lb \calT+\calE^{t-1} \rb {\mX_1^{t}}{\mX_1^{t}}^{\tran} \calM_1\lb \lb \calT+\calE^{t-1} \rb\underset{i\neq 1}{ \times }{\mX_i^{t}}{\mX_i^{t}}^\tran\rb } \\
	&\quad +\fronorm{ \calM_1^{\tran}\left( \lb\calE^{t-1} - \calE^{t-1,\ell}\rb\underset{i\neq 1}{ \times }{\mX_i^{t,\ell}}{\mX_i^{t,\ell}}^\tran \right) {\mX_1^{t}}{\mX_1^{t}}^{\tran} \calM_1\lb \lb\calT+\calE^{t-1} \rb\underset{i\neq 1}{ \times }{\mX_i^{t}}{\mX_i^{t}}^\tran\rb  }\\
	&\quad + \fronorm{\calM_1^{\tran}\left( \lb\calT+\calE^{t-1,\ell}\rb\underset{i\neq 1}{ \times }{\mX_i^{t,\ell}}{\mX_i^{t,\ell}}^\tran \right) \mD_1\calM_1\lb\lb \calT+\calE^{t-1}\rb\underset{i\neq 1}{ \times }{\mX_i^{t}}{\mX_i^{t}}^\tran \rb  } \\
	&\quad + \fronorm{ \calM_1^{\tran}\left( \lb\calT+\calE^{t-1,\ell}\rb\underset{i\neq 1}{ \times }{\mX_i^{t,\ell}}{\mX_i^{t,\ell}}^\tran \right)  \mX_1^{t,\ell}{\mX_1^{t,\ell}}^{\tran}\calM_1\left(\lb\calE^{t-1} - \calE^{t-1,\ell}\rb\underset{i\neq 1}{ \times }{\mX_i^{t}}{\mX_i^{t}}^\tran\right) } \\
	&\quad + \fronorm{ \left( {\mX_3^{t,\ell}}{\mX_3^{t,\ell}}^\tran \otimes{\mX_2^{t,\ell}}{\mX_2^{t,\ell}}^\tran  \right)\calM_1^{\tran}\left( \calT+\calE^{t-1,\ell} \right)  \mX_1^{t,\ell}{\mX_1^{t,\ell}}^{\tran}\calM_1\left(\calT + \calE^{t-1,\ell}\right)\mD_{3,2}}\\
	&\leq \fronorm{ \mD_{3,2}} \cdot \left( \opnorm{ \mT_1+\mE_1^{t-1} }^2+ \opnorm{\mT_1+\mE_1^{t-1,\ell} }^2 \right) + \fronorm{\mD_1}\cdot \opnorm{\mT_1+\mE_1^{t-1}}\cdot  \opnorm{\mT_1+\mE_1^{t-1,\ell}}  \\
	&\quad  + \fronorm{\calE^{t-1} - \calE^{t-1,\ell}}\cdot \left( \opnorm{\mT_1+\mE_1^{t-1}}+ \opnorm{\mT_1+\mE_1^{t-1,\ell}} \right)\\ 
	&\leq 8\sigma_{\max}(\calT)^2 \fronorm{ \mD_{3,2} } + 4\sigma_{\max}(\calT) \fronorm{\calE^{t-1} - \calE^{t-1,\ell}}+ 4\sigma_{\max}(\calT)^2\fronorm{\mX_1^t {\mX_1^t}^\tran - \mX_1^{t,\ell}{\mX_1^{t,\ell}}^\tran},
\end{align*}
\end{small}
where the last line is due to \eqref{spectral norm of error tensor} and $\eqref{spectral norm of loo error tensor}$. Furthermore, a straightforward computation yields that
\begin{align*}
	\fronorm{ \mD_{32} } &\leq\fronorm{ \mX_3^t{\mX_3^t}^\tran - {\mX_3^{t,\ell}}{\mX_3^{t,\ell}}^\tran }\cdot \fronorm{ \mX_2^t{\mX_2^t}^\tran} + \fronorm{ {\mX_3^{t,\ell}}{\mX_3^{t,\ell}}^\tran  }\cdot \fronorm{ \mX_2^t{\mX_2^t}^\tran - {\mX_2^{t,\ell}}{\mX_2^{t,\ell}}^\tran  }\\
	&\leq 2r\cdot \max_{i=2,3}\fronorm{ \mX_i^t{\mX_i^t}^\tran - {\mX_i^{t,\ell}}{\mX_i^{t,\ell}}^\tran}\leq 4r\cdot\max_{i = 2,3}\fronorm{\mX_i^t\mR_i^t-\mX_i^{t,\ell}\mT_i^{t,\ell}}\\
 &\leq  \frac{1}{2^{18} \kappa^2\mu^2 r^3} \frac{1}{2^t} \sqrt{\frac{\mu r}{n}},
\end{align*}
where the last inequality follows from \eqref{F norm}.
Thus we can obtain
\begin{align*}
	\fronorm{\mW_1} &\leq \sigma_{\max}(\calT)^2  \cdot\lb  \frac{8}{2^{18} \kappa^2\mu^2 r^3} \frac{1}{2^t} \sqrt{\frac{\mu r}{n}} + \frac{4}{2^{20} \kappa^4 \mu^2 r^4} \frac{1}{2^t} \sqrt{\frac{\mu r}{n}}+  \frac{8}{2^{20} \kappa^2\mu^2 r^4} \frac{1}{2^t} \sqrt{\frac{\mu r}{n}}\rb\\
	&\leq \left( \frac{8}{2^{18} } + \frac{4}{2^{20} }  + \frac{8}{2^{20} } \right)\frac{\sigma_{\max}^2(\calT)}{\kappa^2}\frac{1}{2^t}\sqrt{\frac{\mu r}{n}}\leq \frac{1}{2^{14}}\sigma_{\min}^2(\calT)\frac{1}{2^t}\sqrt{\frac{\mu r}{n}},
\end{align*} 
where the first inequality is due to \eqref{F norm  of error tensor} and \eqref{F norm}.
Consequently,
\begin{align*}
    \opnorm{\mW_1} \leq \fronorm{\mW_1}\leq \frac{1}{2^{14}}\sigma_{\min}^2(\calT).
\end{align*}
Note that the eigengap $\delta$ between the the $r$-th and $r+1$-th eigenvalues of $\calM_1^\tran (\calX^{t,\ell})\calM_1(\calX^{t,\ell})$ is bounded by
\begin{align*}
    \delta\geq \sigma_{\min}\lb\calM_1\lb\calX^{t,\ell}\rb\rb^2\geq \lb\frac{15}{16}\rb^2\sigma_{\min}^2(\calT),
\end{align*}
where the last inequality follows from Lemma~\ref{lemma: condition number of X}. Applying Lemma~\ref{rateoptimal lemma 1} and Lemma~\ref{Davis-kahan sintheta theorem} shows that
\begin{align*}
	\fronorm{ \mY_1^t{\mY_1^t}^\tran -\mY_1^{t,\ell}{\mY_1^{t,\ell}}^\tran }&\leq\sqrt{2} \frac{\fronorm{\mW_1}}{\delta - \opnorm{\mW_1} }  \leq  \frac{8\sqrt{2} }{7}\frac{1}{\sigma_{\min}^2(\calT) }  \frac{1}{2^{14}}\sigma_{\min}^2(\calT)\frac{1}{2^t} \sqrt{\frac{\mu r}{n}}  \leq \frac{1}{2^{13}} \frac{1}{2^t}\sqrt{\frac{\mu r}{n}}.
\end{align*}

\subsection{Proof of \texorpdfstring{\eqref{eq: claim rc 1}}{TEXT} in Claim \texorpdfstring{\ref{claim in R_c}}{TEXT}}
\label{first term in 5.5}
We provide a detailed proof for $i = 1$, while the proofs for $i = 2,3$ are overall similar. Let $\mD_i^{t,\ell}$ and $\mM_i^{t,\ell}$ be  two auxiliary matrices defined as 
\begin{align*}
	\mD_i^{t,\ell} :&= \mX_i^{t} \mR_i^{t} - \mX_i^{t,\ell}\mT_i^{t,\ell},\\
	\mM_i^{t,\ell} :&= \mX_i^t{\mX_i^t}^\tran - \mX_i^{t,\ell} {\mX_i^{t,\ell}}^\tran= \mD_i^{t,\ell} \lb\mX_i^t \mR_i^t\rb^\tran + \left( \mX_i^{t,\ell} \mT_i^{t,\ell} \right) \left( \mD_i^{t,\ell} \right)^\tran.
\end{align*}
From \eqref{2 inf norm of loo} and  \eqref{F norm}, one has
\begin{align}
    \label{eq: F norm of D}
	\fronorm{\mD_i^{t,\ell}} &\leq \frac{1}{2^{20} \kappa^2\mu^2 r^4} \frac{1}{2^t} \sqrt{\frac{\mu r}{n}},\\
	\label{eq two inf norm of M}
	\twoinf{\mM_i^{t,\ell}} &\leq \fronorm{\mM_i^{t,\ell}} \leq 2\fronorm{ \mD_i^{t,\ell} }\leq \frac{1}{2^{19} \kappa^2\mu^2 r^4} \frac{1}{2^t} \sqrt{\frac{\mu r}{n}}.
\end{align}
By the definition of $\calX^t$ and $\calX^{t,\ell}$, \cjc{
\begin{align*}
	\calX^t - \calX^{t,\ell} &= \lb\calT+\calE^{t-1}\rb \ttimes {\mX_i^{t}}{\mX_i^{t}}^\tran - \lb\calT+\calE^{t-1,\ell}\rb\ttimes {\mX_i^{t,\ell}}{\mX_i^{t,\ell}}^\tran\\
	& = \left( \calT+\calE^{t-1}  \right) \ttimes\mM_i^{t,\ell}  +  \left( \calT+\calE^{t-1} \right) \underset{i\neq 3}{ \times }  \mM_i^{t,\ell}  \times_3 {\mX_3^{t,\ell}}{\mX_3^{t,\ell}}^\tran  \\
	&\quad +  \left( \calT+\calE^{t-1} \right) \underset{i\neq 2}{ \times }\mM_i^{t,\ell} \times_2 {\mX_2^{t,\ell}}{\mX_2^{t,\ell}}^\tran   +  \left( \calT+\calE^{t-1} \right) \times_1 {\mX_1^{t,\ell}}{\mX_1^{t,\ell}}^\tran \underset{i\neq 1}{ \times } \mM_i^{t,\ell} \\ 
	&\quad +  \left( \calT+\calE^{t-1} \right) \times_1 \mM_1^{t,\ell} \underset{i\neq 1}{ \times } {\mX_i^{t,\ell}}{\mX_i^{t,\ell}}^\tran  + \left( \calT+\calE^{t-1} \right) \underset{i\neq 2}{ \times }{\mX_i^{t,\ell}}{\mX_i^{t,\ell}}^\tran\times_2\mM_2^{t,\ell}  \\
	&\quad + \left( \calT+\calE^{t-1} \right)\underset{i\neq 3}{ \times } {\mX_i^{t,\ell}}{\mX_i^{t,\ell}}^\tran  \times_3 \mM_3^{t,\ell}  + \left( \calE^{t-1} - \calE^{t-1,\ell} \right)  \ttimes {\mX_i^{t,\ell}}{\mX_i^{t,\ell}}^\tran \\
 &:= \sum_{i = 1}^8\calA_i.
\end{align*}}
It follows that 
\begin{align*}
	\fronorm{   \left(  \left( \calI - p^{-1} \calP_{\Omega}  \right)  \left( \calX^t-\calX^{t,\ell}\right) \right)  \underset{i\neq 1}{ \times } {\mX_{i}^{t,\ell}}^\tran    }  
	&\leq \sum_{i=1}^{8}\underbrace{\fronorm{\calM_1 \left( \left( \calI - p^{-1} \calP_{\Omega}  \right)  \left(\calA_i\right)  \right) \left( \mX_3^{t, \ell}  \otimes \mX_{2}^{t,\ell}\right)} }_{=:\eta_i}.
\end{align*}
All of  these terms but  $\eta_5$ can be bounded by the same argument as controlling the term $\gamma_{3,1}^a$, yielding
\begin{align}
	\label{eq eta}
	\eta_i \leq \frac{1}{2^{14}} \cdot \frac{1}{2^{20}\kappa^4\mu^2 r^4 } \frac{1}{2^{t+1}} \sqrt{\frac{\mu r}{n}}\sigma_{\max}(\calT),\quad i\neq 5.
\end{align}
For $\eta_5$, the same bound can be obtained, but a different strategy should be adopted.
\subsubsection{Bounding \texorpdfstring{$\eta_5$}{TEXT}}
Notice that 
\begin{align*}
	{\mX_i^{t,\ell}}{\mX_i^{t,\ell}}^\tran = \left(\mX_i^{t,\ell} \mR^{t,\ell}_i - \mU_i \right) \left(\mX_i^{t,\ell} \mR^{t,\ell}_i \right)^\tran + \mU_i \left(\mX_i^{t,\ell} \mR^{t,\ell}_i - \mU_i \right)^\tran + {\mU_i}{\mU_i}^\tran.
\end{align*}
It follows that
\begin{align*}
	\calA_{5} &=\left( \calT+\calE^{t-1} \right) \times_1 \mM_1^{t,\ell}\times_2 {\mX_2^{t,\ell}}{\mX_2^{t,\ell}}^\tran  \times_3{\mX_3^{t,\ell}}{\mX_3^{t,\ell}}^\tran\\
	& = 	\left( \calT+\calE^{t-1} \right) \times_1 \mD_1^{t,\ell}  \left( \mX_1^t \mR_1^t \right)^\tran \times_2 {\mX_2^{t,\ell}}{\mX_2^{t,\ell}}^\tran  \times_3{\mX_3^{t,\ell}}{\mX_3^{t,\ell}}^\tran \\
	&\quad+ \left( \calT+\calE^{t-1} \right) \times_1 \left( \mX_{1}^{t,\ell} \mT_1^{t,\ell} \right) \left(  \mD_1^{t,\ell}  \right)^\tran\times_2 {\mX_2^{t,\ell}}{\mX_2^{t,\ell}}^\tran \times_3{\mX_3^{t,\ell}}{\mX_3^{t,\ell}}^\tran   \\
	&= \left( \calT+\calE^{t-1} \right) \times_1 \mD_1^{t,\ell}  \left( \mX_1^t \mR_1^t \right)^\tran \times_2  \left(\mX_2^{t,\ell} \mR^{t,\ell}_2 - \mU_2 \right) \left(\mX_2^{t,\ell} \mR^{t,\ell}_2\right)^\tran   \times_3 {\mX_3^{t,\ell}}{\mX_3^{t,\ell}}^\tran  \\
	&\quad+ \left( \calT+\calE^{t-1} \right) \times_1 \mD_1^{t,\ell}  \left( \mX_1^t \mR_1^t \right)^\tran \times_2  \mU_2  \left(\mX_2^{t,\ell} \mR^{t,\ell}_2- \mU_2 \right) ^\tran\times_3 {\mX_3^{t,\ell}}{\mX_3^{t,\ell}}^\tran \\
	&\quad+  \left( \calT+\calE^{t-1} \right) \times_1 \mD_1^{t,\ell}  \left( \mX_1^t \mR_1^t \right)^\tran \times_2  {\mU_2}{\mU_2}^\tran \times_3  \left(\mX_3^{t,\ell} \mR^{t,\ell}_3 - \mU_3\right) \left(\mX_3^{t,\ell} \mR^{t,\ell}_3 \right)^\tran \\
	&\quad  + \left( \calT+\calE^{t-1} \right) \times_1 \mD_1^{t,\ell}  \left( \mX_1^t \mR_1^t \right)^\tran \times_2  {\mU_2}{\mU_2}^\tran \times_3 \mU_3  \left(\mX_3^{t,\ell} \mR^{t,\ell}_3 - \mU_3 \right) ^\tran \\
	&\quad+ \left( \calT+\calE^{t-1} \right) \times_1 \mD_1^{t,\ell}  \left( \mX_1^t \mR_1^t \right)^\tran \times_2  {\mU_2}{\mU_2}^\tran \times_3  {\mU_3}{\mU_3}^\tran\\
	&\quad + \left( \calT+\calE^{t-1} \right) \times_1 \left( \mX_{1}^{t,\ell} \mT_1^{t,\ell} \right) \left(  \mD_1^{t,\ell}  \right)^\tran \times_2 {\mX_2^{t,\ell}}{\mX_2^{t,\ell}}^\tran\times_3 {\mX_3^{t,\ell}}{\mX_3^{t,\ell}}^\tran\\
	&=:\sum_{i=1}^{6}\calB_i,
\end{align*}
which implies that $\eta_5$ can be expressed as 
\begin{align*}
	\eta_5 &\leq  \sum_{i=1}^{6}\underbrace{\fronorm{\calM_1 \left( \left( \calI - p^{-1} \calP_{\Omega}  \right)  \left(\calB_i \right)  \right) \left( \mX_3^{t, \ell}  \otimes \mX_{2}^{t,\ell}\right)}}_{=:\eta_{5,i}}.
\end{align*}

\paragraph{Bounding $\eta_{5,1},$ $\eta_{5,2}, \eta_{5,3}$ and $\eta_{5,4}$.} A simple computation yields that
\begin{align*}
	\calM_1 \left(\calB_1 \right) &= \calM_1 \left( \left( \calT+\calE^{t-1} \right) \times_1 \mD_1^{t,\ell}  \left( \mX_1^t \mR_1^t \right)^\tran \times_2  \left(\mX_2^{t,\ell} \mR^{t,\ell}_2 - \mU_2\right)  \left(\mX_2^{t,\ell} \mR^{t,\ell}_2\right)^\tran   \times_3 {\mX_3^{t,\ell}}{\mX_3^{t,\ell}}^\tran  \right)\\
	&= \mD_1^{t,\ell}  \left( \mX_1^t \mR_1^t \right)^\tran \calM_1   \left( \calT+\calE^{t-1} \right) \left( \lb{\mX_3^{t,\ell}}{\mX_3^{t,\ell}}^\tran\rb  \otimes  \lb\left(\mX_2^{t,\ell} \mR^{t,\ell}_2 - \mU_2 \right) \left(\mX_2^{t,\ell} \mR^{t,\ell}_2\right)^\tran \rb\right)^\tran\\
	&=\underbrace{ \mD_1^{t,\ell}  \left( \mX_1^t \mR_1^t \right)^\tran \calM_1   \left( \calT+\calE^{t-1} \right) \left( {\mX_3^{t,\ell} }  \otimes \lb\mX_2^{t,\ell} \mR^{t,\ell}_2\rb\right) }_{=:\mA_1}  \underbrace{\left(  \mX_3^{t,\ell} \otimes  \left(\mX_2^{t,\ell} \mR^{t,\ell}_2 - \mU_2\right)  \right) ^\tran}_{=:\mB_1 }.
\end{align*}
Thus one has
\begin{align}
	\label{eq r51}
	\eta_{5,1} &= \fronorm{\calM_1 \left( \left( \calI - p^{-1} \calP_{\Omega}  \right)  \left(\calB_1 \right)  \right) \left( \mX_3^{t, \ell}  \otimes \mX_{2}^{t,\ell}\right)} \notag \\
	&= \fronorm{ \left(  \calM_1\left( \left( \calI - p^{-1} \calP_{\Omega}  \right)  (\calJ)\right) \odot \calM_1 \left(\calB_1 \right) \right)  \left( \mX_3^{t, \ell}  \otimes \mX_{2}^{t,\ell}\right)}\notag \\
	&=\fronorm{ \left(  \calM_1\left( \left( \calI - p^{-1} \calP_{\Omega}  \right)  (\calJ)\right) \odot  \left(\mA_1 \mB_1\right) \right)  \left( \mX_3^{t, \ell}  \otimes \mX_{2}^{t,\ell}\right)} \notag \\
	&=\sqrt{ \sum_{i=1}^{n}\sum_{s=1}^{r^2} \left[ \left(  \calM_1\left( \left( \calI - p^{-1} \calP_{\Omega}  \right)  (\calJ)\right) \odot \left(\mA_1 \mB_1\right) \right)  \left( \mX_3^{t, \ell}  \otimes \mX_{2}^{t,\ell}\right) \right]_{is}^2 } \notag \\
	&=\sqrt{ \sum_{i=1}^{n}\sum_{s=1}^{r^2} \left(   \sum_{j=1}^{n^2}\left[ \calM_1\left( \left( \calI - p^{-1} \calP_{\Omega}  \right)  (\calJ)\right) \odot  \left(\mA_1 \mB_1\right) \right]_{i,j}\cdot  \left[ \mX_3^{t, \ell}  \otimes \mX_{2}^{t,\ell}  \right]_{j,s} \right)^2 } \notag \\
	&=\sqrt{ \sum_{i=1}^{n}\sum_{s=1}^{r^2} \left(   \sum_{j=1}^{n^2}  \left( 1- p^{-1}\delta_{i,j}\right)  \left(  \sum_{q=1}^{r^2}\left[\mA_1\right]_{i,q} \left[ \mB_1\right] _{q,j} \right)\cdot  \left[ \mX_3^{t, \ell}  \otimes \mX_{2}^{t,\ell}  \right]_{j,s} \right)^2 } \notag \\
	&=\sqrt{ \sum_{i=1}^{n}\sum_{s=1}^{r^2} \left(  \sum_{q=1}^{r^2}\left[\mA_1\right]_{i,q} \sum_{j=1}^{n^2}  \left( 1- p^{-1}\delta_{i,j}\right)  \left[ \mB_1\right] _{q,j} \cdot  \left[ \mX_3^{t, \ell}  \otimes \mX_{2}^{t,\ell}  \right]_{j,s} \right)^2 }\notag  \\
	&\leq \sqrt{ \sum_{i=1}^{n}\sum_{s=1}^{r^2} \left( \sum_{q=1}^{r^2}\left[\mA_1\right]_{i,q}^2 \right) \cdot \left( \sum_{q=1}^{r^2}  \left(  \sum_{j=1}^{n^2}  \left( 1- p^{-1}\delta_{i,j}\right)  \left[ \mB_1\right] _{q,j} \cdot  \left[ \mX_3^{t, \ell}  \otimes \mX_{2}^{t,\ell}  \right]_{j,s} \right)^2 \right)} \notag \\
	&\leq \sqrt{ \lb\sum_{i=1}^{n}\sum_{q=1}^{r^2}\left[\mA_1\right]_{i,q}^2\rb \cdot \max_{i\in[n]}\sum_{s=1}^{r^2} \sum_{q=1}^{r^2}  \left(  \sum_{j=1}^{n^2}  \left( 1- p^{-1}\delta_{i,j}\right)  \left[ \mB_1\right] _{q,j} \cdot  \left[ \mX_3^{t, \ell}  \otimes \mX_{2}^{t,\ell}  \right]_{j,s} \right)^2 } \notag\\ 
	&= \fronorm{\mA_1} \cdot  \sqrt{  \max_{i\in [n]}\sum_{s=1}^{r^2} \sum_{q=1}^{r^2}  \left(  \sum_{j=1}^{n^2}  \left( 1- p^{-1}\delta_{i,j}\right)  \left[ \mB_1\right] _{q,j} \cdot  \left[ \mX_3^{t, \ell}  \otimes \mX_{2}^{t,\ell}  \right]_{j,s} \right)^2 }\notag \\
	&\leq \fronorm{\mA_1}\cdot r^2 \cdot \max_{i\in [n], s,q\in [r^2]} \left|  \sum_{j=1}^{n^2}  \left( 1- p^{-1}\delta_{i,j}\right)  \left[ \mB_1\right] _{q,j} \cdot  \left[ \mX_3^{t, \ell}  \otimes \mX_{2}^{t,\ell}  \right]_{j,s}  \right|\notag \\
	&\stackrel{(a)}{\leq}  \fronorm{\mA_1}\cdot r^2  \cdot 3n^2 \cdot \infnorm{\mB_1} \cdot  \twoinf{\mX_2^{t,\ell}}\twoinf{\mX_3^{t,\ell}}\notag\\
	&\leq  \fronorm{ \mD_1^{t,\ell}   }\cdot \opnorm{\calM_1   \left( \calT+\calE^{t-1} \right) } \cdot r^2  \cdot 3n^2 \cdot \twoinf{ \mX_3^{t,\ell} }^2 \cdot \twoinf{ \mX_2^{t,\ell} \mR^{t,\ell}_2- \mU_2   } \cdot \twoinf{\mX_2^{t,\ell}} \notag\\
	&\stackrel{(b)}{\leq} \frac{1}{2^{20} \kappa^2\mu^2 r^4} \frac{1}{2^t} \sqrt{\frac{\mu r}{n}}\cdot \frac{9}{8}\sigma_{\max}(\calT)\cdot 3n^2 r^2\cdot \left( \frac{9}{8}\right)^3\left( \frac{\mu r}{n} \right)^{3/2} \cdot  \frac{1}{2^{20} \kappa^2\mu^2 r^4} \frac{1}{2^t} \sqrt{\frac{\mu r}{n}} \notag\\
	&\leq \frac{1}{6}\frac{1}{2^{14}} \cdot \frac{1}{2^{20}\kappa^4\mu^2 r^4 } \frac{1}{2^{t+1}} \sqrt{\frac{\mu r}{n}}\sigma_{\max}(\calT),
\end{align}
where $\odot$ denotes the entrywise product, step (a) uses the same argument as in \eqref{eq: sum of delta} and step (b) follows from  the inequalities~\eqref{eq: xitl} and 
\begin{align*}
\opnorm{\calM_1(\calT + \calE^{t-1})}&\leq \sigma_{\max}(\calT) + \frac{1}{2^{20}\kappa^6 \mu^2 r^4}\frac{1}{2^t}\sigma_{\max}(\calT) \leq \frac{9}{8}\sigma_{\max}(\calT).
\end{align*}
Applying the same argument as above can show that 
\begin{align}
	\label{eq r52}
	\eta_{5,2}  &\leq \frac{1}{6}\frac{1}{2^{14}} \cdot \frac{1}{2^{20}\kappa^4\mu^2 r^4 } \frac{1}{2^{t+1}} \sqrt{\frac{\mu r}{n}}\sigma_{\max}(\calT),\\
	\label{eq r53}
	\eta_{5,3}   &\leq \frac{1}{6}\frac{1}{2^{14}} \cdot \frac{1}{2^{20}\kappa^4\mu^2 r^4 } \frac{1}{2^{t+1}} \sqrt{\frac{\mu r}{n}}\sigma_{\max}(\calT),\\
	\label{eq r54}
	\eta_{5,4}  &\leq \frac{1}{6}\frac{1}{2^{14}} \cdot \frac{1}{2^{20}\kappa^4\mu^2 r^4 } \frac{1}{2^{t+1}} \sqrt{\frac{\mu r}{n}}\sigma_{\max}(\calT).
\end{align}

\paragraph{Bounding $\eta_{5,5}$.}
Notice that
\begin{align*}
	\calM_1\left(\calB_5\right) &=\calM_1 \left( \left( \calT+\calE^{t-1} \right) \times_1 \mD_1^{t,\ell}  \left( \mX_1^t \mR_1^t \right)^\tran \times_2  {\mU_2}{\mU_2}^\tran \times_3  {\mU_3}{\mU_3}^\tran\right)\\
	&= \underbrace{\mD_1^{t,\ell}  \left( \mX_1^t \mR_1^t \right)^\tran \calM_1  \left( \calT+\calE^{t-1} \right) \left(\mU_3\otimes\mU_2\right) }_{=:\mA_{5}} \underbrace{ \left(\mU_3\otimes\mU_2\right)^\tran}_{=:\mB_5},
\end{align*}
where the matrices $\mA_5$ and $\mB_5$ obey that
\begin{align*}
	\fronorm{\mA_5} &\leq \fronorm{\mD_1^{t,\ell}} \cdot \opnorm{ \calM_1\left( \calT + \calE^{t-1} \right)} \leq 
 \frac{9}{2^{23} \kappa^2\mu^2 r^4} \frac{1}{2^t} \sqrt{\frac{\mu r}{n}}\sigma_{\max}(\calT),\\
	\infnorm{\mB_5} &\leq  \twoinf{\mU_2}\cdot \twoinf{\mU_3}\leq \frac{\mu r}{n}.
\end{align*}
A simple computation yields that  
\begin{align*}
	\eta_{5,5} 
	&= \fronorm{\calM_1 \left( \left( \calI - p^{-1} \calP_{\Omega}  \right)  \left(\calB_5 \right)  \right) \left( \mX_3^{t, \ell}  \otimes \mX_{2}^{t,\ell}\right) \left(\mR_3^{t,\ell} \otimes \mR_2^{t,\ell}\right) }\\
	& \leq \fronorm{\calM_1 \left( \left( \calI - p^{-1} \calP_{\Omega}  \right)  \left(\calB_5 \right)  \right) \left( \lb\mX_3^{t, \ell}\mR_3^{t,\ell} \rb \otimes \lb\mX_{2}^{t,\ell} \mR_2^{t,\ell}\rb- \mU_3\otimes \mU_2\right)} \\
	&\quad + \fronorm{\calM_1 \left( \left( \calI - p^{-1} \calP_{\Omega}  \right)  \left(\calB_5 \right)  \right) \left(\mU_3\otimes \mU_2\right)} \\
	& \leq \underbrace{ \fronorm{\calM_1 \left( \left( \calI - p^{-1} \calP_{\Omega}  \right)  \left(\calB_5 \right)  \right) \left( \left(   \mX_3^{t,\ell} \mR_3^{t,\ell} - \mU_3  \right)  \otimes \lb\mX_{2}^{t,\ell} \mR_2^{t,\ell}\rb \right)}  }_{=:\eta_{5,5}^a }\\
	&\quad + \underbrace{\fronorm{ \calM_1 \left( \left( \calI - p^{-1} \calP_{\Omega}  \right)  \left(\calB_5 \right)  \right) \left( \mU_3\otimes\left(   \mX_2^{t,\ell} \mR_2^{t,\ell} - \mU_2 \right)\right) } }_{=:\eta_{5,5}^b} \\
	&\quad  + \underbrace{\fronorm{\calM_1 \left( \left( \calI - p^{-1} \calP_{\Omega}  \right)  \left(\calB_5 \right)  \right) \left(\mU_3\otimes \mU_2\right)} }_{=:\eta_{5,5}^c }.
\end{align*}
\begin{itemize}
	\item Controlling  $\eta_{5,5}^a$. It can be bounded as follows:
	\begin{align*}
		\eta_{5,5}^a &= \fronorm{\calM_1 \left( \left( \calI - p^{-1} \calP_{\Omega}  \right)  \left(\calB_5 \right)  \right) \left( \left(   \mX_3^{t,\ell} \mR_3^{t,\ell} - \mU_3  \right) \otimes \lb\mX_{2}^{t,\ell} \mR_2^{t,\ell} \rb\right)} \\
		&\leq \fronorm{\mA_5}\cdot r^2 \cdot \max_{i\in [n], s,q\in [r^2]} \left|  \sum_{j=1}^{n^2}  \left( 1- p^{-1}\delta_{i,j}\right)  \left[ \mB_5\right] _{q,j} \cdot  \left[ \left(   \mX_3^{t,\ell} \mR_3^{t,\ell} - \mU_3  \right)\otimes \lb\mX_{2}^{t,\ell} \mR_2^{t,\ell}\rb  \right]_{j,s}  \right| \\ 
		&\stackrel{(a)}{\leq} \fronorm{\mA_5}\cdot r^2 \cdot 3n^2 \infnorm{\mB_5}\cdot \infnorm{\left(   \mX_3^{t,\ell} \mR_3^{t,\ell} - \mU_3  \right) \otimes \lb\mX_{2}^{t,\ell} \mR_2^{t,\ell} \rb}\\
		&\leq \fronorm{\mA_5}\cdot r^2 \cdot 3n^2 \infnorm{\mB_5}\cdot \twoinf{  \mX_3^{t,\ell} \mR_3^{t,\ell} - \mU_3 } \cdot \twoinf{\mX_{2}^{t,\ell} \mR_2^{t,\ell} }\\
		&\leq \frac{9}{2^{23} \kappa^2\mu^2 r^4} \frac{1}{2^t} \sqrt{\frac{\mu r}{n}}\sigma_{\max}(\calT)\cdot 3n^2r^2\cdot \frac{\mu r}{n}\cdot \frac{1}{2^{20} \kappa^2\mu^2 r^4} \frac{1}{2^t} \sqrt{\frac{\mu r}{n}} \cdot  \frac{9}{8}\sqrt{\frac{\mu r}{n}}\\
		&\leq \frac{1}{18}\frac{1}{2^{14}} \cdot \frac{1}{2^{20}\kappa^4\mu^2 r^4 } \frac{1}{2^{t+1}} \sqrt{\frac{\mu r}{n}}\sigma_{\max}(\calT),
	\end{align*}
	where step (a) uses the same argument as in \eqref{eq: sum of delta}.
	\item Controlling  $\eta_{5,5}^b$. Similarly, one has
	\begin{align*}
		\eta_{5,5}^b&\leq \fronorm{\mA_5}\cdot r^2 \cdot \max_{i\in [n], s,q\in [r^2]} \left|  \sum_{j=1}^{n^2}  \left( 1- p^{-1}\delta_{i,j}\right)  \left[ \mB_5\right] _{q,j} \cdot  \left[\mU_3 \otimes \left(   \mX_2^{t,\ell} \mR_2^{t,\ell} - \mU_2  \right) \right]_{j,s}  \right| \\ 
		&\leq \fronorm{\mA_5}\cdot r^2 \cdot 3n^2 \infnorm{\mB_5}\cdot \infnorm{\mU_3 \otimes \left(   \mX_2^{t,\ell} \mR_2^{t,\ell} - \mU_2 \right) }\\
		&\leq \frac{1}{18}\frac{1}{2^{14}} \cdot \frac{1}{2^{20}\kappa^4\mu^2 r^4 } \frac{1}{2^{t+1}} \sqrt{\frac{\mu r}{n}}\sigma_{\max}(\calT).
	\end{align*}
	\item Controlling  $\eta_{5,5}^c$. Recall that 
	\begin{align*}
		\calB_5 &=  \left( \calT+\calE^{t-1} \right) \times_1 \mD_1^{t,\ell}  \left( \mX_1^t \mR_1^t \right)^\tran \times_2  {\mU_2}{\mU_2}^\tran \times_3  {\mU_3}{\mU_3}^\tran \\
		&= \underbrace{\left( \left( \calT+\calE^{t-1} \right) \times_1 \mD_1^{t,\ell}  \left( \mX_1^t \mR_1^t \right)^\tran \times_2  \mU_2^\tran \times_3 \mU_3^\tran \right) }_{=:\calY_{5,5}}\times_2\mU_2 \times_3 \mU_3,
	\end{align*}
	where $\calY_{5,5}\in\R^{n\times r\times r}$.
	A straightforward computation yields that
	\begin{align*}
		\eta_{5,5}^c&= \fronorm{\calM_1 \left( \left( \calI - p^{-1} \calP_{\Omega}  \right)  \left(\calB_5 \right)  \right) \left(\mU_3\otimes \mU_2\right)}\\
		&=  \fronorm{ \left( \left( \calI - p^{-1} \calP_{\Omega}  \right)  \left(\calB_5 \right)  \right) \times_2 \mU_2^\tran \times_3 \mU_3^\tran }\\
		&= \sup_{\substack{\calZ\in\Rn: \fronorm{\calZ} = 1}} \left\langle \left( \calI - p^{-1} \calP_{\Omega}  \right)  \left(\calY_{5,5}\times_2\mU_2 \times_3 \mU_3\right) , \calZ\times_2 \mU_2  \times_3 \mU_3\right\rangle \\
		&\stackrel{(a)}{\leq } C\sqrt{\frac{\mu^2 r^2\log n}{n^2p}}\cdot\fronorm{\calY_{5,5}}= C\sqrt{\frac{\mu^2 r^2\log n}{n^2p}}\cdot\fronorm{\calM_1\left(\calY_{5,5}\right)} \\
	    &\leq C\sqrt{\frac{\mu^2 r^2\log n}{n^2p}}\cdot\fronorm{\mD_1^{t,\ell}}\cdot \opnorm{\mX_1^t}\cdot \opnorm{\calM_1\left( \calT + \calE^{t-1}\right)}\\
		&\leq C\sqrt{\frac{\mu^2 r^2\log n}{n^2p}}\cdot\frac{1}{2^{20}\kappa^2\mu^2r^4}\frac{1}{2^t}\sqrt{\frac{\mu r}{n}}\cdot\frac{9}{8}\sigma_{\max}\lb\calT\rb\\
		&\leq \frac{1}{18}\frac{1}{2^{14}} \cdot \frac{1}{2^{20}\kappa^4\mu^2 r^4 } \frac{1}{2^{t+1}} \sqrt{\frac{\mu r}{n}}\sigma_{\max}(\calT),
	\end{align*}
provided that 
	$p\geq \frac{C_2 \kappa^4\mu^2 r^2\log n}{n^2},$
where  $(a)$ follows from Lemma~\ref{ZC lemma 1} .
\end{itemize}
Combining the above bounds together, we have
\begin{align}
	\label{eq r55}
	\eta_{5,5} &\leq \frac{1}{6}\frac{1}{2^{14}} \cdot \frac{1}{2^{20}\kappa^4\mu^2 r^4 } \frac{1}{2^{t+1}} \sqrt{\frac{\mu r}{n}}\sigma_{\max}(\calT).
\end{align}

\paragraph{Bounding  $\eta_{5,6}$.}
Notice that
\begin{align*}
	\calB_6 &=\left( \calT+\calE^{t-1} \right) \times_1 \left( \mX_{1}^{t,\ell} \mT_1^{t,\ell} \right) \left(  \mD_1^{t,\ell}  \right)^\tran \times_2 {\mX_2^{t,\ell}}{\mX_2^{t,\ell}}^\tran\times_3 {\mX_3^{t,\ell}}{\mX_3^{t,\ell}}^\tran\\
	&= \underbrace{ \left( \left( \calT+\calE^{t-1} \right) \times_1 \left( \mX_{1}^{t,\ell} \mT_1^{t,\ell} \right) \left(  \mD_1^{t,\ell}  \right)^\tran \times_2  {\mX_2^{t,\ell}}^\tran \times_3 {\mX_3^{t,\ell}}^\tran  \right)}_{=:\calY_{5,6}} \times_2  \mX_2^{t,\ell} \times_3 \mX_3^{t,\ell},
\end{align*}
where $\calY_{5,6}\in\R^{n\times r\times r}$. Following the same argument of bounding $\gamma_{3,1}^a$, one has
\begin{align}\label{eq r56}
    \eta_{5,6}\leq \frac{1}{6}\frac{1}{2^{14}} \cdot \frac{1}{2^{20}\kappa^4\mu^2 r^4 } \frac{1}{2^{t+1}} \sqrt{\frac{\mu r}{n}}\sigma_{\max}(\calT).
\end{align}

\paragraph{Combining $\eta_{5,i}$ together.}
Combining \eqref{eq r51}, \eqref{eq r52}, \eqref{eq r53}, \eqref{eq r54}, \eqref{eq r55} and \eqref{eq r56} together, we have
\begin{align}
	\label{eq r5}
	\eta_5 \leq \sum_{i=1}^{6} \eta_{5,i} \leq  \frac{1}{2^{14}} \cdot \frac{1}{2^{20}\kappa^4\mu^2 r^4 } \frac{1}{2^{t+1}} \sqrt{\frac{\mu r}{n}}\sigma_{\max}(\calT).
\end{align}

\subsubsection{Combining \texorpdfstring{$\eta_i$}{TEXT} together}
Combining \eqref{eq eta} and \eqref{eq r5} together yields that
\begin{align*}
	\fronorm{   \left(  \left( \calI - p^{-1} \calP_{\Omega}  \right)  \left( \calX^t-\calX^{t,\ell}\right) \right)  \underset{j\neq 1}{ \times }  {\mX_{j}^{t,\ell}}^\tran    }  &\leq \sum_{i=1}^{8}\eta_i \leq \frac{1}{2^{11}} \cdot \frac{1}{2^{20}\kappa^4\mu^2 r^4 } \frac{1}{2^{t+1}} \sqrt{\frac{\mu r}{n}}\sigma_{\max}(\calT).
\end{align*}
Thus we have completed the proof of \eqref{eq: claim rc 1} in Claim \ref{claim in R_c}.

\subsection{Proof of \texorpdfstring{\eqref{eq: claim rc 2}}{TEXT} in Claim \texorpdfstring{\ref{claim in R_c}}{TEXT}}
\label{second term in 5.5}
We prove the case $i=1$ and the others are similar. 
Notice that only the $\ell$-th row and those columns in the set $\Gamma$ of 
\begin{align*}
	\calM_1\left(    \left( p^{-1}\calP_{\Omega_{-\ell}} + \calP_{\ell} - p^{-1} \calP_{\Omega}  \right) \left(\calX^{t,\ell}-\calT\right)  \right)
\end{align*}
are non-zeros. By the definition of $\calP_{\ell, :}$ and $\calP_{-\ell, \Gamma}$ in \eqref{eq submatrix row} and \eqref{eq submatrix columns}, we have 
\begin{align*}
	& \fronorm{    \left( \left( p^{-1}\calP_{\Omega_{-\ell}} + \calP_{\ell} - p^{-1} \calP_{\Omega}  \right) \left(\calX^{t,\ell}-\calT\right) \right)\underset{j\neq 1}{ \times } {\mX_{j}^{t,\ell}}^\tran   } \\
	&= \fronorm{\calM_1\left(    \left( p^{-1}\calP_{\Omega_{-\ell}} + \calP_{\ell} - p^{-1} \calP_{\Omega}  \right) \left(\calX^{t,\ell}-\calT\right)  \right) \left(\mX_3^{t,\ell} \otimes \mX_2^{t,\ell}  \right)} \\
	&\leq  \twonorm{\ve_\ell^\tran\calM_1\left(    \left( p^{-1}\calP_{\Omega_{-\ell}} + \calP_{\ell} - p^{-1} \calP_{\Omega}  \right) \left(\calX^{t,\ell}-\calT\right)  \right) \left(\mX_3^{t,\ell} \otimes \mX_2^{t,\ell}  \right) } \\
	&\quad + \fronorm{ \calP_{-\ell, \Gamma} \left( \calM_1\left(    \left( p^{-1}\calP_{\Omega_{-\ell}} + \calP_{\ell} - p^{-1} \calP_{\Omega}  \right) \left(\calX^{t,\ell}-\calT\right)  \right)\right) \left(\mX_3^{t,\ell} \otimes \mX_2^{t,\ell}  \right)} := \phi_1 + \phi_2.
\end{align*}
\subsubsection{Bounding  \texorpdfstring{$\phi_1$}{TEXT}}
A straightforward computation yields that
\begin{align*}
	\phi_1 &=  \twonorm{\sum_{j=1}^{n^2}  \lsb \calM_1\left(    \left( p^{-1}\calP_{\Omega_{-\ell}} + \calP_{\ell} - p^{-1} \calP_{\Omega}  \right) \left(\calX^{t,\ell}-\calT\right)  \right) \rsb_{\ell,j} \lsb \mX_3^{t,\ell} \otimes \mX_2^{t,\ell}  \rsb_{j,:} } \\
	&=\twonorm{\sum_{j=1}^{n^2}  \left( 1-p^{-1}\delta_{\ell,j} \right)\lsb \calM_1 \left(\calX^{t,\ell}-\calT\right)   \rsb_{\ell,j} \lsb \mX_3^{t,\ell} \otimes \mX_2^{t,\ell}  \rsb_{j,:} } :=\twonorm{\sum_{j=1}^{n^2} \vx_j^\tran}.
\end{align*}
Notice that conditioned on $\mX_i^{t,\ell}$ for $i=2,3$, the vectors $\vx_j\in\R^{r^2\times 1}$ are independent mean-zero random vectors with
\begin{align*}
	\twonorm{\vx_j} &\leq \frac{1}{p} \infnorm{\calX^{t,\ell} - \calT}\cdot \twoinf{\mX_2^{t,\ell}}\cdot \twoinf{\mX_3^{t,\ell}},\\
	\opnorm{\sum_{j=1}^{n^2}\E{\vx_j\vx_j^\tran}} &\leq p^{-1}\sum_{j=1}^{n^2} \lsb \calM_1 \left(\calX^{t,\ell}-\calT\right)   \rsb_{\ell,j} ^2 \cdot \twonorm{ \lsb \mX_3^{t,\ell} \otimes \mX_2^{t,\ell}  \rsb_{j,:} }^2\\
	&\leq \frac{n^2}{p}\cdot \infnorm{\calX^{t,\ell} - \calT}^2\cdot  \twoinf{\mX_2^{t,\ell}}  ^2 \cdot  \twoinf{\mX_3^{t,\ell} }^2,\\
	\left|\sum_{j=1}^{n^2}\E{\vx_j^\tran\vx_j} \right| &\leq \frac{n^2}{p}\cdot \infnorm{\calX^{t,\ell} - \calT}^2\cdot  \twoinf{\mX_2^{t,\ell}}  ^2 \cdot  \twoinf{\mX_3^{t,\ell} }^2.
\end{align*}
By the matrix Bernstein inequality, with high probability, one has
\begin{align}
	\label{eq phi1}
	\phi_1 &\leq C \left( \frac{\log n}{p} + \sqrt{\frac{n^2\log n}{p}} \right)\infnorm{\calX^{t,\ell} - \calT}\cdot \twoinf{\mX_2^{t,\ell}}\cdot \twoinf{\mX_3^{t,\ell}} \notag\\
	&\leq C \left( \frac{\log n}{p} + \sqrt{\frac{n^2\log n}{p}} \right)\cdot \frac{36}{2^{20} \kappa^2 \mu^2 r^{4}}\cdot \frac{1}{2^t} \cdot \left( \frac{\mu r}{n} \right)^{3/2} \sigma_{\max}(\calT) \cdot \lb\frac{9}{8}\rb^2\frac{\mu r}{n}\notag\\
	&\leq  \frac{1}{2^{12}} \cdot \frac{1}{2^{20}\kappa^4\mu^2 r^4 } \frac{1}{2^{t+1}} \sqrt{\frac{\mu r}{n}}\sigma_{\max}(\calT)
\end{align} under the assumption
$	p\geq \frac{C_2 \kappa^4\mu^4 r^4\log n}{n^2}$.

\subsubsection{Bounding \texorpdfstring{$\phi_2$}{TEXT}}
A simple computation yields that
\begin{align*}
	\phi_2 &= \fronorm{ \calP_{-\ell, \Gamma} \left( \calM_1\left(    \left( p^{-1}\calP_{\Omega_{-\ell}} + \calP_{\ell} - p^{-1} \calP_{\Omega}  \right) \left(\calX^{t,\ell}-\calT\right)  \right)\right) \left(\mX_3^{t,\ell} \otimes \mX_2^{t,\ell}  \right)}\\
	&\leq r\opnorm{\calP_{-\ell, \Gamma} \left( \calM_1\left(    \left( p^{-1}\calP_{\Omega_{-\ell}} + \calP_{\ell} - p^{-1} \calP_{\Omega}  \right) \left(\calX^{t,\ell}-\calT\right)  \right)\right) \left(\mX_3^{t,\ell} \otimes \mX_2^{t,\ell}  \right)}\\
	&= r \opnorm{\sum_{j\in \Gamma} \va_j \vb_j^\tran }:=r \opnorm{\sum_{j\in \Gamma} \mZ_j},
\end{align*}
where $\va_j\in\R^{n\times 1}$ with $\lsb\va_j\rsb_k = \left(1-p^{-1}\delta_{k,j}\right)\lsb \calM_1\left(\calX^{t,\ell}-\calT\right)  \rsb_{k,j}$ for $k\neq \ell$ and $\lsb\va_j\rsb_{\ell} = 0$, and 
 $\vb_j = \lsb\mX_3^{t,\ell} \otimes \mX_2^{t,\ell}  \rsb_{j,:}^\tran\in\R^{r^2\times 1}$.
Notice that conditioned on $\mX_i^{t,\ell}$,  $\mZ_j$ are independent  mean-zero matrices with
\begin{align*}
	\opnorm{\mZ_j} &\leq \twonorm{\va_j}\cdot \twonorm{\vb_j} \leq \frac{\sqrt{n}}{p}\cdot \infnorm{\calX^{t,\ell} - \calT}\cdot \twoinf{\mX_2^{t,\ell}}\cdot \twoinf{\mX_3^{t,\ell}},\\
	\opnorm{\sum_{ j\in \Gamma}\E{\mZ_j\mZ_j^\tran}} & = \opnorm{\sum_{ j\in \Gamma}\E{\va_j\va_j^\tran}\twonorm{\vb_j}^2} \leq \max_{i\in [n]} \sum_{j\in \Gamma} p^{-1}\lsb \calM_1\left(\calX^{t,\ell}-\calT\right)  \rsb_{i,j}^2 \cdot \twonorm{\vb_j}^2\\
	&\leq \frac{2n}{p}\cdot \infnorm{\calX^{t,\ell} - \calT}^2 \cdot \twoinf{\mX_2^{t,\ell}}^2 \cdot \twoinf{\mX_3^{t,\ell}}^2,
\end{align*}
where we have used the fact that $|\Gamma| \leq 2n$.  Similarly, one has
\begin{align*}
	\opnorm{\sum_{ j\in \Gamma}\E{\mZ_j^\tran \mZ_j}} & = \opnorm{\sum_{ j\in \Gamma}\E{\twonorm{\va_j}^2}\vb_j\vb_j^\tran} \leq \sum_{ j\in \Gamma} \twonorm{\vb_j}^2\cdot \E{\twonorm{\va_j}^2}\\
	&\leq \sum_{ j\in \Gamma} \twonorm{\vb_j}^2 \cdot p^{-1} \sum_{i=1}^{n} \lsb \calM_1\left(\calX^{t,\ell}-\calT\right)  \rsb_{i,j}^2\\
	&\leq \frac{2n^2}{p} \infnorm{\calX^{t,\ell} - \calT}^2 \cdot \twoinf{\mX_2^{t,\ell}}^2 \cdot \twoinf{\mX_3^{t,\ell}}^2.
\end{align*}
Applying the matrix Bernstein inequality shows that, with high probability,
\begin{align}
	\label{eq phi2}
	\phi_2 & \leq r\cdot C \left( \frac{\sqrt{n}\log n}{p} +\sqrt{\frac{2n^2\log n}{p} }\right)\cdot \infnorm{\calX^{t,\ell} - \calT} \cdot \twoinf{\mX_2^{t,\ell}} \cdot \twoinf{\mX_3^{t,\ell}} \notag \\
	&\leq r\cdot C \left( \frac{\sqrt{n}\log n}{p} + \sqrt{\frac{n^2\log n}{p}} \right)\cdot \frac{36}{2^{20} \kappa^2 \mu^2 r^{4}}\cdot \frac{1}{2^t} \cdot \left( \frac{\mu r}{n} \right)^{3/2} \sigma_{\max}(\calT) \cdot \lb\frac{9}{8}\rb^2\frac{\mu r}{n}\notag\\
	&\leq  \frac{1}{2^{12}} \cdot \frac{1}{2^{20}\kappa^4\mu^2 r^4 } \frac{1}{2^{t+1}} \sqrt{\frac{\mu r}{n}}\sigma_{\max}(\calT),
\end{align}
provided that 
	$p\geq \max\left\lbrace\frac{C_1 \kappa^2 \mu^2 r^3\log n}{n^{1.5}}, \frac{C_2 \kappa^4\mu^4 r^6\log n}{n^{2}}\right\rbrace$.

Combining \eqref{eq phi1} and \eqref{eq phi2} together gives
\begin{align*}
	\fronorm{   \left(\left( p^{-1}\calP_{\Omega_{-\ell}} + \calP_{\ell} - p^{-1} \calP_{\Omega}  \right) \left(\calX^{t,\ell}-\calT\right) \right)   \underset{j\neq 1}{ \times } {\mX_{j}^{t,\ell}}^\tran   }	\leq  \frac{1}{2^{11}} \cdot \frac{1}{2^{20}\kappa^4\mu^2 r^4 } \frac{1}{2^{t+1}} \sqrt{\frac{\mu r}{n}}\sigma_{\max}(\calT),
\end{align*}
which completes the proof of \eqref{eq: claim rc 2} in Claim \ref{claim in R_c}.

\section{Auxiliary Lemmas}

\begin{lemma}[{\citep[Lemma 1]{xia2019polynomial}}] 
	\label{lemma:XY 19 lemma1}
	For any $\calX\in\R^{n\times n\times n}$ with multilinear rank $(r_1, r_2, r_3)$, one has
	\begin{align*}
		\infnorm{\calX} \leq \opnorm{\calX} \leq \fronorm{\calX} \leq \sqrt{r_1r_2r_3}\cdot \opnorm{\calX}\quad\text{and}\quad \nucnorm{\calX}\leq \min\left\lbrace \sqrt{r_1r_2}, \sqrt{r_1r_3}, \sqrt{r_2r_3}\right\rbrace\cdot \fronorm{\calX}.
	\end{align*}
\end{lemma}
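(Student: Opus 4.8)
The plan is to establish the two chains of inequalities separately, with everything resting on the Tucker/HOSVD decomposition $\calX = \calG \ttimes \mU_i$ with $\mU_i^\tran\mU_i = \mI$ and core $\calG \in \R^{r_1\times r_2\times r_3}$, which is exact here because $\calX$ has multilinear rank $(r_1,r_2,r_3)$. For the first chain: the bound $\infnorm{\calX}\leq\opnorm{\calX}$ is immediate from $\calX_{i_1,i_2,i_3} = \la\calX,\ve_{i_1}\circ\ve_{i_2}\circ\ve_{i_3}\ra$ together with $\twonorm{\ve_{i_j}} = 1$ and the definition of the spectral norm; the bound $\opnorm{\calX}\leq\fronorm{\calX}$ follows by Cauchy--Schwarz applied to $\la\calX,\vx_1\circ\vx_2\circ\vx_3\ra$ using $\fronorm{\vx_1\circ\vx_2\circ\vx_3} = \twonorm{\vx_1}\twonorm{\vx_2}\twonorm{\vx_3} = 1$. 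For $\fronorm{\calX}\leq\sqrt{r_1r_2r_3}\,\opnorm{\calX}$ I would use that the $\mU_i$ have orthonormal columns, so $\fronorm{\calX} = \fronorm{\calG}$; then $\fronorm{\calG}^2 \leq r_1r_2r_3\,\infnorm{\calG}^2$ by simply counting the $r_1r_2r_3$ entries of $\calG$, $\infnorm{\calG}\leq\opnorm{\calG}$ by the first inequality applied to $\calG$, and finally $\opnorm{\calG} = \opnorm{\calX\times_1\mU_1^\tran\times_2\mU_2^\tran\times_3\mU_3^\tran}\leq\opnorm{\calX}$ from the already-stated fact $\opnorm{\calX\times_i\mX}\leq\opnorm{\mX}\opnorm{\calX}$ and $\opnorm{\mU_i^\tran} = 1$; chaining these proves the first line.

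The substantive part is $\nucnorm{\calX}\leq\min\{\sqrt{r_1r_2},\sqrt{r_1r_3},\sqrt{r_2r_3}\}\fronorm{\calX}$. First reduce to the core: since the tensor nuclear norm is dual to the spectral norm and $\opnorm{\calZ\times_1\mU_1^\tran\times_2\mU_2^\tran\times_3\mU_3^\tran}\leq\opnorm{\calZ}$, one gets $\nucnorm{\calX}\leq\nucnorm{\calG}$ (testing the dual supremum for $\calG$ against a superset of test tensors). Now fix a mode $i$ and matricize: $\calM_i(\calG)\in\R^{r_i\times\prod_{j\neq i}r_j}$ has rank at most $r_i$, so its matrix SVD is $\calM_i(\calG) = \sum_{k}\sigma_k\vu_k\vw_k^\tran$ with $\sum_k\sigma_k = \nucnorm{\calM_i(\calG)}\leq\sqrt{r_i}\,\fronorm{\calM_i(\calG)} = \sqrt{r_i}\,\fronorm{\calG}$ and $\twonorm{\vu_k} = \twonorm{\vw_k} = 1$. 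Reshaping each $\vw_k$ into a matrix $\mW_k$ whose two index sets are the modes $\{j_1,j_2\} = \{1,2,3\}\setminus\{i\}$ gives $\fronorm{\mW_k} = 1$ and $\nucnorm{\mW_k}\leq\sqrt{\min(r_{j_1},r_{j_2})}$; taking a matrix SVD of each $\mW_k$ then exhibits $\calG$ as a sum of rank-one tensors with unit factors, with total weight $\sum_k\sigma_k\nucnorm{\mW_k}$. Subadditivity of $\nucnorm{\cdot}$ and the elementary bound $\nucnorm{\va\circ\vb\circ\vc}\leq\twonorm{\va}\twonorm{\vb}\twonorm{\vc}$ (immediate from spectral-norm duality) yield $\nucnorm{\calG}\leq\sqrt{r_i}\sqrt{\min(r_{j_1},r_{j_2})}\,\fronorm{\calG}$. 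Finally, optimizing over $i\in\{1,2,3\}$ and checking (e.g. by ordering $r_1\leq r_2\leq r_3$) that $\min_i\sqrt{r_i\min_{j\neq i}r_j} = \min\{\sqrt{r_1r_2},\sqrt{r_1r_3},\sqrt{r_2r_3}\}$, together with $\fronorm{\calG} = \fronorm{\calX}$, finishes the proof.

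The main obstacle is precisely this $\nucnorm{\cdot}$ estimate: the tensor nuclear norm is neither straightforwardly dominated by nor dominates its matricizations in the convenient direction, so one cannot just invoke a one-step matricization bound. The fix is the explicit two-stage construction above — matricize the core, SVD, reshape the right singular vectors, SVD again, reassemble — which produces a genuine rank-one decomposition of $\calG$ whose weights can be controlled, after which subadditivity of $\nucnorm{\cdot}$ and the rank-one identity close the argument. The remaining inequalities are short one-line computations once the Tucker decomposition is in hand.
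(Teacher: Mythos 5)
Your proof is correct. Note that the paper itself does not prove this lemma at all—it imports it by citation from Xia and Yuan (their Lemma 1)—so there is no in-paper argument to compare against; your write-up is essentially the standard proof of that cited result. The first chain is exactly the routine argument (duality with canonical/unit rank-one tensors, Cauchy--Schwarz, and $\fronorm{\calX}=\fronorm{\calG}\leq\sqrt{r_1r_2r_3}\,\infnorm{\calG}\leq\sqrt{r_1r_2r_3}\,\opnorm{\calG}\leq\sqrt{r_1r_2r_3}\,\opnorm{\calX}$). For the nuclear-norm bound, your two-stage construction (matricize, use $\nucnorm{\calM_i}\leq\sqrt{r_i}\fronorm{\calM_i}$, reshape each right singular vector into a matrix, SVD again, and invoke subadditivity together with $\nucnorm{\va\circ\vb\circ\vc}\leq\twonorm{\va}\twonorm{\vb}\twonorm{\vc}$) is the same mechanism used in the reference. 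One nice feature of your version is the preliminary reduction $\nucnorm{\calX}\leq\nucnorm{\calG}$ via duality: working with the $r_1\times r_2\times r_3$ core makes the bound $\nucnorm{\mW_k}\leq\sqrt{\min(r_{j_1},r_{j_2})}$ on the reshaped singular vectors immediate, whereas arguing directly on $\calX$ would require observing that those reshaped $n\times n$ matrices inherit low rank from the Tucker structure. All intermediate facts you use (adjointness of mode products, $\opnorm{\calZ\times_i\mA}\leq\opnorm{\mA}\opnorm{\calZ}$ for non-square $\mA$, and the min-over-modes identity after ordering $r_1\leq r_2\leq r_3$) check out.
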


\begin{lemma}[{\citep[Lemma 6]{xia2017statistically}}]
	\label{lemma: Lemma 6 in XYZ}
	For a tensor $\calX\in\Rn$ with multilinear rank  $\vr=(r_1, r_2, r_3)$, the following bound holds for $i= 1,2,3$
	\begin{align*}
		\opnorm{\calM_i(\calX)} \leq \sqrt{\frac{\prod_{j = 1}^3 r_{j}}{r_i\cdot\max_{j'\neq i}r_{j'}}}\cdot
		\opnorm{\calX} = \sqrt{\min_{j\neq i} r_j}\cdot
		\opnorm{\calX}.
	\end{align*}
\end{lemma}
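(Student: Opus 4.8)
The plan is to prove the inequality for $i=1$ only; the cases $i=2,3$ are identical after relabeling, by the symmetry of tensor matricization across modes. Throughout write $\{j_2,j_3\}=\{2,3\}$, so the target reads $\opnorm{\calM_1(\calX)}\le\sqrt{\min\{r_2,r_3\}}\,\opnorm{\calX}$; the stated equality $\sqrt{r_1r_2r_3/(r_1\max\{r_2,r_3\})}=\sqrt{\min\{r_2,r_3\}}$ is pure arithmetic and needs no argument.

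First I would reduce the matrix operator norm to a supremum of Frobenius norms of slices. For a unit vector $\va\in\R^n$, a direct index computation shows that the row vector $\va^\tran\calM_1(\calX)\in\R^{n^2}$ is exactly the vectorization of the $n\times n$ slice $\mB_\va$ with $(\mB_\va)_{i_2,i_3}=[\calX\times_1\va^\tran]_{1,i_2,i_3}$, so that $\twonorm{\va^\tran\calM_1(\calX)}=\fronorm{\calX\times_1\va^\tran}$. Taking the supremum over $\va$ gives $\opnorm{\calM_1(\calX)}=\sup_{\twonorm{\va}=1}\fronorm{\calX\times_1\va^\tran}$.

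Next, fix a Tucker decomposition $\calX=\calG\ttimes\mX_i$ with $\mX_i\in\R^{n\times r_i}$ having orthonormal columns (available since $\calX$ has multilinear rank $\vr$). Then $\calX\times_1\va^\tran=\calG\times_1(\va^\tran\mX_1)\times_2\mX_2\times_3\mX_3$, so the column space of $\mB_\va$ lies in $\Range(\mX_2)$ and its row space in $\Range(\mX_3)$; hence $\rank(\mB_\va)\le\min\{r_2,r_3\}$. A matrix of rank $k$ has at most $k$ nonzero singular values, each at most its operator norm, so $\fronorm{\mB_\va}\le\sqrt{\min\{r_2,r_3\}}\,\opnorm{\mB_\va}$. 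Finally $\opnorm{\mB_\va}=\opnorm{\calX\times_1\va^\tran}\le\opnorm{\va^\tran}\opnorm{\calX}=\opnorm{\calX}$, using the basic relation $\opnorm{\calX\times_i\mX}\le\opnorm{\mX}\opnorm{\calX}$ from the preliminaries with $\mX=\va^\tran$ and $\opnorm{\va^\tran}=\twonorm{\va}=1$. Combining the last three displays, $\opnorm{\calM_1(\calX)}=\sup_{\twonorm{\va}=1}\fronorm{\calX\times_1\va^\tran}\le\sqrt{\min\{r_2,r_3\}}\,\opnorm{\calX}$, as claimed.

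There is no real obstacle here. The only subtle point is that one should \emph{not} invoke Lemma~\ref{lemma:XY 19 lemma1} directly on the $1\times n\times n$ object $\calX\times_1\va^\tran$, since that yields only the weaker factor $\sqrt{r_2 r_3}$; the sharp $\sqrt{\min\{r_2,r_3\}}$ is obtained precisely by treating $\mB_\va$ as a genuine matrix and exploiting that a rank-$k$ matrix has Frobenius norm at most $\sqrt{k}$ times its spectral norm.
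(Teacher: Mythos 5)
Your proof is correct. Note that the paper does not actually prove this lemma — it is imported verbatim from the cited reference \cite{xia2017statistically} and listed among the auxiliary lemmas — so there is no in-paper argument to compare against; your write-up supplies a valid self-contained proof. The route you take (reduce $\opnorm{\calM_1(\calX)}$ to $\sup_{\twonorm{\va}=1}\fronorm{\calX\times_1\va^\tran}$, observe via the Tucker factors that each contracted slice $\mB_\va=\mX_2\mC_\va\mX_3^\tran$ has rank at most $\min\{r_2,r_3\}$, then use $\fronorm{\mB_\va}\le\sqrt{\mathrm{rank}(\mB_\va)}\,\opnorm{\mB_\va}$ together with $\opnorm{\mB_\va}=\opnorm{\calX\times_1\va^\tran}\le\opnorm{\calX}$) is essentially the standard argument behind the cited lemma, and every step checks out, including the identification of the row norm of $\va^\tran\calM_1(\calX)$ with the slice Frobenius norm and the observation that invoking the cruder bound $\fronorm{\cdot}\le\sqrt{r_2r_3}\,\opnorm{\cdot}$ on the slice would lose the sharp constant.
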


\begin{lemma}[{\citep[Lemma EC.12]{cai2021nonconvex}, \citep[Lemma 13]{tong2021scaling}}]
	\label{lemma: spectral norm of E initial}
	Suppose $\Omega$ satisfies the Bernoulli observation model. Then for any fixed $\calZ\in\Rn$,
	\begin{align}
	\opnorm{\lb p^{-1}\calP_{\Omega} - \calI \rb(\calZ)} \leq C \lb p^{-1}\log^3 n \infnorm{\calZ} + \sqrt{p^{-1}\log^5 n }  \max_{i=1,2,3} \twoinf{ \calM_i^\tran \left( \calZ\right)}  \rb
	\end{align}
	holds with high probability.
\end{lemma}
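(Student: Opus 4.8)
The plan is to reproduce the argument of \cite[Lemma EC.12]{cai2021nonconvex} and \cite[Lemma 13]{tong2021scaling}, which I sketch here. Write $\calW := \lb p^{-1}\calP_{\Omega} - \calI \rb(\calZ)$, so that entrywise $\calW_{i_1,i_2,i_3} = \xi_{i_1,i_2,i_3}\,\calZ_{i_1,i_2,i_3}$, where $\xi_{i_1,i_2,i_3} := p^{-1}\delta_{i_1,i_2,i_3} - 1$ are independent, mean zero, obey $|\xi_{i_1,i_2,i_3}| \le p^{-1}$, and have variance $\E{\xi_{i_1,i_2,i_3}^2} = p^{-1} - 1 \le p^{-1}$. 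By definition of the tensor spectral norm,
\[
\opnorm{\calW} = \sup_{\|\va\|_2=\|\vb\|_2=\|\vc\|_2=1} \ \sum_{i_1,i_2,i_3} \xi_{i_1,i_2,i_3}\,\calZ_{i_1,i_2,i_3}\, [\va]_{i_1}[\vb]_{i_2}[\vc]_{i_3},
\]
so the task reduces to controlling the supremum of a random trilinear form over three unit spheres.

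The difficulty is that a crude $\varepsilon$-net on the product of three spheres has cardinality $9^{3n}$, and feeding this into a Bernstein bound for a fixed triple $(\va,\vb,\vc)$ produces a spurious $\sqrt{n/p}$ factor. To obtain the polylogarithmic bound I would instead use a \emph{dyadic peeling}: split the coordinates of $\va$ into magnitude level sets $L^{\va}_s := \{i : 2^{-s-1} < |[\va]_i| \le 2^{-s}\}$, and similarly for $\vb$ and $\vc$; since $\|\va\|_2 = 1$ one has $|L^{\va}_s| \le 2^{2s+2}$, and only $O(\log n)$ levels carry non-negligible mass (the finer levels and the ``spiky'' coordinates with $|[\va]_i|$ near $1$ are treated separately). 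This splits $\opnorm{\calW}$ into a sum of $O(\log^3 n)$ contributions, one per triple of levels $(s_1,s_2,s_3)$; on the index box $L^{\va}_{s_1}\times L^{\vb}_{s_2}\times L^{\vc}_{s_3}$ the three vectors are flat up to a factor of two, so the relevant net has cardinality only exponential in $|L^{\va}_{s_1}|+|L^{\vb}_{s_2}|+|L^{\vc}_{s_3}|$, which is small enough for Bernstein's inequality to apply with the required deviation. For a fixed admissible triple the summands are bounded by $p^{-1}\infnorm{\calZ}\cdot 2^{-(s_1+s_2+s_3)}$ and have total conditional variance at most $p^{-1}\sum_{i_1,i_2,i_3} \calZ_{i_1,i_2,i_3}^2\,[\va]_{i_1}^2[\vb]_{i_2}^2[\vc]_{i_3}^2$; bounding two of the three vectors by their $\ell_\infty$ norms on the level set and the third in $\ell_2$, this variance is at most $p^{-1}\max_{i=1,2,3}\twoinf{\calM_i^\tran(\calZ)}^2$ up to geometric factors, exactly the column-norm quantity in the statement. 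Summing the $O(\log^3 n)$ boxes, the range term aggregates to $\lesssim p^{-1}\log^3 n\,\infnorm{\calZ}$ and the variance term to $\lesssim \sqrt{p^{-1}\log^5 n}\,\max_{i=1,2,3}\twoinf{\calM_i^\tran(\calZ)}$.

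The main obstacle is exactly this replacement of the global net by the dyadic family: one must (i) choose the per-box Bernstein deviation large enough to dominate the box-net cardinality but not so large as to spoil the constants, (ii) track how both the range proxy $\infnorm{\calZ}$ and the variance proxy shrink along the level sets so that the geometric series over $(s_1,s_2,s_3)$ sums to the claimed powers of $\log n$, and (iii) dispose of the spiky coordinates and the truncated heavy tail of $\xi_{i_1,i_2,i_3}$, which uses a Chernoff bound on the number of observed entries in any mode-$i$ fiber, valid in the regime $p\gtrsim \mathrm{polylog}(n)/n^{3/2}$ assumed throughout the paper. Everything else---the reduction to the trilinear form, the level-set bookkeeping, and the Cauchy--Schwarz identification of $\sum \calZ_{i_1,i_2,i_3}^2[\va]_{i_1}^2[\vb]_{i_2}^2[\vc]_{i_3}^2$ with $\twoinf{\calM_i^\tran(\calZ)}^2$---is routine once the decomposition is in place.
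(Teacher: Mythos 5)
The paper never proves this lemma: it is imported verbatim, with citations, from Cai et al.\ and Tong et al., so there is no in-paper argument to compare against. Your plan is essentially the proof strategy of those references (which in turn goes back to Yuan and Zhang's peeling bound for sparsely sampled tensors): reduce $\opnorm{\lb p^{-1}\calP_{\Omega}-\calI\rb(\calZ)}$ to a random trilinear form over three unit spheres, decompose each vector into dyadic magnitude level sets, apply Bernstein plus a union bound block by block, and identify the variance proxy with $\max_{i}\twoinf{\calM_i^\tran(\calZ)}$ by exactly the Cauchy--Schwarz step you describe; the $\log^3 n$ and $\log^5 n$ factors do come from aggregating the $O(\log^3 n)$ level-set triples. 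So the route is the right one.

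Two caveats, since what you have is a plan rather than a proof. First, the entropy-versus-deviation bookkeeping you defer to obstacles (i)--(ii) is where essentially all the work lies: the union over possible supports of a level set of size $m$ costs roughly $e^{cm\log(n/m)}$ in the exponent, and for the large, flat blocks Bernstein only absorbs this because the per-entry magnitudes $2^{-s}$ are correspondingly tiny; this trade-off has to be verified box by box and is not automatic. Second, your treatment of the spiky/sparsely-hit regime via a Chernoff bound on the number of observed entries in a single mode-$i$ fiber is weaker than what the cited proofs actually use: they control the number of samples landing in arbitrary index boxes (a discrepancy estimate \`a la Feige--Ofek, extended to tensors), and it is this box-level counting that produces the range term $p^{-1}\log^3 n\,\infnorm{\calZ}$ for blocks containing only a handful of observations. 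Related to this, the lemma as stated imposes no lower bound on $p$, so the argument should not lean on the sampling regime $p\gtrsim \mathrm{polylog}(n)/n^{3/2}$ as your sketch does (harmless for the paper's application, but then the lemma you prove is weaker than the one quoted). With those pieces supplied, your argument would coincide with the cited one.
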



\begin{lemma}
	\label{lemma: multilinear rank unchange}
	Suppose $\calZ\in\R^{n\times n\times n}$ is a tensor with multilinear rank $(r_1,r_2,r_3)$. Then for any $\va,\vb,\vc\in\R^{n\times 1}$ with $\twonorm{\va}=\twonorm{\vb}=\twonorm{\vc}=1$, the tensor  $\calZ\odot(\va\circ\vb\circ\vc)$ has multilinear rank at most $(r_1,r_2,r_3)$, where $\odot$ denotes the entrywise product.
\end{lemma}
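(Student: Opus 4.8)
\textbf{Proof proposal for Lemma~\ref{lemma: multilinear rank unchange}.}

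The plan is to show, mode by mode, that the matricizations of $\calZ \odot (\va \circ \vb \circ \vc)$ have rank no larger than those of $\calZ$. Fix the first mode; the other two are handled symmetrically. The key observation is that the Hadamard (entrywise) product interacts nicely with matricization and Kronecker products of rank-one vectors. Specifically, I would first record the elementary identity for the first matricization: for any tensor $\calZ$ and any rank-one tensor $\va\circ\vb\circ\vc$,
\begin{align*}
\calM_1\bigl(\calZ \odot (\va\circ\vb\circ\vc)\bigr) = \diag(\va)\,\calM_1(\calZ)\,\diag(\vc\otimes\vb),
\end{align*}
which follows directly from the definitions, since $\bigl[\calM_1(\calZ\odot(\va\circ\vb\circ\vc))\bigr]_{i_1, i_2+n(i_3-1)} = \calZ_{i_1,i_2,i_3}\,[\va]_{i_1}[\vb]_{i_2}[\vc]_{i_3}$ and $[\vc\otimes\vb]_{i_2+n(i_3-1)} = [\vb]_{i_2}[\vc]_{i_3}$.

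From this identity the rank bound is immediate: multiplying $\calM_1(\calZ)$ on the left and right by (diagonal, hence) matrices cannot increase rank, so $\rank\bigl(\calM_1(\calZ\odot(\va\circ\vb\circ\vc))\bigr) \le \rank\bigl(\calM_1(\calZ)\bigr) = r_1$. I would then repeat the argument for modes $2$ and $3$, using the analogous identities $\calM_2\bigl(\calZ\odot(\va\circ\vb\circ\vc)\bigr) = \diag(\vb)\,\calM_2(\calZ)\,\diag(\vc\otimes\va)$ and $\calM_3\bigl(\calZ\odot(\va\circ\vb\circ\vc)\bigr) = \diag(\vc)\,\calM_3(\calZ)\,\diag(\vb\otimes\va)$, which again yield rank bounds $r_2$ and $r_3$ respectively. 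Combining the three, the tensor $\calZ\odot(\va\circ\vb\circ\vc)$ has multilinear rank at most $(r_1,r_2,r_3)$, as claimed. Note the normalization $\twonorm{\va}=\twonorm{\vb}=\twonorm{\vc}=1$ plays no role in the rank statement; it is presumably inherited from the context in which the lemma is invoked.

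There is essentially no obstacle here: the only mild care needed is to get the Kronecker-product ordering right in $\calM_1$ (the paper uses $\mX_3\otimes\mX_2$ for mode 1), so that the diagonal factor on the right is $\diag(\vc\otimes\vb)$ and not $\diag(\vb\otimes\vc)$. Everything else is a one-line consequence of $\rank(\mD_1 \mM \mD_2) \le \rank(\mM)$ for diagonal $\mD_1, \mD_2$. I would present the mode-$1$ case in full and remark that modes $2$ and $3$ follow identically.
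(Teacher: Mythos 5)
Your proof is correct, but it takes a different (and arguably cleaner) route than the paper. The paper's own argument starts from the Tucker decomposition $\calZ = \calH\ttimes\mZ_i$, writes $\calM_1(\calZ) = \mZ_1\calM_1(\calH)(\mZ_3\otimes\mZ_2)^\tran$ as a sum of $r_1$ rank-one matrices, and then uses the fact that the Hadamard product of a rank-one matrix with the rank-one matrix $\va(\vc\otimes\vb)^\tran$ is again rank-one, so the result is a sum of at most $r_1$ rank-one matrices. You instead exploit the exact identity $\calM_1\bigl(\calZ\odot(\va\circ\vb\circ\vc)\bigr)=\diag(\va)\,\calM_1(\calZ)\,\diag(\vc\otimes\vb)$ and conclude from $\rank(\mD_1\mM\mD_2)\leq\rank(\mM)$; this avoids invoking the Tucker factors altogether and works for any tensor $\calZ$, with the multilinear-rank hypothesis entering only through $\rank(\calM_i(\calZ))=r_i$. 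Your index check for the Kronecker ordering ($[\vc\otimes\vb]_{i_2+n(i_3-1)}=[\vb]_{i_2}[\vc]_{i_3}$) is the one place where care is needed, and you handled it correctly; the observation that the unit-norm assumption is irrelevant to the rank statement also matches the paper, which likewise never uses it in this lemma.
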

\begin{proof}[Proof of Lemma~\ref{lemma: multilinear rank unchange}]
Since $\calZ$ is of multilinear rank $(r_1,r_2,r_3)$, it can be decomposed as $\calZ = \calH\ttimes \mZ_i$, where $\calH\in\R^{r_1\times r_2\times r_3} $ and $\mZ_i\in\R^{n\times r_i}$ for $i  = 1,2,3$. To complete the proof, it suffices to show that the matrix $\calM_i(\calZ\odot(\va\circ\vb\circ\vc))$ is of rank at most $r_i$ for  $i = 1,2,3$. By the definition of entrywise product, 
	\begin{align*}
		\calM_1(\calZ\odot(\va\circ\vb\circ\vc)) &= \calM_1(\calZ)\odot \calM_1(\va\circ\vb\circ\vc)\\
		&= \lb\mZ_1\calM_1(\calH)(\mZ_3\otimes \mZ_2)^\tran\rb \odot \lb \va(\vc\otimes\vb)^\tran \rb\\
		& = \lb\sum_{i = 1}^{r_1}\lsb\mZ_1\rsb_{:,i}\lsb\calM_1(\calH)(\mZ_3\otimes \mZ_2)^\tran\rsb_{i,:}\rb\odot \lb \va(\vc\otimes\vb)^\tran \rb\\
		& = \sum_{i = 1}^{r_1}\lb\lsb\mZ_1\rsb_{:,i}\lsb\calM_1(\calH)(\mZ_3\otimes \mZ_2)^\tran\rsb_{i,:}\rb\odot \lb \va(\vc\otimes\vb)^\tran \rb\\
		& = \sum_{ i =1}^{r_1}\lb\lsb\mZ_1\rsb_{:,i}\odot\va\rb \lb\lsb\calM_1(\calH)(\mZ_3\otimes \mZ_2)^\tran\rsb_{i,:}\odot (\vc\otimes\vb)^\tran\rb,
	\end{align*}
	which implies that this matrix has rank at most $r_1$. This is  certainly true
	 for $i=2,3$. 
\end{proof}

\begin{lemma}[Uniform bound]
	\label{lemma: uniform tensor operator norm}
	Suppose $\calJ\in\Rn$ is the full-one tensor. For any tensor $\calZ\in\R^{n\times n\times n}$ with multilinear rank $(r_1, r_2, r_3)$, the following inequality holds,
	\begin{align*}
		\opnorm{\lb \calI -p^{-1}\calP_{\Omega} \rb \left(\calZ\right) } \leq\opnorm{\lb \calI -p^{-1}\calP_{\Omega} \rb (\calJ)}\cdot \min\left\lbrace \sqrt{r_1r_2}, \sqrt{r_1r_3}, \sqrt{r_2r_3}\right\rbrace \vecnorm{\calZ}{\infty}.
	\end{align*}
\end{lemma}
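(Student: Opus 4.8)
The plan is to recognize that the operator $\calI - p^{-1}\calP_{\Omega}$ is nothing but entrywise (Hadamard) multiplication by a fixed tensor. Set
\begin{align*}
\calD := \lb\calI - p^{-1}\calP_{\Omega}\rb(\calJ), \qquad \text{so that } \calD_{i_1,i_2,i_3} = 1 - p^{-1}\delta_{i_1,i_2,i_3},
\end{align*}
and observe that $\lb\calI - p^{-1}\calP_{\Omega}\rb(\calZ) = \calD\odot\calZ$, where $\odot$ denotes the entrywise product. First I would unfold the definition of the tensor spectral norm and simply rearrange the triple sum:
\begin{align*}
\opnorm{\calD\odot\calZ} = \sup_{\twonorm{\va}=\twonorm{\vb}=\twonorm{\vc}=1}\la \calD\odot\calZ,\ \va\circ\vb\circ\vc\ra = \sup_{\twonorm{\va}=\twonorm{\vb}=\twonorm{\vc}=1}\la \calD,\ \calZ\odot(\va\circ\vb\circ\vc)\ra.
\end{align*}

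Next I would invoke the duality between the spectral and nuclear norms of tensors to bound $\la\calD,\calW\ra\leq\opnorm{\calD}\nucnorm{\calW}$ with $\calW := \calZ\odot(\va\circ\vb\circ\vc)$. The key structural input is Lemma~\ref{lemma: multilinear rank unchange}: since $\calW$ is the entrywise product of $\calZ$ with the rank-one tensor $\va\circ\vb\circ\vc$ built from unit vectors, $\calW$ still has multilinear rank at most $(r_1,r_2,r_3)$. Hence Lemma~\ref{lemma:XY 19 lemma1} applies and gives $\nucnorm{\calW}\leq\min\left\lbrace\sqrt{r_1r_2},\sqrt{r_1r_3},\sqrt{r_2r_3}\right\rbrace\fronorm{\calW}$. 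Finally, a one-line computation controls the Frobenius norm by the infinity norm:
\begin{align*}
\fronorm{\calW}^2 = \sum_{i_1,i_2,i_3}\calZ_{i_1,i_2,i_3}^2\, a_{i_1}^2 b_{i_2}^2 c_{i_3}^2 \leq \infnorm{\calZ}^2\,\twonorm{\va}^2\twonorm{\vb}^2\twonorm{\vc}^2 = \infnorm{\calZ}^2.
\end{align*}
Chaining these estimates and taking the supremum over $\va,\vb,\vc$ yields $\opnorm{\calD\odot\calZ}\leq\opnorm{\calD}\cdot\min\left\lbrace\sqrt{r_1r_2},\sqrt{r_1r_3},\sqrt{r_2r_3}\right\rbrace\infnorm{\calZ}$, which is the claim after substituting $\calD = \lb\calI - p^{-1}\calP_{\Omega}\rb(\calJ)$.

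I do not anticipate a genuine obstacle here: the argument is a short duality-plus-rank-preservation chain. The only points needing care are the elementary identity $\la\calD\odot\calZ,\va\circ\vb\circ\vc\ra = \la\calD,\calZ\odot(\va\circ\vb\circ\vc)\ra$ (immediate from the entrywise definitions) and checking that the hypotheses of Lemma~\ref{lemma: multilinear rank unchange} are met, namely that $\va\circ\vb\circ\vc$ is a rank-one tensor formed from unit vectors — which is exactly the normalization built into the supremum. Everything else is a routine inequality.
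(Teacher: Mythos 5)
Your proposal is correct and follows essentially the same route as the paper: rewrite the operator as a Hadamard product with $\lb\calI - p^{-1}\calP_{\Omega}\rb(\calJ)$, move the rank-one tensor across the inner product, apply spectral--nuclear duality, and invoke Lemma~\ref{lemma: multilinear rank unchange} with Lemma~\ref{lemma:XY 19 lemma1} before bounding the Frobenius norm by $\infnorm{\calZ}$. No gaps.
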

\begin{proof}[Proof of Lemma~\ref{lemma: uniform tensor operator norm}]
	Let  $\va,\vb,\vc\in\R^{n\times 1}$ be  unit vectors and $\tilde{r}  = \min\left\lbrace \sqrt{r_1r_2}, \sqrt{r_1r_3}, \sqrt{r_2r_3}\right\rbrace$. We have
	\begin{align*}
		\opnorm{\lb \calI - p^{-1}\calP_{\Omega} \rb \left(\calZ\right) }  &=	\opnorm{\lb \calI - p^{-1}\calP_{\Omega} \rb (\calJ)\odot \calZ } \\
		&=\sup_{\substack{ \va,\vb,\vc}} \la\lb \calI -p^{-1}\calP_{\Omega} \rb (\calJ)\odot \calZ, \va\circ\vb\circ\vc \ra\\
		&= \sup_{\substack{ \va,\vb,\vc }} \la\lb \calI - p^{-1}\calP_{\Omega} \rb (\calJ), \calZ\odot( \va\circ\vb\circ\vc)\ra\\
		&\leq \opnorm{\lb \calI -p^{-1}\calP_{\Omega} \rb (\calJ)}\cdot \sup_{\substack{ \va,\vb,\vc }}\nucnorm{\calZ\odot(\va\circ\vb\circ\vc)  }\\
		&\stackrel{(a)}{\leq} \opnorm{\lb \calI - p^{-1}\calP_{\Omega} \rb (\calJ)}\cdot \tilde{r}\sup_{\substack{ \va,\vb,\vc }}\fronorm{ \calZ\odot(\va\circ\vb\circ\vc) }\\
		&= \opnorm{\lb \calI - p^{-1}\calP_{\Omega} \rb (\calJ)}\cdot \tilde{r}\sup_{\substack{  \va,\vb,\vc  }}\sqrt{\sum_{i_1,i_2,i_3} \calZ_{i_1,i_2,i_3}^2(a_{i_1} b_{i_2} c_{i_3})^2 }\\
		&\leq \opnorm{\lb \calI - p^{-1}\calP_{\Omega} \rb (\calJ)}\cdot \tilde{r}\sup_{\substack{  \va,\vb,\vc  }}\sqrt{\sum_{i_1,i_2,i_3} (a_{i_1} b_{i_2} c_{i_3})^2 }\cdot \vecnorm{\calZ}{\infty}\\
		&=\opnorm{\lb \calI - p^{-1}\calP_{\Omega} \rb (\calJ)}\cdot \tilde{r}\vecnorm{\calZ}{\infty},
	\end{align*}
	where (a) is due to Lemma~\ref{lemma:XY 19 lemma1} and Lemma~\ref{lemma: multilinear rank unchange}. 
 \end{proof}

\begin{lemma}[{\citep[Lemma 45]{ma2020implicit}, \citep[Lemma 16]{chen2020nonconvex}}]
	\label{lemma: Ma 2017 lemma 45}
	Let $\mM$ and $\widehat{\mM}\in\R^{n\times n}$ be  symmetric matrices with top-r eigenvalue decomposition $\mM=\mU\bm{\Lambda}\mU^{\tran}$ and $\widehat{\mM}=\widehat{\mU}\widehat{\bm{\Lambda}}\widehat{\mU}$, respectively. Assume $\sigma_r(\mM)>0$, $\sigma_{r+1}(\mM) = 0$ and $\opnorm{\mM-\widehat{\mM}}\leq \frac{1}{4}\sigma_r(\mM)$. Define
		$\mQ = \arg\min_{\mR^\tran\mR = \mI}\fronorm{\widehat{\mU}\mR-\mU}.$	 
	Then
	\begin{align*}
		\opnorm{\widehat{\mU}\mQ-\mU}\leq \frac{3}{\sigma_r(\mM)}\opnorm{\mM-\widehat{\mM}}.
	\end{align*}
\end{lemma}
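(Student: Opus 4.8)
The plan is to prove the lemma by the standard three-step route: first reduce the Procrustes error $\opnorm{\widehat{\mU}\mQ-\mU}$ to the subspace-distance quantity $\delta := \opnorm{(\mI-\widehat{\mU}\widehat{\mU}^\tran)\mU}$ using the explicit form of the orthogonal Procrustes solution; then pin down the relevant eigengap via Weyl's inequality, exploiting the hypothesis $\sigma_{r+1}(\mM)=0$; and finally bound $\delta$ by the Davis--Kahan $\sin\Theta$ theorem and combine. Since this is a classical fact, one could also simply defer to \cite{ma2018implicit,chen2020nonconvex}, but the argument is short enough to reproduce.

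For the first step I would set $\mH:=\widehat{\mU}^\tran\mU\in\R^{r\times r}$ and take its singular value decomposition $\mH=\mA\bSigma\mB^\tran$. A von Neumann trace-inequality argument identifies the minimizer of $\fronorm{\widehat{\mU}\mR-\mU}$ over $\mR^\tran\mR=\mI$ as $\mQ=\mA\mB^\tran$. I would then use the telescoping identity
\begin{align*}
\widehat{\mU}\mQ-\mU=\widehat{\mU}(\mQ-\mH)-(\mI-\widehat{\mU}\widehat{\mU}^\tran)\mU
\end{align*}
and bound the two pieces separately. Because $\widehat{\mU}$ has orthonormal columns, $\opnorm{\widehat{\mU}(\mQ-\mH)}\le\opnorm{\mQ-\mH}=\opnorm{\mA(\mI-\bSigma)\mB^\tran}=1-\sigma_r(\mH)$. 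Using $\mU^\tran(\mI-\widehat{\mU}\widehat{\mU}^\tran)\mU=\mI-\mH^\tran\mH$ one gets $\delta=\sqrt{1-\sigma_r^2(\mH)}$, and since $0\le\sigma_r(\mH)\le1$ we have $1-\sigma_r(\mH)\le1-\sigma_r^2(\mH)=\delta^2$. Hence
\begin{align*}
\opnorm{\widehat{\mU}\mQ-\mU}\le\delta^2+\delta .
\end{align*}

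For the remaining two steps, Weyl's inequality and $\sigma_{r+1}(\mM)=0$ give $\sigma_{r+1}(\widehat{\mM})\le\opnorm{\mM-\widehat{\mM}}\le\tfrac14\sigma_r(\mM)$ and $\sigma_r(\widehat{\mM})\ge\tfrac34\sigma_r(\mM)>0$ (so $\widehat{\bLambda}$ is indeed invertible, as implicit in the statement). The eigengap of $\mM$ between its $r$-th and $(r+1)$-th eigenvalues is exactly $\sigma_r(\mM)$, which exceeds $2\opnorm{\mM-\widehat{\mM}}$; the Davis--Kahan $\sin\Theta$ theorem therefore yields $\delta\le\dfrac{2\opnorm{\mM-\widehat{\mM}}}{\sigma_r(\mM)}$, and in particular $\delta\le\tfrac12$. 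Substituting into the bound from the first step,
\begin{align*}
\opnorm{\widehat{\mU}\mQ-\mU}\le\delta(\delta+1)\le\tfrac32\,\delta\le\frac{3\,\opnorm{\mM-\widehat{\mM}}}{\sigma_r(\mM)},
\end{align*}
as claimed. There is no real obstacle here; the only point requiring a little care is the bookkeeping of the constant $3$, which arises from combining the quadratic-in-$\delta$ Procrustes error with the factor-$2$ Davis--Kahan estimate and the smallness $\delta\le1/2$. An equivalent purely algebraic derivation, writing $\widehat{\mU}=\widehat{\mM}\widehat{\mU}\widehat{\bLambda}^{-1}$ and $\mM\mU=\mU\bLambda$ and expanding $\widehat{\mU}\mQ-\mU$ directly, also works but is messier and offers no advantage.
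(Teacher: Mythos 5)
Your proof is correct. The paper itself does not prove this lemma—it imports it verbatim from the cited references—so there is no internal proof to compare against; your argument is the standard self-contained route (orthogonal Procrustes solution $\mQ=\mA\mB^\tran$ from the SVD of $\mH=\widehat{\mU}^\tran\mU$, the splitting $\widehat{\mU}\mQ-\mU=\widehat{\mU}(\mQ-\mH)-(\mI-\widehat{\mU}\widehat{\mU}^\tran)\mU$ giving $\opnorm{\widehat{\mU}\mQ-\mU}\le\delta^2+\delta$, then Weyl plus Davis--Kahan), which is essentially how the cited lemmas are established and is consistent with the Davis--Kahan statement the paper lists among its auxiliary lemmas. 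The constant bookkeeping also checks out: with $\opnorm{\mM-\widehat{\mM}}\le\tfrac14\sigma_r(\mM)$ and gap $\sigma_r(\mM)-\sigma_{r+1}(\mM)=\sigma_r(\mM)$, Davis--Kahan gives $\delta\le\tfrac{4}{3}\opnorm{\mM-\widehat{\mM}}/\sigma_r(\mM)\le 2\opnorm{\mM-\widehat{\mM}}/\sigma_r(\mM)$ and $\delta\le\tfrac12$, so $\delta(1+\delta)\le\tfrac32\delta\le 3\opnorm{\mM-\widehat{\mM}}/\sigma_r(\mM)$ as you state.
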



\begin{lemma}
\label{ZC lemma 1}
Let $\calT\in\Rn$ be a tensor with Tucker decomposition $\calS\ttimes \mU_i$ where $\mU_i^\tran\mU_i=\mI\in\R^{r\times r}$ for $i = 1,2,3$. Suppose $\calT$ is $\mu$-incoherent, and $\Omega$ obeys the Bernoulli observation with parameter $p$. If  $p\geq C_2\mu^2r^2\log n/n^2$, then with high probability, 
\begin{align*}
\left| \la (\calI - p^{-1} \calP_{\Omega} )(\calX),~ \calZ\ra \right| \leq C\sqrt{\frac{\mu^2r^2\log n}{n^2p}}\fronorm{\calX}\fronorm{\calZ}, 
\end{align*}
holds simultaneously for all tensors $\calX, \calZ\in\R^{n\times n\times n}$ of the form
\begin{align*}
	\calX = \calG\times_i \mX_i \times_{j\neq i} \mU_i, \calZ = \calH \times_i \mZ_i \times_{j\neq i } \mU_i,
\end{align*}
where  $\calG,\calH\in\R^{n\times r\times r}$ and $\mX_i, \mZ_i\in\R^{n\times n}$ are arbitrary factors.
\end{lemma}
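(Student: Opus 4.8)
The plan is to reduce the uniform bound to the operator norm of a single random self-adjoint operator restricted to a structured subspace, and then apply matrix Bernstein essentially as in the proof of Lemma~\ref{lemma: local isotropy}. By the symmetry among the three modes (and a union bound over $i\in\{1,2,3\}$, which only affects absolute constants) it suffices to treat the case $i=1$. Every admissible $\calX=\calG\times_1\mX_1\times_2\mU_2\times_3\mU_3$ can be rewritten as $\calX=\calY\times_2\mU_2\times_3\mU_3$ with $\calY=\calG\times_1\mX_1\in\R^{n\times r\times r}$, and conversely every such $\calY$ is attainable; moreover $\fronorm{\calX}=\fronorm{\calY}$ because $\mU_3\otimes\mU_2$ has orthonormal columns. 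Hence the set of admissible $\calX$ (and likewise $\calZ$) is exactly the subspace $S=\{\calY\times_2\mU_2\times_3\mU_3:\calY\in\R^{n\times r\times r}\}$, whose orthogonal projector is $\Pi(\cdot)=(\cdot)\times_2\mU_2\mU_2^\tran\times_3\mU_3\mU_3^\tran$. Since $\Pi$ is self-adjoint and $\Pi\calX=\calX$, $\Pi\calZ=\calZ$ for $\calX,\calZ\in S$, one obtains
\begin{align*}
\sup_{\calX,\calZ\in S}\frac{\lab\la(\calI-p^{-1}\calP_{\Omega})(\calX),\calZ\ra\rab}{\fronorm{\calX}\fronorm{\calZ}}=\opnorm{\Pi\lb\calI-p^{-1}\calP_{\Omega}\rb\Pi}.
\end{align*}
In particular the uniformity over all factors $\calG,\calH,\mX_i,\mZ_i$ is automatic once this operator-norm reformulation is in place: no $\varepsilon$-net argument over the factors is required.

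It then remains to show $\opnorm{\Pi(\calI-p^{-1}\calP_{\Omega})\Pi}\le C\sqrt{\mu^2r^2\log n/(n^2p)}$. Writing $\calI-p^{-1}\calP_{\Omega}=\sum_{i_1,i_2,i_3}(1-p^{-1}\delta_{i_1,i_2,i_3})\la\cdot,\calE_{i_1,i_2,i_3}\ra\calE_{i_1,i_2,i_3}$ with $\calE_{i_1,i_2,i_3}=\ve_{i_1}\circ\ve_{i_2}\circ\ve_{i_3}$ and vectorizing, $\Pi(\calI-p^{-1}\calP_{\Omega})\Pi$ corresponds to $\sum_{i_1,i_2,i_3}\mS_{i_1,i_2,i_3}$ where $\mS_{i_1,i_2,i_3}=(1-p^{-1}\delta_{i_1,i_2,i_3})\vect(\Pi\calE_{i_1,i_2,i_3})\vect(\Pi\calE_{i_1,i_2,i_3})^\tran$ are independent mean-zero symmetric matrices. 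The key incoherence input is $\fronorm{\Pi\calE_{i_1,i_2,i_3}}^2=\twonorm{\mU_2^\tran\ve_{i_2}}^2\twonorm{\mU_3^\tran\ve_{i_3}}^2\le(\mu r/n)^2$; together with $\opnorm{\sum_{i_1,i_2,i_3}\vect(\Pi\calE_{i_1,i_2,i_3})\vect(\Pi\calE_{i_1,i_2,i_3})^\tran}=\opnorm{\Pi}=1$ (verified exactly as in Lemma~\ref{lemma: local isotropy}), this gives $\opnorm{\mS_{i_1,i_2,i_3}}\le p^{-1}(\mu r/n)^2$ and $\opnorm{\sum_{i_1,i_2,i_3}\E{\mS_{i_1,i_2,i_3}^2}}\le p^{-1}(\mu r/n)^2$. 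The matrix Bernstein inequality \cite{tropp2015introduction} (with ambient dimension $n^3$, so the logarithmic factor is $O(\log n)$) then yields, with high probability, $\opnorm{\Pi(\calI-p^{-1}\calP_{\Omega})\Pi}\le C(p^{-1}(\mu r/n)^2\log n+\sqrt{p^{-1}(\mu r/n)^2\log n})$.

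Finally, under the hypothesis $p\ge C_2\mu^2r^2\log n/n^2$ one has $p^{-1}(\mu r/n)^2\log n\le 1/C_2\le 1$, so the linear term is dominated by the square-root term and $\opnorm{\Pi(\calI-p^{-1}\calP_{\Omega})\Pi}\le C\sqrt{\mu^2r^2\log n/(n^2p)}$, which is the claim. I do not expect a genuinely hard step here, since the second half is a near-verbatim variant of Lemma~\ref{lemma: local isotropy}; the one point deserving care is the subspace identification of the first paragraph — one must check that allowing $\mX_i,\mZ_i$ to range over all of $\R^{n\times n}$ does not enlarge $S$ beyond the $nr^2$-dimensional space on which the bound $\fronorm{\Pi\calE_{i_1,i_2,i_3}}^2\le(\mu r/n)^2$ holds, and that the normalization $\fronorm{\calX}=\fronorm{\calY}$ is exact so that the denominator really is $\fronorm{\calX}\fronorm{\calZ}$.
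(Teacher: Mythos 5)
Your proof is correct. Note, however, that the paper does not actually prove this lemma: it only remarks that it is "essentially a restatement of Lemma 12 in \cite{tong2021scaling}" and omits the argument, so the natural comparison is with that cited proof and with the paper's own Lemma~\ref{lemma: local isotropy}. What you do differently is to observe that the admissible tensors for a fixed mode $i$ form exactly the range of the orthogonal projector $\Pi_i(\cdot)=(\cdot)\times_{j\neq i}\mU_j\mU_j^\tran$ (the arbitrary factors $\calG,\mX_i$ do not enlarge this subspace, and the Kronecker factor $\otimes_{j\ne i}\mU_j$ being column-orthonormal makes the normalization $\fronorm{\calX}=\fronorm{\calY}$ exact, as you checked), so the uniform statement collapses to the single operator-norm bound $\opnorm{\Pi_i(\calI-p^{-1}\calP_{\Omega})\Pi_i}\le C\sqrt{\mu^2r^2\log n/(n^2p)}$, which you then establish by the vectorize-and-Bernstein computation that is a verbatim variant of the paper's proof of Lemma~\ref{lemma: local isotropy}, with $\fronorm{\Pi_i(\ve_{i_1}\circ\ve_{i_2}\circ\ve_{i_3})}^2\le(\mu r/n)^2$ replacing the tangent-space bound there and with the variance factor $\opnorm{\sum\vect(\Pi_i\calE_{i_1,i_2,i_3})\vect(\Pi_i\calE_{i_1,i_2,i_3})^\tran}=\opnorm{\Pi_i}=1$. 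By contrast, the argument the paper points to in \cite{tong2021scaling} organizes the estimate along the unconstrained mode (slice-wise Bernstein bounds involving $(\mU_3\otimes\mU_2)^\tran\mD_{i_1}(\mU_3\otimes\mU_2)$ plus a union bound over slices and Cauchy--Schwarz), which is what makes arbitrary $\mX_i$ of size $n\times n$ harmless there. Your reformulation buys the uniformity over all factors for free (no $\varepsilon$-net, no per-slice union bound) and makes transparent why the rate coincides with that of Lemma~\ref{lemma: local isotropy}; the cited slice-wise route is more flexible when the two tensors carry different unconstrained modes, but that generality is not needed for the statement or for how the lemma is used (e.g., in bounding $\eta_{5,5}^c$). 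All the individual estimates you use (the bound $|1-p^{-1}\delta|\le p^{-1}$, the variance bound $p^{-1}(\mu r/n)^2$, absorbing the linear Bernstein term under $p\ge C_2\mu^2r^2\log n/n^2$, and the union bound over $i=1,2,3$) check out.
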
 
\begin{remark}
This lemma is essentially a restatement  of Lemma 12 in \citep{tong2021scaling}.  Considering the case i = 1, the only  difference is that here $\calG$ (or $\calH$) and $\mX_i$ (or $\mZ_i$) are respectively of size $n\times r\times r$ and $n\times n$ instead of $r\times r\times r$ and $n\times r$. However, the proof therein is equally applicable here and thus we omit the  proof.
\end{remark}

\begin{lemma}[{\citep[Lemma 14]{tong2021scaling}}]
	\label{lemma: tong lemma 14}
	Suppose $\Omega$ satisfies the Bernoulli observation model. Then with high probability, 
	\begin{align*}
		\left\vert \left\langle \lb \calI - p^{-1}\calP_{\Omega}\rb \left(\calG\ttimes \mX_i \right) ,\calH\ttimes\mZ_i\right\rangle\right\vert\leq C\lb p^{-1}\log^3 n+\sqrt{p^{-1}n\log^5 n}\rb\tau,
	\end{align*}
	holds simultaneously for all tensors $\calG\ttimes \mX_i$ and $\calH\ttimes\mZ_i$,
	where the quantity $\tau$ obeys
	\begin{align*}
		\tau&\leq \lb\twoinf{\mX_1\calM_1(\calG)}\fronorm{\mZ_1\calM_1(\calH)} \wedge  \fronorm{\mX_1\calM_1(\calG)} \twoinf{\mZ_1\calM_1(\calH)}\rb\\
		&\quad\quad\cdot \lb\twoinf{\mX_2}\fronorm{\mZ_2}\wedge\fronorm{\mX_2}\twoinf{\mZ_2}\rb\lb\twoinf{\mX_3}\fronorm{\mZ_3}\wedge\fronorm{\mX_3}\twoinf{\mZ_3}\rb.
	\end{align*}
\end{lemma}
\begin{lemma}
	\label{lemma: loo tong lemma 14}
	Suppose $\Omega$ satisfies the Bernoulli observation model. Then with high probability, 
	\begin{align*}
		\left\vert \left\langle \lb \calI - p^{-1}\calP_{\Omega_{-\ell}} - \calP_{\ell}\rb \left(\calG\ttimes \mX_i \right) ,\calH\ttimes\mZ_i\right\rangle\right\vert\leq C\lb p^{-1}\log^3 n+\sqrt{p^{-1}n\log^5 n}\rb\tau,
	\end{align*}
	holds simultaneously for all tensors $\calG\ttimes \mX_i$ and $\calH\ttimes\mZ_i$,
	where the quantity $\tau$ obeys
	\begin{align*}
		\tau&\leq \lb\twoinf{\mX_1\calM_1(\calG)}\fronorm{\mZ_1\calM_1(\calH)} \wedge  \fronorm{\mX_1\calM_1(\calG)} \twoinf{\mZ_1\calM_1(\calH)}\rb\\
		&\quad\quad\cdot \lb\twoinf{\mX_2}\fronorm{\mZ_2}\wedge\fronorm{\mX_2}\twoinf{\mZ_2}\rb\lb\twoinf{\mX_3}\fronorm{\mZ_3}\wedge\fronorm{\mX_3}\twoinf{\mZ_3}\rb.
	\end{align*}
\end{lemma}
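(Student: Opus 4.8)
\textbf{Proof proposal for Lemma~\ref{lemma: loo tong lemma 14}.}
The plan is to reduce the leave-one-out statement to Lemma~\ref{lemma: tong lemma 14} by an operator identity. First I would record that, since $\calP_{\Omega}$ and $\calP_{\ell}$ are both entrywise mask operators, they commute, and
\begin{align*}
\calP_{\Omega_{-\ell}} = \calP_{\Omega}\lb\calI-\calP_{\ell}\rb = \lb\calI-\calP_{\ell}\rb\calP_{\Omega},
\end{align*}
because $\calP_{\Omega_{-\ell}}(\calZ)$ keeps exactly those entries $\calZ_{i_1,i_2,i_3}$ with $(i_1,i_2,i_3)\in\Omega$ and no $i_j=\ell$. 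Consequently
\begin{align*}
\calI-p^{-1}\calP_{\Omega_{-\ell}}-\calP_{\ell} = \lb\calI-\calP_{\ell}\rb\lb\calI-p^{-1}\calP_{\Omega}\rb = \lb\calI-p^{-1}\calP_{\Omega}\rb\lb\calI-\calP_{\ell}\rb.
\end{align*}
Next I would observe that $\lb\calI-\calP_{\ell}\rb(\calZ) = \calZ\ttimes\lb\mI-\ve_{\ell}\ve_{\ell}^\tran\rb$ for any $\calZ$, since the product of the three mode multipliers $\mI-\ve_\ell\ve_\ell^\tran$ zeros out any entry whose index touches $\ell$ and leaves the others intact. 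Applying this to $\calG\ttimes\mX_i$ gives $\lb\calI-\calP_{\ell}\rb\lb\calG\ttimes\mX_i\rb = \calG\ttimes\tilde{\mX}_i$, where $\tilde{\mX}_i := \lb\mI-\ve_\ell\ve_\ell^\tran\rb\mX_i$ is simply $\mX_i$ with its $\ell$-th row set to zero. Crucially, this preserves the Tucker form.

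Combining the two observations, for any admissible factors I would write
\begin{align*}
\la\lb\calI-p^{-1}\calP_{\Omega_{-\ell}}-\calP_{\ell}\rb\lb\calG\ttimes\mX_i\rb,~\calH\ttimes\mZ_i\ra = \la\lb\calI-p^{-1}\calP_{\Omega}\rb\lb\calG\ttimes\tilde{\mX}_i\rb,~\calH\ttimes\mZ_i\ra,
\end{align*}
and then invoke Lemma~\ref{lemma: tong lemma 14} on the right-hand side, which (on a single high-probability event that does not depend on $\ell$) bounds it by $C\lb p^{-1}\log^3 n+\sqrt{p^{-1}n\log^5 n}\rb\tilde{\tau}$, with $\tilde{\tau}$ the quantity built from $\tilde{\mX}_i$ in place of $\mX_i$. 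Finally I would note that deleting a row can only decrease both the $\ell_{2,\infty}$ norm and the Frobenius norm, so $\twoinf{\tilde{\mX}_1\calM_1(\calG)}\leq\twoinf{\mX_1\calM_1(\calG)}$, $\fronorm{\tilde{\mX}_1\calM_1(\calG)}\leq\fronorm{\mX_1\calM_1(\calG)}$, $\twoinf{\tilde{\mX}_j}\leq\twoinf{\mX_j}$ and $\fronorm{\tilde{\mX}_j}\leq\fronorm{\mX_j}$ for $j=2,3$; since each factor in the expression for $\tilde{\tau}$ is monotone under these replacements, $\tilde{\tau}\leq\tau$, which yields the claim. Because Lemma~\ref{lemma: tong lemma 14} holds simultaneously over all factor matrices, the resulting inequality holds simultaneously over all $\calG\ttimes\mX_i$ and $\calH\ttimes\mZ_i$ (and in fact for all $\ell$ at once).

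The argument is essentially bookkeeping once the reduction is in hand, so there is no serious analytic obstacle; the only points requiring care are verifying the mask-operator identity $\calI-p^{-1}\calP_{\Omega_{-\ell}}-\calP_{\ell}=(\calI-\calP_\ell)(\calI-p^{-1}\calP_\Omega)$ precisely from the definitions of $\calP_{\Omega_{-\ell}}$ and $\calP_{\ell}$, checking that $(\calI-\calP_\ell)$ acting on a Tucker-form tensor merely zeros rows of a factor matrix (so the hypotheses of Lemma~\ref{lemma: tong lemma 14} are met), and confirming that every term appearing in $\tau$ is monotone under row deletion so that $\tilde\tau\le\tau$. I would also remark that one could symmetrically route $(\calI-\calP_\ell)$ onto $\calH\ttimes\mZ_i$ instead, or sandwich it on both sides, without changing the conclusion.
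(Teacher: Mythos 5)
Your proof is correct, but it takes a genuinely different route from the paper. The paper's own "proof" is a one-line remark that the argument is the same as that of Lemma~\ref{lemma: tong lemma 14} (i.e., one is expected to rerun the concentration/peeling argument of Tong et al.\ with the modified sampling operator $p^{-1}\calP_{\Omega_{-\ell}}+\calP_{\ell}$ in place of $p^{-1}\calP_{\Omega}$). You instead reduce the leave-one-out statement to Lemma~\ref{lemma: tong lemma 14} itself: the mask identity $\calI-p^{-1}\calP_{\Omega_{-\ell}}-\calP_{\ell}=(\calI-\calP_{\ell})(\calI-p^{-1}\calP_{\Omega})$ is exactly the one the paper already uses in the bound of $\opnorm{\mE_1^{0,\ell}}$, the observation that $(\calI-\calP_{\ell})(\calZ)=\calZ\ttimes(\mI-\ve_\ell\ve_\ell^\tran)$ correctly shows that $(\calI-\calP_\ell)$ merely zeroes the $\ell$-th rows of the factor matrices and hence preserves the Tucker form required by Lemma~\ref{lemma: tong lemma 14}, and every quantity entering $\tau$ ($\ell_{2,\infty}$ and Frobenius norms of the factors and of $\mX_1\calM_1(\calG)$) is indeed monotone under row deletion, so $\tilde\tau\le\tau$. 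Since Lemma~\ref{lemma: tong lemma 14} is uniform over all factor matrices, your argument gives the bound on the same high-probability event for every $\ell$ simultaneously, with no re-derivation of the chaining argument; what the paper's (omitted) route buys is nothing beyond what your reduction gives, so your approach is the cleaner one here. The only points to state carefully in a final write-up are exactly the ones you flagged: the operator identity from the definitions of $\calP_{\Omega_{-\ell}}$ and $\calP_{\ell}$ (they are commuting entrywise masks), and that $\calG\ttimes\tilde\mX_i$ with $\tilde\mX_i=(\mI-\ve_\ell\ve_\ell^\tran)\mX_i$ lies in the class over which Lemma~\ref{lemma: tong lemma 14} is uniform.
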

\begin{remark}
   The proof of Lemma \ref{lemma: loo tong lemma 14} is similar to the proof of Lemma \ref{lemma: tong lemma 14}, so we omit it.  
\end{remark}

\begin{lemma}[{\citep[Lemma 37]{ma2018implicit}}, {\citep[Lemma 6]{chen2020nonconvex}}]
	\label{lemma Ma 2017 lemma 37}
	Suppose $\mU$, $\mX_1$, $\mX_2 \in\R^{n\times r}$ are matrices such that
	\begin{align*}
		\opnorm{\mX_1-\mU}\opnorm{\mU}\leq \frac{\sigma^2_r(\mU)}{2}\quad\mbox{and}\quad\opnorm{\mX_1-\mX_2}\opnorm{\mU}\leq \frac{\sigma^2_r(\mU)}{4}.
	\end{align*} 
	Let $\mR_1$ and $\mR_2$ be orthogonal matrices such that
	\begin{align*}
		\mR_1 = \arg\min_{\mR^\tran\mR = \mI}\fronorm{\mX_1\mR-\mU}\text{ and }~\mR_2 = \arg\min_{\mR^\tran\mR = \mI}\fronorm{\mX_2\mR-\mU}.
	\end{align*}
	Then one has
	\begin{align*}
		\fronorm{\mX_1\mR_1-\mX_2\mR_2}\leq 5\frac{\sigma^2_1(\mU)}{\sigma^2_r(\mU)}\fronorm{\mX_1-\mX_2}.
	\end{align*}
\end{lemma}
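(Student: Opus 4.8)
The plan is to reduce the statement to a perturbation bound for the orthogonal polar factor of a nearly invertible square matrix. The starting point is the closed form of the Procrustes minimizer: if $\mX_i^\tran\mU = \widehat{\mA}_i\widehat{\bSigma}_i\widehat{\mB}_i^\tran$ denotes a singular value decomposition, then $\mR_i = \widehat{\mA}_i\widehat{\mB}_i^\tran$, which follows from $\fronorm{\mX_i\mR-\mU}^2 = \fronorm{\mX_i}^2+\fronorm{\mU}^2-2\la\mR,\mX_i^\tran\mU\ra$ and von Neumann's trace inequality. Thus $\mR_i$ is the polar factor of $\mX_i^\tran\mU$, and the first task is to certify that $\mX_i^\tran\mU$ stays well separated from singular. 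Writing $\mX_i^\tran\mU = \mU^\tran\mU + (\mX_i-\mU)^\tran\mU$ and applying Weyl's inequality gives $\sigma_r(\mX_1^\tran\mU)\ge\sigma_r^2(\mU)-\opnorm{\mX_1-\mU}\opnorm{\mU}\ge\tfrac12\sigma_r^2(\mU)$ from the first hypothesis; and since $\opnorm{\mX_2-\mU}\le\opnorm{\mX_2-\mX_1}+\opnorm{\mX_1-\mU}$, the two hypotheses together give $\opnorm{\mX_2-\mU}\opnorm{\mU}\le\tfrac34\sigma_r^2(\mU)$, hence $\sigma_r(\mX_2^\tran\mU)\ge\tfrac14\sigma_r^2(\mU)>0$. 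In particular $\mR_1,\mR_2$ are the unique polar factors of invertible matrices.

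Next I would invoke the polar-factor perturbation estimate: for invertible $\mA_1,\mA_2\in\R^{r\times r}$ with polar (orthogonal) factors $\mQ_1,\mQ_2$,
\[
\fronorm{\mQ_1-\mQ_2}\le\frac{2}{\sigma_r(\mA_1)+\sigma_r(\mA_2)}\fronorm{\mA_1-\mA_2}.
\]
Applying this with $\mA_i=\mX_i^\tran\mU$, using $\fronorm{\mX_1^\tran\mU-\mX_2^\tran\mU}\le\opnorm{\mU}\fronorm{\mX_1-\mX_2}$ and $\sigma_r(\mA_1)+\sigma_r(\mA_2)\ge\tfrac34\sigma_r^2(\mU)$, yields
\[
\fronorm{\mR_1-\mR_2}\le\frac{8}{3}\,\frac{\opnorm{\mU}}{\sigma_r^2(\mU)}\,\fronorm{\mX_1-\mX_2}.
\]
If a self-contained argument is preferred, one writes $\mA_i=\mQ_i\mH_i$ with $\mH_i=(\mA_i^\tran\mA_i)^{1/2}\succ0$, uses the identity $(\mQ_1-\mQ_2)\mH_1=(\mA_1-\mA_2)-\mQ_2(\mH_1-\mH_2)$, multiplies on the right by $\mH_1^{-1}$, and controls $\fronorm{\mH_1-\mH_2}$ through the Sylvester-type relation $\mH_1(\mH_1-\mH_2)+(\mH_1-\mH_2)\mH_2=\mA_1^\tran\mA_1-\mA_2^\tran\mA_2$ together with the lower bounds $\sigma_r(\mH_i)=\sigma_r(\mA_i)$.

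Finally I decompose $\mX_1\mR_1-\mX_2\mR_2=(\mX_1-\mX_2)\mR_1+\mX_2(\mR_1-\mR_2)$; orthogonality of $\mR_1$ makes the first term contribute exactly $\fronorm{\mX_1-\mX_2}$. For the second term, the clean way to keep the constant small is to write $\mX_2(\mR_1-\mR_2)=(\mX_2\mR_2)\bigl(\mR_2^\tran\mR_1-\mI\bigr)$, split $\mX_2\mR_2=\mU+(\mX_2\mR_2-\mU)$, and note that $\mR_2^\tran\mR_1-\mI=\mR_2^\tran(\mR_1-\mR_2)$ has Frobenius norm $\fronorm{\mR_1-\mR_2}$ and operator norm at most $\fronorm{\mR_1-\mR_2}$, which is tiny; the residual piece $(\mX_2\mR_2-\mU)(\mR_2^\tran\mR_1-\mI)$ is of smaller order. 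This gives $\fronorm{\mX_2(\mR_1-\mR_2)}\le(\opnorm{\mU}+\text{smaller order})\fronorm{\mR_1-\mR_2}$, and combined with the previous display and $\sigma_1^2(\mU)/\sigma_r^2(\mU)\ge1$ one collects the constant to $5$. The cruder bound $\fronorm{\mX_2(\mR_1-\mR_2)}\le\opnorm{\mX_2}\fronorm{\mR_1-\mR_2}$ with $\opnorm{\mX_2}\le\opnorm{\mU}+\opnorm{\mX_2-\mU}\le\tfrac74\opnorm{\mU}$ already gives the same conclusion with a slightly larger absolute constant.

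The main obstacle is the polar-factor perturbation bound: obtaining the estimate on $\fronorm{\mR_1-\mR_2}$ with a sharp constant---in particular the tight control of $\fronorm{\mH_1-\mH_2}$ for the absolute-value factors---is the only genuinely nonroutine ingredient; everything else reduces to Weyl's inequality, the triangle inequality, and submultiplicativity of the operator and Frobenius norms. The hypotheses are calibrated precisely so that $\mX_1^\tran\mU$ and $\mX_2^\tran\mU$ both stay uniformly bounded away from singular, which is exactly what makes their polar factors stable and the whole argument go through.
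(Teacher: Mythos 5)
The paper itself does not prove this lemma; it imports it from the cited references, so there is no internal proof to compare against. Your route is the standard one used there: identify $\mR_i$ as the orthogonal polar factor of $\mX_i^\tran\mU$, lower-bound $\sigma_r(\mX_1^\tran\mU)\ge\tfrac12\sigma_r^2(\mU)$ and $\sigma_r(\mX_2^\tran\mU)\ge\tfrac14\sigma_r^2(\mU)$ via Weyl, and invoke the polar-factor perturbation bound $\fronorm{\mR_1-\mR_2}\le\frac{2}{\sigma_r(\mX_1^\tran\mU)+\sigma_r(\mX_2^\tran\mU)}\fronorm{(\mX_1-\mX_2)^\tran\mU}\le\frac{8}{3}\frac{\opnorm{\mU}}{\sigma_r^2(\mU)}\fronorm{\mX_1-\mX_2}$. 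All of that is correct.

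The gap is in the final assembly, i.e.\ precisely where the advertised constant $5$ must come from. Your crude version, $\mX_1\mR_1-\mX_2\mR_2=(\mX_1-\mX_2)\mR_1+\mX_2(\mR_1-\mR_2)$ with $\opnorm{\mX_2}\le\tfrac74\opnorm{\mU}$, yields $1+\tfrac74\cdot\tfrac83\,\frac{\sigma_1^2(\mU)}{\sigma_r^2(\mU)}\le\tfrac{17}{3}\,\frac{\sigma_1^2(\mU)}{\sigma_r^2(\mU)}$, which overshoots $5$. Your proposed refinement does not close this: the claim that $(\mX_2\mR_2-\mU)(\mR_2^\tran\mR_1-\mI)$ is of smaller order is not a worst-case statement, since $\fronorm{\mR_1-\mR_2}$ need not be small (only $\opnorm{\mX_1-\mX_2}$ is constrained by the hypotheses, so $\fronorm{\mX_1-\mX_2}$, hence $\fronorm{\mR_1-\mR_2}$, can be of order $\sqrt{r}$), and $\opnorm{\mX_2\mR_2-\mU}$ is not controlled by $\opnorm{\mX_2-\mU}$ without an additional argument (the Procrustes optimality only gives a Frobenius-norm comparison, and only against $\mR=\mI$). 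The repair is simple and keeps your ingredients: swap the roles of $\mX_1$ and $\mX_2$ in the splitting, writing $\mX_1\mR_1-\mX_2\mR_2=(\mX_1-\mX_2)\mR_2+\mX_1(\mR_1-\mR_2)$. Since the first hypothesis gives $\opnorm{\mX_1}\le\opnorm{\mU}+\opnorm{\mX_1-\mU}\le\tfrac32\opnorm{\mU}$, you obtain $\fronorm{\mX_1\mR_1-\mX_2\mR_2}\le\bigl(1+\tfrac32\cdot\tfrac83\,\frac{\sigma_1^2(\mU)}{\sigma_r^2(\mU)}\bigr)\fronorm{\mX_1-\mX_2}\le 5\,\frac{\sigma_1^2(\mU)}{\sigma_r^2(\mU)}\fronorm{\mX_1-\mX_2}$, using $\sigma_1(\mU)\ge\sigma_r(\mU)$. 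With that one change your argument proves the lemma exactly as stated.
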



\begin{lemma}[{\citep[Lemma 4.1]{wei2020guarantees}}]
\label{lemma: lemma 4.1 in Wei}
	Let $\widehat{\mX} = \widehat{\mU}\widehat{\bSigma}\widehat{\mV}^\tran$ and $\mX=\mU\bSigma\mV^\tran$  be rank $r$ matrices. Then
	\begin{align*}
    \opnorm{\widehat{\mU}\widehat{\mU}^\tran-\mU\mU^\tran}\leq \frac{\fronorm{\widehat{\mX} - \mX}}{\sigma_{\min}(\mX)}\quad\mbox{and}\quad	\opnorm{\widehat{\mV}\widehat{\mV}^\tran - \mV\mV^\tran} \leq \frac{\fronorm{\widehat{\mX} - \mX}}{\sigma_{\min}(\mX)}.
	\end{align*}
\end{lemma}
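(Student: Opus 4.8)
The plan is to recognize this as Wedin's $\sin\Theta$ theorem specialized to rank-$r$ matrices (where the $(r+1)$-th singular value is zero, so the relevant spectral gap is simply $\sigma_{\min}(\mX)$), and to derive it from scratch with elementary linear algebra. Throughout, write $\widehat{\mU}\widehat{\mU}^\tran$ and $\mU\mU^\tran$ for the orthogonal projectors onto the column spaces of $\widehat{\mX}$ and $\mX$.

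First I would use that the range of $\widehat{\mX}$ equals the range of $\widehat{\mU}$, so $\widehat{\mU}\widehat{\mU}^\tran\widehat{\mX}=\widehat{\mX}$ and hence $(\mI-\widehat{\mU}\widehat{\mU}^\tran)\mX=(\mI-\widehat{\mU}\widehat{\mU}^\tran)(\mX-\widehat{\mX})$. Since $\mI-\widehat{\mU}\widehat{\mU}^\tran$ is a contraction,
\[
\fronorm{(\mI-\widehat{\mU}\widehat{\mU}^\tran)\mX}\leq\fronorm{\mX-\widehat{\mX}}.
\]
Next, insert the SVD $\mX=\mU\bSigma\mV^\tran$. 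Right multiplication by $\mV^\tran$ (orthonormal rows) preserves the spectral norm, so $\opnorm{(\mI-\widehat{\mU}\widehat{\mU}^\tran)\mX}=\opnorm{(\mI-\widehat{\mU}\widehat{\mU}^\tran)\mU\bSigma}\geq\sigma_{\min}(\mX)\,\opnorm{(\mI-\widehat{\mU}\widehat{\mU}^\tran)\mU}$, where the last inequality is the elementary bound $\opnorm{A\bSigma}\geq\sigma_{\min}(\bSigma)\opnorm{A}$ (pull the maximizing unit vector for $A$ back through $\bSigma^{-1}$). Chaining this with the previous display and $\opnorm{\cdot}\leq\fronorm{\cdot}$ yields $\opnorm{(\mI-\widehat{\mU}\widehat{\mU}^\tran)\mU}\leq\fronorm{\mX-\widehat{\mX}}/\sigma_{\min}(\mX)$.

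Then I would convert this to a projector bound via the identity
\[
\opnorm{\widehat{\mU}\widehat{\mU}^\tran-\mU\mU^\tran}=\opnorm{(\mI-\widehat{\mU}\widehat{\mU}^\tran)\mU\mU^\tran}=\opnorm{(\mI-\widehat{\mU}\widehat{\mU}^\tran)\mU},
\]
where the second equality uses that $\mU^\tran$ has orthonormal rows, and the first uses that $\widehat{\mU}$ and $\mU$ both have $r$ columns, i.e., their ranges have equal dimension. This gives the first claimed inequality. The bound for $\mV$ follows by applying the entire argument to $\mX^\tran=\mV\bSigma\mU^\tran$ and $\widehat{\mX}^\tran$, using $\sigma_{\min}(\mX^\tran)=\sigma_{\min}(\mX)$ and $\fronorm{\mX^\tran-\widehat{\mX}^\tran}=\fronorm{\mX-\widehat{\mX}}$.

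The one step that deserves care is the projector identity in the last display. In general one only has $\opnorm{P-Q}=\max\{\opnorm{P(\mI-Q)},\opnorm{(\mI-P)Q}\}$ for orthogonal projectors $P,Q$, and the two terms on the right coincide precisely when $\operatorname{rank}P=\operatorname{rank}Q$ — which is exactly the equal-dimension situation here (both ranges are $r$-dimensional). This equality can be justified through the CS (principal-angle) decomposition of the subspace pair, or simply cited; everything else in the argument is routine. Note that the resulting bound is automatically consistent with $\opnorm{\widehat{\mU}\widehat{\mU}^\tran-\mU\mU^\tran}\leq 1$ and is informative only when $\fronorm{\mX-\widehat{\mX}}<\sigma_{\min}(\mX)$, which is the regime in which it is invoked throughout the paper.
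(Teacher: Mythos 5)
Your proof is correct. Note that the paper itself does not prove this lemma; it is quoted verbatim from \cite[Lemma 4.1]{wei2020guarantees}, and your derivation is essentially the standard Wedin-type argument behind that result: use $(\mI-\widehat{\mU}\widehat{\mU}^\tran)\widehat{\mX}=\bzero$ to replace $\mX$ by $\mX-\widehat{\mX}$, peel off $\bSigma$ and $\mV^\tran$ to lower-bound by $\sigma_{\min}(\mX)\opnorm{(\mI-\widehat{\mU}\widehat{\mU}^\tran)\mU}$, and then invoke the equal-rank projector identity $\opnorm{\widehat{\mU}\widehat{\mU}^\tran-\mU\mU^\tran}=\opnorm{(\mI-\widehat{\mU}\widehat{\mU}^\tran)\mU\mU^\tran}$. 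The one step you flag is indeed the only nontrivial ingredient, and your justification is sound: since both matrices are rank $r$, the two subspaces have equal dimension, so the two terms in $\opnorm{P-Q}=\max\{\opnorm{P(\mI-Q)},\opnorm{(\mI-Q)P}\}$ coincide (e.g.\ via the CS/principal-angle decomposition, using $\mathrm{rank}(P_W P_U)=\mathrm{rank}(P_U P_W)$), with no smallness assumption on $\opnorm{P-Q}$ required.
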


\begin{lemma}[{\citep[Lemma 1]{cai2018rate}}]
	\label{rateoptimal lemma 1}
	Suppose $\mV,\widehat{\mV}\in \R^{n\times r}$ are orthonormal matrices. Then the following relations hold,
	\begin{align*}
		&\fronorm{\sin\Theta\lb\mV,\widehat{\mV}\rb}\leq \inf_{\mR^\tran\mR = \mI}\fronorm{\widehat{\mV}-\mV\mR}\leq \sqrt{2}\fronorm{\sin\Theta\lb\mV,\widehat{\mV}\rb},\\
		&\opnorm{\sin\Theta\lb\mV,\widehat{\mV}\rb}\leq \inf_{\mR^\tran\mR = \mI}\opnorm{\widehat{\mV}-\mV\mR}\leq \sqrt{2}\opnorm{\sin\Theta\lb\mV,\widehat{\mV}\rb},\\
		&\opnorm{\sin\Theta\lb\mV,\widehat{\mV}\rb}\leq \opnorm{\widehat{\mV}{\widehat{\mV}}^{\tran}-\mV\mV^{\tran}}\leq 2\opnorm{\sin\Theta\lb\mV,\widehat{\mV}\rb},\\
		&\fronorm{\widehat{\mV}{\widehat{\mV}}^{\tran}-\mV\mV^{\tran}} = \sqrt{2}\fronorm{\sin\Theta\lb\mV,\widehat{\mV}\rb}.
	\end{align*}
\end{lemma}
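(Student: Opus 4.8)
The plan is to reduce every quantity in the statement to the principal angles between the column spaces of $\mV$ and $\widehat{\mV}$. First I would fix the singular value decomposition $\mV^\tran\widehat{\mV} = \mA\,\cos\Theta\,\mB^\tran$ with $\mA,\mB\in\R^{r\times r}$ orthogonal and $\cos\Theta = \diag(\cos\theta_1,\dots,\cos\theta_r)$, $\theta_i\in[0,\pi/2]$, so that by definition $\sin\Theta(\mV,\widehat{\mV}) = \diag(\sin\theta_1,\dots,\sin\theta_r)$. The classical solution of the orthogonal Procrustes problem shows that $\inf_{\mR^\tran\mR=\mI}\fronorm{\widehat{\mV}-\mV\mR}$ (and likewise the operator-norm version) is attained at the polar factor $\mR^\ast := \mA\mB^\tran$ of $\mV^\tran\widehat{\mV}$, which also settles the assertion that the infima are attained. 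A direct expansion gives that $\mR^{\ast\tran}\mV^\tran\widehat{\mV} = \mB\,\cos\Theta\,\mB^\tran$ is symmetric PSD, hence $\fronorm{\widehat{\mV}-\mV\mR^\ast}^2 = 2r - 2\,\trace(\cos\Theta) = 2\sum_i(1-\cos\theta_i)$ and $\opnorm{\widehat{\mV}-\mV\mR^\ast}^2 = 2\max_i(1-\cos\theta_i)$.

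Next I would invoke the elementary inequalities valid for $\cos\theta\in[0,1]$, namely $1-\cos\theta \le \sin^2\theta = (1-\cos\theta)(1+\cos\theta) \le 2(1-\cos\theta)$. Summing (resp.\ maximizing) these over $i$ yields $\fronorm{\sin\Theta}^2 \le 2\sum_i(1-\cos\theta_i) \le 2\fronorm{\sin\Theta}^2$ and $\opnorm{\sin\Theta}^2 \le 2\max_i(1-\cos\theta_i) \le 2\opnorm{\sin\Theta}^2$. Taking square roots and recalling that the middle quantities are exactly the attained infima gives the first two displayed relations (the left inequalities comparing $\fronorm{\sin\Theta}$, $\opnorm{\sin\Theta}$ with the infimum, the right ones with the value at $\mR^\ast$).

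For the last two relations I would analyze the symmetric matrix $\mP := \widehat{\mV}\widehat{\mV}^\tran - \mV\mV^\tran$ directly. Decomposing $\R^n$ into the mutual invariant subspaces of the two orthogonal projectors $\mV\mV^\tran$ and $\widehat{\mV}\widehat{\mV}^\tran$ (equivalently, using the CS decomposition of the pair $(\mV,\widehat{\mV})$), one sees that $\mP$ vanishes on the common directions (principal angle $0$) and on the directions orthogonal to both ranges, and on each remaining two-dimensional block it is the difference of two rank-one orthogonal projectors onto lines meeting at angle $\theta_i$, whose eigenvalues are $\pm\sin\theta_i$. Hence the nonzero eigenvalues of $\mP$ are $\pm\sin\theta_1,\dots,\pm\sin\theta_r$, giving $\opnorm{\mP} = \max_i\sin\theta_i = \opnorm{\sin\Theta}$ and $\fronorm{\mP}^2 = 2\sum_i\sin^2\theta_i = 2\fronorm{\sin\Theta}^2$; the former is even stronger than the claimed $\opnorm{\sin\Theta}\le\opnorm{\mP}\le 2\opnorm{\sin\Theta}$, and the latter is exactly the fourth relation.

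The main obstacle I anticipate is the bookkeeping in the third and fourth relations: one must treat both $r\le n/2$ and $r>n/2$ (where $\mV^\perp$ and $\widehat{\mV}^\perp$ have dimension $<r$), ensure the common and mutually orthogonal directions contribute only zero eigenvalues to $\mP$, and verify the multiplicities of $\pm\sin\theta_i$. A shortcut that avoids the full CS decomposition for relation three is the identity $\mP = (\widehat{\mV}\mR^\ast-\mV)\mR^{\ast\tran}\widehat{\mV}^\tran + \mV(\widehat{\mV}\mR^\ast-\mV)^\tran$, which gives $\opnorm{\mP}\le 2\opnorm{\widehat{\mV}\mR^\ast-\mV}\le 2\sqrt{2}\,\opnorm{\sin\Theta}$, together with the trivial lower bound $\opnorm{\mP}\ge\opnorm{(\mI-\mV\mV^\tran)\widehat{\mV}} = \opnorm{\sin\Theta}$; relation four, however, genuinely needs the exact spectral description of $\mP$. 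All remaining computations are routine.
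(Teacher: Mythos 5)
The paper offers no proof of this lemma at all---it is imported verbatim from Cai and Zhang \cite[Lemma 1]{cai2018rate}---so your argument stands on its own, and it is essentially correct and follows the standard principal-angle route used in that reference: Procrustes analysis of $\inf_{\mR}\|\widehat{\mV}-\mV\mR\|$ together with $1-\cos\theta\leq\sin^{2}\theta\leq 2(1-\cos\theta)$ for the first two relations, and the spectral description of $\widehat{\mV}\widehat{\mV}^{\tran}-\mV\mV^{\tran}$ (nonzero eigenvalues $\pm\sin\theta_i$) for the last two. Two points deserve attention. First, your lower bounds rest on the assertion that the operator-norm Procrustes problem is also minimized at the polar factor $\mR^{\ast}=\mA\mB^{\tran}$; this is true (a Fan--Hoffman-type fact, e.g.\ because for any orthogonal $\mR$ and the right singular vector $\vx$ of $\mV^{\tran}\widehat{\mV}$ belonging to $\cos\theta_{\max}$ one has $\vx^{\tran}\lb\mR^{\tran}\mV^{\tran}\widehat{\mV}+\widehat{\mV}^{\tran}\mV\mR\rb\vx\leq 2\cos\theta_{\max}$), but you state it without proof; a cleaner route that avoids any attainment claim is to note that for \emph{every} orthogonal $\mR$, $\|\widehat{\mV}-\mV\mR\|\geq\|(\mI-\mV\mV^{\tran})(\widehat{\mV}-\mV\mR)\|=\|(\mI-\mV\mV^{\tran})\widehat{\mV}\|$ in either norm, and the singular values of $(\mI-\mV\mV^{\tran})\widehat{\mV}$ are exactly $\sin\theta_1,\dots,\sin\theta_r$. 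Second, your proposed ``shortcut'' for the third relation only delivers $\opnorm{\widehat{\mV}\widehat{\mV}^{\tran}-\mV\mV^{\tran}}\leq 2\sqrt{2}\,\opnorm{\sin\Theta}$, which does \emph{not} recover the stated constant $2$; it is your main CS-decomposition argument (which in fact gives the equality $\opnorm{\widehat{\mV}\widehat{\mV}^{\tran}-\mV\mV^{\tran}}=\opnorm{\sin\Theta}$, stronger than needed) that carries relation three, so the shortcut should be dropped or presented only as a crude bound. Finally, relation four has a one-line alternative that bypasses the CS decomposition entirely: $\fronorm{\widehat{\mV}\widehat{\mV}^{\tran}-\mV\mV^{\tran}}^{2}=2r-2\fronorm{\mV^{\tran}\widehat{\mV}}^{2}=2\sum_{i}\sin^{2}\theta_i$.
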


\begin{lemma}[Davis-Kahan $\sin\Theta$ Theorem]
	\label{Davis-kahan sintheta theorem}
	Suppose $\mG^\natural, \bDelta\in\R^{n\times n}$ are symmetric matrices, and $\widehat{\mG} = \mG^\natural +\bDelta$. Let $\delta = \sigma_r(\mG^\natural)-\sigma_{r+1}(\mG^\natural)$ be the gap between the top $r$-th and $r+1$-th eigenvalues of $\mG^\natural$, and $\mU$, $\widehat{\mU}$ be matrices whose columns are the top $r$ orthonormal eigenvectors of $\mG^\natural$ and $\widehat{\mG}$ respectively. If $\delta>\opnorm{\bDelta}$, then we have
	\begin{align*}
\opnorm{\sin\Theta\lb\mU,\widehat{\mU}\rb}\leq\frac{\opnorm{\bDelta\mU}}{\delta-\opnorm{\bDelta}}\quad\mbox{and}\quad\fronorm{\sin\Theta\lb\mU,\widehat{\mU}\rb}\leq\frac{\fronorm{\bDelta\mU}}{\delta-\opnorm{\bDelta}}.
	\end{align*}
\end{lemma}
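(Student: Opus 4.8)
The plan is to follow the classical Davis--Kahan argument, reducing the bound to an estimate for a Sylvester equation. Write the eigen-decomposition of $\widehat{\mG}$ as $\widehat{\mG}=\widehat{\mU}\widehat{\bLambda}_1\widehat{\mU}^\tran+\widehat{\mU}_{\perp}\widehat{\bLambda}_2\widehat{\mU}_{\perp}^\tran$, where $[\widehat{\mU}\ \widehat{\mU}_{\perp}]\in\R^{n\times n}$ is orthogonal, $\widehat{\bLambda}_1$ carries the top $r$ eigenvalues of $\widehat{\mG}$ and $\widehat{\bLambda}_2$ the remaining $n-r$ eigenvalues. Since $\mU$ and $\widehat{\mU}$ have orthonormal columns, one has $\opnorm{\sin\Theta(\mU,\widehat{\mU})}=\opnorm{\widehat{\mU}_{\perp}^\tran\mU}$ and $\fronorm{\sin\Theta(\mU,\widehat{\mU})}=\fronorm{\widehat{\mU}_{\perp}^\tran\mU}$, so it suffices to bound the matrix $\mS:=\widehat{\mU}_{\perp}^\tran\mU$ in both norms.

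First I would derive the governing identity for $\mS$. Using $\mG^\natural\mU=\mU\bLambda_1$ with $\bLambda_1=\diag(\sigma_1(\mG^\natural),\dots,\sigma_r(\mG^\natural))$, together with $\widehat{\mU}_{\perp}^\tran\widehat{\mG}=\widehat{\bLambda}_2\widehat{\mU}_{\perp}^\tran$, and substituting $\mG^\natural=\widehat{\mG}-\bDelta$, one obtains
\begin{align*}
\widehat{\bLambda}_2\mS-\mS\bLambda_1=\widehat{\mU}_{\perp}^\tran\bDelta\mU .
\end{align*}
This is a Sylvester equation in $\mS$; its left-hand side is an invertible linear operator as soon as the spectra of $\widehat{\bLambda}_2$ and $\bLambda_1$ are disjoint, and a standard estimate (via diagonalization for the Frobenius norm, and via the integral representation $\mS=-\int_0^\infty e^{t\widehat{\bLambda}_2}(\widehat{\mU}_{\perp}^\tran\bDelta\mU)e^{-t\bLambda_1}\,dt$ for the operator norm) gives
\begin{align*}
\opnorm{\mS}\le\frac{\opnorm{\widehat{\mU}_{\perp}^\tran\bDelta\mU}}{\eta},\qquad \fronorm{\mS}\le\frac{\fronorm{\widehat{\mU}_{\perp}^\tran\bDelta\mU}}{\eta},
\end{align*}
where $\eta:=\min_{i,j}\lab[\widehat{\bLambda}_2]_{ii}-[\bLambda_1]_{jj}\rab$.

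Next I would lower bound the spectral separation $\eta$. Every eigenvalue of $\widehat{\bLambda}_2$ is at most $\sigma_{r+1}(\widehat{\mG})$, which by Weyl's inequality is at most $\sigma_{r+1}(\mG^\natural)+\opnorm{\bDelta}$, while every diagonal entry of $\bLambda_1$ is at least $\sigma_r(\mG^\natural)$; hence $\eta\ge\sigma_r(\mG^\natural)-\sigma_{r+1}(\mG^\natural)-\opnorm{\bDelta}=\delta-\opnorm{\bDelta}$, which is strictly positive by the hypothesis $\delta>\opnorm{\bDelta}$ (this also guarantees that the block structure above is consistent, i.e.\ the ``top $r$'' part of the spectrum of $\widehat{\mG}$ is genuinely separated from the rest). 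Combining the Sylvester estimate with $\eta\ge\delta-\opnorm{\bDelta}$ and the trivial bounds $\opnorm{\widehat{\mU}_{\perp}^\tran\bDelta\mU}\le\opnorm{\bDelta\mU}$ and $\fronorm{\widehat{\mU}_{\perp}^\tran\bDelta\mU}\le\fronorm{\bDelta\mU}$ then yields the two claimed inequalities. The only delicate point is the operator-norm Sylvester bound: one must argue that the inverse Sylvester operator is a contraction up to the factor $1/\eta$ in spectral norm, not merely in Frobenius norm. The integral representation above handles this cleanly, since it is valid precisely because the spectrum of $\bLambda_1$ lies entirely above that of $\widehat{\bLambda}_2$, and $\opnorm{e^{t\widehat{\bLambda}_2}}\le e^{t\sigma_{r+1}(\widehat{\mG})}$ times $\opnorm{e^{-t\bLambda_1}}\le e^{-t\sigma_r(\mG^\natural)}$ produces an integrable factor $e^{-t\eta}$, giving $\opnorm{\mS}\le\opnorm{\widehat{\mU}_{\perp}^\tran\bDelta\mU}\int_0^\infty e^{-t\eta}\,dt=\opnorm{\widehat{\mU}_{\perp}^\tran\bDelta\mU}/\eta$.
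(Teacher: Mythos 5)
Your proposal is correct. Note that the paper itself states this lemma in the auxiliary-lemmas section without any proof, treating it as the classical Davis--Kahan $\sin\Theta$ theorem, so there is no in-paper argument to compare against; your write-up supplies a sound, self-contained proof of exactly the variant used here (gap $\delta=\sigma_r(\mG^\natural)-\sigma_{r+1}(\mG^\natural)$ with denominator $\delta-\opnorm{\bDelta}$). The key steps all check out: the identity $\widehat{\bLambda}_2\mS-\mS\bLambda_1=\widehat{\mU}_{\perp}^\tran\bDelta\mU$ for $\mS=\widehat{\mU}_{\perp}^\tran\mU$, the equality of $\opnorm{\sin\Theta(\mU,\widehat{\mU})}$ and $\fronorm{\sin\Theta(\mU,\widehat{\mU})}$ with the corresponding norms of $\mS$, the Weyl step showing every eigenvalue in $\widehat{\bLambda}_2$ sits at least $\delta-\opnorm{\bDelta}$ below every entry of $\bLambda_1$ (which both justifies the one-sided separation needed for the integral representation and lets you drop the absolute value in $\eta$), and the final relaxations $\opnorm{\widehat{\mU}_{\perp}^\tran\bDelta\mU}\leq\opnorm{\bDelta\mU}$, $\fronorm{\widehat{\mU}_{\perp}^\tran\bDelta\mU}\leq\fronorm{\bDelta\mU}$. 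You were right to flag the operator-norm Sylvester bound as the delicate point: the entrywise (diagonal) argument only yields the Frobenius bound, and your integral representation, whose convergence and boundary term are controlled precisely by $\sigma_r(\mG^\natural)-\sigma_{r+1}(\widehat{\mG})\geq\delta-\opnorm{\bDelta}>0$, closes that gap cleanly (one could alternatively invoke uniqueness of the Sylvester solution under disjoint spectra, which your separation argument already provides).
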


\begin{lemma}[{\citep[Lemma 1]{ding2020leave}}]
\label{lemma: ding lemma 1}
Suppose  $\mG^\natural, \bDelta\in\R^{n\times n}$ are symmetric matrices, and $\widehat{\mG} = \mG^\natural + \bDelta$. The eigenvalue decomposition of $\mG^\natural$ is denoted $\mG^\natural = \mU\bLambda\mU^\tran$, where $\mU\in\R^{n\times r}$ has orthonormal columns and $\bLambda = \diag(\sigma_1,\cdots, \sigma_r)$. Let $\mX\in\R^{n\times r}$ be the matrix whose columns are the top-$r$ orthonormal eigenvectors of $\widehat{\mG}$. Let the SVD of the matrix $\mH = \mX^\tran\mU$ be $\mH = \mA\bSigma\mB^\tran$, and define $\mR=\mA\mB^\tran$. If $\opnorm{\bDelta} < \frac{1}{2}\sigma_r$, then one has
\begin{align*}
\opnorm{\bLambda\mR - \mH\bLambda} &\leq \left( 2 + \frac{\sigma_1}{\sigma_r - \opnorm{\bDelta}}\right) \opnorm{\bDelta}.
\end{align*}
\end{lemma}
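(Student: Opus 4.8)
The plan is to combine the eigenvector identity satisfied by $\mX$ with two standard perturbation facts: Weyl's inequality for the eigenvalues and the Davis--Kahan $\sin\Theta$ theorem for the invariant subspaces. Throughout, I would write $\widehat{\bLambda}=\diag(\lambda_1(\widehat{\mG}),\dots,\lambda_r(\widehat{\mG}))$ for the diagonal matrix of the top-$r$ eigenvalues of $\widehat{\mG}$, so that $\widehat{\mG}\mX=\mX\widehat{\bLambda}$.

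First I would derive the key identity. Left-multiplying $\widehat{\mG}\mX=\mX\widehat{\bLambda}$ by $\mX^\tran$ and using $\widehat{\mG}=\mG^\natural+\bDelta$ together with $\mG^\natural\mU=\mU\bLambda$ (a consequence of $\mG^\natural=\mU\bLambda\mU^\tran$ and $\mU^\tran\mU=\mI$) gives $\widehat{\bLambda}\mH=\mX^\tran\widehat{\mG}\mU=\mH\bLambda+\mX^\tran\bDelta\mU$, i.e.
\[
\widehat{\bLambda}\mH-\mH\bLambda=\mX^\tran\bDelta\mU,\qquad\text{so}\qquad\opnorm{\widehat{\bLambda}\mH-\mH\bLambda}\le\opnorm{\bDelta}
\]
because $\mX,\mU$ have orthonormal columns. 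I would then use the telescoping decomposition
\[
\bLambda\mR-\mH\bLambda=\bLambda(\mR-\mH)+(\bLambda-\widehat{\bLambda})\mH+(\widehat{\bLambda}\mH-\mH\bLambda),
\]
and bound the three summands: $\opnorm{\bLambda(\mR-\mH)}\le\sigma_1\opnorm{\mR-\mH}$; $\opnorm{(\bLambda-\widehat{\bLambda})\mH}\le\opnorm{\bLambda-\widehat{\bLambda}}\le\opnorm{\bDelta}$ by Weyl's inequality and $\opnorm{\mH}\le1$; and the last term is $\le\opnorm{\bDelta}$ by the identity above.

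It then remains to show $\opnorm{\mR-\mH}\le\opnorm{\bDelta}/(\sigma_r-\opnorm{\bDelta})$. Since $\mH=\mA\bSigma\mB^\tran$ and $\mR=\mA\mB^\tran$, we have $\mR-\mH=\mA(\mI-\bSigma)\mB^\tran$, hence $\opnorm{\mR-\mH}=1-\sigma_r(\mH)$. The singular values of $\mH=\mX^\tran\mU$ are the cosines of the principal angles between $\Range(\mX)$ and $\Range(\mU)$, so $1-\sigma_r(\mH)\le1-\sigma_r(\mH)^2=\opnorm{\sin\Theta(\mX,\mU)}^2\le\opnorm{\sin\Theta(\mX,\mU)}$. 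I would then apply the Davis--Kahan theorem (Lemma~\ref{Davis-kahan sintheta theorem}): here $\mG^\natural$ has rank $r$, so the eigengap is $\delta=\sigma_r(\mG^\natural)-\sigma_{r+1}(\mG^\natural)=\sigma_r$, and $\delta>\opnorm{\bDelta}$ by the hypothesis $\opnorm{\bDelta}<\frac{1}{2}\sigma_r$; the theorem gives $\opnorm{\sin\Theta(\mX,\mU)}\le\opnorm{\bDelta\mU}/(\delta-\opnorm{\bDelta})\le\opnorm{\bDelta}/(\sigma_r-\opnorm{\bDelta})$. Combining the three bounds of the previous paragraph then yields $\opnorm{\bLambda\mR-\mH\bLambda}\le\sigma_1\opnorm{\bDelta}/(\sigma_r-\opnorm{\bDelta})+2\opnorm{\bDelta}=\bigl(2+\sigma_1/(\sigma_r-\opnorm{\bDelta})\bigr)\opnorm{\bDelta}$.

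The only mildly delicate point is the step controlling $\opnorm{\mR-\mH}$: one must notice that this quantity is bounded by the \emph{square} of the $\sin\Theta$ distance (via $1-\cos\theta\le\sin^2\theta$) and that the square may be replaced by the first power at no cost since $\opnorm{\sin\Theta(\mX,\mU)}\le1$; getting this dependence right is exactly what produces the clean constant $2+\sigma_1/(\sigma_r-\opnorm{\bDelta})$. Everything else is routine triangle-inequality bookkeeping with the operator norm, so I would expect no genuine obstacle.
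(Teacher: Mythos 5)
Your proof is correct. Note that the paper itself does not prove this statement at all: it is quoted as an auxiliary result from \cite[Lemma 1]{ding2020leave}, so there is no in-paper argument to compare against. Your derivation is a valid self-contained proof using only tools already available in the paper: the identity $\widehat{\bLambda}\mH-\mH\bLambda=\mX^\tran\bDelta\mU$, Weyl's inequality for $\opnorm{\bLambda-\widehat{\bLambda}}\leq\opnorm{\bDelta}$, the exact computation $\opnorm{\mR-\mH}=1-\sigma_{\min}(\mH)\leq\opnorm{\sin\Theta(\mX,\mU)}$ via $1-\cos\theta\leq\sin^2\theta\leq\sin\theta$, and the Davis--Kahan bound of Lemma~\ref{Davis-kahan sintheta theorem} with eigengap $\delta=\sigma_r$ (since $\mG^\natural$ has rank $r$). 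The only point worth making explicit is the standing assumption $\sigma_1\geq\cdots\geq\sigma_r>0$ with $\bLambda$ and $\widehat{\bLambda}$ ordered consistently, which is needed both for $\opnorm{\bLambda}=\sigma_1$, for the entrywise Weyl alignment, and for identifying the gap of $\mG^\natural$ as $\sigma_r$; this is exactly the setting in which the lemma is used (Gram matrices), so no genuine gap remains.
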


\includepdf[pages={-}]{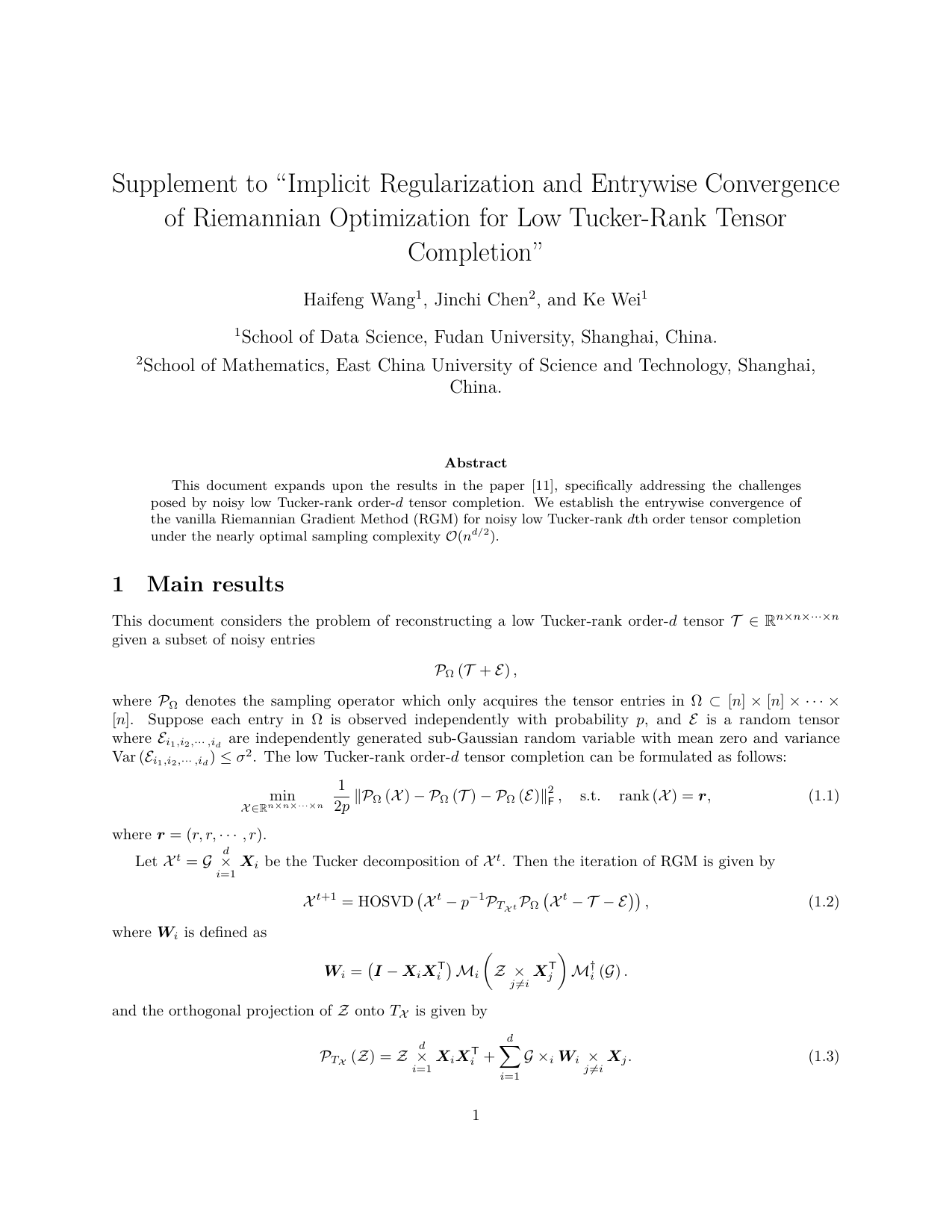}

\end{document}